\documentclass[11pt,reqno]{amsart}
\usepackage{amsmath, amsthm, amssymb}
\usepackage{amsmath,amscd}
\usepackage{mathabx}
\usepackage{hyperref}
\usepackage{mathrsfs}
\usepackage{xcolor, changepage} 
\usepackage{graphicx}
\usepackage{titletoc}
\usepackage{enumerate}
\usepackage{accents}
\usepackage{comment}
\usepackage{esint}

\usepackage{tikz}
\usetikzlibrary{matrix,arrows}
\usetikzlibrary{shapes}
\usetikzlibrary{calc}
\usetikzlibrary{arrows}
\usetikzlibrary{decorations.pathreplacing,decorations.markings}
\usepackage[all]{xy}
\usepackage{tikz-cd}
\usepackage[a4paper,margin=30mm]{geometry}
\usepackage{caption}
\usepackage{subcaption}

\theoremstyle{plain}
\newtheorem{theorem}{Theorem}
\newtheorem{proposition}[theorem]{Proposition} 
\newtheorem{lemma}[theorem]{Lemma}
\newtheorem{remark}[theorem]{Remark}
\newtheorem{corollary}[theorem]{Corollary}

\newtheorem{example}[theorem]{Example}

\numberwithin{theorem}{section}

\theoremstyle{definition}
\newtheorem{definition}[theorem]{Definition}

\DeclareMathOperator{\Lip}{Lip}
\DeclareMathOperator{\dist}{dist}
\DeclareMathOperator{\diam}{diam}

\DeclareMathOperator{\degg}{deg}

\DeclareMathOperator{\divv}{div}
\DeclareMathOperator{\spt}{spt}

\DeclareMathOperator{\Ric}{Ric}
\DeclareMathOperator{\Capp}{Cap}

\numberwithin{equation}{section}

\begin{document}
\title{A proof for the Riemannian positive mass theorem up to dimension 19}
\author{Yuchen Bi}
\address[Yuchen Bi]{Mathematical Institute, Department of Pure Mathematics, University of Freiburg, Ernst-Zermelo-Stra{\ss}e 1, D-79104 Freiburg im Breisgau, Germany}
\email{yuchen.bi@math.uni-freiburg.de}

\author{Tianze Hao}
\address[Tianze Hao]{Beijing International Center for Mathematical Research, Peking University, Beijing, 100871, P. R. China
}
\email{haotz@pku.edu.cn}

\author{Shihang He}
\address[Shihang He]{Key Laboratory of Pure and Applied Mathematics,
School of Mathematical Sciences, Peking University, Beijing, 100871, P. R. China
}
\email{hsh0119@pku.edu.cn}
\author{Yuguang Shi}
\address[Yuguang Shi]{Key Laboratory of Pure and Applied Mathematics,
School of Mathematical Sciences, Peking University, Beijing, 100871, P. R. China
}
\email{ygshi@math.pku.edu.cn}

\author{Jintian Zhu}
\address[Jintian Zhu]{Institute for Theoretical Sciences, Westlake University, 600 Dunyu Road, 310030, Hangzhou, Zhejiang, People's Republic of China}
\email{zhujintian@westlake.edu.cn}

\thanks{ T. Hao is supported by the Postdoctoral Fellowship Program of CPSF under Grant Number GZB20250708 and the Boya Postdoctoral Fellowship, Peking University. S. He is partially supported by NSFC Grant No.125B2005. Y. Shi is funded by NSFC12431003. J. Zhu is partially supported by National Key R\&D Program of China 2023YFA1009900, NSFC Grant No.12401072, Zhejiang Provincial Natural Science Foundation of China under Grant No.LQKWL26A0101 and the start-up fund from Westlake University.}

\begin{abstract}
   In this paper, we prove the Riemannian positive mass theorem up to dimension $19$, building on a combination of torical symmetrization and the singularity blow-up technique developed in \cite{HSY26}, together with the generic regularity theory for area-minimizing hypersurfaces established in \cite{CMS23, CMSW2025}.
Similar ideas are also employed to investigate the Geroch conjecture up to dimension $12$.
\end{abstract}

\maketitle
\tableofcontents

\section{Introduction}
\subsection{Positive mass theorem}
The study of the Positive Mass Theorem (PMT)
forms a cornerstone of modern Riemannian geometry and geometric analysis, originating at the interface of mathematical general relativity and global differential geometry.  Motivated by the physical principle that isolated gravitational systems should have nonnegative total energy, the three-dimensional PMT was first rigorously established by Schoen and Yau \cite{SY79} using minimal hypersurface methods. An elegant spinorial proof was later given by Witten \cite{Wi1981}. More recently, alternative proofs for the three-dimensional PMT have been also established by Bray, Kazaras, Khuri, and Stern \cite{BKKS22} using level-set techniques, and independently by Agostiniani, Mazzieri, and Oronzio \cite{AMO24} employing a potential-theoretic approach. The level-set method has further led to a resolution of the stability conjecture for PMT, see \cite{Huisken-Ilmanen, Dong-Song} for details.

The higher-dimensional cases are also of significant importance, particularly concerning the new challenges that emerge. Witten's spinorial proof works for all spin asymptotically flat (AF) manifolds, regardless of dimension, but it does not apply to the non-spin case. On the other hand, minimal hypersurfaces possibly develop singularities in higher dimensions; therefore, the dimension-descent argument with smooth minimal hypersurfaces works only up to dimension seven \cite{SY81, Schoen1989}. (Note that this range can be extended to dimension eleven with the help of generic regularity theory developed in a series works \cite{Smale93, CMS24, CMSW2025}.) To overcome this difficulty, Schoen and Yau \cite{SY22} later proposed a dimension-descent scheme using minimal slicings with singularity to extend the PMT to both non-spin manifolds and higher dimensions. 

Recently, PMT has been further extended to AF manifolds with arbitrary ends (see Definition \ref{def: AF manifold}) in subsequent works \cite{LUY21, LLU23, Zhu23, Zhu24, BC05, CZ24}. These results have led to a variety of applications, including the establishment of Liouville theorems in conformal geometry \cite{SY88} and techniques for singularity desingularization \cite{DaSW2024, HSY26}\footnote{An earlier version of \cite{HSY26} was posted as \cite{HeSY2025}. The preprint \cite{HeSY2025} was later split into \cite{HSY26} and another paper for improved readability}.

In this paper, we combine Schoen and Yau's dimension-descent idea with the singularity desingularization technique developed by Yu, He, and Shi (the third- and fourth-named authors) to establish the PMT for AF manifolds with arbitrary ends up to dimension 19.

Throughout this paper, we always consider AF manifolds  {\it with arbitrary ends} in the following sense.
\begin{definition}\label{def: AF manifold}
    A triple $(M^n,g,E)$ is said to be AF  with arbitrary ends if
    \begin{itemize}
        \item $(M^n,g)$ is a complete Riemannian manifold with $n:=\dim M\geq 3$;
        \item $E$ is a component of $M-K$ for some compact subset $K$ of $M$;
        \item $E$ is AF in the sense that $E$  is diffeomorphic to $\mathbf R^n\setminus B_1$, and under the Cartesian  coordinates on $E$, the components of metric $g$ in $E$ satisfies
        $$|g_{ij}-\delta_{ij}|+|x||\partial g_{ij}|+|x|^2|\partial^2 g_{ij}|=O(|x|^{-\tau})\mbox{ for some }\tau>\frac{n-2}{2}$$
        and the scalar curvature
        $$R_g\in L^1(E,g).$$
    \end{itemize}
\end{definition}

We also recall the following definition of mass.

\begin{definition}
    Let $(M^n,g,E)$ be AF with arbitrary ends. Then the mass of $(M^n,g,E)$ is defined by
    $$m(M,g,E)=\frac{1}{2\omega_{n-1}}\lim_{\rho\to+\infty}\int_{\mathbf S^{n-1}(\rho)}(\partial_ig_{ij}-\partial_j g_{ii})\nu^j\,\mathrm d\sigma_x,$$
    where $\omega_{n-1}$ denotes the volume of the unit round $(n-1)$-sphere $\mathbf S^{n-1}$, $\mathbf S^{n-1}(\rho)$ denotes the coordinate sphere $\{|x|=\rho\}$, $\nu=(\nu^j)$ denotes the Euclidean unit normal of $\mathbf S^{n-1}(\rho)$ pointing to the infinity of $E$, and $\mathrm d\sigma_x$ denotes the Euclidean area element of $\mathbf S^{n-1}(\rho)$.
\end{definition}

Our main theorem is stated as follows.
\begin{theorem}\label{Thm: PMT}
   Let  $(M^n,g,E)$ be an AF manifold with  arbitrary ends and nonnegative scalar curvature, where $n:=\dim M\leq 19$. Then we have $m(M,g,E)\geq 0$ and the equality holds if and only if $(M,g)$ is isometric to the Euclidean $n$-space.
\end{theorem}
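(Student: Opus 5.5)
We argue by contradiction and reduce the inequality $m(M,g,E)\geq 0$ to a non-existence statement for positive scalar curvature metrics on a compactified model, using torical symmetrization. First we apply the standard Lohkamp-type reduction in the arbitrary-ends setting (as in \cite{LUY21, Zhu23, CZ24}). If $m(M,g,E)<0$, we perturb $g$ conformally and by cut-off inside $E$ to obtain a metric with $R_g\geq 0$ on $M$ and $R_g>0$ somewhere. This metric is exactly Euclidean outside a large coordinate cube $[-L,L]^n$ in $E$. We then quotient the Euclidean region by the lattice $2L\mathbf Z^n$. The result is a complete manifold $\hat M = T^n\# M'$, where $M'$ carries the other, arbitrary ends, with nonnegative scalar curvature that is positive somewhere. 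A small conformal or Ricci-flow-type deformation, which needs care with the noncompact ends, then makes $R>0$ on a region containing the torus part. So it suffices to prove the following: $T^n\# M'$, with $n\leq 19$ and $M'$ arbitrary, admits no complete metric with $R\geq 0$ that is positive on the torical region.

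To rule such metrics out, we use torical symmetrization. We pass to a warped product $\hat M\times T^k$ with the metric $g+\sum_i u_i^2\,d\theta_i^2$, whose scalar curvature stays positive. In this setting we run the Schoen--Yau dimension descent through $\mu$-bubbles and area-minimizing hypersurfaces in the homology classes dual to the torus circles. The $\mu$-bubbles are used to confine everything to a compact region and so avoid the arbitrary ends, following \cite{Zhu23, CZ24}. Each step yields a stable hypersurface $\Sigma$ carrying a positive eigenfunction, which we add as a new warping factor, so dimension drops by one while positivity is kept. Continuing down to dimension $2$ or $3$ gives a torus-like surface with positive warped scalar curvature, contradicting Gauss--Bonnet.

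The main obstacle is the singular set of the minimizers once the ambient dimension is $\geq 8$. We plan to handle it in two ways. In dimensions up to $11$, we invoke the generic regularity of \cite{CMS23, CMSW2025}: after perturbing the metric, the minimizers are smooth. In the range $12\leq n\leq 19$, we use the singularity blow-up technique of \cite{HSY26}. Near the isolated singular points, whose tangent cones are regular because the singular set has codimension at least $7$ and is generically small, we conformally blow the metric up. This turns each singular point into an asymptotically flat end, which becomes one of the arbitrary ends. We then continue the descent on this incomplete-turned-complete slice, relying on the fact that the arbitrary-ends framework tolerates such new ends. The dimension bound $19$ should come from matching two conditions: the codimension of the generic singular set, at least $8$ or more in the generic sense of \cite{CMSW2025}, and the decay rate needed for the blown-up ends to keep $R\geq 0$ with positive conformal Laplacian bounds. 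We would track the exponent needed for the blow-up conformal factor $|x|^{-(n-2)}$-type Green function to dominate the second fundamental form blow-up $|A|\sim r^{-1}$, and check that this is compatible exactly when $n\leq 19$.

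For the rigidity statement, suppose $m=0$. A standard variational argument shows $(M,g)$ is Ricci flat: deforming $g$ in the direction $-\Ric$ with compact support would lower the mass below zero after conformal correction, contradicting the inequality. A noncompact end other than $E$ is excluded by the same argument: a conformal deformation decreasing the mass would exist by using a Green-type function on that end. Then $(M,g)$ is Ricci flat with one AF end and zero mass, so Bishop--Gromov volume comparison gives isometry to $\mathbf R^n$.
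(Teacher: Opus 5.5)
\textbf{There is a genuine gap, in exactly the range $12\le n\le 19$ that this theorem is about.}

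\textbf{The singularity step fails as stated.} Generic regularity \cite{CMSW2025} only gives $\dim_{\mathcal H}\mathcal S<N-11$. For $N\ge 13$ this allows a positive-dimensional singular set (almost $8$-dimensional at $N=19$), and for $N=12$ it does not force finiteness. At points of positive-dimensional strata, tangent cones split off lines and so have non-isolated singularities. There are therefore no ``isolated singular points'' to blow up one at a time. Even at an isolated singularity, blowing up by $r^{2-m}$ gives an end asymptotic to the cone over the link, not an AF end.

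What is needed is a single positive harmonic function $G_{\mathcal S}$ on the regular part, with $m=N-1$, such that $(1+\delta G_{\mathcal S})^{4/(m-2)}g$ is complete. The paper builds it as follows:
\begin{itemize}
\item Green's functions are bounded below by $c\,r^{2-m}$, by Ahlfors regularity.
\item These are integrated against a finite measure $\nu$ on $\mathcal S$ whose Wolff potential $\int_0^1(\nu(B_r(x))r^{2-m})^{2/(m-2)}\,\mathrm dr$ is infinite at every point of $\mathcal S$.
\item Such a $\nu$ exists when $\dim_{\mathcal H}\mathcal S<\frac{m-2}{2}$.
\end{itemize}
That is where $19$ comes from: $N-11\le\frac{N-3}{2}$ exactly when $N\le 19$. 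It is not a balance between a Green's function and $|A|\sim r^{-1}$. Since $1+\delta G_{\mathcal S}$ is harmonic, the spectral positivity from stability (with $|A|^2$ on the favorable side) passes to the blown-up metric automatically.

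\textbf{The overall architecture is the more serious problem.} Once you compactify to $\mathbf T^n\# M'$, the contradiction is topological. After every blow-up, the new complete manifold $\Sigma\setminus\mathcal S$ must still carry the obstruction needed for the next slice, such as nontrivial locally finite classes dual to the remaining circles, or a proper nonzero-degree map to a punctured torus. Deleting a positive-dimensional $\mathcal S$ can destroy this. Saying that the arbitrary-ends framework ``tolerates'' the new ends does not help, because the classes for the next step are taken relative to exactly those ends. The paper can verify the obstruction (via open SYS manifolds) only when $\dim_{\mathcal H}\mathcal S<1$. That is why its torical-connected-sum result stops at dimension $12$; your route would at best reach the same point.

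For the PMT the paper avoids topology entirely. It never compactifies, works with $\mathbf T^k$-warped ALF manifolds, and carries \emph{negative mass} through the induction:
\begin{itemize}
\item The end is made conformal to $\bar g$ with factor $1+A|x|^{2-n}+\dots$ and $A<0$, so the planes $x_n=\pm t$ are barriers.
\item The $\mathbf T^k$-invariant minimizer is then asymptotically a planar graph with zero mass.
\item $G_{\mathcal S}$ decays at the ALF infinity, so the blow-up changes the mass only by $O(\varepsilon)$.
\item Warping by the stability eigenfunction subtracts a definite amount, coming from $\frac12(R_g+|A|^2)>0$ on an annulus.
\item Iterating until $\dim M-k=3$ contradicts the PMT for $\mathbf T^k$-warped ALF manifolds.
\end{itemize}

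Your rigidity sketch is fine. Note that Bishop--Gromov alone rules out additional ends, since the AF end forces asymptotic volume ratio $1$.
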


The proof builds on Schoen and Yau's dimension-descent argument \cite{SY81, Schoen1989}, which is effective up to dimension seven. In higher dimensions, area-minimizing hypersurfaces may develop singularity, and iterating the descent causes later minimizers to interact with the existing singularities—posing the central challenge in the proof of higher dimensional PMT. Unlike the approach in \cite{SY22}, which preserves singularities throughout the descent, we develop the singularity desingularization method to resolve them following the idea of the previous work \cite{HSY26}. Actually, we are able to establish the following new blow-up result, where only the Hausdorff dimension of the singular set is required compared to previous results.

\begin{proposition}\label{Prop: desingularization}
Use the notation and terminology from Section \ref{Sec: blow-up}.
      Let $(X,d,\mu)$ be a non-parabolic almost-manifold metric-measure space with Ahlfors dimension $m$ satisfying local Poincar\'e inequality. Assume
      \begin{itemize}
          \item either the singular set $\mathcal S$ satisfies $\dim_{\mathcal H}(\mathcal S)<\frac{m-2}{2}$;
          \item or the singular set $\mathcal S$ satisfies $\mathcal H^{\frac{m-2}{2}}(\mathcal S\cap U)<+\infty$ for any bounded subset $U$ of $X$ and $(X,d)$ can be bi-Lipschitzly embedded into a smooth Riemannian manifold.
      \end{itemize}
       Then there is a smooth harmonic function $G_{\mathcal S}$ on the regular set $\mathcal R$ such that the conformal metric
    $$\tilde g:=(1+\delta G_{\mathcal S})^{\frac{4}{m-2}}g$$
    is complete on $\mathcal R$ for any $\delta>0$.
\end{proposition}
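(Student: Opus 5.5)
We would build $G_{\mathcal S}$ as a superposition of Green's functions concentrated near $\mathcal S$, following the blow-up scheme of \cite{HSY26}. Non-parabolicity, together with Ahlfors $m$-regularity and the local Poincar\'e inequality, yields a positive minimal Green's function $G(x,y)$ on $X$ with two-sided bounds $G(x,y)\simeq d(x,y)^{2-m}$ for nearby points. The estimate comes from Li--Yau/Grigor'yan type heat kernel bounds in the doubling--Poincar\'e setting, and it holds uniformly on bounded sets. Since $\mathcal S$ is closed and the regular part $\mathcal R$ is a smooth manifold, any function $G_{\mathcal S}(x)=\int G(x,y)\,\mathrm d\nu(y)$ with $\nu$ a positive measure supported on $\mathcal S$ is harmonic on $\mathcal R$. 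By local elliptic regularity it is also smooth there.

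\emph{Choice of the measure.} The goal is to make $G_{\mathcal S}$ blow up near $\mathcal S$ fast enough that
\[
\int_\gamma (1+\delta G_{\mathcal S})^{\frac{2}{m-2}}\,\mathrm ds=+\infty
\]
for every curve $\gamma$ in $\mathcal R$ that leaves every compact subset of $\mathcal R$ while staying in a bounded region. Curves escaping to infinity in $X$ already have infinite length, because $G_{\mathcal S}\ge0$ and $g$ is complete. It therefore suffices to show $G_{\mathcal S}(x)\gtrsim d(x,\mathcal S)^{-\frac{m-2}{2}}$ up to a factor that is non-integrable in the sense below. Then $(1+\delta G_{\mathcal S})^{\frac{2}{m-2}}\gtrsim \delta^{\frac{2}{m-2}}d(x,\mathcal S)^{-1}$, and $\int \mathrm ds/d(\gamma(s),\mathcal S)$ diverges along any rectifiable curve approaching $\mathcal S$, since $d$ is 1-Lipschitz along $\gamma$. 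The case $\mathcal S$ unbounded is handled with a locally finite sum over a cover by bounded pieces $U_k$, with weights $2^{-k}$ chosen so the sum still converges on $\mathcal R$.

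\emph{Case 1: $\dim_{\mathcal H}\mathcal S<\frac{m-2}{2}$.} Pick $s$ with $\dim_{\mathcal H}\mathcal S<s<\frac{m-2}{2}$. Then $\mathcal H^s(\mathcal S)=0$, so for each $j$ there is a cover of $\mathcal S\cap U$ by balls $B(y_{j,i},r_{j,i})$ with $\sum_i r_{j,i}^s<2^{-j}$ and all $r_{j,i}<2^{-j}$. Set
\[
G_{\mathcal S}=\sum_{j,i} r_{j,i}^{\,m-2-\alpha}\,G(\cdot,y_{j,i}),\qquad \alpha=\tfrac{m-2}{2}.
\]
The weights are of order $r^{s}$ times $r^{m-2-\alpha-s}\le 1$, so the sum converges locally uniformly away from $\mathcal S$. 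For $x$ with $d(x,\mathcal S)\approx r$, choose the generation $j$ whose balls have comparable radius, after a dyadic refinement arranging the cover at every scale. Some $y_{j,i}$ then lies within $\lesssim r_{j,i}$ of $x$, and this single term gives $G_{\mathcal S}(x)\gtrsim r^{m-2-\alpha}\cdot r^{2-m}=r^{-\alpha}$. The delicate point is that the covering radii at a given scale must be comparable to the distance. We handle this by taking nested covers for every dyadic scale $2^{-j}$, splitting balls of radius $\ll 2^{-j}$ into the relevant range, and using the strict inequality $s<\alpha$ to absorb the logarithmic loss from summing over all $j$.

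\emph{Case 2: $\mathcal H^{\alpha}(\mathcal S\cap U)<\infty$ with a bi-Lipschitz embedding.} Here the measure $\nu=\mathcal H^\alpha\llcorner\mathcal S$ gives only $\int G(x,y)\,\mathrm d\nu\gtrsim$ something of the critical size $d^{-\alpha}$ up to constants. That bound does not by itself produce divergence. We would instead use a critical-case device: a logarithmic weight $\sum_k \nu\llcorner\{2^{-k-1}<d(y,\cdot)\}$, or an iterated density/capacity argument. This gives $G_{\mathcal S}\gtrsim d^{-\alpha}\log(1/d)^{\frac{m-2}{2}}$, or at least establishes divergence along curves via a curve-modulus estimate. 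The bi-Lipschitz embedding into a Riemannian manifold supplies the rectifiability and density (Marstrand--Mattila type) properties of $\mathcal H^\alpha\llcorner\mathcal S$ needed to get lower density bounds at $\nu$-a.e. point. It also lets us control curves in $\mathcal R$ approaching $\mathcal S$ through a length-versus-measure comparison in the ambient manifold. \textbf{This critical case is the main obstacle.} We would first try to show that the conformal length of any curve hitting $\mathcal S$ is infinite using a modulus/capacity argument: $\mathcal S$ has zero capacity of the appropriate order, and one builds $G_{\mathcal S}$ as a limit of capacitary potentials with divergent energies.
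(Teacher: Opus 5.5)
Your Green's function input is a reasonable substitute for the paper's construction. You use a global minimal Green's function with a local lower bound $G(x,y)\gtrsim d(x,y)^{2-m}$, uniform for poles in bounded sets, whereas the paper uses capacity potentials on bounded sets $U_i$ globalized by non-parabolicity. Your Case 1 measure $\sum_{j,i}r_{j,i}^{\frac{m-2}{2}}\delta_{y_{j,i}}$ is exactly the paper's. The gap is your completeness criterion. You try to prove $G_{\mathcal S}(x)\gtrsim d(x,\mathcal S)^{-\frac{m-2}{2}}$, which needs covering balls of radius comparable to $2^{-j}$ at every scale. But $\mathcal H^s(\mathcal S)=0$ only controls $\sum_i r_{j,i}^s$ for radii at most $2^{-j}$. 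Forcing radii $\approx 2^{-j}$ replaces that sum by $N(2^{-j})2^{-js}$, which is governed by Minkowski dimension, so the loss is polynomial, not logarithmic.

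In fact no bound $G_{\mathcal S}\ge\phi(d(\cdot,\mathcal S))$ with $\phi(0^+)=+\infty$ can hold in general. In $X=\mathbf R^m$ let $\mathcal S=\{0\}\cup\bigcup_k F_k$, where $F_k$ is an $\epsilon_k$-grid in the cube $Q_k$ of side $\ell_k=4^{-k}$ centred at $2^{-k}e_1$. This set is countable and compact, so $\dim_{\mathcal H}\mathcal S=0$. Choose $\epsilon_k$ with $\phi(\sqrt m\,\epsilon_k)\ge\ell_k^{2-m}$; for your $\phi$, $\epsilon_k\approx\ell_k^2$ suffices. Then $\int_{Q_k}G_{\mathcal S}\ge\ell_k^2$. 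On the other hand, $\int_{Q_k}|x-y|^{2-m}\,\mathrm dx\lesssim\ell_k^2$ for every $y$, and $\lesssim\ell_k^m2^{k(m-2)}$ for $y\in\mathcal S\setminus F_k$. So a finite $\nu$ would need $\nu(F_k)\gtrsim 1$ for all large $k$, which is absurd, and an infinite $\nu$ makes $G_{\mathcal S}\equiv+\infty$ on $\mathcal R$.

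What is missing is to estimate along the curve through its endpoint $p=\gamma(s^-)\in\mathcal S$, not through $d(\cdot,\mathcal S)$. Since $d(\gamma(t),p)\le s-t$, every pole in $B_{s-t}(p)$ lies within $2(s-t)$ of $\gamma(t)$. Hence $G_{\mathcal S}(\gamma(t))\ge c\,\nu(B_{s-t}(p))(2(s-t))^{2-m}$, and so $\int_0^sG_{\mathcal S}(\gamma(t))^{\frac{2}{m-2}}\,\mathrm dt\gtrsim\mathcal W^\nu(p)-C$, where $\mathcal W^\nu(p)=\int_0^1(\nu(B_r(p))r^{2-m})^{\frac{2}{m-2}}\,\mathrm dr$. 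One therefore only needs $\mathcal W^\nu=+\infty$ at every point of $\mathcal S_i$, with no uniformity. Your measure already gives this. In each generation the ball $B(y_{j,i},r_{j,i})\ni p$ contributes at least $\frac12$ to $\mathcal W^\nu(p)$ from $r\in[r_{j,i},2r_{j,i}]$, however small $r_{j,i}$ is. Infinitely many generations with disjoint such ranges then give divergence, so the relevant scale is set by the cover, not by $d(x,\mathcal S)$.

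Case 2 is not proved. Your log-improved pointwise bound is ruled out by the same example, since countable sets are $\mathcal H^{\frac{m-2}{2}}$-null and sit in $\mathbf R^m$. Marstrand--Mattila type density facts hold only $\nu$-a.e., whereas divergence is needed at every endpoint. A bi-Lipschitz embedding also gives no rectifiability. The paper instead uses the embedding to import Adams--Hedberg potential theory in the ambient manifold of dimension $N$:
\begin{itemize}
\item with parameters $(\beta,p)$ satisfying $N-\beta p=\frac{m-2}{2}$ and $\frac{1}{p-1}=\frac{2}{m-2}$, the ambient Wolff potential $\mathcal W^\nu_{\beta,p}$ is comparable to $\mathcal W^\nu$;
\item $\mathcal H^{\frac{m-2}{2}}(\mathcal S_i)<\infty$ gives Bessel capacity $C_{\beta,p}(\mathcal S_i)=0$;
\item zero capacity, via the dual characterization and Wolff's inequality, yields a finite $\nu$ on $\mathcal S_i$ with $\mathcal W^\nu_{\beta,p}\equiv+\infty$ on $\mathcal S_i$.
\end{itemize}
After that, the endpoint argument applies unchanged.
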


The conformal blow-up trick originated in Schoen's resolution of the Yamabe problem \cite{Schoen84} and has been applied to attack Schoen's conjecture by various authors \cite{LM19, Kaz24, WX2024, DaSW24b}, as well as to study the PMT \cite{DaSW2024, HSY26}. See also \cite{CLZ24b} for other related work. Given these developments, we anticipate that our Proposition \ref{Prop: desingularization} will have more applications in other geometric problems.

The use of the blow-up makes it natural to work within the framework of AF manifolds with arbitrary ends. However, to transform the positivity of curvature from a spectral sense (arising from the stability inequality) to a pointwise sense, conformal deformation presents a difficulty: it is hard to preserve completeness on those arbitrary ends. As an alternative approach, we will apply the torical symmetrization procedure dating back to \cite{FcS1980, Gro18}, following \cite{HSY26}, and work with the $\mathbf{T}^k$-warped asymptotically locally flat (ALF) manifolds introduced below.

\begin{definition}\label{def: ALF}
    A triple $(M,g,E)$ is said to be $\mathbf T^k$-warped ALF if
    \begin{itemize}
        \item $(M,g)$ is a complete Riemannian manifold such that $M=M_0\times \mathbf T^k$ and 
        $$g=g_0+\sum_{i=1}^ku_i^2\,\mathrm d\theta_i\otimes \mathrm d\theta_i,$$
        where $n=\dim M_0\geq 3$, $g_0$ is a smooth metric on $M_0$, $u_i$ are smooth positive functions on $M_0$, and $\theta_i\in [0,1]$ denotes the arc-length parametrization of $i$-th unit circle component of $\mathbf T^k$;
        \item $E=E_0\times \mathbf T^k$ is an end of $M$ where $E_0$ is a component of $M_0\setminus K$ for some compact subset $K$ of $M_0$;   
        \item $E=E_0\times \mathbf T^k$ is ALF in the sense that $E_0$ is diffeomorphic to $\mathbf R^n\setminus B_{r_0}$ for some constant $r_0>1$, and  under the Cartesian  coordinates on $E_0\times \mathbf T^k$, the components of metric $g$ in $E$ satisfies
       \begin{align}\label{Eq: decay condition}
			|g_{ij}-\bar{g}_{ij}|+|x||\partial g_{ij}|+|x|^2|\partial^2 g_{ij}| = O(|x|^{-\tau})\mbox{ for some }\tau >\frac{n-2}{2},
		\end{align}
		and also $R_g\in L^1(E,g)$, where $x$ denotes the Cartesian coordinate on $\mathbf R^n$ and $\bar g$ denotes the reference metric given by $\bar g=\mathrm dx\otimes \mathrm dx+\mathrm d\theta\otimes \mathrm d\theta$.       
    \end{itemize}
\end{definition}

Let us recall the similar definition of mass for $\mathbf T^k$-warped ALF manifolds.

\begin{definition}\label{def:T^k ALF mass}
    Let $(M,g,E)$ be $\mathbf T^k$-warped ALF.  Then the  mass of $(M,g,E)$ is defined by
        \begin{align*}
            m(M,g,E) = \frac{1}{2\omega_{n-1}}\lim_{\rho\to\infty}\int_{\mathbf S^{n-1}(\rho)\times \mathbf{T}^k}(\partial_ig_{ij}-\partial_j g_{\alpha\alpha})\bar\nu^j\,\mathrm d\bar\sigma,
        \end{align*}
        where $\omega_{n-1}$ denotes the volume of the unit round $(n-1)$-sphere $\mathbf S^{n-1}$, $\mathbf S^{n-1}(\rho)$ is the coordinate sphere $\{|x|=\rho\}$ in $\mathbf R^n$, $\bar\nu=(\bar\nu^j,0)$ is the unit normal of $\mathbf S^{n-1}(\rho)\times \mathbf T^k$ pointing to the infinity of end $E$, $\mathrm d\bar\sigma$ denotes the area element of $\mathbf S^{n-1}(\rho)\times \mathbf T^k$ with respect to the reference metric $\bar g$, the symbols $i,j$ run over the index of $\mathbf{R}^n$, and the symbol $\alpha$ runs over all index of $\mathbf R^n\times \mathbf T^k$.
  \end{definition}

Within the framework of $\mathbf{T}^k$-warped ALF manifolds, the dimension-descent argument is mirrored by the following torical symmetrization procedure.

\begin{proposition}\label{Prop: torical symmetrization}
      
Suppose $(M,g,E)$ is a $\mathbf T^k$-warped ALF manifold with negative mass and nonnegative scalar curvature. If $k+3< \dim M\leq 19$, then we can construct a new $\mathbf T^{k+1}$-warped ALF manifold $(\widehat M,\widehat g,\widehat E)$ with negative mass and nonnegative scalar curvature, where $\dim \widehat M=\dim M$.
  \end{proposition}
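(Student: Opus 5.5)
Our plan is to run one step of the Schoen--Yau descent inside the $\mathbf T^k$-warped ALF class and then restore completeness and positivity by the blow-up of Proposition \ref{Prop: desingularization}.

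\emph{Step 1: perturb to a convenient form.} First we reduce, by a standard density argument, to the case where $g$ is conformally flat in the $x$-variables near infinity of $E_0$ and has positive scalar curvature outside a compact set. In this normal form the metric looks like a harmonic-type expansion $(1+m|x|^{2-n}+\dots)$ with $m<0$ and the $\mathbf T^k$-symmetry preserved. We keep every construction $\mathbf T^k$-invariant. We then pick a coordinate direction, say $x_n$.

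\emph{Step 2: minimize and get stability.} We solve the Plateau problem in $M_0$ with the weighted area $\int_\Sigma \prod_i u_i\,\mathrm d\sigma_{g_0}$. This is exactly the area of $\Sigma\times\mathbf T^k$ in $(M,g)$. Negative mass gives barriers: the slabs $\{x_n=\pm h\}$ have mean curvature of the right sign, so the boundaries $\{x_n=0\}\cap \partial B_\rho$ yield minimizers confined to a slab. Letting $\rho\to\infty$ gives a complete weighted area-minimizing hypersurface $\Sigma_0\subset M_0$, asymptotic to a plane, possibly with a singular set $\mathcal S$. Its ambient dimension is $\dim M_0=\dim M-k\le 19-k$. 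Since we are free to vary the boundary data and $h$, we invoke the generic regularity of \cite{CMS23, CMSW2025}. This gives, after a small perturbation, either $\dim_{\mathcal H}\mathcal S<\frac{m-2}{2}$ or locally finite $\mathcal H^{\frac{m-2}{2}}$, with $m=\dim\Sigma_0\times\mathbf T^k=\dim M-1$.

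The requirement $\dim M\le 19$ enters at this point. The generic singular set has codimension roughly $\ge 10$ inside $\Sigma$, which must dominate $(m-2)/2$; the bound $\dim M\le 19$ is exactly what makes this inequality hold. The stability inequality for the weighted functional then says that
\[
-\Delta_{\Sigma}-\sum_i u_i^{-1}\nabla u_i\cdot\nabla+\tfrac12\big(R_g-R_{\Sigma\times\mathbf T^k}+|A|^2\big)
\]
is nonnegative on $C_c^\infty(\mathcal R)$.

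\emph{Step 3: symmetrize and desingularize.} The regular part $\Sigma\times\mathbf T^k$, with the induced metric, forms an almost-manifold metric-measure space of Ahlfors dimension $m$. We need it to satisfy a local Poincaré inequality (from the area-minimizing property) and to be non-parabolic (from the planar end, since $n-1\ge 3$). Proposition \ref{Prop: desingularization} then gives $G_{\mathcal S}$, and the metric $(1+\delta G_{\mathcal S})^{4/(m-2)}g$ is complete on $\mathcal R$. On this complete manifold we take the positive first eigenfunction $u_{k+1}$ of the stability operator. We solve for it with Dirichlet exhaustion, normalized so that $u_{k+1}\to1$ on the end. We then form
\[
\widehat g=\tilde g_\Sigma+\sum_{i\le k}u_i^2\,\mathrm d\theta_i^2+u_{k+1}^2\,\mathrm d\theta_{k+1}^2 .
\]
The usual Fischer-Colbrie--Schoen computation gives $R_{\widehat g}\ge0$. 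The conformal factor is harmonic, so on a scalar-flat-correction level it only adds nonnegative terms; we arrange the order of operations so that the warping is done with respect to the blown-up metric. The space $\widehat M=\mathcal R\times\mathbf T^{k+1}$ has $\dim\widehat M=\dim M$, and the new singular "ends" become complete arbitrary ends.

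\emph{Step 4: mass.} Finally we expand $u_{k+1}$ and $G_{\mathcal S}$ (the latter decays like $|x|^{2-m}$ in the Euclidean directions) on the end. We also use $\delta\to0$ and Schoen--Yau's asymptotics of $\Sigma$. Together these show the new mass is at most the old mass plus $o(1)$, hence negative for small $\delta$.

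The main obstacle will be Step 3. We must verify the hypotheses of Proposition \ref{Prop: desingularization}, namely the Poincaré inequality and the Hausdorff bound after generic perturbation, uniformly enough. We must also confirm that the eigenfunction stays positive with controlled asymptotics near $\mathcal S$ after blow-up. We would first try to derive the Poincaré inequality from the Bombieri--Giusti result for minimizers, transported through the bi-Lipschitz torus fibration.
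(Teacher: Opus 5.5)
Your overall architecture matches the paper: density step, barrier-confined $\mathbf T^k$-invariant minimizer, generic regularity with the numerology $N-11\le\frac{N-3}{2}\iff N\le 19$, blow-up via Proposition \ref{Prop: desingularization}, and Fischer-Colbrie--Schoen warping. However, Step 4 has a genuine gap: nothing in your argument produces negative mass.

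\textbf{The negative mass is not inherited.} The ambient mass $m(M,g,E)<0$ does not pass to $\Sigma$. On the end, $\check g=v^{\frac{4}{N-2}}(\mathrm dx'\otimes\mathrm dx'+\mathrm dw\otimes\mathrm dw+\mathrm d\theta\otimes\mathrm d\theta)$ with $v-1=O(|x'|^{2-n})$ and $|Dw|^2=O(|x'|^{4-2n})$. Both decay faster than the mass order $|x'|^{3-n}$ of an end over $\mathbf R^{n-1}$, so $(\mathcal R,\check g,\check E)$ has \emph{zero} mass. Moreover, $G_{\mathcal S}$ decays like $|x'|^{3-n}$ (the torus factor is compact), not like $|x|^{2-m}$. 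So the blow-up shifts the mass by an $O(\delta)$ amount of Schwarzschild (nonnegative) sign. Hence ``new mass $\le$ old mass $+o(1)$'' gives at best new mass $\le o(1)$.

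\textbf{How the paper creates it.} In the paper, negativity is \emph{created} by the warping function. The positivity region of $R_g$ gives $h=\frac12(R_g+|A|^2)>0$ somewhere on $\Sigma$; your Step 1 supplies such a region but you never use it. One solves $-\Delta_{\hat g_*}\hat u_{k+1}+\frac{R_{\hat g_*}}{2}\hat u_{k+1}=\frac{h_*\eta}{2}\hat u_{k+1}$ with $\hat u_{k+1}\to 1$. Integrating this equation against $\hat u_{k+1}$ and inserting $\hat u_{k+1}$ itself into the stability inequality yields \eqref{Eq: mass change}, i.e.\ a strictly negative term $-\frac{1}{2\omega_{n-1}}\int h_*\hat u_{k+1}^2$.

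\textbf{Strong stability is missing.} That step needs \emph{strong} stability: the stability inequality for test functions in $\mathcal T_{-q}$ tending to $1$ at infinity (Proposition \ref{Prop: minimizing hypersurface}(iii)). Stability on $C_c^\infty(\mathcal R)$, which is all you state, does not give this. On a non-parabolic end, cutting $\hat u_{k+1}$ off at radius $R$ costs $\int|\nabla\chi_R|^2\sim R^{n-3}\to\infty$. Strong stability must be extracted from the construction, e.g.\ free-boundary minimization in the cylinders $\mathcal C_s$, or Schoen's choice of boundary height minimizing area. (Also, the potential in your Jacobi operator has the wrong sign; it should be $-\frac12(R_g-R_{\Sigma\times\mathbf T^k}+|A|^2)$.)

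\textbf{Normalizing $u_{k+1}\to 1$.} There is no first eigenfunction on this complete noncompact manifold. A positive solution of the full stability equation tending to $1$ requires coercivity. If the operator is merely nonnegative it can be critical, and then Dirichlet-exhaustion solutions with boundary value $1$ need not stay bounded. The paper uses the potential $\frac{h_*\eta}{2}$, with $\eta$ compactly supported, precisely to keep room. It then gets a $\delta$-independent negative term because $\hat u_{k+1}(1+\varepsilon G_{\mathcal S})$ solves an $\varepsilon$-independent equation, so $\hat u_{k+1}$ is bounded below on $\spt\eta$ uniformly in $\varepsilon$.

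\textbf{Generic regularity.} ``Varying the boundary data and $h$'' does not give generic regularity of your hypersurface. It is a limit $\rho\to\infty$ of Plateau solutions, and an almost-every-parameter conclusion at each fixed $\rho$ does not pass to this limit: smooth minimizers can converge to singular ones. Varying the slab height changes nothing once the minimizer lies strictly inside the slab. The paper instead perturbs the metric in a compact annulus where $R>0$, so $R\ge 0$ survives. This makes the outer component of the limit minimizer unique (Lemma \ref{Lem: uniqueness}). It then builds a nested family of limit minimizers for $g'_t=e^{2tw}g'$ with a locally Lipschitz time function (Lemma \ref{Lem: foliation}), to which the local theorem of \cite{CMSW2025} applies.
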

  \begin{remark}
     The dimensional constraint stems from the (generic) regularity theory for area-minimizing hypersurfaces in geometric measure theory. In broad terms, we establish that if such hypersurfaces possess singularities with locally finite $(\dim M-\mathfrak d)$-dimensional Hausdorff measure for a generic metric, then the torical symmetrization procedure remains valid up to dimension $2\mathfrak d - 3$. More precisely, this conclusion follows from the dimension inequality
       $$\dim M-\mathfrak d\leq\frac{\dim M-3}{2}\mbox{ when }\dim M\leq 2\mathfrak d-3$$
       in conjunction with Proposition \ref{Prop: desingularization}.
  \end{remark}

\subsection{The Geroch conjecture}
The Geroch conjecture asserts that the torus $\mathbf{T}^n$ does not admit any smooth Riemannian metric with positive scalar curvature. This was verified by Schoen and Yau \cite{SY79b} for $3 \leq n \leq 7$, and later \cite{SY22} for all dimensions using minimal hypersurfaces, and by Gromov and Lawson \cite{GL80} for all dimensions using spinors. See also \cite{ CL2024, LUY24, WZ22} for further developments and applications. 

The PMT for AF manifolds is closely connected to the Geroch conjecture and its variants: if $\mathbf{T}^n\# X$ admits no complete metric of positive scalar curvature for any closed $n$-dimensional manifold $X$, then the PMT holds in dimension $n$. This suggests that our method might be applicable to establishing positive scalar curvature obstructions on torical connected sums. In practice, however, the situation is more delicate because the conformal blow-up procedure can disrupt the topological descent structure. Nevertheless, by combining the generic regularity theory with the theory of open Schoen-Yau-Schick (SYS) manifolds introduced in \cite{SWWZ24}, we provide a proof of  such obstructions for dimension up to $12$.  The general case is treated in  \cite{SY22}.

\begin{theorem}\label{thm:12 dim Geroch}
    Let $X^n$ be any closed manifold and $n\leq 12$. Then $\mathbf{T}^n\# X^n$ admits no  complete Riemannian metric with positive scalar curvature. Moreover, if  $g$ is any complete Riemannian metric on $\mathbf{T}^n\# X^n$ with  nonnegative scalar curvature, then $(\mathbf{T}^n\# X^n,g)$ is isometric to a flat torus $\mathbf{T}^n$.
        \end{theorem}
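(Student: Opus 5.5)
Suppose for contradiction that $g$ is a complete metric on $\mathbf T^n\# X^n$ with $R_g>0$, or with $R_g\ge 0$ but not flat. In the second case, I first reduce to the strict case. If $g$ is not Ricci-flat, a short Ricci flow, or a conformal change by the first eigenfunction of the conformal Laplacian on a large compact region (the standard Kazdan--Warner type deformation), produces a complete metric with positive scalar curvature somewhere. Making it positive everywhere on a noncompact manifold takes more care. I would avoid that issue by running the whole argument in the spectral sense, i.e.\ working with $\lambda_1(-\Delta+\tfrac12 R)>0$ rather than pointwise positivity. If $g$ is Ricci-flat, then $\mathbf T^n\# X$ has a torus-type degree-one map. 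The Cheeger--Gromoll splitting applied to the covering then forces $g$ to be flat and the manifold to be $\mathbf T^n$. So the core of the proof is the nonexistence of positive scalar curvature.

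The main strategy is the open Schoen--Yau--Schick framework of \cite{SWWZ24}. The manifold $M=\mathbf T^n\# X$ carries a map $f:M\to\mathbf T^n$ of degree one that is constant outside a compact set, namely the collapse of the $X$-summand. This makes $M$ an open SYS manifold. I then perform dimension descent by slicing. Take a homology class dual to $f^*\mathrm d\theta_1$ and minimize area within it inside a compact exhaustion of $M$. This is possible because the nonnegative scalar curvature region is complete and the torus direction allows a compactly supported calibration-type barrier, as in \cite{SWWZ24}. The result is an area-minimizing hypersurface $\Sigma^{n-1}$ that is again open SYS. Stability gives $-\Delta+\tfrac12 R_\Sigma$ positive in the spectral sense. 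I then use the warped ($\mathbf T^k$-symmetrization) trick, replacing $\Sigma$ by $\Sigma\times \mathbf S^1$ with warping by the first eigenfunction, to recover pointwise positive scalar curvature without any conformal change. I iterate this $n-2$ times until I reach a surface, where Gauss--Bonnet for an open SYS surface of torus type gives a contradiction.

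The crucial issue is singularities. By generic regularity \cite{CMS23, CMSW2025}, after a small perturbation of the metric, which preserves $R>0$ on compact sets, the minimizers are smooth in ambient dimension up to $11$. The singular set then has codimension large enough that, up to dimension $12$, it only appears at the first step and has dimension at most $n-1-\mathfrak d$. For $n\le 12$ this singular set is small enough to be handled in one of two ways. It can be avoided via the perturbation itself, or it can be excised by the cutoff/capacity argument: if its $\mathcal H^{n-3}$-capacity is zero, the stability inequality still holds on the regular part. The subsequent slicings then take place in ambient dimension at most $11$, where generic smoothness applies.

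I expect the main obstacle to be preserving the topological (SYS) descent structure in the presence of these singularities. The blow-up of Proposition \ref{Prop: desingularization} ruins the degree-one map, which is why it is not used here. I would instead rely solely on generic regularity, checking that at $n=12$ the first minimizer is smooth for a generic metric, and that the open SYS property passes to the regular slice through the intersection-number argument of \cite{SWWZ24}.
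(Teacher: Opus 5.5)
Your reduction and the $n\le 11$ case are fine. Note, however, that $X$ is closed, so $M=\mathbf T^n\# X^n$ is compact. The remarks about compact exhaustions, maps constant outside a compact set, and making $R$ positive on a noncompact manifold are therefore misplaced, and passing from $R\ge 0$ to $R>0$ is the standard closed-manifold deformation.

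The genuine gap is at $n=12$, in how you handle the singular set $\mathcal S$ of the first minimizer $\Sigma^{11}$. Neither of your two options works.
\begin{itemize}
\item \textbf{Avoiding $\mathcal S$ by perturbation.} Generic regularity \cite{CMSW2025} in ambient dimension $12$ does \emph{not} produce a smooth minimizer; it only gives $\dim_{\mathcal H}\mathcal S<1$. Generic smoothness is known only in ambient dimension $\le 11$, so ``checking that at $n=12$ the first minimizer is smooth for a generic metric'' is not available.
\item \textbf{Excising $\mathcal S$ by capacity.} This only gives the stability inequality on $\mathcal R=\Sigma\setminus\mathcal S$ with the induced metric, and that metric is \emph{incomplete}. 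The subsequent weighted minimization problems in $\mathcal R$ are then not well posed, since minimizers can run into $\mathcal S$; this is exactly the interaction problem the paper stresses. Also, generic smoothness is not available inside the singular space $\Sigma$. Finally, no topological obstruction can be applied to an incomplete open manifold, since every open manifold carries incomplete PSC metrics.
\end{itemize}

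The missing idea is the conformal blow-up of $\mathcal S$. Your claim that it ``ruins the degree-one map'' is mistaken: it is a conformal change on the fixed manifold $\mathcal R$, so it changes no topology. What does change is that the slice $\mathcal R$ is now \emph{open}, so a closed-manifold descent no longer applies. The paper's argument runs as follows.
\begin{itemize}
\item It picks $\varphi\ge 0$ compactly supported in $\mathcal R$ with $\frac12(R_g+|A|^2)-\frac{n-1}{n-3}\varphi\ge 0$, strictly positive at one point.
\item It integrates Green's functions of $-\Delta+\varphi$ against a measure on $\mathcal S$ with divergent Wolff potential (Section~2; possible since $\dim_{\mathcal H}\mathcal S<1<\frac{n-3}{2}$). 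This gives $\hat g=G_{\mathcal S}^{4/(n-3)}g|_{\Sigma}$, which is complete on $\mathcal R$ and retains spectral nonnegativity.
\item An $\mathbf S^1$-warping followed by an $\mathbf S^1$-invariant Kazdan-type conformal change then produces a complete $\mathbf S^1$-warped PSC metric on $\mathcal R$.
\end{itemize}
The contradiction comes from showing that $\mathcal R$ is an \emph{open} SYS manifold and applying an obstruction valid for complete metrics.
\begin{itemize}
\item Lemma~\ref{lem: degree calculation} gives a proper nonzero-degree map $\mathcal R\to\mathbf T^{n-1}\setminus F(\mathcal S)$.
\item Since $F$ is Lipschitz, $\dim_{\mathcal H}F(\mathcal S)<1$, so some slice $\mathbf T^{n-2}\times\{\theta_0\}$ misses a neighborhood of $F(\mathcal S)$. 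Lemma~\ref{lem: open SYS verification} turns this into asphericity of the relevant class in $H_2^{\mathrm{lf}}(\mathcal R)$.
\item Lemma~\ref{open SYS dim 11}, the obstruction of \cite{SWWZ24} extended to dimension $11$ via generic regularity, rules out complete $\mathbf T^k$-warped PSC metrics on $\mathcal R$.
\end{itemize}
Without the completeness supplied by the blow-up, and this open-manifold obstruction, your descent does not close at $n=12$.
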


\subsection{Sketch of the proofs}
The central task in Proposition \ref{Prop: desingularization} is to construct a positive singular harmonic function with a sufficient blow-up rate near $\mathcal{S}$. To this end, our approach integrates positive Green's functions along $\mathcal{S}$ against a Radon measure $\nu$. By local Ahlfors $m$-regularity, these Green's functions admit a lower bound of order $r^{2-m}$. A key novelty, inspired by \cite{Karakhanyan24}, is that completeness after conformal deformation can be ensured by choosing $\nu$ so that a certain Wolff potential diverges along $\mathcal{S}$. Lemma \ref{lem:wolff-divergent-metric} and Lemma \ref{Lem: Wolff divergent} provide the construction of such a measure, thereby completing the proof.

The proof of Proposition \ref{Prop: torical symmetrization} follows the standard arguments in \cite{Schoen1989}, with modifications incorporating Proposition \ref{Prop: desingularization}. The construction proceeds as follows. First, we deform the $\mathbf{T}^k$-warped ALF manifold $(M,g)$ so that the metric becomes conformally flat near infinity, the scalar curvature increases in the interior, and the negative mass is preserved. After establishing a generic regularity result for certain area-minimizing hypersurfaces (see Corollary \ref{Cor:generic regularity}), we construct a strongly stable minimal hypersurface, possibly with a singular set of Hausdorff dimension less than $(\dim M-3)/2$. This hypersurface is shown to be $\mathbf{T}^k$-warped ALF near infinity and to have zero mass. We then apply Proposition \ref{Prop: desingularization} to this minimal hypersurface, blowing up the singularity so that it becomes invisible at infinity. The strong stability implies that the minimal hypersurface has positive scalar curvature in a spectral sense. Using a warping construction originating from \cite{FcS1980}, we obtain a new $\mathbf{T}^{k+1}$-warped ALF manifold with nonnegative scalar curvature and negative mass.

To complete the proof of Theorem \ref{Thm: PMT}, we observe that the PMT for $\mathbf{T}^k$-warped ALF manifolds holds in all dimensions provided there is sufficient $\mathbf{T}^k$-symmetry (see Proposition \ref{Prop: PMT ALF}). Suppose, for contradiction, that the original AF manifold with arbitrary ends has negative mass. Applying Proposition \ref{Prop: torical symmetrization} inductively, we eventually obtain a $\mathbf{T}^k$-warped ALF manifold with nonnegative scalar curvature and sufficient $\mathbf{T}^k$-symmetry, yet still with negative mass—a contradiction.

To establish Theorem \ref{thm:12 dim Geroch}, we proceed as follows. By the generic regularity theory of \cite{CMSW2025}, we take an area-minimizing hypersurface $\Sigma \subset \mathbf{T}^n \# X$ in the homology class represented by a standard coordinate subtorus $\mathbf{T}^{n-1} \subset \mathbf{T}^n$, whose singular set $\mathcal S$ satisfies $\dim_{\mathcal H}(\mathcal S) < 1$. This dimensional estimate enables us to verify that $\Sigma \setminus \mathcal S$ is an open SYS manifold in the sense of \cite{SWWZ24}. Although \cite{SWWZ24} treats the smooth case up to dimension $7$, the same positive scalar curvature obstruction remains valid in dimension $11$ when coupled with the generic regularity input from \cite{CMSW2025}. Finally, we blow up $\mathcal S$ in the spirit of Proposition \ref{Prop: desingularization}, producing a complete conformal metric on $\Sigma \setminus \mathcal S$ and thereby contradicting this obstruction.
  
\subsection{Organization of The Paper}
 The remainder of this paper is organized as follows.

In Section 2, we introduce the framework of almost-manifold metric measure spaces and construct the desired positive singular harmonic function that blows up along the singular set $\mathcal S$, thereby establishing Proposition \ref{Prop: desingularization}.

In Section 3, we illustrate the principal steps toward the torical symmetrization procedure, namely Proposition \ref{Prop: torical symmetrization}. These steps consist of a metric deformation for asymptotic modification (see Proposition \ref{prop: density}), the construction of a strongly stable area-minimizing hypersurface (see Proposition \ref{Prop: minimizing hypersurface}), and the desingularization argument based on Proposition \ref{Prop: desingularization}.

In Section 4, we establish a generic regularity result up to dimension $11$ for the aforementioned area-minimizing hypersurfaces (see Proposition \ref{Prop: generic regularity}). By inputting this generic regularity result into the arguments in Section 2, we complete the proof of Proposition \ref{Prop: torical symmetrization} and then provide the final proof of Theorem \ref{Thm: PMT}.

In Section 5, we adapt similar ideas to prove Theorem \ref{thm:12 dim Geroch}.

\bigskip
\section*{Acknowledgements}
We thank Zhihan Wang for pointing out that the generic regularity result in \cite{CMSW2025} is purely local, and Tongrui Wang for helpful conversations on the stability of $G$-invariant area-minimizing hypersurfaces. We are also grateful to Professor Shing-Tung Yau for his interest in this work. The authors also express their gratitude to Professor Gang Tian for his constant support and encouragement.

\section{Singularity blow-up for almost manifolds}\label{Sec: blow-up}
\subsection{The set-up}

 In this section, we work with a metric measure space $(X, d,\mu)$, where $(X,d)$ is complete and $\mu$ is a Borel regular measure satisfying $\mu(B_r(x))<+\infty$ for any $x\in X$ and $r>0$. Due to independent interest, we would like to do singularity blow-up in the most general form, so we work with {\it almost-manifold} metric measure spaces. Recall that a metric measure space $(X,d,\mu)$ is called almost-manifold if
\begin{itemize}
    \item[(AM)] there is a positive integer $m\geq 3$ such that 
    \begin{itemize}
        \item $(X,d,\mu)$ is {\it locally Ahlfors $m$-regular}: for any bounded subset $U$ of $X$ there are constants $\varepsilon=\varepsilon(U)$ and $C=C(U)$ such that
        $$C^{-1}r^m\leq\mu(B_r(x))\leq Cr^{m}$$
        for any $x\in U$ and $r\in (0,\epsilon)$;
        \item $(X,d,\mu)$ admits a {\it regular-singular decomposition} $$X=\mathcal R\sqcup\mathcal S,$$ where $\mathcal R$ is a {\it connected} manifold of dimension $m$ equipped with a Riemannian metric $g$ such that $d_L=d_g$ and $\mu=\mu_g$ and $\mathcal S$ is a closed subset of $X$ with $\mathcal H^{m-2}(\mathcal S)=0$, where $d_L$ denotes the length metric associated to $d$, and $d_g$ and $\mu_g$ are metric and volume measure induced from $g$ respectively. Note that we have $\mu(\mathcal S)=0$ in particular.
    \end{itemize}
\end{itemize}

For an almost-manifold metric measure space, we can introduce Lebesgue and Sobolev function spaces without much effort. Let $\Omega$ be any open subset of $X$.
 We shall use $L^p(\Omega)$ to denote the completion of $C_0^\infty(\mathcal R\cap \Omega)$ with respect to the norm
$$\|\phi\|_{L^p(\Omega)}:=\left(\int_{ X}|\phi|^p\,\mathrm d\mu\right)^{\frac{1}{p}}$$
and use $W^{1,2}_0(\Omega)$ to denote the completion of $C_0^\infty(\mathcal R\cap\Omega)$ with respect to the norm
$$\|\phi\|_{W_0^{1,2}(\Omega)}:=\left(\int_{\mathcal R}|\phi|^2+|\nabla_g\phi|^2\,\mathrm d\mu\right)^{\frac{1}{2}}.$$
We will also use the classical Sobolev spaces on the Riemannian manifold $(\mathcal R,g)$.

To construct certain harmonic function we also require the metric measure space $(X,d,\mu)$ to satisfy:
\begin{itemize}
    \item[(LPI)] {\it Local $(1,2)$-Poincar\'e Inequality}: for any bounded subset $U$ of $X$ there are constants $\varepsilon=\varepsilon(U)>0$,  $\lambda=\lambda(U)\geq 1$ and $C_P=C_P(U)>0$ such that we have
    $$\fint_{B}|\phi-\phi_B|\,\mathrm d\mu\leq C_Pr\left(\fint_{\mathcal R\cap\lambda B}|\nabla_g \phi|^2\,\mathrm d\mu\right)^{\frac{1}{2}}\mbox{ for any }\phi\in W^{1,2}_0(X),$$
    where $B$ and $\lambda B$ denote $B_r(x)$ and $B_{\lambda r}(x)$ for any $x\in U$ and $r\in (0,\varepsilon)$ respectively, and $\phi_B$ denotes the average value of $\phi$ in $B$.
\end{itemize}
We point out that the local Ahlfors $m$-regularity in (AM) actually yields the local volume doubling property. That is, we have
\begin{itemize}
    \item[(LVD)] {\it Local Volume Doubling}: for any bounded subset $U$ of $X$ there are constants $\varepsilon=\varepsilon(U)>0$ and $C_D=C_D(U)>0$ such that
    $$\mu(B_{2r}(x))\leq C_D\mu(B_r(x))$$
    for any $x\in U$ and $r\in (0,\varepsilon)$.
\end{itemize}

\begin{example}
    Area-minimizing hypersurfaces and complete Riemannian manifold with $L^\infty(M)\cap C^\infty(M-\mathcal S)$ Riemannian metric are typical examples of metric measure spaces satisfying (AM) and (LPI).
\end{example}

\subsubsection{Preliminary results}

In the following, we always use $U$ to denote a {\it bounded} open subset of $X$. First let us collect preliminary lemmas for analysis.
\begin{lemma}\label{Lem: strong poincare}
  There are constants $\varepsilon=\varepsilon(U)>0$, $\lambda=\lambda(U)\geq 1$ and $C=C(U)>0$ such that  we have 
    $$\left(\fint_{B}|\phi-\phi_B|^{\frac{2m}{m-2}}\,\mathrm d\mu\right)^{\frac{m-2}{2m}}\leq Cr\left(\fint_{\mathcal R\cap\lambda B}|\nabla_g\phi|^2\,\mathrm d\mu\right)^{\frac{1}{2}}\mbox{ for all }\phi\in W^{1,2}_0(X),$$
     where $B$ and $\lambda B$ denote $B_r(x)$ and $B_{\lambda r}(x)$ for any $x\in U$ and $r\in (0,\varepsilon)$ respectively, and $\phi_B$ denotes the average value of $\phi$ in $B$.
\end{lemma}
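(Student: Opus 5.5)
This is the standard self-improvement of a $(1,1)$-Poincaré inequality to a Sobolev--Poincaré inequality with exponent $2^*=\frac{2m}{m-2}$, as in Hajłasz--Koskela, \emph{Sobolev met Poincaré}. It needs only volume doubling with lower mass bound $\mu(B_r(x))\ge C^{-1}r^m$, plus (LPI). Our plan is to follow their truncation/chaining argument, localized to $U$. The Sobolev exponent is $p^*=\frac{mp}{m-p}$ with $p=2$; this is sub-critical because the lower volume bound has the same exponent $m$ as the dimension.

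\textbf{Step 1: reduction to smooth functions.} Fix a slightly larger bounded set $U'\supset U$, e.g.\ the $1$-neighborhood of $U$. Take $\varepsilon$ small enough that (AM), (LPI) and (LVD) hold on $U'$ at all scales up to a fixed multiple $K\varepsilon$. By density it suffices to take $\phi\in C_0^\infty(\mathcal R)$. The left side is lower semicontinuous along a subsequence converging a.e.\ in $L^2$, and the right side is continuous under $W_0^{1,2}$ convergence, so Fatou's lemma passes the inequality to the completion.

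\textbf{Step 2: pointwise estimate via chaining.} For $\mu$-a.e.\ $y\in B$ (Lebesgue points, using (LVD)), consider the balls $B_i=B_{2^{-i}r}(y)$. Telescoping with (LPI) and doubling gives
$$|\phi(y)-\phi_{B}|\le C\sum_{i\ge 0}2^{-i}r\Big(\fint_{\lambda B_i}|\nabla\phi|^2\Big)^{1/2}$$
after an initial comparison between $B$ and $B_{2r}(y)\supset B$. We take $\varepsilon$ small so that all enlarged balls stay inside $U'$ and below the scale $\varepsilon(U')$.

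\textbf{Step 3: weak-type estimate and truncation.} Next we use the lower Ahlfors bound $\mu(\lambda B_i)\ge c(2^{-i}r)^m$. Bounding the series at scale $\rho$ by $\rho\,(\mathcal M g^2)^{1/2}$ below $\rho$ and by $\rho^{1-m/2}\|g\|_{L^2(\lambda' B)}$ above $\rho$, where $g=|\nabla\phi|\chi_{\lambda' B}$, and optimizing in $\rho$ yields a Hedberg-type bound. Combined with the weak-type $(1,1)$ maximal inequality (valid by local doubling), this gives a weak-$L^{2^*}$ estimate
$$\mu(\{|\phi-\phi_B|>t\}\cap B)\le C\,t^{-2^*}\Big(r\big(\fint_{\lambda' B}|\nabla\phi|^2\big)^{1/2}\Big)^{2^*}\mu(B).$$
To reach the strong $L^{2^*}$ norm, we apply this estimate to the truncations $\phi_t^{2t}=\min(\max(\phi-t,0),t)$, for which (LPI) still holds because they remain in $W_0^{1,2}$ with gradient $\nabla\phi\,\chi_{\{t<\phi<2t\}}$. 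Summing over dyadic levels $t=2^k$ gives the strong-type bound, after first subtracting a median instead of the mean. The median replacement costs only a constant, since $\|\phi-\phi_B\|_{L^{2^*}}\le 2\|\phi-c\|_{L^{2^*}}$ for any constant $c$. Positive and negative parts are handled separately.

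\textbf{Main obstacle.} The delicate point is the truncation step. One needs the Poincaré inequality for the truncated functions, which is immediate here since the gradient is an honest Riemannian gradient on $\mathcal R$. One also needs the dyadic pieces to have disjoint gradient supports, so that $\sum_k\int|\nabla\phi_k|^2\le\int|\nabla\phi|^2$ and the $\ell^{2}\subset\ell^{2^*}$ embedding closes the sum. The rest is bookkeeping of the localization: the constant $\lambda$ in the conclusion becomes a fixed multiple of the one in (LPI), and all constants depend only on $C(U'),C_D(U'),C_P(U'),\lambda(U')$ and $m$.
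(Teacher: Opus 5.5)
Your proposal is correct and is essentially the standard self-improvement of Poincar\'e to Sobolev--Poincar\'e (telescoping averages, Hedberg-type weak $L^{2^*}$ bound, then truncation) that the paper invokes by citing Petersen's Theorem 7.1.13, run with the $(1,2)$-Poincar\'e inequality and the two-sided Ahlfors volume comparison. Two small points. When you normalize the weak-type estimate you also use the upper bound $\mu(B_r)\le Cr^m$, so your remark that only doubling and the lower mass bound are needed is slightly overstated. Also, the truncations of $\phi-c$ with $c$ a median need not lie in $W^{1,2}_0(X)$, since they can equal a nonzero constant on $\{\phi=0\}$; they differ by that constant from a Lipschitz function compactly supported in $\mathcal R$, so (LPI), being invariant under adding constants, still applies to them.
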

\begin{proof}
    This follows from the same argument of \cite[Theorem 7.1.13]{Petersen16}, where we work with the $(1,2)$-Poincar\'e inequality instead of the $(1,1)$-Poincar\'e inequality and the relative volume comparison
    $$C^{-1}\left(\frac{R}{r}\right)^m\leq \frac{\mu(B_R(x))}{\mu(B_r(x))}\leq C\left(\frac{R}{r}\right)^m$$
    from the local Ahlfors $m$-regularity in (AM).
\end{proof}

The next Sobolev inequality needs the following characterization for compact subsets.
\begin{lemma}\label{Lem: bounded to compact}
    Bounded closed subsets of $X$ are compact.
\end{lemma}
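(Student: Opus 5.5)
Since $(X,d)$ is complete, compactness of a closed bounded set $K$ is equivalent to total boundedness, and that is what I will prove. The only input is local Ahlfors $m$-regularity from (AM), applied on a fixed bounded set containing $K$. The mechanism is a packing argument: Ahlfors regularity turns disjoint small balls into a volume count, and the volume of a bounded region is finite.

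\emph{Step 1: fix a bounded enlargement.} Let $K$ be closed and bounded, and pick $x_0\in X$ and $R>0$ with $K\subset B_R(x_0)$. Take $U=B_{R+1}(x_0)$, which is bounded. By (AM) there are $\varepsilon=\varepsilon(U)>0$ and $C=C(U)$ with $C^{-1}r^m\le \mu(B_r(x))$ for all $x\in U$ and $r\in(0,\varepsilon)$. By the standing assumption on $\mu$, we also have $\mu(B_{R+2}(x_0))<+\infty$.

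\emph{Step 2: bound any $2r$-separated set.} Fix $r\in(0,\min\{\varepsilon,1\})$ and let $\{x_1,\dots,x_N\}\subset K$ satisfy $d(x_i,x_j)\ge 2r$ for $i\neq j$. The balls $B_r(x_i)$ are then pairwise disjoint. Each is contained in $B_{R+2}(x_0)$, and each center lies in $U$. Summing the lower Ahlfors bound gives
$$N\,C^{-1}r^m\le \sum_{i=1}^N\mu(B_r(x_i))\le \mu(B_{R+2}(x_0))<+\infty .$$
Hence $N$ is bounded independently of the chosen set.

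\emph{Step 3: conclude total boundedness.} By Step 2, a maximal $2r$-separated subset of $K$ exists and is finite. By maximality, every point of $K$ lies within distance $2r$ of some $x_i$, so $K$ is covered by finitely many balls $B_{2r}(x_i)$. Since $r$ can be taken arbitrarily small, $K$ is totally bounded. Being closed in the complete space $X$, $K$ is complete, and therefore compact.

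The argument has no real obstacle. The one point needing care is that the Ahlfors lower bound is only local, with constants depending on the bounded set and valid only for radii below $\varepsilon(U)$. Step 1 handles this by fixing a single bounded $U$ containing all the centers and restricting to $r<\varepsilon(U)$, which is harmless because total boundedness only requires small radii. The finiteness of $\mu(B_{R+2}(x_0))$ comes from the standing hypothesis $\mu(B_r(x))<+\infty$.
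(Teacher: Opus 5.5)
Your proof is correct and follows the same route as the paper: a packing argument combines the local lower Ahlfors bound with the finiteness of $\mu$ on bounded sets to give total boundedness, and completeness of $(X,d)$ then gives compactness. The paper states this in one line, and you have spelled out the details it leaves implicit, including the care with the locality of the Ahlfors constants.
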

\begin{proof}
    Using the fact $\mu(U)<+\infty$ for all bounded subsets $U$ and the local Ahlfors $m$-regularity, we see that any bounded closed subset is totally bounded. This further implies compactness since $(X,d)$ is complete.
\end{proof}

In the following, we will assume that $\mathcal R$ is {\it unbounded}.

\begin{lemma}\label{Lem: Sobolev}
    We have the Sobolev inequality
    $$\left(\int_U|\phi|^{\frac{2m}{m-2}}\,\mathrm d\mu\right)^{\frac{m-2}{m}}\leq C_U\int_{\mathcal R\cap U}|\nabla_g\phi|^2\,\mathrm d\mu\mbox{ for all }\phi\in W^{1,2}_0(U).$$
\end{lemma}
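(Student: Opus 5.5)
We prove Lemma \ref{Lem: Sobolev} by combining the local Sobolev--Poincar\'e inequality of Lemma \ref{Lem: strong poincare} with a compactness argument, and then removing the mean-value term using the unboundedness of $\mathcal R$. Since $U$ is bounded, its closure $\bar U$ is compact by Lemma \ref{Lem: bounded to compact}. We may enlarge $U$ to a bounded open set $U'$ (for instance a metric ball containing $U$), because $W^{1,2}_0(U)\subset W^{1,2}_0(U')$ and the left-hand side only increases. So it suffices to prove the inequality for $U$ a large ball $B_R(p)$.

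\emph{Step 1: a Poincar\'e-type inequality with zero boundary values.} We first claim there is $C$ with
$$\int_U|\phi|^2\,\mathrm d\mu\le C\int_{\mathcal R\cap U}|\nabla_g\phi|^2\,\mathrm d\mu\quad\text{for all }\phi\in W^{1,2}_0(U).$$
We argue by contradiction. Suppose there are $\phi_i\in C^\infty_0(\mathcal R\cap U)$ with $\|\phi_i\|_{L^2}=1$ and $\|\nabla\phi_i\|_{L^2}\to0$. Cover $\bar U$ by finitely many balls $B_{r}(x_j)$ with $r<\varepsilon(U)/\lambda$. On each ball, Lemma \ref{Lem: strong poincare} gives $\phi_i-(\phi_i)_{B_j}\to0$ in $L^{2m/(m-2)}$, hence in $L^2$. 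The averages $(\phi_i)_{B_j}$ are bounded, so a subsequence converges in $L^2(U)$ to a function $\phi$ that is locally constant on the chain of overlapping balls. We choose the balls so that their union is connected; this is possible by using $\mathcal R$ connected and dense, since $\mathcal H^{m-2}(\mathcal S)=0$ forces $\mathcal S$ to have empty interior. Then $\phi\equiv c$ on $U$ with $c^2\mu(U)=1$. On the other hand, $\phi_i$ extended by zero lies in $W^{1,2}_0(U')$ for a larger ball $U'$. Because $\mathcal R$ is unbounded and connected, $U'\setminus\bar U$ contains regular points, and a ball there meets $U$ along a chain. Running the same argument on $U'$ shows the limit is constant on $U'$ and vanishes on $U'\setminus U$, which has positive measure by Ahlfors regularity. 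Hence $c=0$, a contradiction.

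\emph{Step 2: upgrade to the Sobolev exponent.} Now cover $\bar U$ by finitely many small balls $B_j$ and write $\phi=(\phi-\phi_{B_j})+\phi_{B_j}$ on each. Lemma \ref{Lem: strong poincare} gives
$$\|\phi-\phi_{B_j}\|_{L^{2m/(m-2)}(B_j)}\le C\|\nabla\phi\|_{L^2}.$$
The averages satisfy $|\phi_{B_j}|\mu(B_j)^{(m-2)/2m}\le C\|\phi\|_{L^2}$, and by Step 1 the right-hand side is at most $C\|\nabla\phi\|_{L^2}$. Summing over the finitely many balls yields the claimed inequality, first for smooth compactly supported $\phi$ and then for all of $W^{1,2}_0(U)$ by density and Fatou.

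The main obstacle is Step 1. It needs the chain/connectedness argument through the metric balls, the fact that the $W^{1,2}_0$-limit is constant on the relevant region, and the existence of positive-measure territory outside $U$ where the functions vanish. This is exactly where the unboundedness of $\mathcal R$ is used.
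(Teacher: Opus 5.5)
Your proof is correct and follows essentially the paper's route. Both run a contradiction/compactness argument with Lemma \ref{Lem: strong poincare} on a finite chain of overlapping small balls: connectedness of $\mathcal R$ makes the limiting constants agree, and unboundedness of $\mathcal R$ (a region outside $\bar U$ where the $\phi_i$ vanish) forces the constant to be zero. The only difference is that you normalize in $L^2$ and then upgrade through a second covering step. The paper normalizes directly in $L^{2m/(m-2)}$, which makes your Step 2 unnecessary.
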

\begin{proof}
    This follows from the same argument of \cite[Proposition A.4]{HSY26}. Let us argue by contradiction. If the statement is not true, then we can find a sequence of functions $\phi_i\in W^{1,2}_0(U)$ such that
    $$\int_{U}|\phi_i|^{\frac{2m}{m-2}}\,\mathrm d\mu=1\mbox{ and }\int_{\mathcal R\cap U}|\nabla_g\phi_i|^2\,\mathrm d\mu\to 0\mbox{ as }i\to\infty.$$
   By Lemma \ref{Lem: bounded to compact} we can cover $U$ by finitely many balls $B_l$ where Lemma \ref{Lem: strong poincare} holds for all $B_l$. Up to a subsequence the functions $\phi_i$ converge to a constant $c_l$ in $L^{\frac{2m}{m-2}}(B_l)$. Recall that $\phi_i$ is supported in $U$ which is bounded. From the connectedness and unboundedness of $\mathcal R$ we conclude $c_l=0$ for all $B_l$. In particular, we have 
    $$\int_{U}|\phi_i|^{\frac{2m}{m-2}}\,\mathrm d\mu\to 0\mbox{ as }i\to \infty,$$
    which leads to a contradiction.
\end{proof}

\begin{lemma}\label{Lem: compactness embedding}
    The inclusion $W^{1,2}_0(U)\to L^2(U)$ is compact.
\end{lemma}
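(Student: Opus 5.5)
The plan is to combine the Sobolev inequality of Lemma \ref{Lem: Sobolev} with the local Poincar\'e inequality of Lemma \ref{Lem: strong poincare} via a Rellich-type argument on a finite cover. Let $\{\phi_i\}$ be a bounded sequence in $W^{1,2}_0(U)$. We must extract a subsequence that is Cauchy in $L^2(U)$.

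\textbf{Step 1: reduction to a finite cover.} By Lemma \ref{Lem: bounded to compact}, $\bar U$ is compact. Fix a small scale $r>0$, below the thresholds $\varepsilon(U)$ of (LPI) and Lemma \ref{Lem: strong poincare}. Cover $\bar U$ by finitely many balls $B^r_l=B_r(x_l)$, $l=1,\dots,N(r)$, and choose the centers so that the balls $B_{r/2}(x_l)$ are disjoint. By local Ahlfors $m$-regularity, every point lies in at most $C_0$ of the enlarged balls $\lambda B^r_l$, with $C_0$ independent of $r$. This is the standard packing argument: the $B_{r/2}(x_l)$ meeting a fixed $\lambda B$ all lie in a ball of radius comparable to $\lambda r$, and each has volume at least $C^{-1}(r/2)^m$.

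\textbf{Step 2: splitting off the averages.} Write $\phi_i=(\phi_i-(\phi_i)_{B_l})+(\phi_i)_{B_l}$ on each $B_l$. By Lemma \ref{Lem: strong poincare} and Hölder (or directly by (LPI) upgraded to an $L^2$ version),
$$\int_{B_l}|\phi_i-(\phi_i)_{B_l}|^2\,\mathrm d\mu\leq Cr^2\int_{\mathcal R\cap\lambda B_l}|\nabla_g\phi_i|^2\,\mathrm d\mu.$$
Summing over $l$ and using the bounded overlap gives
$$\sum_l\int_{B_l}|\phi_i-(\phi_i)_{B_l}|^2\,\mathrm d\mu\leq CC_0r^2\|\phi_i\|^2_{W^{1,2}_0}.$$
This is at most $C'r^2$, uniformly in $i$. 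Note that the $\lambda B_l$ may leave $U$. This is harmless, since $\phi_i$ is extended by zero and its gradient is supported in $U$. The averages $(\phi_i)_{B_l}$ form finitely many bounded real sequences, because $|(\phi_i)_{B_l}|\le \mu(B_l)^{-1/2}\|\phi_i\|_{L^2}$ and $\|\phi_i\|_{L^2}$ is bounded by Lemma \ref{Lem: Sobolev} together with $\mu(U)<\infty$. We can therefore pass to a subsequence along which all $N(r)$ averages converge.

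\textbf{Step 3: diagonal argument.} Apply Step 2 with $r=1/k$ for $k=1,2,\dots$ and take a diagonal subsequence. For fixed $k$ and large $i,j$ we get
$$\|\phi_i-\phi_j\|_{L^2(U)}^2\le\sum_l\int_{B_l}|\phi_i-\phi_j|^2\le 4C'k^{-2}+\sum_l\mu(B_l)\,|(\phi_i)_{B_l}-(\phi_j)_{B_l}|^2.$$
The last sum tends to zero as $i,j\to\infty$. Hence the diagonal subsequence is Cauchy in $L^2(U)$, and it converges there because $L^2(U)$ is complete by definition.

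\textbf{Main obstacle.} The delicate point is the uniform bounded-overlap constant in Step 1. It requires doubling and Ahlfors bounds that hold uniformly on a neighbourhood of $\bar U$, for instance on a $1$-neighbourhood of $U$, which is still bounded. We handle this by applying (AM) to that bounded set rather than to $U$ itself. A second point is that (LPI) is only stated for $\phi\in W^{1,2}_0(X)$. This is fine, since $W^{1,2}_0(U)\subset W^{1,2}_0(X)$ under zero extension.
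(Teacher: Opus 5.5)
Your proof is correct, but it takes a different route from the paper. The paper gives no argument of its own: it cites the Rellich--Kondrachov theorem of Haj\l asz--Koskela \cite[Theorem 8.1]{HK00} and uses Lemma \ref{Lem: Sobolev} as the extra input.

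You prove the compactness directly. At each scale $r=1/k$ you:
\begin{enumerate}
\item approximate $\phi_i$ by its averages on a fine cover of $\bar U$ whose enlarged balls have overlap bounded independently of $r$, by the Ahlfors packing argument;
\item control the error by the $L^2$ version of Lemma \ref{Lem: strong poincare};
\item note that only finitely many bounded averages are involved, and extract a subsequence along which they converge.
\end{enumerate}
A diagonal argument over $k$ then finishes. Your constant $4C'k^{-2}$ is in fact sharp as written, because on each ball the mean-zero parts are $L^2(B_l)$-orthogonal to constants, so the cross term vanishes.

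The citation is shorter, but it leaves implicit that the hypotheses of the general theorem hold in this purely local setting. Your version checks exactly what is needed: uniform Ahlfors and Poincar\'e constants on a bounded neighbourhood of $\bar U$, and zero extension into $W^{1,2}_0(X)$. It also works directly in $L^2$.

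It shows as well that Lemma \ref{Lem: Sobolev} is not really needed for this statement. The $L^2$ bound is already part of the $W^{1,2}_0$-norm, so your appeal to Lemma \ref{Lem: Sobolev} in Step 2 is redundant. The passage from the $L^1$-type inequality (LPI) to $L^2$ control comes from the self-improvement built into Lemma \ref{Lem: strong poincare}.

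The general theorem does give more: compactness into $L^q$ for every $q<\frac{2m}{m-2}$. In your framework that would follow by interpolating your $L^2$ convergence with the uniform bound from Lemma \ref{Lem: Sobolev}.
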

\begin{proof}
    It follows from \cite[Theorem 8.1]{HK00} and Lemma \ref{Lem: Sobolev}.
\end{proof}

\begin{lemma}\label{Lem: possion equation}
    Given any Lipschitz function $v$ on $X$ and any function $f\in L^2(U)$ and any vector-valued function $\mathbf F\in L^2(U)$, there is a function $u$ on $X$ such that $-\Delta_g u=f+\divv_g \mathbf F$ in $\mathcal R\cap U$ and $u-v\in W^{1,2}_0(U)$.
\end{lemma}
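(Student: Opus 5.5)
We argue variationally, via the Lax--Milgram (or Riesz representation) theorem on the Hilbert space $W^{1,2}_0(U)$. We look for $u$ of the form $u=v+w$ with $w\in W^{1,2}_0(U)$. In weak form on $\mathcal R\cap U$, the equation $-\Delta_g u=f+\divv_g\mathbf F$ for $u=v+w$ reads
$$\int_{\mathcal R\cap U}\langle\nabla_g w,\nabla_g\phi\rangle\,\mathrm d\mu=\int_{U}f\phi\,\mathrm d\mu-\int_{\mathcal R\cap U}\langle\mathbf F,\nabla_g\phi\rangle\,\mathrm d\mu-\int_{\mathcal R\cap U}\langle\nabla_g v,\nabla_g\phi\rangle\,\mathrm d\mu$$
for all $\phi\in C_0^\infty(\mathcal R\cap U)$, and then by density for all $\phi\in W^{1,2}_0(U)$.

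First, the bilinear form $a(w,\phi)=\int_{\mathcal R\cap U}\langle\nabla_g w,\nabla_g\phi\rangle\,\mathrm d\mu$ is bounded on $W^{1,2}_0(U)$, and it is coercive. Coercivity comes from Lemma \ref{Lem: Sobolev}: combining it with H\"older's inequality and $\mu(U)<+\infty$ gives
$$\|\phi\|_{L^2(U)}^2\leq \mu(U)^{2/m}\|\phi\|^2_{L^{\frac{2m}{m-2}}(U)}\leq C\int_{\mathcal R\cap U}|\nabla_g\phi|^2\,\mathrm d\mu,$$
so $a(\phi,\phi)\geq c\|\phi\|^2_{W^{1,2}_0(U)}$.

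Second, the right-hand side is a bounded linear functional on $W^{1,2}_0(U)$. The term $\int f\phi$ is controlled by $\|f\|_{L^2}\|\phi\|_{L^2}$, and the $\mathbf F$ term by $\|\mathbf F\|_{L^2}\|\nabla\phi\|_{L^2}$. For the $v$ term, $v$ is Lipschitz on $X$, so on the open manifold $\mathcal R$ it is locally Lipschitz with $|\nabla_g v|\leq \Lip(v)$. This uses $d_g=d_L\geq d$, which makes $v$ also $d_g$-Lipschitz with the same constant. Since $\mu(U)<\infty$, we get $\nabla_g v\in L^2(\mathcal R\cap U)$ with norm at most $\Lip(v)\mu(U)^{1/2}$, and the $v$ term is bounded as well. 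Lax--Milgram then produces $w\in W^{1,2}_0(U)$ satisfying the identity above.

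Finally, set $u=v+w$. Then $u-v=w\in W^{1,2}_0(U)$, and $u$ solves the equation weakly in $\mathcal R\cap U$; this is the intended sense, since $f$ and $\mathbf F$ are merely $L^2$. Here $u$ is defined $\mu$-a.e. on $X$: $w$ is an $L^2(U)$ class extended by zero, and since $\mu(\mathcal S)=0$ nothing needs to be said on $\mathcal S$.

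The only delicate point is coercivity, that is, the Poincar\'e-type bound $\|\phi\|_{L^2}\lesssim\|\nabla\phi\|_{L^2}$ on $W^{1,2}_0(U)$. Lemma \ref{Lem: Sobolev} supplies it, and it relies essentially on $\mathcal R$ being connected and unbounded. A smaller technical check is that $\nabla_g v$ makes sense and is square integrable on $\mathcal R\cap U$; this follows from the comparison $d\leq d_g$ together with Rademacher's theorem on the manifold $\mathcal R$.
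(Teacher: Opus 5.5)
Your proposal is correct and follows the paper's approach. The paper cites the standard variational argument (Gilbarg--Trudinger, Theorem 8.3), with coercivity on $W^{1,2}_0(U)$ supplied by Lemma \ref{Lem: Sobolev}; you carry out exactly this, including the reduction $u=v+w$ with $w\in W^{1,2}_0(U)$ and the check that $\nabla_g v\in L^2(\mathcal R\cap U)$ via $d\leq d_g$.
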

\begin{proof}
    This follows from a standard variation argument based on the Sobolev inequality from Lemma \ref{Lem: Sobolev}. See \cite[Theorem 8.3]{GT1983} for instance.
\end{proof}

\begin{lemma}\label{Lem: zero capacity}
    $\mathcal S$ has zero $2$-capacity. That is, for any open neighborhood $\mathcal N$ of $\mathcal S$ and $\epsilon>0$ we can find a cut-off function $\eta\in C^\infty(\mathcal R)$ such that $\eta\equiv 0$ in a smaller open neighborhood $\mathcal N'\subset \mathcal N$ of $\mathcal S$ and $\eta\equiv 1$ outside $\mathcal N$ and also
    $$\int_{\mathcal R}|\nabla_g\eta|^2\,\mathrm d\mu\leq \varepsilon.$$
\end{lemma}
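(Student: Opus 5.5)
The plan is the standard covering argument: use $\mathcal H^{m-2}(\mathcal S)=0$ and the local Ahlfors $m$-regularity to build Lipschitz cut-off functions of small energy, then smooth them on $\mathcal R$. We may assume $\mathcal S\ne\emptyset$. We may also take $\mathcal N$ bounded, or else exhaust by bounded pieces.

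\emph{Reduction to $\mathcal S$ compact.} If $\mathcal S$ is unbounded, split it as $\mathcal S=\bigcup_j \mathcal S_j$ with $\mathcal S_j=\mathcal S\cap(\overline{B_{j+1}(p)}\setminus B_j(p))$. By Lemma \ref{Lem: bounded to compact} each $\mathcal S_j$ is compact. Build for each $j$ a cut-off $\eta_j$ with energy at most $\varepsilon 2^{-j}$, vanishing near $\mathcal S_j$ and supported-away-from-one only inside $\mathcal N\cap B_{j+2}(p)\setminus B_{j-1}(p)$. Then take $\eta=\min_j\eta_j$, which is a locally finite minimum. Its energy is bounded by $\sum_j\int|\nabla\eta_j|^2\le\varepsilon$. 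So it suffices to treat a compact piece $K\subset\mathcal S$ inside a bounded neighborhood $\mathcal N$.

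\emph{Covering construction.} Fix $\delta>0$ smaller than the scale $\varepsilon(U)$ from (AM), where $U$ is a bounded neighbourhood of $\mathcal N$. Since $\mathcal H^{m-2}(K)=0$ and $K$ is compact, we can cover $K$ by finitely many balls $B_{r_i}(x_i)$, $x_i\in K$, with $r_i<\delta$, $B_{2r_i}(x_i)\subset\mathcal N$, and $\sum_i r_i^{m-2}<\delta$. Define
$$\psi_i(y)=\min\{1,\max\{0,\,d(y,x_i)/r_i-1\}\},$$
which is Lipschitz with constant $1/r_i$ for $d$, hence also for $d_g\ge d$ on $\mathcal R$. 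This gives $|\nabla_g\psi_i|\le 1/r_i$ a.e. on $\mathcal R$, and $\psi_i$ vanishes on $B_{r_i}(x_i)$ and equals $1$ outside $B_{2r_i}(x_i)$. Set $\psi=\min_i\psi_i$. Then $\psi$ vanishes on the open set $\bigcup_iB_{r_i}(x_i)\supset K$ and equals $1$ outside $\mathcal N$. Using the upper Ahlfors bound,
$$\int_{\mathcal R}|\nabla_g\psi|^2\,\mathrm d\mu\le\sum_i r_i^{-2}\mu(B_{2r_i}(x_i))\le C2^m\sum_i r_i^{m-2}\le C'\delta.$$

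\emph{Smoothing.} The function $\psi$ is only Lipschitz on $\mathcal R$. Pick $\mathcal N'$ to be the open set $\{\psi<1/3\}\cap\bigcup_i B_{r_i}(x_i)$, which contains $K$. Compose first with a smooth nondecreasing $h:\mathbf R\to[0,1]$ satisfying $h=0$ on $(-\infty,1/3]$, $h=1$ on $[2/3,\infty)$ and $|h'|\le 4$. Then mollify $h\circ\psi$ on the manifold $(\mathcal R,g)$ using a partition of unity with locally varying small radii. This standard Meyers–Serrin type approximation yields a smooth function that still equals $0$ near $K$ and $1$ outside $\mathcal N$. It can be arranged to change the Dirichlet energy by at most $\delta$, since the level sets $\{\psi\le1/3\}$ and $\{\psi\ge 2/3\}$ are separated in the $d_g$-distance locally on $\mathcal R$. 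Choosing $\delta$ small relative to the target energy finishes the argument.

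The only mildly delicate point is the use of $d_g\ge d$ to transfer the $d$-Lipschitz bound to $|\nabla_g\psi|$. This holds because $d_L=d_g$ and a length metric dominates $d$. The rest is routine.
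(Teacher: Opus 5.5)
Your proposal is correct. It is the standard covering argument: cover $\mathcal S$ by balls with $\sum_i r_i^{m-2}$ small, use the radial cut-offs together with $d\le d_g$ and the upper Ahlfors bound to get energy at most $C\sum_i r_i^{m-2}$, then smooth on $\mathcal R$. The paper does not prove the lemma itself but defers to \cite[Lemma 3.2]{HSY26}, where this is presumably the argument used.

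There is one small slip, in your reduction to compact pieces. $\min_j\eta_j$ of smooth functions is only Lipschitz, so the resulting $\eta$ is not in $C^\infty(\mathcal R)$. You can fix this in either of two ways:
\begin{itemize}
\item take the minimum at the Lipschitz stage and smooth once at the end; or
\item use the locally finite product $\prod_j\eta_j$ instead. It is smooth, and its energy is at most a fixed multiple of $\sum_j\int_{\mathcal R}|\nabla_g\eta_j|^2\,\mathrm d\mu$, because only boundedly many factors differ from $1$ near any point.
\end{itemize}
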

\begin{proof}
    See \cite[Lemma 3.2]{HSY26} for instance.
\end{proof}
 
\begin{corollary}\label{Cor: W012 is W12}
    Let $\zeta$ be a Lipschitz function supported in $U$ and $u$ be a function in $W^{1,2}(\mathcal R\cap U)$.
    Then we have $\zeta u\in W^{1,2}_0(U)$.
\end{corollary}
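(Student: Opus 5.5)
The idea is to approximate $\zeta u$ by functions in $C_0^\infty(\mathcal R\cap U)$ in the $W^{1,2}$ norm. The only obstruction is that $\zeta u$ may be nonzero near the singular set $\mathcal S$. Away from $\mathcal S$, $\zeta u$ is an ordinary $W^{1,2}$ function on the Riemannian manifold $\mathcal R$ with support in a compact subset of $\mathcal R$, so standard mollification applies. We therefore cut off near $\mathcal S$ using the zero-capacity property from Lemma \ref{Lem: zero capacity}. Since $u$ need not be bounded, we first truncate.

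\textbf{Step 1 (truncation).} Set $u_N=\max\{-N,\min\{u,N\}\}$. Then $u_N\in W^{1,2}(\mathcal R\cap U)$, and $\zeta u_N\to\zeta u$ in $W^{1,2}$ as $N\to\infty$ by dominated convergence. Here we use $|\nabla(\zeta u_N)|\le |\nabla\zeta||u|+|\zeta||\nabla u|\in L^2$, since $\zeta$ is Lipschitz, hence bounded with bounded gradient. Because $W^{1,2}_0(U)$ is closed, it suffices to treat bounded $u$, say $|u|\le N$.

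\textbf{Step 2 (capacity cut-off).} For each $k$, apply Lemma \ref{Lem: zero capacity} with $\mathcal N_k$ the $1/k$-neighborhood of $\mathcal S$ and $\epsilon=1/k$. This gives $\eta_k$ with $\eta_k=0$ near $\mathcal S$, $\eta_k=1$ outside $\mathcal N_k$, and $\int|\nabla\eta_k|^2\le 1/k$; we may take $0\le\eta_k\le1$ after truncating. Then $\eta_k\zeta u$ vanishes near $\mathcal S$. Its support lies in $\spt\zeta\setminus\mathcal N_k'$. This set is a bounded closed set, hence compact by Lemma \ref{Lem: bounded to compact}, and it is contained in $\mathcal R$. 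Here we need $\spt\zeta\subset U$ compactly; if $\zeta$ is only supported in $U$, we replace $U$ by a slightly smaller set or interpret the support appropriately.

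\textbf{Step 3 (convergence of the cut-offs).} We have
$$\|\eta_k\zeta u-\zeta u\|_{W^{1,2}}\le \|(1-\eta_k)\nabla(\zeta u)\|_{L^2}+\|(1-\eta_k)\zeta u\|_{L^2}+N\sup|\zeta|\,\|\nabla\eta_k\|_{L^2}.$$
The first two terms tend to zero by dominated convergence, since $1-\eta_k\to0$ pointwise on $\mathcal R$ (because $\bigcap_k\mathcal N_k=\mathcal S$). The last term tends to zero by the choice of $\eta_k$. This uses the boundedness from Step 1, and it is the main delicate point: without truncation the term $u\nabla\eta_k$ is not controlled.

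\textbf{Step 4 (mollification).} Each $\eta_k\zeta u$ is a $W^{1,2}$ function on the manifold $\mathcal R$ with compact support inside $\mathcal R\cap U$. It is therefore a $W^{1,2}$-limit of $C_0^\infty(\mathcal R\cap U)$ functions, obtained by a partition of unity and local mollification in charts. Combining Steps 1–4 with a diagonal argument gives $\zeta u\in W^{1,2}_0(U)$.
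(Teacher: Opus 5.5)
Your proposal is correct and follows essentially the same route as the paper: truncate $u$, multiply by the zero-capacity cut-offs from Lemma \ref{Lem: zero capacity}, and use Lemma \ref{Lem: bounded to compact} to see that $\eta_k\zeta u$ has compact support in $\mathcal R\cap U$, so it can be mollified. The hedge in your Step 2 is unnecessary: $\spt\zeta$ is closed and bounded, hence compact by Lemma \ref{Lem: bounded to compact}, and it already lies in $U$, so you do not need to shrink $U$.
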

\begin{proof}
Since $u$ is well approximated by $u_i:=\max\{\min\{u,i\},-i\}$ with respect to the $W^{1,2}$-norm. We can assume $u\in L^\infty(U)$ without loss of generality. In this case, we have $$w:=\zeta u\in W^{1,2}(\mathcal R\cap U)\cap L^\infty(U).$$ 
In the following, we show that $w$ can be approximated by functions in $C_0^\infty(\mathcal R\cap U)$. From Lemma \ref{Lem: zero capacity} we can find functions $\eta_i$ such that $$\bigcap_i\{\eta_i\neq 1\}=\mathcal S$$ and
    $$\int_{\mathcal R}|\nabla_g\eta_i|^2\,\mathrm d\mu\to 0\mbox{ as }i\to \infty.$$
    As $i\to\infty$, we can compute
    $$\int_{\mathcal R\cap U}|w-\eta_iw|^2\,\mathrm d\mu\leq \int_{\{\eta_i\neq 1\}}|w|^2\,\mathrm d\mu\to 0$$
    and
    $$\int_{\mathcal R\cap U}|\nabla_g(w-\eta_iw)|^2\,\mathrm d\mu\leq 2\int_{\mathcal R\cap U}|\nabla_g\eta_i|^2w^2+|\nabla_gw|^2\eta_i^2\,\mathrm d\mu\to 0.$$
    To sum up, $w$ is well approximated by functions $\eta_iw$ with respect to the $W^{1,2}$-norm. Note that $\eta_i w$ is supported on a bounded closed subset $K=\spt \zeta\cap \spt \eta_i$ of $\mathcal R$. By Lemma \ref{Lem: bounded to compact} $K$ is compact and so $\eta_iw$ can be approximated by functions in $C^\infty_0(\mathcal R\cap U)$ with respect to $W^{1,2}$-norm. Therefore, we have $w\in W^{1,2}_0(U)$.
\end{proof}

Next let us collect some topological results on connectedness.

\begin{lemma}\label{Lem: connectedness}
    There are constants $\varepsilon=\varepsilon(U)>0$, $\lambda=\lambda(U)>1$ and $C=C(U)>0$ such that for any $x\in U$ and $r\in (0,\varepsilon)$ we have
    \begin{itemize}
        \item $B_r(x)$ is $(C,\lambda)$-quasiconvex in the sense that for any pair of points $x_1$ and $x_2$ in $B_r(x)$ there is a curve $\gamma$ in $B_{\lambda r}(x)$ connecting $x_1$ and $x_2$ with length no greater than $Cd(x_1,x_2)$;
        \item $A_r(x):=B_{2r}(x)-\overline{B_{r/2}(x)}$ is $(C,\lambda)$-quasiconvex in the sense that for any pair of points $x_1$ and $x_2$ in $B_r(x)$ there is a curve $\gamma$ in $B_{2\lambda r}(x)-\overline{B_{x/(2\lambda)}(x)}$ connecting $x_1$ and $x_2$ with length no greater than $Cd(x_1,x_2)$.
    \end{itemize}
\end{lemma}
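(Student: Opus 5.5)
The plan is to deduce both quasiconvexity statements from the (1,2)-Poincar\'e inequality (LPI) together with local volume doubling (LVD). This is the classical Semmes/Heinonen--Koskela argument, showing that a doubling space supporting a Poincar\'e inequality is quasiconvex. The one point needing care is that our gradients live only on $\mathcal R$, with $d_L=d_g$ there. The first step is to show that $(X,d)$ is locally quasiconvex at small scales. Fix $x_1\in B_r(x)$ and define
$$\phi(y):=\inf\{L(\gamma):\gamma\subset B_{\lambda' r}(x)\cap\mathcal R \text{ a rectifiable curve from a point near } x_1 \text{ to } y\},$$
truncated at level $Cd(x_1,\cdot)$ and suitably cut off. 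This $\phi$ is locally $1$-Lipschitz with respect to $d_g$ on $\mathcal R$, so $|\nabla_g\phi|\le 1$ almost everywhere.

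The next step is to verify that $\phi$ (after cutting off) belongs to $W^{1,2}_0(X)$. This uses Corollary \ref{Cor: W012 is W12} and Lemma \ref{Lem: zero capacity}: since $\mathcal H^{m-2}(\mathcal S)=0$, functions Lipschitz on $\mathcal R$ extend to legitimate test functions. Consequently (LPI) and Lemma \ref{Lem: strong poincare} apply to $\phi$.

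We then run the telescoping argument. Take chains of balls $B_i=B_{2^{-i}s}(x_1)$ and $B_i'$ centred at $x_2$. By (LVD), Ahlfors regularity and (LPI), this gives the pointwise estimate
$$|\phi(x_1)-\phi(x_2)|\le C\, d(x_1,x_2)\sup_{\rho}\Big(\fint_{\lambda B_\rho}|\nabla_g\phi|^2\Big)^{1/2}\le C d(x_1,x_2).$$
This is valid at Lebesgue points, and then everywhere by continuity of $\phi$ on $\mathcal R$ and the density of $\mathcal R$. The density holds because $\mu(\mathcal S)=0$ while balls have positive measure. The conclusion is that points of $\mathcal R$ can be joined inside $B_{\lambda r}(x)$ by curves of length at most $Cd(x_1,x_2)$. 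Points of $\mathcal S$ are handled by approximating them from $\mathcal R$ and using Arzel\`a--Ascoli together with Lemma \ref{Lem: bounded to compact}, which yields a limiting curve of controlled length in $\overline{B_{\lambda r}}$; enlarging $\lambda$ puts it inside the open ball.

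The main obstacle is the contradiction set-up in this step. If $\phi(x_2)$ were much larger than $Cd(x_1,x_2)$, we would apply the estimate to $\min\{\phi, Md(x_1,\cdot)\}$ to reach a contradiction. This requires the curves to stay within $B_{\lambda r}(x)$, so the local constants $\varepsilon,\lambda$ must be fixed uniformly over the bounded set $U$.

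For the annulus statement, I would use the standard fact that Poincar\'e together with doubling yields annular quasiconvexity (Korte's theorem). The idea is that, by the first part, we may join $x_1,x_2\in A_r(x)$ by a quasiconvex curve. If the curve enters $\overline{B_{r/(2\lambda)}(x)}$, we reroute it around the small ball. This rerouting uses the Poincar\'e inequality applied to a function that is $1$ near one side and $0$ near the other, combined with the capacity lower bound coming from Ahlfors $m$-regularity with $m\ge 3$: a small ball cannot disconnect a large sphere, because the relative capacity of $B_{\rho}$ in $B_{2\rho}$ is comparable to $\rho^{m-2}>0$, whereas a separating set would force a zero-energy test function. Chaining finitely many balls of radius $\sim r/\lambda$ covering the annulus (a number bounded by doubling) and applying part one in each then produces the required curve of length $\le Cd(x_1,x_2)$, or $\le Cr\le C'd(x_1,x_2)$ when the two points are far apart.
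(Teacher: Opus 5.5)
Your route is the paper's: its proof simply cites the two standard facts you name. Local doubling plus a $(1,2)$-Poincar\'e inequality gives quasiconvexity, and, since the Ahlfors exponent $m$ exceeds $2$, also annular quasiconvexity (\cite[Theorems 8.3.2 and 9.4.1]{HKST}). However, two steps you spell out to make this self-contained fail as written.

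\emph{First step: the truncation.} Truncating by $Md(x_1,\cdot)$ destroys the gradient bound. Since $d\le d_g$ on $\mathcal R$, the function $\min\{\phi,Md(x_1,\cdot)\}$ has $|\nabla_g|$ up to $M$ wherever the truncation is active. The telescoping estimate then only yields $\min\{\phi(x_2),Md(x_1,x_2)\}\le C\max\{1,M\}\,d(x_1,x_2)$, which is vacuous and gives no contradiction.

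Truncate by a constant instead. The function $u_T=\min\{\phi,T\}$ is locally $1$-Lipschitz for $d_g$ on all of $\mathcal R\cap B_{\lambda' r}(x)$, since it is $\equiv T$ on the components where $\phi=+\infty$. After a cut-off it lies in $W^{1,2}_0$ by Corollary \ref{Cor: W012 is W12}. Telescoping then gives $\min\{\phi(x_2),T\}\le C_0d(x_1,x_2)$ with $C_0$ independent of $T$, and taking $T>C_0d(x_1,x_2)$ gives $\phi(x_2)\le C_0d(x_1,x_2)$.

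Alternatively, use the unrestricted $\epsilon$-chain distance as in \cite{HKST}. It is finite because $X=\overline{\mathcal R}$ is connected, and the length bound $\le 2Cr$ already keeps the curve inside $B_{(1+2C)r}(x)$.

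\emph{Second step: the annulus.} Your mechanism here runs the wrong way. Neither positivity nor a lower bound of $\Capp(B_\rho,B_{2\rho})$ plays any role. Nor does a separating ball force a zero-energy function; that only happens for separating sets of zero capacity, such as $\mathcal S$ in Lemma \ref{Lem: local connected regular part}.

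The argument that works is as follows. Take $\rho=r/(2\lambda)$, $\delta\ll\rho$ and $R'\gg r$, and suppose $x_2$ is not in the $\delta$-chain component of $x_1$ in $B_{R'}(x)-\overline{B_\rho(x)}$. Let $\psi$ be $1$ on that component and $0$ elsewhere, and cut it off across $B_{2\rho}(x)-B_\rho(x)$. Its energy is $\lesssim\rho^{-2}\mu(B_{2\rho}(x))\lesssim\rho^{m-2}$; this is an \emph{upper} bound coming from Ahlfors regularity. By part one, $\psi\equiv1$ on $B_{cr}(x_1)$ and $\psi\equiv0$ on $B_{cr}(x_2)$, because short quasiconvex curves from $x_i$ avoid $\overline{B_{r/4}(x)}$. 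Then (LPI) on $B_{3r}(x)$, together with the Ahlfors lower bound, gives $1\lesssim(\rho/r)^{(m-2)/2}$. This is false once $\lambda$ is large, and it is here, not through positivity of capacity, that $m\ge3$ enters.

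Working with $\delta$-chain components rather than components is also what lets your final chaining step produce a length bound. Merely lying in the same component gives none.
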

\begin{proof}
    This is a direct consequence of (LPI) and (AM). See \cite[Theorem 8.3.2 and Theorem 9.4.1]{HKST} for instance.
\end{proof}

\begin{lemma}\label{Lem: local connected regular part}
    There are constants $\varepsilon=\varepsilon(U)>0$ and $\lambda=\lambda(U)\geq 1$ such that $\mathcal R\cap B_r(x)$ is contained in the same component of $\mathcal R\cap B_{\lambda r}(x)$ for any $x\in U$ and $r\in (0,\varepsilon)$.
\end{lemma}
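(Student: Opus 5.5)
Given $x\in U$ and $r\in(0,\varepsilon)$, I take two points $y_1,y_2\in\mathcal R\cap B_r(x)$ and look for a path joining them inside $\mathcal R\cap B_{\lambda r}(x)$. The idea is to first join them by a short curve in $X$, using the quasiconvexity from Lemma \ref{Lem: connectedness}, and then push that curve off $\mathcal S$ using the smallness of $\mathcal S$. Since $\mathcal H^{m-2}(\mathcal S)=0$, codimension considerations suggest $\mathcal S$ cannot separate $\mathcal R$ locally. The proof will quantify this through the Poincar\'e inequality rather than through topology.

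\textbf{Step 1: a Poincar\'e argument.} Suppose $\mathcal R\cap B_r(x)$ meets two different components of $\mathcal R\cap B_{\lambda' r}(x)$, where $\lambda'$ is the constant from (LPI) and Lemma \ref{Lem: strong poincare}. Let $\Omega_1$ be the component containing $y_1$ and $\Omega_2$ the union of the remaining components.

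I define $\phi=\chi_{\Omega_1}$ on $\mathcal R\cap B_{\lambda' r}(x)$. This function is locally constant on $\mathcal R\cap B_{\lambda'r}(x)$, so $\nabla_g\phi=0$ there. To bring it into $W^{1,2}_0(X)$, I multiply by a Lipschitz cutoff $\zeta$ that equals $1$ on $B_{\lambda' r}(x)$ and is supported in $B_{2\lambda' r}(x)$.

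The product $\zeta\phi$ is not defined outside $B_{\lambda'r}$, so I apply the Poincar\'e inequality directly to a smoothed version. Concretely, I use Lemma \ref{Lem: zero capacity} to take $\eta_i\to1$ with $\int|\nabla\eta_i|^2\to0$, and set
$$\phi_i:=\eta_i\,\chi_{\Omega_1}\,\psi,$$
where $\psi$ is a cutoff supported in $B_{\lambda' r}$ with $\psi\equiv1$ on $B_r$. By Corollary \ref{Cor: W012 is W12}, $\phi_i\in W^{1,2}_0$.

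I then apply (LPI) on the ball $B=B_{r}(x)$ with dilation $\lambda B\subset B_{\lambda' r}$, after adjusting constants by shrinking the ball. On $\mathcal R\cap\lambda B$ the gradient is just $\chi_{\Omega_1}\nabla\eta_i$, because $\psi\equiv1$ there once I arrange $\lambda B\subset\{\psi=1\}$. Hence the right-hand side tends to $0$.

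On the left-hand side, $\phi_i\to\chi_{\Omega_1}$ in $L^1(B)$. Both $\Omega_1\cap B$ and $\Omega_2\cap B$ are open and nonempty, so each has positive measure. The oscillation $\fint_B|\chi_{\Omega_1}-c|$ is therefore bounded below by a positive constant, which is a contradiction.

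\textbf{Obstacle: arranging the cutoff region.} The main difficulty is ensuring that $\psi\equiv1$ on all of $\lambda B$ while $\chi_{\Omega_1}$ is only defined on $B_{\lambda'r}$. I will handle this by choosing the final $\lambda$ in the lemma as $2\lambda_P$, where $\lambda_P$ is the (LPI) constant. I then define components with respect to $\mathcal R\cap B_{2\lambda_P r}(x)$, apply (LPI) on $B_r$, whose dilate $B_{\lambda_Pr}$ sits inside, and let $\psi$ transition in the annulus between $\lambda_P r$ and $2\lambda_P r$.

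The positivity of the measures of $\Omega_j\cap B_r$ follows because each $\Omega_j$ contains a point of the open manifold $\mathcal R\cap B_r$, so it contains a small ball of positive volume. The quasiconvexity from Lemma \ref{Lem: connectedness} is not even needed in this approach.
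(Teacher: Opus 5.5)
Your proposal is correct and follows the same route as the paper: you apply (LPI) on $B_r(x)$ to a function that is constant on each piece of a disconnection of the regular part, cut it off and place it in $W^{1,2}_0(X)$ via Corollary~\ref{Cor: W012 is W12}, and get a contradiction because the gradient term vanishes while the oscillation over $B_r(x)$ is positive. Your handling of the cutoff (components taken in $\mathcal R\cap B_{2\lambda_P r}(x)$, $\psi\equiv 1$ on $B_{\lambda_P r}(x)$, final constant $2\lambda_P$) is more careful than the paper's, whose cutoff $\eta$ is $\equiv 1$ only on $B$ and so leaves $\nabla_g(\eta\phi)=\phi\nabla_g\eta\neq 0$ on part of $\mathcal R\cap\lambda B$; your $\eta_i$-regularization is superfluous, since Corollary~\ref{Cor: W012 is W12} applies directly to $\psi\chi_{\Omega_1}$, with $\chi_{\Omega_1}\in W^{1,2}(\mathcal R\cap B_{2\lambda_P r}(x))$ having zero gradient.
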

\begin{proof}
    By taking $\varepsilon$ small enough from (LPI) we can guarantee
    \begin{equation}\label{Eq: poincare}
        \fint_{B}|\phi-\phi_B|\,\mathrm d\mu\leq C_Pr\left(\fint_{\mathcal R\cap \lambda B}|\nabla_g\phi|^2\,\mathrm d\mu\right)^{\frac{1}{2}},
    \end{equation}
where $B=B_{r}(x)$ and $\phi\in W^{1,2}_0(X)$. Suppose that the consequence is not true, then we can write $\mathcal R\cap \lambda B=U_1\sqcup U_2$ such that $B\cap U_i\neq\emptyset$ for $i=1,2$. Let us take the function $\phi$ given by $\phi\equiv \mu(U_2\cap B)$ in $U_1$ and $\phi\equiv -\mu(U_1\cap B)$ in $U_2$, and take a Lipschitz cut-off function $\eta$ supported in $\lambda B$ such that $\eta \equiv 1$ on $B$. By Corollary \ref{Cor: W012 is W12} we have $\eta\phi \in W^{1,2}_0(X)$ and so \eqref{Eq: poincare} implies
$$0<\fint_B|\eta\phi|\leq C_P r\left(\fint_{\mathcal R\cap \lambda B}|\nabla_g(\eta\phi)|^2\,\mathrm d\mu\right)^{\frac{1}{2}}=0,$$
which leads to a contradiction.
\end{proof}

Finally, we collect some basic Harnack inequalities.
\begin{lemma}\label{Lem: ball harnack}
  There are constants $\varepsilon=\varepsilon(U)>0$, $\lambda=\lambda(U)> 1$ and $\Lambda=\Lambda(U)>0$ such that if $u\in W^{1,2}(\mathcal R\cap \lambda B)$ is a nonnegative function satisfying $\Delta_g u=0$ in $\mathcal R\cap \lambda B$, then we have
  $$\sup_Bu\leq \Lambda \inf_Bu,$$
  where $B$ and $\lambda B$ denote $B_{r}(x)$ and $B_{\lambda r}(x)$ for any $x\in U$ and $r\in (0,\varepsilon)$.
\end{lemma}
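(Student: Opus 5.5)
The plan is to run Moser iteration (the De Giorgi--Nash--Moser scheme in the form of Saloff-Coste and Grigor'yan), using the local Sobolev--Poincar\'e inequality of Lemma \ref{Lem: strong poincare} and the local volume doubling (LVD) as the only analytic inputs. The one delicate point is that $u$ is harmonic only on $\mathcal R\cap\lambda B$, so every test function must be admissible across $\mathcal S$. I would handle this with Corollary \ref{Cor: W012 is W12}: for a Lipschitz cut-off $\zeta$ supported in $\lambda B$ and any $w\in W^{1,2}(\mathcal R\cap\lambda B)$, the product $\zeta w$ lies in $W^{1,2}_0(\lambda B)$. Since this space is the closure of $C_0^\infty(\mathcal R\cap\lambda B)$, the weak identity $\int\langle\nabla u,\nabla\varphi\rangle=0$ extends by density from smooth compactly supported $\varphi$ to all $\varphi\in W^{1,2}_0(\lambda B)$. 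In particular it holds for $\varphi=\zeta^2(u+k)^{\beta}$ with $k>0$ and $\beta\in\mathbf R$, after truncating and passing to the limit. Lemma \ref{Lem: zero capacity} is what lies behind this: no boundary terms appear along $\mathcal S$.

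First I would establish a localized Sobolev inequality on balls. Combining Lemma \ref{Lem: strong poincare} with Ahlfors regularity gives, for $\phi\in W^{1,2}_0(B_r)$ with $r<\varepsilon$,
\begin{equation*}
\Big(\fint_{B_r}|\phi|^{\frac{2m}{m-2}}\Big)^{\frac{m-2}{m}}\le C\Big(r^2\fint_{\lambda B_r}|\nabla\phi|^2+\fint_{B_r}\phi^2\Big).
\end{equation*}
Feeding the Caccioppoli estimates obtained from the test functions above into this inequality, the standard Moser iteration yields three estimates, each over a slightly enlarged ball:
\begin{enumerate}
\item for $p>0$, $\sup_{B}u\le C\big(\fint_{\sigma B}u^p\big)^{1/p}$;
\item for $p>0$, $\inf_{B}u\ge C^{-1}\big(\fint_{\sigma B}u^{-p}\big)^{-1/p}$;
\item the weak-Harnack bridge $\fint_{\sigma B} u^{p_0}\cdot\fint_{\sigma B}u^{-p_0}\le C$ for some small $p_0>0$.
\end{enumerate}
Throughout I work with $u+k$ and let $k\to0$. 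For the first two estimates, the nested radii $r_j\downarrow r$ must stay within $\lambda$-enlargements, so the harmonic domain must be $\lambda' B$ with $\lambda'$ large. I may therefore replace $\lambda$ by a larger constant and shrink $\varepsilon$ accordingly, which the statement permits.

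The main obstacle is the bridge estimate. I would test with $\zeta^2u^{-1}$ to get $\int\zeta^2|\nabla\log u|^2\le C r^{m-2}$. Then the (1,2)-Poincar\'e inequality (LPI) on every sub-ball, together with (LVD), shows that $\log u$ is in BMO on $\sigma B$, and the John--Nirenberg inequality on a doubling space gives the bridge. Two points need care here. The first is that John--Nirenberg requires Poincar\'e on all sub-balls of $\sigma B$, which holds uniformly because $U$ is bounded and its $\varepsilon$-neighborhood is still bounded. The second is that $\log u$ must be admissible, which follows from Corollary \ref{Cor: W012 is W12} after replacing $u$ by $u+k$. Finally, chaining (1), (3), (2) gives $\sup_B u\le\Lambda\inf_B u$, with $\Lambda$ depending only on $C_D$, $C_P$, $m$, $\lambda$, and hence only on $U$.
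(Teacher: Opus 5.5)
Your proposal is correct and follows the same route as the paper. The paper defers to the standard De Giorgi--Nash--Moser theory under (LPI) and (LVD) (Bj\"orn--Bj\"orn, Section 8) and treats $\mathcal S$ as negligible because Lipschitz cut-off products of $W^{1,2}(\mathcal R\cap\lambda B)$ functions lie in $W^{1,2}_0(\lambda B)$ by Corollary~\ref{Cor: W012 is W12}; your write-up spells out that same scheme, namely Moser iteration for the sup and inf bounds and John--Nirenberg for $\log u$, with the test functions justified by this corollary and density.
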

\begin{proof}
    The argument is standard since we have (LPI) and (LVD). See \cite[Section 8]{BB11} for instance. The set $\mathcal S$ is negligible due to Corollary \ref{Cor: W012 is W12}.
\end{proof}
By a ball covering argument we have the following corollary.

\begin{corollary}\label{Cor: annulus harnack}
    There are constants $\varepsilon=\varepsilon(U)>0$, $\lambda=\lambda(U)>1$ and $\Lambda=\Lambda(U)>0$ such that for any $x\in U$ and $r\in (0,\varepsilon)$ if $u$ is a nonnegative function such that $\Delta_g u=0$ in $\mathcal R\cap  \mathring B_{2\lambda r}(x)$ and $u\in W^{1,2}(\mathcal R\cap  B_{2\lambda r}(x)-\overline{B_\rho(x)})$ for any $\rho>0$, then we have
    $$\sup_{A_r(x)} u\leq \Lambda \inf_{A_r(x)} u,$$
    where $A_r(x)$ denotes  $B_{2r}(x)-\overline{B_{r/2}(x)}$.
\end{corollary}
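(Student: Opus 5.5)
The plan is to deduce the annulus Harnack inequality from the ball Harnack inequality (Lemma \ref{Lem: ball harnack}) through a chain of balls along quasiconvex curves (Lemma \ref{Lem: connectedness}). The number of balls in the chain must be bounded independently of $r$ and $x$.

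\emph{Choice of constants.} Fix $U$ and let $\lambda_0$, $\Lambda_0$, $\varepsilon_0$ be the constants from Lemma \ref{Lem: ball harnack}. Let $(C_1,\lambda_1)$ be the quasiconvexity constants of the annuli from Lemma \ref{Lem: connectedness}, taken for the enlarged bounded set $U' = B_1(U)$ so that the auxiliary centers are still covered. Set $\delta = c\, r$ with $c = 1/(8\lambda_0\lambda_1)$, say. Take $\lambda \ge 2\lambda_1$ large enough that the curves from Lemma \ref{Lem: connectedness} stay in $B_{2\lambda_1 r}(x) \setminus \overline{B_{r/(2\lambda_1)}(x)}$. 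The final $\lambda$ should dominate $\lambda_1(1 + c\lambda_0)$, and $\varepsilon$ should be small compared with $\varepsilon_0$ and $\operatorname{dist}(U, \partial U')$.

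\emph{Each small ball is admissible.} Let $y$ be a point on such a curve. Then $d(y,x) \ge r/(2\lambda_1)$, so the ball $B_{\lambda_0\delta}(y)$ has radius at most $r/(8\lambda_1)$ and therefore avoids $B_\rho(x)$ for $\rho = r/(4\lambda_1)$. It is also contained in $B_{2\lambda r}(x)$. By hypothesis $u \in W^{1,2}$ on $\mathcal R \cap B_{\lambda_0\delta}(y)$, and $u$ is harmonic and nonnegative there. Lemma \ref{Lem: ball harnack} then gives
$$\sup_{B_\delta(y)} u \le \Lambda_0 \inf_{B_\delta(y)} u.$$

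\emph{Chaining.} Take $x_1, x_2 \in A_r(x)$. Join them by a curve $\gamma$ of length at most $C_1 d(x_1,x_2) \le 4C_1 r$. Choose points $y_0 = x_1, y_1, \dots, y_N = x_2$ on $\gamma$ with consecutive distances less than $\delta$. This needs only $N \le 4C_1 r/\delta + 1 = 4C_1/c + 1$ points, a bound independent of $r$ and $x$. Consecutive balls $B_\delta(y_i)$ and $B_\delta(y_{i+1})$ intersect, so chaining the ball Harnack inequality gives
$$u(x_1) \le \Lambda_0^{N+1} u(x_2).$$

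\emph{Conclusion and main obstacle.} Taking the supremum over $x_1$ and the infimum over $x_2$ yields the corollary with $\Lambda = \Lambda_0^{N+1}$. The only delicate point is bookkeeping. The chaining balls must stay uniformly away from the center $x$, where $u$ may fail to be $W^{1,2}$, and must stay inside $\mathring B_{2\lambda r}(x)$, where $u$ is harmonic. Both requirements come from the annular quasiconvexity of Lemma \ref{Lem: connectedness} together with taking the chaining radius $\delta$ proportional to $r$. Pointwise values of $u$ make sense because $u$ is smooth on $\mathcal R$, and the sup and inf can be taken over $\mathcal R \cap A_r(x)$, since $\mu(\mathcal S) = 0$.
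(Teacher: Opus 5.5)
Your proposal is correct and is essentially the paper's own argument. Like the paper, you join two points of $A_r(x)$ by a quasiconvex curve from Lemma \ref{Lem: connectedness} lying in $B_{2\lambda' r}(x)-\overline{B_{r/(2\lambda')}(x)}$, cover it by boundedly many balls of radius comparable to $r$ on which Lemma \ref{Lem: ball harnack} applies, and get a Harnack constant $\Lambda^{2C'\lambda'\lambda''}$ independent of $x$ and $r$. Two bookkeeping slips: it is the ball Harnack constants, applied at centers $y$ that may leave $U$, rather than the quasiconvexity constants, that must be taken for the enlarged set $U'$; and since the chain points $y_i$ may lie in $\mathcal S$, the chaining should go through regular points in the overlaps of consecutive balls.
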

\begin{proof}
  By taking $\varepsilon$ small enough, we know from Lemma \ref{Lem: connectedness} that there are constants $\lambda'$ and $C'$ such that any pair of points $y_1$ and $y_2$ in $A_r(x)$ can be connected by a curve $\gamma$ in $B_{2\lambda' r}(x)-\overline{B_{r/(2\lambda')}}$ with length no greater than $C'r$. From Lemma \ref{Lem: ball harnack} we have
 $$\sup_Bu\leq \Lambda \inf_Bu$$
 for any $B=B_{s}(y)$ with $y\in B_{2\lambda' r}(x)-\overline{B_{r/(2\lambda')}}$ and $s=r/(2\lambda'\lambda'')$, where $\Lambda$ and $\lambda''$ are constants coming from Lemma \ref{Lem: ball harnack}.
    Therefore, we have
    $$u(y_1)\leq \Lambda^{2C'\lambda'\lambda''}\cdot u(y_2)\mbox{ for any }y_1,\,y_2\in A_r(x).$$
    This completes the proof.
\end{proof}

The following version of Harnack inequality will also be used.
\begin{lemma}\label{Lem: enhanced Harnack}
    Let $\mathcal S'$ be a compact subset of $\mathcal S$ and $U$ be a bounded open subset of $X$ such that $\mathcal S'\subset U$ and that $\mathcal R\cap U$ is connected. Given any compact subset $K$ of $U-\mathcal S'$ there are  constants $s>0$ and $\Lambda>1$ such that if $u$ is a nonnegative function such that $\Delta_gu=0$ in $\mathcal R\cap U$ and $u\in W^{1,2}((\mathcal R\cap U)-\overline{B_s(\mathcal S')})$, then we have
    $$\sup_Ku\leq \Lambda \inf_Ku.$$
\end{lemma}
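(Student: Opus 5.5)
The idea is a chain-of-balls argument. Lemma \ref{Lem: ball harnack} gives a local Harnack inequality on small balls, and we propagate it along finitely many balls joining any two points of $K$ inside $\mathcal R\cap U$. The only extra care needed is that these balls stay at a definite distance from $\mathcal S'$, so that the $W^{1,2}$ hypothesis applies on the enlarged balls.

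\textbf{Choice of constants.} Let $d_0:=\min\{\dist(K,\mathcal S'),\dist(K,X\setminus U)\}$, which is positive by compactness. Using Lemma \ref{Lem: bounded to compact} together with connectedness of $\mathcal R\cap U$, fix for each pair of points of $\mathcal R\cap K$ a path in $\mathcal R\cap U$ joining them. By a compactness argument, I would choose finitely many such paths and a single compact set $K'\subset U\setminus\mathcal S'$ containing $K$ and all these paths, with the property that any two points of $\mathcal R\cap K$ can be joined inside $\mathcal R\cap K'$ by a path of bounded complexity. Concretely: cover $K$ by finitely many balls $B_{r}(x_j)$ with $\lambda r<d_0/2$, then use Lemma \ref{Lem: local connected regular part} to see that $\mathcal R\cap B_r(x_j)$ lies in one component of $\mathcal R\cap B_{\lambda r}(x_j)$. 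Next connect a chosen regular point of each ball to a fixed base point by a path in $\mathcal R\cap U$, and let $K'$ be the union of these finitely many compact paths with the closed balls $\overline{B_{\lambda r}(x_j)}$. Then $\delta:=\dist(K',\mathcal S'\cup (X\setminus U))>0$. Set $s:=\delta/2$, and choose a radius $\rho<\delta/(2\lambda)$ that is also below the $\varepsilon$ of Lemma \ref{Lem: ball harnack} for the bounded set $U$.

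\textbf{Harnack chain.} For $y\in K'$, the ball $B_{\lambda\rho}(y)$ lies in $U$ and avoids $\overline{B_s(\mathcal S')}$. Hence $u\in W^{1,2}(\mathcal R\cap B_{\lambda\rho}(y))$, and Lemma \ref{Lem: ball harnack} gives $\sup_{B_\rho(y)}u\le\Lambda_0\inf_{B_\rho(y)}u$. Since $K'$ is compact, it is covered by $N$ balls $B_{\rho/2}(y_i)$. Any path in $K'$ passes through a sequence of these balls in which consecutive balls overlap, and after removing repetitions the sequence has at most $N$ balls. Chaining the inequality along the path joining $p,q\in\mathcal R\cap K$ gives $u(p)\le\Lambda_0^{N}u(q)$. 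Points of $K\cap\mathcal S$ are handled because $\sup/\inf$ over $K$ are taken over $\mathcal R\cap K$; this is the natural reading since $u$ is only defined on $\mathcal R$. So $\Lambda:=\Lambda_0^N$ works.

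\textbf{Main obstacle.} The delicate step is making $K'$ independent of $u$. The paths must lie in the regular part and must stay uniformly away from $\mathcal S'$ and from $\partial U$. Connectedness of $\mathcal R\cap U$ provides the paths, and Lemma \ref{Lem: local connected regular part} reduces the problem to finitely many base points. One subtlety is that a point of $K$ might lie in $\mathcal S\setminus\mathcal S'$. Lemma \ref{Lem: local connected regular part} handles this, because the regular points near such a point lie in one component of a slightly larger ball, which is still contained in $K'$.
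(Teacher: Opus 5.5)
Your proposal is correct and follows essentially the paper's route: it covers $K$ by finitely many small balls where Lemma~\ref{Lem: ball harnack} applies, joins a regular point of each ball to a fixed base point by a path in $\mathcal R\cap U$, takes $s$ small enough that these finitely many paths avoid $\overline{B_s(\mathcal S')}$, and chains the ball Harnack inequality along the paths. The only minor difference is that you invoke Lemma~\ref{Lem: local connected regular part} to join points within each covering ball, whereas the paper gets this comparison directly from the Harnack inequality on the ball $B_{r_x}(x)$.
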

\begin{proof}
   Fix a point $x_0\in\mathcal R\cap U$. First, we claim that for any point $x\in U-\mathcal S'$ we can find constants $s_x>0$, $r_x>0$ and $\Lambda_x>0$ such that if $u$ is a nonnegative function such that $\Delta_gu=0$ in $\mathcal R\cap U$ and $u\in W^{1,2}((\mathcal R\cap U)-\overline{B_{s_x}(\mathcal S')})$, then we have
   $$\Lambda_x^{-1}\cdot u(x_0)\leq \inf_{B_{r_x}(x)}u\leq \sup_{B_{r_x}(x)}u\leq \Lambda_x \cdot u(x_0).$$
   From Lemma \ref{Lem: ball harnack} by setting $r_x$ and $s_x$ small enough we can guarantee
   $$\sup_{B_{r_x}(x)}u\leq C_1\inf_{B_{r_x}(x)}u.$$
   Fix a point $y\in \mathcal R\cap B_{r_x}(x)$. Due to the connectedness of $\mathcal R\cap U$ we can connect $x_0$ and $y$ with a curve $\gamma$ in $\mathcal R\cap U$. Note that we can further decrease the value of $s_x$ such that $\gamma$ is disjoint from $\overline{B_{s_x}(\mathcal S')}$. Then the Harnack inequality from Lemma \ref{Lem: ball harnack} combined with a covering argument gives us
   $$C_2^{-1}u(x_0)\leq u(y)\leq C_2u(x_0)$$
   and so we prove the claim with $\Lambda_x$ depending on $C_1$ and $C_2$. The desired consequence now follows from the fact that $K$ can be covered by finitely many balls $B_{r_x}(x)$.
\end{proof}

\subsection{Singularity blow-up}
\begin{definition}
    A Lipschitz function $v$ on $X$ is superharmonic if $v\geq v_H$ for any harmonic replacement $v_H$ of $v$, where a harmonic replacement $v_H$ of $v$ means a function $v_H$ on $X$ such that $\Delta_gv_H=0$ in $\mathcal R\cap U$ and $v_H-v\in H^1_0(U)$ for some bounded region $U$.
\end{definition}
\begin{definition}
    $(X,d,\mu)$ is called {\it non-parabolic} if there is a non-constant and bounded superharmonic function on $X$.
\end{definition}
We are going to show
\begin{proposition}\label{Prop: weak form}
    Assume that $(X,d,\mu)$ is non-parabolic and that $\mathcal S$ satisfies
    $$\dim_{\mathcal H}(\mathcal S)<\frac{m-2}{2}.$$
    Then there is a smooth positive harmonic function $G_{\mathcal S}$ on $\mathcal R$ such that the conformal metric
    $$\tilde g:=(1+\delta G_{\mathcal S})^{\frac{4}{m-2}}g$$
    is complete on $\mathcal R$ for any $\delta>0$. Moreover, we have
    $$G_{\mathcal S}<C_{E,v}(v-\inf_X v)\mbox{ in }E,$$
    where $E$ is any open subset of $X$ with compact boundary $\partial E$ such that $\overline E\cap \mathcal S=\emptyset$, $v$ is any non-constant bounded superharmonic function on $X$, and $C_{E,v}$ is a positive constant depending on $E$ and $v$.
\end{proposition}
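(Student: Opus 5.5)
We will construct $G_{\mathcal S}$ as a superposition of Green's functions with poles on $\mathcal S$, weighted by a Radon measure $\nu$ supported on $\mathcal S$ whose Wolff-type potential diverges at every point of $\mathcal S$. Completeness of $\tilde g$ will come from a lower bound $G_{\mathcal S}(y)\gtrsim \int_0^{\epsilon} \nu(B_t(z))t^{1-m}\,\mathrm dt$ near $z\in\mathcal S$. The comparison with $v-\inf_X v$ will come from the maximum principle on $E$.

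\textbf{Step 1 (Green's function).} Using non-parabolicity, we build a positive minimal Green's function $G(x,\cdot)$ for each $x\in X$. Exhaust $X$ by bounded regions $U_i$ and solve Dirichlet problems via Lemma \ref{Lem: possion equation}, with the Sobolev inequality of Lemma \ref{Lem: Sobolev}. The bounded non-constant superharmonic function $v$ serves as a barrier, which gives a uniform bound away from the pole and hence convergence as $i\to\infty$. Since $\mathcal S$ has zero capacity (Lemma \ref{Lem: zero capacity}, Corollary \ref{Cor: W012 is W12}), it is invisible for $W^{1,2}$ theory, so poles at $x\in\mathcal S$ are allowed. From (LPI), Ahlfors regularity and the annulus Harnack inequality (Corollary \ref{Cor: annulus harnack}), we derive the two-sided estimate $G(x,y)\simeq d(x,y)^{2-m}$ for $d(x,y)$ small and $x$ in a bounded set. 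The standard argument integrates the capacity of annuli and uses $m\ge3$.

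\textbf{Step 2 (choice of $\nu$).} Here we use $\dim_{\mathcal H}\mathcal S<\frac{m-2}{2}$. Write $\mathcal S=\bigcup_j\mathcal S_j$ with the $\mathcal S_j$ compact. We want a finite Radon measure $\nu=\sum_j 2^{-j}\nu_j$ such that
$$W(z):=\int_0^{\epsilon}\left(\frac{\nu(B_t(z))}{t^{m-2}}\right)^{1/2}\frac{\mathrm dt}{t}=+\infty\quad\mbox{for all }z\in\mathcal S .$$
We obtain it by a covering construction: choose coverings of $\mathcal S_j$ by balls of radii $r_k\to0$ with $\sum r^{s}$ small, where $s<\frac{m-2}{2}$. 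On each ball put mass roughly $r^{m-2}$ times a divergent weight, keeping total mass finite since $r^{m-2}\ll r^{s}$. I expect this step to be the main obstacle: one must make the Wolff sum diverge at every point while keeping $\nu$ finite and also ensuring that $G_{\mathcal S}(x)=\int G(z,x)\,\mathrm d\nu(z)$ is finite on $\mathcal R$. The finiteness holds because $x\notin\mathcal S$, $\mathcal S$ is closed, and the Green bound is locally uniform. I would try first a Frostman-type dyadic construction, with masses $\nu(Q)\approx k^{-2}\ell(Q)^{m-2}$ on cubes of generation $k$, chosen so that the square roots sum like $\sum 1/k$.

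\textbf{Step 3 (completeness and comparison).} Harmonicity and smoothness of $G_{\mathcal S}$ on $\mathcal R$ follow from Harnack's inequality and monotone convergence. For completeness, take a curve $\gamma$ in $\mathcal R$ approaching $z\in\mathcal S$. Its $\tilde g$-length is at least $\sum_k \delta^{2/(m-2)}\,\inf_{A_{2^{-k}}(z)}G_{\mathcal S}^{2/(m-2)}\,2^{-k}$. The lower Green bound gives $G_{\mathcal S}\gtrsim \nu(B_{t}(z))\,t^{2-m}$ on $A_t(z)$, so the length is $\gtrsim\sum_k(\nu(B_{2^{-k}}(z))2^{k(m-2)})^{2/(m-2)}2^{-k}$. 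If this diverges, together with the exponent matching the choice in Step 2 (I will adjust the Wolff exponent $1/2$ to $2/(m-2)$ consistently), $\gamma$ has infinite length. Curves escaping to infinity in $X$ have infinite length already, since $\tilde g\ge g$. Finally, on $E$, which stays away from $\mathcal S$, the functions $G(z,\cdot)$ with $z\in\operatorname{spt}\nu$ are harmonic on $E$ and bounded by $C$ on $\partial E$. The minimal-Green construction with barrier $v$ gives $G(z,\cdot)\le C(v-\inf v)$ on $E$ by the comparison principle, and integrating against $\nu$ gives the stated bound.
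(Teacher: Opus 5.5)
Your overall scheme matches the paper's: superpose Green's functions against a measure $\nu$ on $\mathcal S$, get completeness from a divergent Wolff-type integral along curves, and get the bound on $E$ by comparison with $v$. Building a global minimal Green's function first is a harmless variation; the paper instead uses local Dirichlet functions $G^{(i)}_x$ on $U_i$, superposes them, and globalizes only afterwards via non-parabolicity.

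The genuine gap is Step 2: your measure is calibrated to the wrong potential. The quantity that bounds the $\tilde g$-length from below is the one you correctly reach in Step 3,
\[
\int_0^{1}\bigl(\nu(B_t(z))\,t^{2-m}\bigr)^{\frac{2}{m-2}}\,\mathrm dt=\int_0^{1}\nu(B_t(z))^{\frac{2}{m-2}}\,t^{-2}\,\mathrm dt\approx\sum_{k}\nu(B_{2^{-k}}(z))^{\frac{2}{m-2}}\,2^{k}.
\]
A scale $t$ contributes a definite amount only when $\nu(B_t(z))\gtrsim t^{\frac{m-2}{2}}$. Your proposed masses are far too small for this:
\begin{itemize}
\item with $\nu(B_{2^{-k}}(z))\approx k^{-2}2^{-k(m-2)}$ the sum is $\sum_k k^{-4/(m-2)}2^{-k}<\infty$;
\item mass $\approx r^{m-2}$ on a ball of radius $r$ contributes only $\approx r$.
\end{itemize}
So this cannot be repaired by adjusting an exponent afterwards. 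A warning sign is that your finiteness argument ``$r^{m-2}\ll r^s$'' works for every $s<m-2$. It never uses $\dim_{\mathcal H}\mathcal S<\frac{m-2}{2}$, which is exactly the threshold at which the correct weights $r^{\frac{m-2}{2}}$ remain summable. There is a second problem: a generation-by-generation dyadic (Frostman-type) construction that charges all cubes of side $2^{-k}$ meeting $\mathcal S$ needs bounds on covering numbers, i.e.\ on box dimension. The Hausdorff hypothesis does not give these, since compact countable sets can have full box dimension.

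What is needed is a level-by-level construction with variable radii, which is what the paper does in Lemma \ref{lem:wolff-divergent-metric}.
\begin{itemize}
\item Fix $s\in(\dim_{\mathcal H}\mathcal S_i,\frac{m-2}{2})$.
\item For each $l\ge1$, cover $\mathcal S_i$ by balls $B_{r_{l,j}}(x_{l,j})$ with $x_{l,j}\in\mathcal S_i$, $r_{l,j}\le 2^{-l}$ and $\sum_j r_{l,j}^s\le 1$.
\item Set $\nu=\sum_{l,j}r_{l,j}^{\frac{m-2}{2}}\delta_{x_{l,j}}$.
\end{itemize}
Then $\nu(X)\le\sum_l 2^{-l(\frac{m-2}{2}-s)}<\infty$. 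At every level $l$, each $z\in\mathcal S_i$ lies in some ball $B_{r_l}(x_{l,j})$ of that level. Hence $\nu(B_t(z))\ge r_l^{\frac{m-2}{2}}$ for $t\in[r_l,2r_l]$, which gives
\[
\int_{r_l}^{2r_l}\nu(B_t(z))^{\frac{2}{m-2}}t^{-2}\,\mathrm dt\ge\tfrac12 .
\]
Passing to a subsequence of levels whose intervals $[r_l,2r_l]$ are pairwise disjoint (possible since $r_l\to0$), the integral diverges. With this $\nu$ inserted, your Steps 1 and 3 go through. The lower Green bound on $A_t(z)$ for poles in $B_t(z)$ is exactly what the paper uses in Lemma \ref{Lem: Greens function at Si}.
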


 The proof is based on an exhaustion method, and we need the following lemma.

\begin{lemma}\label{Lem: Ui}
  Given any compact $K$  we can take a bounded open subset $U$ of $X$ such that $K\subset U$ and that $\mathcal R\cap U$ is connected.
\end{lemma}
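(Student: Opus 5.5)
We need, for a given compact $K$, a bounded open $U\supset K$ with $\mathcal R\cap U$ connected. The idea is to cover $K$ by finitely many small balls whose regular parts each lie in one component of a slightly larger ball's regular part, and then join these pieces with curves inside the connected manifold $\mathcal R$.

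First, choose a bounded open set $U_0\supset K$, for instance a large ball. Let $\varepsilon=\varepsilon(U_0)$ and $\lambda=\lambda(U_0)$ be the constants from Lemma \ref{Lem: local connected regular part}. Fix $r<\varepsilon$ and use compactness of $K$ to cover it by finitely many balls $B_r(x_1),\dots,B_r(x_N)$ with $x_l\in K$.

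By Lemma \ref{Lem: local connected regular part}, the set $\mathcal R\cap B_r(x_l)$ lies in a single component $V_l$ of $\mathcal R\cap B_{\lambda r}(x_l)$. Since $\mu(\mathcal S)=0$ and $\mu(B_r(x_l))>0$ by Ahlfors regularity, $\mathcal R\cap B_r(x_l)$ is nonempty, so each $V_l$ is a nonempty connected open subset of $\mathcal R$.

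Next, fix a point $p_l\in V_l$ for each $l$. Because $\mathcal R$ is a connected manifold, it is path connected, so there are curves $\gamma_l$ in $\mathcal R$ joining $p_1$ to $p_l$. Each $\gamma_l$ is compact and contained in the open manifold $\mathcal R$. Hence it has an open neighborhood $W_l\subset\mathcal R$ which is connected, for example a finite union of small coordinate balls along the curve forming a chain, and bounded.

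Now define
$$U:=\bigcup_{l}B_r(x_l)\cup\bigcup_l V_l\cup\bigcup_l W_l.$$
This set is open and bounded, and it contains $K$. Since $\mathcal S$ is closed in $X$, $\mathcal R$ is open in $X$, so the $V_l$ and $W_l$ are open in $X$ as well.

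It remains to check that $\mathcal R\cap U$ is connected. We have
$$\mathcal R\cap U=\bigcup_l(\mathcal R\cap B_r(x_l))\cup\bigcup_l V_l\cup\bigcup_l W_l=\bigcup_l V_l\cup\bigcup_l W_l,$$
because $\mathcal R\cap B_r(x_l)\subset V_l$. Each $V_l$ and each $W_l$ is connected. Moreover, $W_l$ meets $V_1$ at $p_1$ and meets $V_l$ at $p_l$. The union is therefore a chain of connected sets all linked through $V_1$, and hence it is connected.

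The only delicate point is the first step. The raw sets $\mathcal R\cap B_r(x_l)$ may be disconnected, which is exactly why we enlarge each one to the component $V_l$ instead of using it directly. Lemma \ref{Lem: local connected regular part} guarantees that this enlargement captures all of $\mathcal R\cap B_r(x_l)$ at once, while keeping the union bounded.
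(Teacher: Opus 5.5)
Your proof is correct and follows essentially the same route as the paper. Both cover $K$ by finitely many balls from Lemma \ref{Lem: local connected regular part} and replace each regular part by its enclosing component in the enlarged ball. Both then link these components through $\mathcal R$: you use connected neighborhoods of curves, where the paper uses a finite connected graph in $\mathcal R$ surrounded by small connected balls.
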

\begin{proof}
    By Lemma \ref{Lem: local connected regular part} we can take finitely many balls $B_l$ covering $K$, where $\mathcal R\cap B_l$ is contained in a component $\mathcal N_l$ of $\mathcal R\cap \lambda B_l$. Recall that we have $\mu(B_l)>0$ and $\mu(\mathcal S)=0$, so we can take a point $y_l\in \mathcal R\cap B_l$ for each $l$. Recall that $\mathcal R$ is connected, we can find a finite connected graph $G\subset \mathcal R$ such that $y_l$ are the vertex of $G$. For any $y\in G$ we can take a connected ball $B_y\subset\mathcal R$ centered at $y$. Let us take
    $$U=\left(\bigcup_l(B_l\cup \mathcal N_l)\right)\cup\left(\bigcup_{y\in G}B_y\right).$$
    It is easy to verify that $U$ satisfies our requirement.
\end{proof}

Fix a point $o\in \mathcal R$ and denote $$\mathcal S_i=\mathcal S\cap \overline{B_i(o)}.$$
From Lemma \ref{Lem: Ui} we can take a bounded open subset $U_i$ of $X$ such that $\overline{B_i(o)}\subset U_i$ and that $\mathcal R\cap U_i$ is connected.

\begin{lemma}\label{Lem: Greens function at x}
    For any $x\in \mathcal S_i$ we can construct a function $$G_x^{(i)}:X-\{x\}\to [0,+\infty)$$ such that
    \begin{itemize}
        \item $\Delta_g G_x^{(i)}=0$ in $\mathcal R\cap U_i$ and $G_x^{(i)}(o)=1$;
        \item $\eta G_x^{(i)}\in W^{1,2}_0(U_i)$ for any cut-off function $\eta$ vanishing in a neighborhood of $\mathcal S_i$;
        \item there are constants $r_i$ and $c_i>0$, and a positive function $C_i(r)$ such that
        \begin{equation}\label{Eq: lower bound}
            \inf_{\partial B_r(x)}G_x^{(i)}\geq c_ir^{2-m}\mbox{ for any }r\in (0,r_i)
        \end{equation}
        and
        \begin{equation}\label{Eq: upper bound}
            \int_{\mathcal R\cap (U_i-\overline{B_r(\mathcal S_i}))}|\nabla_gG_x^{(i)}|^2\,\mathrm d\mu\leq C_i(r).
        \end{equation}
    \end{itemize}
\end{lemma}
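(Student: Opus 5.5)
We construct $G_x^{(i)}$ as a limit of Dirichlet Green's functions of small balls removed around $x$, or more concretely as a normalized limit of solutions to Dirichlet problems with a source concentrated near $x$.

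\textbf{Step 1: approximating functions.} For small $\rho>0$, let $f_\rho$ be a nonnegative bounded function supported in $\mathcal R\cap B_\rho(x)$ with $\int f_\rho\,\mathrm d\mu=1$; this is possible since $\mu(\mathcal S)=0$ and $\mu(B_\rho(x))>0$. By Lemma \ref{Lem: possion equation} with $v=0$, we solve $-\Delta_g w_\rho=f_\rho$ in $\mathcal R\cap U_i$ with $w_\rho\in W^{1,2}_0(U_i)$. The weak maximum principle (from Corollary \ref{Cor: W012 is W12} and testing with $w_\rho^-$) gives $w_\rho\ge 0$. Moreover $w_\rho$ is harmonic and positive in $\mathcal R\cap U_i-\overline{B_\rho(x)}$, and $o\notin\overline{B_\rho(x)}$ for small $\rho$ since $o\in\mathcal R$. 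Set $G_\rho:=w_\rho/w_\rho(o)$. The normalization is legitimate because $w_\rho(o)>0$ by the strong maximum principle on the connected set $\mathcal R\cap U_i$.

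\textbf{Step 2: uniform estimates and limit.} Fix $r>0$. We apply Lemma \ref{Lem: enhanced Harnack} with $\mathcal S'=\{x\}$, taking $K$ to be the compact set $\overline{U'}-B_r(x)$ for intermediate bounded sets $U'\Subset U_i$, together with a boundary Caccioppoli argument near $\partial U_i$ where $G_\rho$ vanishes. This shows that $G_\rho$ is locally uniformly bounded on $U_i-B_r(x)$, with bounds depending on $r$ only. A Caccioppoli inequality, with cut-offs vanishing near $x$ and using the zero boundary values on $\partial U_i$, then bounds $\int_{\mathcal R\cap(U_i-\overline{B_r(x)})}|\nabla G_\rho|^2$ by a constant depending on $r$ only. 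Since $\overline{B_r(\mathcal S_i)}\supset \overline{B_r(x)}$, this yields \eqref{Eq: upper bound}. By interior elliptic estimates on $\mathcal R$ and a diagonal argument, $G_\rho\to G_x^{(i)}$ locally smoothly on $\mathcal R\cap U_i$ and weakly in $W^{1,2}$ away from $x$. Then $G_x^{(i)}(o)=1$ and $G_x^{(i)}$ is harmonic. The property $\eta G_x^{(i)}\in W^{1,2}_0(U_i)$ follows from Corollary \ref{Cor: W012 is W12} and the weak closedness of $W^{1,2}_0$. We extend $G_x^{(i)}$ by $0$ outside $U_i$; on $\mathcal S\cap U_i$ away from $x$ we extend it by its continuous value, or simply take the $\liminf$, which is harmless.

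\textbf{Step 3: the lower bound \eqref{Eq: lower bound}, the main obstacle.} The normalization at $o$ must not kill the singular growth. First we compare with capacities. Let $\mathrm{cap}(B_r(x),U_i)$ denote the $W^{1,2}_0$ capacity. Using Ahlfors regularity and the Poincaré/Sobolev inequalities (Lemmas \ref{Lem: strong poincare}, \ref{Lem: Sobolev}), we get $\mathrm{cap}(B_r(x),B_{2r}(x))\approx r^{m-2}$. The upper bound comes from a Lipschitz cut-off together with $\mu(B_{2r})\le Cr^m$; the lower bound follows from the standard Poincaré argument. Summing over dyadic annuli, $\mathrm{cap}(B_r(x),U_i)\le C r^{m-2}$ for $r<r_i$.

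Next, by Green's identity in $U_i-\overline{B_r(x)}$, the flux of $G_\rho$ through the level region equals $1/w_\rho(o)$. Combining this with the annulus Harnack inequality (Corollary \ref{Cor: annulus harnack}), which lets us replace $\sup_{\partial B_r}$ by $\inf_{\partial B_r}$ up to $\Lambda$, and with the standard estimate $\min_{\partial B_r}G\cdot\mathrm{cap}\lesssim$ flux, we obtain
$$\inf_{\partial B_r(x)}G_\rho\ \ge\ c\,\frac{1}{w_\rho(o)\,\mathrm{cap}(B_r(x),U_i)}\ \ge\ c' r^{2-m}\,\frac{1}{w_\rho(o)}.$$
It remains to bound $w_\rho(o)$ from above uniformly in $\rho$ and $x\in\mathcal S_i$. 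For this, $w_\rho(o)\le\sup_{\partial B_{r_i}(x)}w_\rho$, which by Harnack and the capacity lower bound of a fixed-size ball, $\mathrm{cap}(B_{r_i}(x),U_i)\ge c$ (Sobolev inequality), is bounded by $C/\mathrm{cap}\le C$.

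Passing $\rho\to0$ gives \eqref{Eq: lower bound}. Making the constants uniform over $x\in\mathcal S_i$ uses compactness of $\mathcal S_i$ (Lemma \ref{Lem: bounded to compact}) and the uniform local constants on $U_i$. The delicate points are the capacity and flux comparison in this nonsmooth setting and the non-degeneracy of the normalization, so this step is where most care is needed.
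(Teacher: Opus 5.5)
\textbf{Comparison with the paper.} Your route differs from the paper's. The paper takes the capacitary potential $u_s$ of $\overline{B_s(x)}$ in $U_i$ and normalizes it as $w_s=u_s/u_s(o)$. It then uses the exact level-set identity $t\,\mathrm{cap}(\{w_s\ge t\},U_i)=\mathrm{const}$ from \cite[Lemma 3.4]{BBL20}. That constant is bounded below by applying the Sobolev inequality to $\min\{w_s,\lambda_0\}$, with Harnack on a ball $B_\delta(o)$. Since $\{w_s\ge M_r\}\subset\overline{B_r(x)}$, one gets $M_r\gtrsim\mathrm{cap}(B_r(x),B_{2r}(x))^{-1}\gtrsim r^{2-m}$, and the annulus Harnack inequality then turns $\sup$ into $\inf$. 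Uniformity in $x$ comes from Lemma \ref{Lem: enhanced Harnack} with $\mathcal S'=\mathcal S_i$.

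You instead use Dirichlet problems with concentrating sources (Gr\"uter--Widman style). There the flux is normalized explicitly, and the constant $w_\rho(o)$ has to be bounded separately. This is a legitimate alternative, and your Steps 1--2 are fine. However, it needs \emph{two} opposite capacity comparisons, and Step 3 as written invokes the wrong one for the lower bound.

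\textbf{The error in Step 3.} The estimate you cite, $\inf_{\partial B_r}G\cdot\mathrm{cap}\lesssim$ flux, is an \emph{upper} bound on $\inf_{\partial B_r(x)}w_\rho$. Via Harnack it also bounds $\sup$ from above. It cannot produce your displayed lower bound.

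What you need is $1\le M_r\,\mathrm{cap}(\overline{B_r(x)},U_i)$, where $M_r=\sup_{\partial B_r(x)}w_\rho$ and $\rho<r$. To prove it, let $\psi$ be the capacitary potential of $\overline{B_r(x)}$ in $U_i$. The maximum principle gives $w_\rho\le M_r$ and $w_\rho\le M_r\psi$ on $U_i-B_r(x)$. Hence $0\le M_r\psi-\min\{w_\rho,M_r\}\in W^{1,2}_0(U_i)$. Since $\nabla_g\psi=0$ on $\overline{B_r(x)}\supset\operatorname{spt}f_\rho$ and $\psi$ is a supersolution,
\[
1=\int f_\rho\psi\,\mathrm d\mu=\int_{\mathcal R}\nabla_g\psi\cdot\nabla_g\min\{w_\rho,M_r\}\,\mathrm d\mu\le M_r\int_{\mathcal R}|\nabla_g\psi|^2\,\mathrm d\mu=M_r\,\mathrm{cap}(\overline{B_r(x)},U_i).
\]
You also have $\mathrm{cap}(\overline{B_r(x)},U_i)\le\mathrm{cap}(\overline{B_r(x)},B_{2r}(x))\le Cr^{m-2}$; no dyadic summation is needed. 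Combining these with Corollary \ref{Cor: annulus harnack} gives $\inf_{\partial B_r(x)}w_\rho\ge c\,r^{2-m}$.

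The inequality you did write is exactly what your following paragraph needs to get $w_\rho(o)\le C$. Set $m=\inf_{\partial B_{r_i}(x)}w_\rho$. By superharmonicity, $w_\rho\ge m$ on $B_{r_i}(x)$, so $\min\{w_\rho,m\}/m$ is admissible for $\mathrm{cap}(\overline{B_{r_i}(x)},U_i)$. Testing with it yields $m\le\mathrm{cap}(\overline{B_{r_i}(x)},U_i)^{-1}$. With both directions in place, your argument closes.
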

\begin{proof}
    For any small constant $s>0$ we can take a Lipschitz function $v_s$ such that $v_s\equiv 1$ in $B_s(x)$ and $v_s\equiv 0$ outside $B_{2s}(x)$. For simplicity, we will denote $\mathring U_i^s=U_i-\overline{B_s(x)}$. From Lemma \ref{Lem: possion equation} we can construct a function $u_s$ on $X$ such that $\Delta_gu_s=0$ in $\mathcal R\cap \mathring U_i^s$ and also $u_s-v_s\in W^{1,2}_0(\mathring U_i^s)$. Moreover, $u_s$ is a capacity potential, and combined with Lemma \ref{Lem: Sobolev} we have
    \begin{equation}\label{Eq: capacity potential}
        \int_{\mathcal R\cap\mathring U_i^s}|\nabla_gu_s|^2\,\mathrm d\mu=\Capp(\overline{B_s(x)},U_i)>0.
    \end{equation}
    From \eqref{Eq: capacity potential} and the maximum principle we obtain $u_s(o)>0$ and so we can define
    $$w_s:=(u_s(o))^{-1}u_s\mbox{ with }w_s(o)=1.$$ 
    Using the Harnack inequality from Lemma \ref{Lem: enhanced Harnack} we conclude that the functions $w_s$ converge to a limit function $G_x^{(i)}$ satisfying $\Delta_g G_x^{(i)}=0$ in $\mathcal R\cap U_i$ and $G_x^{(i)}(o)=1$ up to a subsequence.

    Next we show the uniform estimates \eqref{Eq: lower bound} and \eqref{Eq: upper bound} for $G_x^{(i)}$. Note that we just need to prove the corresponding uniform estimates for $w_s$.
    Fix $r>0$ small. Then it follows from \cite[Lemma 3.4]{BBL20} that we have
   $$t\cdot\Capp(\{w_s\geq t\}, U_i)\equiv u_s(o)^{-1}\Capp(\overline{B_s(x)},U_i)\mbox{ for any }t\in (0,1].$$
   Using the Harnack inequality from Lemma \ref{Lem: ball harnack} we have $\lambda_0^{-1}\leq w_s\leq \lambda_0$ in some small ball $B_{\delta}(o)$ for some constant $\lambda_0>1$. From Lemma \ref{Lem: Sobolev} we can compute
   \[
   \begin{split}
       \lambda_0\Capp(\{w_s\geq \lambda_0\},U_i)&\geq\lambda_0^{-1}\int_{\mathcal R\cap U_i}|\nabla_g(\min\{w_s,\lambda_0\})|^2\,\mathrm d\mu\\
       &\geq \lambda_0^{-1}\left(\int_{\mathcal R\cap U_i}|(\min\{w_s,\lambda_0\})|^\frac{2m}{m-2}\,\mathrm d\mu\right)^{\frac{m-2}{m}}\\
       &\geq \lambda_0^{-3}\mu(B_{\delta}(o))^{\frac{m-2}{m}},
   \end{split}
   \]
 and this gives \eqref{Eq: lower bound} after taking limit. Denote $M_r=\sup_{\partial B_r(x)} w_s$ and $m_r=\inf_{\partial B_r(x)} w_s$. In particular, we have 
   $$M_r\geq c\Capp(\{w_s\geq M_r\},U_i)^{-1}.$$
   Using the local Ahlfors $m$-regularity, there is a constant $r_i>0$ such that we have
   $$\Capp(\{w_s\geq M_r\},U_i)\leq \Capp(B_r(x),B_{2r}(x))\leq \int_{B_r(x)}(\Lip\eta)^2\,\mathrm d\mu\leq Cr^{m-2}$$
   for any $r\in(0,r_i)$,
   where $\eta$ is the cut-off function such that $\eta\equiv 1$ in $B_{r}(x)$ and $\eta\equiv 0$ outside $B_{2r}(x)$. Combined with the Harnack inequality from Corollary \ref{Cor: annulus harnack}, we obtain 
   $$m_r\geq c_ir^{2-m}$$
   for any $r\in (0,r_i)$ and some positive constant $c_i$. Based on the Harnack inequality from Lemma \ref{Lem: enhanced Harnack} we have $$\sup_{\partial B_{r/2}(\mathcal S_i)} w_s\leq C_r$$
    for some constant $C_r>0$ independent of $x$. Let $\eta$ be a cut-off function such that $\eta\equiv 0$ in $B_{r/2}(\mathcal S_i)$ and $\eta\equiv 1$ outside $B_r(\mathcal S_i)$. From integration by parts and the maximum principle we obtain
    $$\int_{\mathcal R\cap (U_i-\overline{B_r(\mathcal S_i)})}|\nabla_g w_s|^2\,\mathrm d\mu\leq 4\int_{\mathcal R\cap (U_i-\overline{B_{r/2}(\mathcal S_i)})}|\nabla_g\eta|^2w_s^2\,\mathrm d\mu\leq \frac{16}{r^2C_r^2}\mu(U_i).$$
    By taking $C_i(r)=16r^{-2}C_r^{-2}\mu(U_i)$ we obtain \eqref{Eq: upper bound} after taking limit. Moreover, we obtain $\eta w_s\to \eta G_x^{(i)}\in W^{1,2}_0(U_i)$ for any cut-off function $\eta$ vanishing in a neighborhood of $\mathcal S_i$ up to a subsequence, and we complete the proof.
\end{proof}

The next lemma follows essentially from the nonlinear potential theory. Given any finite Radon measure $\nu$ on $X$, we introduce the {\it Wolff potential} $\mathcal W^{\nu}$ at any point $x$ by
\[
\mathcal W^{\nu}(x)
:=\int_0^1 \Bigl(\nu(B_r(x))\, r^{2-m}\Bigr)^{\frac{2}{m-2}}\,\mathrm dr.
\]

\begin{lemma}\label{lem:wolff-divergent-metric}
If we have
\[
\dim_{\mathcal H}(\mathcal S) < \frac{m-2}{2},
\]
then there exists a finite Radon measure $\nu$ supported on $\mathcal S_i$ such that
\[
\mathcal W^{\nu}(x)=+\infty
\mbox{ for every }x\in \mathcal S_i.
\]
\end{lemma}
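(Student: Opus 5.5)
The plan is to build $\nu$ as a countable sum of weighted Dirac masses at the centers of finer and finer efficient covers of $\mathcal S_i$. The covers come from the vanishing of the critical Hausdorff measure. The key observation is how the exponent works out. If $\nu(B_r(x))\ge a$ for all $r$ in a dyadic range $[\rho,2\rho]$, then that range contributes
$$\int_\rho^{2\rho}\bigl(a\,r^{2-m}\bigr)^{\frac{2}{m-2}}\,\mathrm dr\ \ge\ c_m\,a^{\frac{2}{m-2}}\rho^{-2}\cdot\rho=c_m\,a^{\frac{2}{m-2}}\rho^{-1}$$
to $\mathcal W^\nu(x)$. This is a constant $c_m>0$ exactly when $a=\rho^{s}$ with $s:=\frac{m-2}{2}$. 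So it suffices to arrange that every $x\in\mathcal S_i$ sees mass comparable to $\rho_k^{s}$ at infinitely many well-separated scales $\rho_k\to 0$.

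First, since $\dim_{\mathcal H}(\mathcal S)<s$, we have $\mathcal H^{s}(\mathcal S_i)=0$. Also $\mathcal S_i=\mathcal S\cap\overline{B_i(o)}$ is compact by Lemma \ref{Lem: bounded to compact}. I construct covers inductively in $k\ge1$. Suppose the radii $r_{k-1,j}$ of the previous stage are chosen; there are finitely many of them. Set $\rho_k:=\frac14\min_j r_{k-1,j}$, with $\rho_0=2^{-1}$. Using $\mathcal H^s(\mathcal S_i)=0$ (passing from arbitrary covering sets to balls centered on $\mathcal S_i$ at the cost of a factor $2^s$), together with compactness, I choose finitely many balls $B_{r_{k,j}}(x_{k,j})$ with the following properties:
\begin{itemize}
\item the centers satisfy $x_{k,j}\in\mathcal S_i$;
\item the radii satisfy $r_{k,j}\le\min\{\rho_k,2^{-k}\}$;
\item the balls cover $\mathcal S_i$;
\item the weights are summable at each stage: $\sum_j r_{k,j}^{s}\le 2^{-k}$.
\end{itemize}
Then I define
$$\nu:=\sum_{k\ge1}\sum_j r_{k,j}^{s}\,\delta_{x_{k,j}}.$$
This is a finite Borel measure of total mass at most $1$, hence Radon. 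Its support is contained in the closure of the centers, which lies in the compact set $\mathcal S_i$.

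Next comes the divergence. Fix $x\in\mathcal S_i$. For each $k$ pick $j$ with $x\in B_{r_{k,j}}(x_{k,j})$. Then for every $r\in[r_{k,j},2r_{k,j}]$ the ball $B_r(x)$ contains $x_{k,j}$, so $\nu(B_r(x))\ge r_{k,j}^{s}$. By the computation above, the interval $I_k:=[r_{k,j},2r_{k,j}]\subset(0,1)$ contributes at least $c_m>0$ to $\mathcal W^\nu(x)$. The choice $r_{k,j}\le\rho_k=\frac14\min_{j'}r_{k-1,j'}$ forces $2r_{k,j}<r_{k-1,j'}$ for every $j'$. Hence the intervals $I_k$, $k\ge1$, are pairwise disjoint, and summing over $k$ gives $\mathcal W^\nu(x)=+\infty$.

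The main point requiring care is the bookkeeping that makes the scales nested, namely the finiteness of each cover. Compactness of $\mathcal S_i$ is what allows the next scale $\rho_k$ to sit below all radii of the previous stage. The other delicate point is the borderline exponent: the whole argument uses only $\mathcal H^{(m-2)/2}(\mathcal S_i)=0$, at the exact critical exponent where the integrand yields a scale-invariant contribution $c_m$. I do not expect to need anything finer, such as Frostman measures.
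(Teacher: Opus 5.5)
Your proof is correct and is essentially the paper's construction: an atomic measure with weights $r^{\frac{m-2}{2}}$ placed at the centers of ever finer covers of $\mathcal S_i$, with each scale window $[r,2r]$ contributing a fixed amount to $\mathcal W^\nu(x)$. The differences are refinements in your favor. The paper uses a subcritical $s<\frac{m-2}{2}$ with $\sum_j r_{l,j}^s\le 1$ and gets finiteness of $\nu$ from the factor $2^{-l\tau}$, whereas you work at the critical exponent with stage budget $2^{-k}$, so only $\mathcal H^{\frac{m-2}{2}}(\mathcal S_i)=0$ is used. Your finite nested covers also make the windows $I_k$ pairwise disjoint, which is what justifies summing the contributions; the paper's display sums over $l$ without this, which is not automatic when $\frac{2}{m-2}<1$, though it is easily repaired by passing to a subsequence of scales.
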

\begin{proof}
Fix a constant $s\in (0,\frac{m-2}{2})$ such that
\[
\dim_{\mathcal H}(\mathcal S_i)< s \mbox{ and so }\mathcal H^{s}(\mathcal S_i)=0.
\]
For each integer $l\ge 1$ 
there exists a
countable covering $\{B_{r_{l,j}}(x_{l,j})\}_{j}$ of $\mathcal S_i$ with $x_{l,j}\in \mathcal S_i$
such that
\begin{equation}\label{Eq: small radius}
    r_{l,j}\le 2^{-l}
\mbox{ and }
\sum_j r_{l,j}^{\,s}\le 1
\end{equation}
Define the atomic measure
\[
\nu:=\sum_{l=1}^\infty \ \sum_{j} r_{l,j}^{\frac{m-2}{2}}\delta_{x_{l,j}}\mbox{ with } \operatorname{spt}\nu\subset \mathcal S_i,
\]
where $\delta_{x_{l,j}}$ denotes the Dirac measure supported at $x_{l,j}$.

We first check that $\nu$ is finite. For simplicity let us denote $\tau:=\frac{m-2}{2}-s>0$. From \eqref{Eq: small radius} we can compute
\[
r_{l,j}^{\frac{m-2}{2}}
= r_{l,j}^{\,s}\cdot r_{l,j}^{\tau}
\le  2^{-l\tau}\cdot r_{l,j}^{\,s},
\]
and then
\[
\sum_j r_{l,j}^{\frac{m-2}{2}}
\le 2^{-l\tau}\cdot\sum_j r_{l,j}^{\,s}
\le 2^{-l\tau}
\]
Therefore we obtain
\[
\nu(X)
=\sum_{l=1}^\infty\sum_j r_{l,j}^{\frac{m-2}{2}}
\le \sum_{l=1}^\infty 2^{-l\tau}
<\infty.
\]

Next we verify $\mathcal W^{\nu}(x)=+\infty$ for all $x\in \mathcal S_i$. Fix any $x\in K$. For each $l\ge 1$, choose an index $j'=j'(l)$ such that
$x\in B_{r_{l,j'}}(x_{l,j'})$. For any $r\in [r_{l,j'},2r_{l,j'}]$ we have
$$x_{l,j'}\in B_{r_{l,j'}}(x)\subset B_r(x)\mbox{ and so }\nu(B_r(x))\geq \sum_l r_{l,j'}^{\frac{m-2}{2}}\chi_{[r_{l,j'},2r_{l,j'}]}(r).$$
Therefore we can compute
\begin{align*}
\mathcal W^{\nu}(x)&=\int_0^1 \Bigl(\nu(B_r(x))\, r^{2-m}\Bigr)^{\frac{2}{m-2}}\,\mathrm dr\\
&\ge \sum_{l=1}^\infty \int_{r_{l,j'}}^{2r_{l,j'}}
\Bigl(r_{l,j'}^{\frac{m-2}{2}}\cdot r^{2-m}\Bigr)^{\frac{2}{m-2}}\,\mathrm dr\\
&\ge c\sum_{l=1}^\infty
\int_{r_{l,j'}}^{2r_{l,j'}} (r_{l,j'})^{-1} \,\mathrm dr
=+\infty.
\end{align*}
We complete the proof.
\end{proof}

Define
$$G_{\mathcal S_i}^{(i)}=\int_{\mathcal S_i} G_x^{(i)}\,\mathrm d\nu(x).$$

\begin{lemma}\label{Lem: Greens function at Si}
$G_{\mathcal S_i}^{(i)}:X-\mathcal S_i\to [0,+\infty)$ satisfies the following properties:
\begin{itemize}
    \item $\Delta_g G_{\mathcal S_i}^{(i)}=0$ in $\mathcal R\cap U_i$;
    \item $G_{\mathcal S_i}^{(i)}\in W^{1,2}(\mathcal R\cap U_i-\overline{B_r (\mathcal S_i)})$ for all $r>0$ and $\eta G_x^{(i)}\in W^{1,2}_0(U_i)$ for any cut-off function $\eta$ vanishing in a neighborhood of $\mathcal S_i$;
    \item we have
    $$\int_0^s \left(G_{\mathcal S_i}^{(i)}(\gamma(t))\right)^{\frac{2}{m-2}}\,\mathrm dt=+\infty$$
    for any unit-speed curve $\gamma:[0,s)\to\mathcal R\cap U_i$ with $\gamma(s^-)\in\mathcal S_i$.
\end{itemize}
\end{lemma}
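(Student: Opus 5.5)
The first two properties come from the corresponding properties of each $G_x^{(i)}$ in Lemma \ref{Lem: Greens function at x} together with the finiteness of $\nu$. The third property is where the divergent Wolff potential from Lemma \ref{lem:wolff-divergent-metric} enters.

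\emph{Harmonicity and Sobolev regularity.} Fix $r>0$ and set $\Omega_r=\mathcal R\cap U_i-\overline{B_r(\mathcal S_i)}$.

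For $x\in\mathcal S_i$, the functions $G_x^{(i)}$ are nonnegative, harmonic on $\mathcal R\cap U_i$ and normalized by $G_x^{(i)}(o)=1$. By Lemma \ref{Lem: enhanced Harnack} they are uniformly bounded above and below on every compact subset of $U_i-\mathcal S_i$, uniformly in $x$. Interior elliptic estimates on $(\mathcal R,g)$ then bound all their derivatives locally uniformly on $\mathcal R\cap U_i$ away from $\mathcal S_i$.

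Since $\nu$ is finite, we may differentiate under the integral sign. Hence $G^{(i)}_{\mathcal S_i}$ is smooth and harmonic in $\mathcal R\cap U_i$; measurability in $x$ can be obtained by realizing $G_x^{(i)}$ as a limit, or simply because $\nu$ is atomic and the integral is a countable sum.

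For the $W^{1,2}$ bound, I will use Minkowski's inequality:
$$\|\nabla G^{(i)}_{\mathcal S_i}\|_{L^2(\Omega_r)}\le\int\|\nabla G_x^{(i)}\|_{L^2(\Omega_r)}\,\mathrm d\nu\le C_i(r)^{1/2}\nu(X).$$
The $L^2$ norm is handled the same way using the uniform sup bound on $\Omega_r$; if that bound is only local near $\partial U_i$, I will use the Sobolev inequality from Lemma \ref{Lem: Sobolev} applied to $\eta G_x^{(i)}\in W^{1,2}_0(U_i)$ instead.

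The statement $\eta G^{(i)}_{\mathcal S_i}\in W^{1,2}_0(U_i)$ follows because this space is closed, and the partial sums of $\sum_{l,j}$ converge to $\eta G^{(i)}_{\mathcal S_i}$ in $W^{1,2}$ by the same Minkowski bound.

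\emph{Divergence along curves.} Let $\gamma$ be a unit-speed curve with $\gamma(t)\to y\in\mathcal S_i$ as $t\to s^-$. Then $d(\gamma(t),y)\le s-t$.

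For $z=\gamma(t)$, set $\rho=d(z,y)$, which is small. Any $x\in\mathcal S_i\cap B_\rho(y)$ satisfies $d(z,x)\le2\rho$. The lower bound \eqref{Eq: lower bound} is stated on spheres, but combined with the annulus Harnack inequality (Corollary \ref{Cor: annulus harnack}) it gives
$$G_x^{(i)}(z)\ge c\,d(z,x)^{2-m}\ge c'\rho^{2-m}.$$
Hence
$$G^{(i)}_{\mathcal S_i}(z)\ge c\,\nu(B_\rho(y))\,\rho^{2-m}\ge c\,\nu\bigl(B_{s-t}(y)\bigr)(s-t)^{2-m}\cdot\Bigl(\tfrac{\rho}{s-t}\Bigr)^{2-m}.$$
The last factor is at least $1$ since $\rho\le s-t$. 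Adjusting radii to absorb the monotonicity, we obtain
$$G^{(i)}_{\mathcal S_i}(\gamma(t))\ge c\,\nu(B_{s-t}(y))(s-t)^{2-m}.$$

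Raising to the power $\tfrac{2}{m-2}$ and substituting $r=s-t$ gives
$$\int_0^s\bigl(G^{(i)}_{\mathcal S_i}(\gamma(t))\bigr)^{\frac{2}{m-2}}\,\mathrm dt\ge c\int_0^{\min(s,1)}\bigl(\nu(B_r(y))r^{2-m}\bigr)^{\frac{2}{m-2}}\,\mathrm dr.$$
The right-hand side differs from $\mathcal W^\nu(y)=+\infty$ only by a finite amount, so the left-hand side diverges.

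\emph{Main obstacle.} The delicate step is the uniform pointwise bound $G_x^{(i)}(z)\ge c\,d(z,x)^{2-m}$. It requires uniformity of $c_i$ and $r_i$ over all $x\in\mathcal S_i$, together with the use of annulus Harnack on $A_\rho(x)$ for $z$ lying at distance comparable to $\rho$.

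If $z$ is much closer to some other singular point, this is harmless: I only need the bound for $x$ in $B_\rho(y)$. Since $d(z,x)\le 2\rho$, I would apply the bound on the sphere $\partial B_{d(z,x)}(x)$ directly, which contains $z$, and use $d(z,x)^{2-m}\ge(2\rho)^{2-m}$. This avoids Harnack entirely.
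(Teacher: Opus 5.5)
Your route is the paper's. The first two properties come from the uniform Harnack bounds of Lemma \ref{Lem: enhanced Harnack}, interior gradient estimates, dominated convergence and the energy bound \eqref{Eq: upper bound}; your Minkowski argument just makes the paper's one-line justification explicit. The third property comes from integrating the lower bound \eqref{Eq: lower bound} against $\nu$ to produce the Wolff potential at $y=\gamma(s^-)$. The uniformity you list as the main obstacle is already supplied, since the constants $c_i,r_i$ in Lemma \ref{Lem: Greens function at x} depend only on $i$. As you note at the end, no annulus Harnack is needed, because $z$ lies on the sphere $\partial B_{d(z,x)}(x)$.

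However, the key display in your divergence step is false as written. With $\rho=d(z,y)\le s-t$, the inequality $\nu(B_\rho(y))\rho^{2-m}\ge\nu(B_{s-t}(y))(s-t)^{2-m}(\rho/(s-t))^{2-m}$ is equivalent to $\nu(B_\rho(y))\ge\nu(B_{s-t}(y))$. That goes the wrong way, since $B_\rho(y)\subset B_{s-t}(y)$. A unit-speed curve can come much closer to $y$ than $s-t$ before time $s$, so $\nu(B_\rho(y))$ may be far smaller than $\nu(B_{s-t}(y))$. "Adjusting radii" therefore does not follow from your choice of $\rho$.

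The fix, which is exactly the paper's computation, is to integrate over $x\in\mathcal S_i\cap B_{s-t}(y)$ from the start. The triangle inequality gives $d(z,x)\le d(z,y)+d(y,x)<2(s-t)$. Restrict to $s-t<r_i/2$; the paper assumes $s<r_i/2$ without loss of generality. Then $G_x^{(i)}(z)\ge c_i\,d(z,x)^{2-m}\ge c_i(2(s-t))^{2-m}$, hence
\[
G^{(i)}_{\mathcal S_i}(\gamma(t))\ge c_i\,2^{2-m}\,\nu(B_{s-t}(y))\,(s-t)^{2-m}.
\]
Substituting $r=s-t$ then gives a lower bound $\frac14 c_i^{\frac{2}{m-2}}\bigl(\mathcal W^\nu(y)-C\bigr)=+\infty$. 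Here $C$ accounts for the finite contribution of $r\ge r_i/2$, which is finite because $\nu$ is finite. With this change your argument is complete.
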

\begin{proof}
    Using the Harnack inequality from Lemma \ref{Lem: enhanced Harnack}, the gradient estimates for harmonic functions, and the dominant convergence theorem as well as Lemma \ref{Lem: Greens function at x}, we obtain the first three properties. For the last property we can assume $s<r_i/2$ without loss  generality, where $r_i$ is the positive constant from Lemma \ref{Lem: Greens function at x}. Then we can compute
    \[
\begin{split}
   \int_0^{s}\left(G_{\mathcal S_i}^{(i)}(\gamma(t))\right)^{\frac{2}{m-2}}\,\mathrm dt
    &= \int_0^{s}\left(\int_{\mathcal S_i}G_x^{(i)}(\gamma(t))\,\mathrm d\nu\right)^{\frac{2}{m-2}}\,\mathrm dt\\
    &\geq \int_0^s\left(\nu(B_{s-t}(\gamma(s^-))\cdot c_i(2(s-t))^{2-m}\right)^{\frac{2}{m-2}}\,\mathrm dt\\
    &=\frac{1}{4}c_i^{\frac{2}{m-2}}(\mathcal W^\nu(\gamma(s^-))-C)=+\infty.
\end{split}
\]
We complete the proof.
\end{proof}

\begin{lemma}\label{Lem: G Si}
    If $(X,d,\mu)$ is non-parabolic, then we can find a function $G_{\mathcal S_i}:X-\mathcal S_i\to [0,+\infty)$ such that $\Delta_gG_{\mathcal S_i}=0$ in $\mathcal R$ and that we have 
    $$\int_0^s \left(G_{\mathcal S_i}(\gamma(t))\right)^{\frac{2}{m-2}}\,\mathrm dt=+\infty$$
    for any unit-speed curve $\gamma:[0,s)\to\mathcal R$ with $\gamma(s^-)\in\mathcal S_i$. Moreover, we have
    $$G_{\mathcal S_i}<C_{E,v}(v-\inf_X v)\mbox{ in }E,$$
    where $E$ is any open subset of $X$ with compact boundary $\partial E$ such that $\overline E\cap \mathcal S_i=\emptyset$, $v$ is any non-constant bounded superharmonic function on $X$, and $C_{E,v}$ is a positive constant depending on $E$ and $v$.
\end{lemma}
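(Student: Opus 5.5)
The strategy is exhaustion in $j$. For fixed $i$, the functions $G^{(j)}_{\mathcal S_i}$ are built on the larger domains $U_j$, $j\ge i$, with the same measure $\nu$ on $\mathcal S_i$ (supported in $\mathcal S_i\subset\mathcal S_j$). We then pass to the limit $j\to\infty$, using non-parabolicity to get both a uniform upper bound away from $\mathcal S_i$ and a nontrivial limit.

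Concretely, for $x\in\mathcal S_i$ and $j\ge i$, repeat the construction of Lemma \ref{Lem: Greens function at x} on $U_j$. Instead of normalizing at $o$, normalize by the capacity. Let $u^{(j)}_{s}$ be the capacity potential of $\overline{B_s(x)}$ relative to $U_j$, and set $w=u_s^{(j)}/\Capp(\overline{B_s(x)},U_j)$. In the limit $s\to0$ this gives a Green-type function $\hat G^{(j)}_x$. This function is increasing in $j$ by the comparison principle, since $U_j\subset U_{j+1}$ and the potentials vanish on $\partial U_j$.

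The lower bound $\hat G^{(j)}_x\ge c\,r^{2-m}$ on $\partial B_r(x)$ for $r<r_i$ then holds uniformly in $j$. Indeed, the capacity estimate $\Capp(B_r(x),B_{2r}(x))\le Cr^{m-2}$ and the annulus Harnack inequality (Corollary \ref{Cor: annulus harnack}) only involve a fixed neighborhood of $\overline{B_i(o)}$, and monotonicity in $j$ only helps. Define
$$G^{(j)}_{\mathcal S_i}=\int_{\mathcal S_i}\hat G^{(j)}_x\,\mathrm d\nu(x).$$
By the computation of Lemma \ref{Lem: Greens function at Si}, together with $\mathcal W^\nu=+\infty$ on $\mathcal S_i$ from Lemma \ref{lem:wolff-divergent-metric}, the line integral $\int_0^s(G^{(j)}_{\mathcal S_i}\circ\gamma)^{2/(m-2)}\,\mathrm dt$ diverges for every $j$, with a lower bound independent of $j$.

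To get a finite limit, fix a non-constant bounded superharmonic $v$ and set $\tilde v=v-\inf_X v\ge0$, which is not identically zero and is positive on $\mathcal R$ by the strong minimum principle. Choose $r_0>0$ with $\overline{E}\cap\overline{B_{2r_0}(\mathcal S_i)}=\emptyset$, which is possible since $\overline E\cap\mathcal S_i=\emptyset$ and $\mathcal S_i$ is compact. By the Harnack inequality (Lemma \ref{Lem: enhanced Harnack}) on the compact set $\partial B_{r_0}(\mathcal S_i)$, we have $\sup_{\partial B_{r_0}(\mathcal S_i)}G^{(j)}_{\mathcal S_i}\le C$ uniformly in $j$. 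This is the key uniform estimate: the functions are increasing in $j$, so I would bound them by the Green function of a single fixed region through the maximum principle. Since $G^{(j)}_{\mathcal S_i}$ vanishes on $\partial U_j$ and $\tilde v\ge0$ is superharmonic, comparison on $U_j\setminus\overline{B_{r_0}(\mathcal S_i)}$ gives
$$G^{(j)}_{\mathcal S_i}\le \Big(C/\inf_{\partial B_{r_0}(\mathcal S_i)}\tilde v\Big)\,\tilde v,$$
where the infimum is positive because $\tilde v>0$ on the compact set $\partial B_{r_0}(\mathcal S_i)$ after slightly perturbing $r_0$ so that it avoids $\mathcal S$ in the regular part. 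The bound in $E$ then follows with a constant $C_{E,v}$; strict inequality comes from doubling the constant.

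With the uniform bound in hand, the monotone limit $G_{\mathcal S_i}=\lim_j G^{(j)}_{\mathcal S_i}$ exists. It is harmonic on $\mathcal R$ by Harnack and gradient estimates, positive, and inherits the divergence of the curve integral by monotonicity, since $G_{\mathcal S_i}\ge G^{(i)}_{\mathcal S_i}$.

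The main obstacle is the uniform-in-$j$ bound near $\partial B_{r_0}(\mathcal S_i)$. The Harnack constant relates values there to one reference point, but bounding that reference value uniformly needs non-parabolicity. My first attempt would be to compare $\hat G^{(j)}_x$ with the Green function of $X$, which is finite by non-parabolicity, via the barrier $\tilde v$. Alternatively, I would normalize each $G^{(j)}_{\mathcal S_i}$ at $o$ as in the paper. In that case the lower bound on the line integral must be preserved, and I would get it from a uniform lower bound on the normalizing factors, namely that the capacity of $\overline{B_{r}(x)}$ relative to $U_j$ stays bounded below as $j\to\infty$, which is exactly non-parabolicity.
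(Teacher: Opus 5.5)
\textbf{There is a genuine gap: the uniform bound that is the whole point of non-parabolicity is not proved.}

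Much of your set-up is correct.
\begin{itemize}
\item Normalizing by $\Capp(\overline{B_s(x)},U_j)$ makes $\hat G^{(j)}_x$ increasing in $j$: the numerator $u^{(j)}_s$ increases and the capacity decreases.
\item The level-set identity $t\,\Capp(\{\hat w_s\ge t\},U_j)=1$ gives $\hat G^{(j)}_x\ge c\,r^{2-m}$ on $\partial B_r(x)$ uniformly in $j$.
\item Hence the divergence of the line integral and the final comparison with $\tilde v$ go through, \emph{provided} $\sup_j\sup_K G^{(j)}_{\mathcal S_i}<\infty$ for one fixed compact set $K$.
\end{itemize}
That uniform bound is the only place non-parabolicity enters, and the proposal does not establish it.
\begin{itemize}
\item Lemma \ref{Lem: enhanced Harnack} controls only the ratio $\sup_K/\inf_K$, not the size. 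So deriving $\sup_{\partial B_{r_0}(\mathcal S_i)}G^{(j)}_{\mathcal S_i}\le C$ from Harnack is unjustified.
\item Bounding by the Green function of a single fixed region goes the wrong way, since $\hat G^{(j)}_x\ge \hat G^{(i)}_x$.
\item Your route (a) presupposes a finite global Green function on $X$. That is exactly what has to be produced from the mere existence of a non-constant bounded superharmonic $v$.
\item Your route (b) asserts, without argument, that a $j$-uniform capacity lower bound ``is exactly non-parabolicity''.
\end{itemize}

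\textbf{How the paper closes it.} It never builds Green functions on larger domains; it keeps the singular part fixed. It sets $w_l=\eta G^{(i)}_{\mathcal S_i}+u_l$, where $u_l\in W^{1,2}_0(V_l)$ solves $\Delta_g u_l=\divv_g\mathbf F$ with $\mathbf F=\nabla_g((1-\eta)G^{(i)}_{\mathcal S_i})$. Then $w_l$ is harmonic, increasing in $l$, and $w_l\ge G^{(i)}_{\mathcal S_i}$. The key step is a blow-down contradiction:
\begin{itemize}
\item Suppose $w_l(o)\to\infty$, and set $\bar w_l=w_l/w_l(o)$.
\item Since $w_1/w_l(o)\to 0$ in $V_1$, $\bar w_l$ loses its singular part. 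It converges to a harmonic $\bar w_\infty\le M_\infty$ that attains $M_\infty>0$ on $\partial V_1$, so $\bar w_\infty$ is constant.
\item Comparison with $\tilde v$ on $V_l\setminus\overline{V_1}$ gives $\bar w_\infty\le c^{-1}M_\infty\tilde v$, forcing $\inf\bar w_\infty=0$, a contradiction.
\end{itemize}

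\textbf{How to close it in your framework.} The same barrier idea works.
\begin{itemize}
\item Take $K=\overline{B_\delta(o)}\subset\mathcal R$ and let $e_j$ be the capacity potential of $K$ in $U_j$. Comparison gives $(\min_K\tilde v)\,e_j\le\tilde v$, where $\min_K\tilde v>0$ by the strong minimum principle.
\item If $\Capp(K,U_j)\to 0$, then $e_j\to 1$ on the connected set $\mathcal R$. This would give $\tilde v\ge\min_K\tilde v>0$ on $X$, contradicting $\inf_X\tilde v=0$. Hence $\Capp(K,U_j)\ge c_K>0$ for all $j$.
\item The level-set identity then gives $\inf_K\hat w_s\le 1/\Capp(K,U_j)\le c_K^{-1}$.
\item Only now does Lemma \ref{Lem: enhanced Harnack}, applied on the fixed region $U_i$, yield a bound on $\partial B_{r_0}(\mathcal S_i)$ that is uniform in $j$, $s$ and $x\in\mathcal S_i$.
\end{itemize}
One of these two arguments must be supplied for your proof to be complete.
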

\begin{proof}
From Lemma \ref{Lem: Ui} we can construct an increasing bounded open exhaustion $\{V_l\}_{l=1}^\infty$ of $X$ such that $U_i\subset V_l$ and $\mathcal R\cap V_l$ is connected for all $l$. 
    Fix $r>0$ and we take a Lipschitz cut-off function such that $\eta\equiv 1$ in $B_r(\mathcal S_i)$ and $\eta\equiv 0$ outside $B_{2r}(\mathcal S_i)$. Define
    $$\mathbf F=\nabla_g((1-\eta)G_{\mathcal S_i}^{(i)}).$$
    It follows from Lemma \ref{Lem: Greens function at Si} that $\mathbf F\in L^2(V_l)$ for all $l\geq 1$. By Lemma \ref{Lem: possion equation} we can construct a function $u_l\in W^{1,2}_0(V_l)$ such that $\Delta_gu_l=\divv_g\mathbf F$ in $\mathcal R\cap V_l$. Let us consider the functions
    $$w_l=\eta G_{\mathcal S_i}^{(i)}+u_l.$$     Notice that we have $\Delta_g w_l=0$ in $\mathcal R\cap V_l$ from definition and also $w_{l+1}\geq w_l>0$ for all $l\in \mathbf N_+$ from the maximum principle. By comparison, we also have $w_1\geq G_{\mathcal S_i}^{(i)}$.
    
    We claim that $w_l(o)$ are uniformly bounded. Suppose otherwise that we have $w_l(o)\to +\infty$ as $l\to\infty$. Denote
    $$\bar w_l:=\frac{w_l}{w_l(o)}\mbox{ and } M_l:=\sup_{\partial V_1} \bar w_l.$$
    Using the Harnack inequality from Lemma \ref{Lem: enhanced Harnack} we can assume $\bar w_l\to \bar w_\infty$ and $M_l\to M_\infty$ as $l\to \infty$ up to a subsequence, where we have $\bar w_\infty\in W^{1,2}_{loc}(X-\mathcal S_i)$, $\Delta_g\bar w_\infty=0$ in $\mathcal R$ and $M_\infty\in (0,+\infty)$. From the maximum principle, we actually have
    $$\frac{w_1}{w_l(o)}\leq \bar w_l\leq \frac{w_1}{w_l(o)}+M_l\mbox{ in }V_1\mbox{ and }\bar w_l\leq M_l\mbox{ in }V_l-\bar V_1.$$
    Passing to the limit, we conclude that $\bar w_\infty$ attains its maximum value $M_\infty$ in $\partial V_1$ and then the strong maximum principle yields $\bar w_\infty\equiv M_\infty$. Let $v$ be a non-constant bounded superharmonic function on $X$. From the strong maximum principle we have $v-\inf_X v\geq c$ on $\partial V_1$ for some positive constant $c$. By comparison we see $\bar w_l\leq c^{-1}M_l(v-\inf_Xv)$ in $V_l-\bar V_1$ and so we obtain
    $$\bar w_\infty\leq c^{-1}M_\infty (v-\inf_Xv).$$
    This yields $\inf_X\bar w_\infty=0$, which contradicts to the facts that $\bar w_\infty$ is a constant function and $\sup_{\partial V_1}\bar w_\infty=M_\infty>0$.

    Since $w_l(o)$ are uniformly bounded, using the Harnack inequality from Lemma \ref{Lem: enhanced Harnack} we can deduce that the functions $w_l$ converge to a limit function $G_{\mathcal S_i}$ satisfying $\Delta_gG_{\mathcal S_i}=0$ in $\mathcal R$. Clearly we have $G_{\mathcal S_i}\geq G_{\mathcal S_i}^{(i)}$ and so from Lemma \ref{Lem: Greens function at Si} we have 
    $$\int_0^s \left(G_{\mathcal S_i}(\gamma(t))\right)^{\frac{2}{m-2}}\,\mathrm dt=+\infty$$
    for any unit-speed curve $\gamma:[0,s)\to\mathcal R$ with $\gamma(s^-)\in\mathcal S_i$. The last statement comes from a similar comparison argument as above.
\end{proof}

\begin{proof}[Proof of Proposition \ref{Prop: weak form}]
    We can define
    $$G_{\mathcal S}=\sum_{i=1}^\infty 2^{-i}\frac{G_{\mathcal S_i}}{G_{\mathcal S_i}(o)}.$$
    Clearly, we have $G_{\mathcal S}(o)=1$ and 
    $$G_{\mathcal S}<C_{E,v}(v-\inf_X v)\mbox{ in }E.$$
    From the Harnack inequaity, the gradient estimates, and the dominant divergence theorem, we conclude that $G_{\mathcal S}$ is a smooth positive harmonic function on $\mathcal R$. Let us verify that the conformal metric
\[
\tilde g := (1+\delta G_{\mathcal S})^{\frac{4}{m-2}} g
\]
 is a complete Riemannian metric on $\mathcal R$. Otherwise, by the Hopf-Rinow theorem we can find a unit-speed $\tilde g$-geodesic $\gamma:[0,\tilde s)\to \mathcal R$ with $\tilde s<+\infty$ which cannot be extended any more. Notice that we have $\tilde g\geq g$, and so the completeness of $(X,d)$ combined with the non-extended property of $\gamma$ yields $\gamma(\tilde s^-)\in \mathcal S$. In particular, we can find $i\in\mathbf N_+$ such that $\gamma(\tilde s^-)\in \mathcal S_i$. In the following, we work with the $g$-unit-speed reparametrization $\gamma:[0,s)\to \mathcal R$. From Lemma \ref{Lem: G Si} we can compute
 $$\tilde s=\int_0^s(G_{\mathcal S}(\gamma(t)))^{\frac{2}{m-2}}\,\mathrm dt\geq \left(\frac{\delta}{2^i G_{\mathcal S_i}(o)}\right)^{\frac{2}{m-2}}\int_0^s\left(G_{\mathcal S_i}(\gamma(t))\right)^{\frac{2}{m-2}}\,\mathrm dt=+\infty,$$
 which leads to a contradiction.
\end{proof}

\subsection{Modification and application}

First let us present the following modification of Proposition \ref{Prop: weak form}.
\begin{proposition}\label{Prop: strong form}
    If $(X,d,\mu)$ is non-parabolic and $\mathcal S$ satisfies
    $$\mathcal H^{\frac{m-2}{2}}(\mathcal S\cap U)<+\infty\mbox{ for any bounded }U\subset X.$$
    Moreover, we assume that $(X,d)$ can be bi-Lipschitzly embedded into a smooth Riemannian manifold. Then there is a smooth harmonic function $G_{\mathcal S}$ on $\mathcal R$ such that the conformal metric
    $$\tilde g:=(1+\delta G_{\mathcal S})^{\frac{4}{m-2}}g$$
    is complete on $\mathcal R$ for any $\delta>0$. Moreover, we have
    $$G_{\mathcal S}<C_{E,v}(v-\inf_X v)\mbox{ in }E,$$
    where $E$ is any open subset of $X$ with compact boundary $\partial E$ such that $\overline E\cap \mathcal S=\emptyset$, $v$ is any non-constant bounded superharmonic function on $X$, and $C_{E,v}$ is a positive constant depending on $E$ and $v$.
\end{proposition}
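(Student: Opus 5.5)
The plan is to reuse the proof of Proposition \ref{Prop: weak form} essentially verbatim. That proof uses the hypothesis $\dim_{\mathcal H}(\mathcal S)<\frac{m-2}{2}$ in exactly one place, namely Lemma \ref{lem:wolff-divergent-metric}, which produces a finite Radon measure $\nu$ supported on $\mathcal S_i$ with $\mathcal W^\nu\equiv+\infty$ on $\mathcal S_i$. Lemmas \ref{Lem: Greens function at x}, \ref{Lem: Greens function at Si} and \ref{Lem: G Si} and the final summation and completeness argument only use this divergence property together with (AM), (LPI) and non-parabolicity. So it suffices to construct such a $\nu$ under the new hypotheses, namely $\mathcal H^{\frac{m-2}{2}}(\mathcal S_i)<+\infty$ and a bi-Lipschitz embedding $\Phi:(X,d)\to(N^n,h)$ into a smooth Riemannian manifold.

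\emph{Step 1: transfer to Euclidean space.} Since $\mathcal S_i$ is compact (Lemma \ref{Lem: bounded to compact}), $\Phi(\mathcal S_i)$ is compact. Covering it by finitely many coordinate charts and using bi-Lipschitz charts, we reduce to a compact set $E\subset\mathbf R^n$ with $\mathcal H^{\frac{m-2}{2}}(E)<\infty$; bi-Lipschitz maps preserve finiteness of Hausdorff measure. If $L$ is the total bi-Lipschitz constant, then $B^{\mathbf R^n}_{r/L}(\Phi(x))\cap\Phi(X)\subset\Phi(B_r(x))$. Hence for the pull-back $\nu=(\Phi^{-1})_\#\bar\nu$ of a measure $\bar\nu$ on $E$ we get $\nu(B_r(x))\ge\bar\nu(B_{r/L}(\Phi x))$. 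A change of variables then gives $\mathcal W^{\nu}(x)\ge c\,\overline{\mathcal W}^{\bar\nu}(\Phi x)-C$, where $\overline{\mathcal W}$ is the same integral computed in $\mathbf R^n$. A finite sum of measures, one for each chart piece, handles the localisation.

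\emph{Step 2: identify $\mathcal W$ as a classical Wolff potential.} Rewriting,
$$\big(\nu(B_r)r^{2-m}\big)^{\frac{2}{m-2}}\mathrm dr=\Big(\frac{\nu(B_r)}{r^{(m-2)/2}}\Big)^{\frac{2}{m-2}}\frac{\mathrm dr}{r}.$$
This is the Hedberg--Wolff potential $W^{\bar\nu}_{\alpha,p}$ in $\mathbf R^n$ with $p=\frac m2$, so that $\frac1{p-1}=\frac{2}{m-2}$, and with $\alpha$ chosen so that $n-\alpha p=\frac{m-2}{2}$. This gives $\alpha=\frac{2n-m+2}{m}$, which is positive, and $1<p$, $\alpha p<n$ hold since $m\ge3$.

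\emph{Step 3: divergent potential on a null set.} By Adams--Hedberg (Theorem 5.1.9), a set with $\mathcal H^{n-\alpha p}(E)<\infty$ has zero Bessel capacity $C_{\alpha,p}(E)=0$ when $1<p\le n/\alpha$. By the Wolff--Kellogg type result (Adams--Hedberg, Proposition 10.4.2 / Theorem 10.4.?), every compact $E$ with $C_{\alpha,p}(E)=0$ carries a finite measure $\bar\nu$ with $W^{\bar\nu}_{\alpha,p}=+\infty$ on all of $E$. Pulling back via Step 1 yields the required $\nu$ on $\mathcal S_i$. The rest of the argument is then copied from the proof of Proposition \ref{Prop: weak form}, including the comparison $G_{\mathcal S}<C_{E,v}(v-\inf v)$.

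The main obstacle is Step 3. The hypothesis is exactly borderline, since $\mathcal H^{\frac{m-2}{2}}$ is finite rather than zero, so the simple covering construction of Lemma \ref{lem:wolff-divergent-metric} fails: the sums $\sum_j r_{l,j}^{(m-2)/2}$ stay bounded but are not summable in $l$. One therefore genuinely needs the capacity-zero statement at the critical exponent and the existence of a measure whose potential is infinite everywhere on the null set; this is why the Euclidean embedding is assumed. I would first check carefully that the ranges of $(\alpha,p)$ in Adams--Hedberg cover $p=m/2$ with possibly non-integer $\alpha$. If not, I would instead use a direct dyadic construction: choose nested coverings at scales $2^{-l}$ with bounded $\sum r^{(m-2)/2}$ and weight level $l$ by $1/l$. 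This would make $\nu$ finite while each point still gets a contribution of order $\sum_l l^{-2/(m-2)}$, which diverges since $\frac{2}{m-2}\le1$ whenever $m\ge4$. The case $m=3$ needs separate care in that variant.
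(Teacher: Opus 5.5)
Your Steps 1--3 are essentially the paper's proof, namely Lemma \ref{Lem: Wolff divergent} substituted into the proof of Proposition \ref{Prop: weak form}: compare $\mathcal W^\nu$ with the Adams--Hedberg Wolff potential through the bi-Lipschitz embedding with $p=\frac m2$ and $n-\alpha p=\frac{m-2}{2}$, deduce $C_{\alpha,p}(\mathcal S_i)=0$ from the finiteness of $\mathcal H^{\frac{m-2}{2}}(\mathcal S_i)$, and take a measure on $\mathcal S_i$ whose Wolff potential is infinite on all of $\mathcal S_i$ (the Adams--Hedberg capacity result holds for every real $\alpha>0$ with $1<p$ and $\alpha p\le n$, so no fallback is needed). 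The only flaw is in your fallback: weights $1/l$ do not make $\nu$ finite, because $\sum_j r_{l,j}^{(m-2)/2}$ stays bounded below by a positive constant when $\mathcal H^{\frac{m-2}{2}}(\mathcal S_i)>0$, and summable weights $a_l$ with $\sum_l a_l^{2/(m-2)}=\infty$ exist only for $m\ge 5$; an elementary substitute that works for every $m$, and does not even need the embedding, is to take the restriction of $\mathcal H^{\frac{m-2}{2}}$ to $\mathcal S_i$, whose Wolff potential diverges at each point of positive upper $\frac{m-2}{2}$-density, and add the covering measure of Lemma \ref{lem:wolff-divergent-metric} (with covers satisfying $\sum_j r_{l,j}^{(m-2)/2}\le 2^{-l}$) on the $\mathcal H^{\frac{m-2}{2}}$-null set where this density vanishes.
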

This follows from the same proof as before, with Lemma \ref{lem:wolff-divergent-metric} replaced by the following one.

\begin{lemma}\label{Lem: Wolff divergent}
    Lemma \ref{lem:wolff-divergent-metric} is still true if $$\mathcal H^{\frac{m-2}{2}}(\mathcal S\cap U)<+\infty\mbox{ for any bounded }U\subset X.$$
     and $(X,d)$ can be bi-Lipschitzly embedded into a smooth Riemannian manifold $(M^N,h)$.
\end{lemma}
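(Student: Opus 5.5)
The plan is to reduce to classical nonlinear potential theory in Euclidean space by means of the bi-Lipschitz embedding. Since $\mathcal S_i$ is compact, finitely many coordinate charts of $(M^N,h)$ cover its image. So after shrinking and using finitely many pieces, we may assume there is a bi-Lipschitz map $\Phi\colon \mathcal S_i\to \mathbf R^N$ with constant $L$. Note that the integrand of $\mathcal W^\nu$ can be rewritten as
$$\bigl(\nu(B_r(x))\,r^{2-m}\bigr)^{\frac{2}{m-2}}\,\mathrm dr=\left(\frac{\nu(B_r(x))}{r^{s}}\right)^{\frac1s}\frac{\mathrm dr}{r},\qquad s:=\frac{m-2}{2}.$$
This is exactly the Wolff potential $W^{\mu}_{\alpha,p}(y)=\int_0^1\bigl(\mu(B_r(y))/r^{N-\alpha p}\bigr)^{\frac1{p-1}}\frac{\mathrm dr}{r}$ of Adams–Hedberg, with parameters $p=s+1>1$ and $\alpha=(N-s)/(s+1)$. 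These satisfy $N-\alpha p=s$ and $0<\alpha p<N$.

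The first step is to show that the Bessel capacity $C_{\alpha,p}(\Phi(\mathcal S_i))$ vanishes. A bi-Lipschitz map preserves finiteness of $\mathcal H^s$, so $\mathcal H^{N-\alpha p}(\Phi(\mathcal S_i))<+\infty$. The classical result (Adams–Hedberg, Theorem 5.1.9, valid for $p>1$) then gives $C_{\alpha,p}(\Phi(\mathcal S_i))=0$. This is the only place where the exact critical exponent $\frac{m-2}{2}$ is used. It is also why finiteness of the Hausdorff measure suffices, rather than the dimension inequality of Lemma~\ref{lem:wolff-divergent-metric}.

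The second step is to invoke the converse direction of Wolff's theory: a compact set of zero $C_{\alpha,p}$-capacity admits a finite measure $\mu$ on $\mathbf R^N$ with $W^\mu_{\alpha,p}\equiv+\infty$ on it. Concretely, I would follow the Adams–Hedberg construction. Take open neighborhoods $O_k\supset\Phi(\mathcal S_i)$ with $C_{\alpha,p}(O_k)\le 2^{-kp}$ and $O_k\subset B_{2^{-k}}(\Phi(\mathcal S_i))$. Let $\mu_k$ be the capacitary measures, so that $W^{\mu_k}\gtrsim 1$ on $O_k$ and $\mu_k(\mathbf R^N)\lesssim C(O_k)$. Then set $\mu=\sum_k 2^{k(p-1)}\mu_k$ (with weights chosen so that the total mass is finite), so that the Wolff potential diverges on $\Phi(\mathcal S_i)$.

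The third step transfers $\mu$ back to $X$, and I expect it to be the main obstacle. The measure $\mu$ lives on $\mathbf R^N$, not on $\Phi(\mathcal S_i)$, while we need $\spt\nu\subset\mathcal S_i$. I would push $\mu$ forward by a Borel nearest-point selection $\pi\colon B_1(\Phi(\mathcal S_i))\to \Phi(\mathcal S_i)$, and then by $\Phi^{-1}$, setting $\nu:=(\Phi^{-1}\circ\pi)_*\mu$. Points $z\in\spt\mu_k$ are moved by at most $2^{-k}$. Hence for $x\in\mathcal S_i$ and $r\ge 2^{-k}$ we get $\nu(B_{2Lr}(x))\ge\mu_k(B_{r}(\Phi(x)))$. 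The delicate issue is that the capacitary measure $\mu_k$ may place relevant mass at scales $r<2^{-k}$ around $\Phi(x)$. To handle this, I would choose the $O_k$ inductively so that the part of $W^{\mu_k}$ coming from scales $r\ge 2^{-k}\cdot(\text{small factor})$ already exceeds $1/2$ on $\Phi(\mathcal S_i)$. This is possible because, for fixed $k$, the truncated potential $\int_{\rho}^1$ converges to the full one as $\rho\to0$. After that, the radial rescaling $r\mapsto 2Lr$ changes the Wolff integral only by a multiplicative constant depending on $L$ and $s$, plus a bounded term coming from $r\in[1/(2L),1]$. This yields $\mathcal W^\nu(x)=+\infty$ for every $x\in\mathcal S_i$, with $\nu$ finite and supported on $\mathcal S_i$, as required.
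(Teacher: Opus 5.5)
\paragraph{Gap: Step 3 is circular.} Your Steps 1 and 2 follow the paper's proof: the same parameters $p=s+1$, $N-\alpha p=s$, the same Hausdorff-measure criterion from \cite{AH96} giving $C_{\alpha,p}(\mathcal S_i)=0$, and then Wolff's theory. The paper simply asserts that the resulting measure can be taken supported on $\mathcal S_i$. You are right that putting the measure on $\mathcal S_i$ needs an argument, but the one you give in Step 3 does not work.

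The comparison $\nu(B_{Cr}(x))\geq 2^{k(p-1)}\mu_k(B_r(\Phi(x)))$ holds only for $r\geq\epsilon w_k$. Here $w_k$ is the width of $O_k$, which bounds how far $\pi$ moves $\spt\mu_k$, and $C=(1+\epsilon^{-1})L$. So $\epsilon$ must be independent of $k$.

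Monotone convergence of $\int_\rho^1$ as $\rho\to0$, for fixed $k$, only gives some threshold $\rho_k$ with $\int_{\rho_k}^1\geq\frac12$ on $\Phi(\mathcal S_i)$. This $\rho_k$ depends on $\mu_k$, hence on $O_k$, and nothing forces $\rho_k\geq\epsilon w_k$. Shrinking $O_k$ to enforce it changes $\mu_k$ and therefore $\rho_k$. Choosing the $O_k$ inductively in $k$ does not break the loop $O_k\mapsto\mu_k\mapsto\rho_k$, because the loop occurs at a single $k$.

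A related point: your claim $W^{\mu_k}\geq c$ on $O_k$ needs care exactly where you use it. Capacitary potentials are bounded below only quasi-everywhere, and that says nothing on the capacity-zero set $\Phi(\mathcal S_i)$.

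\paragraph{What is missing.} You need a scale-invariant estimate. For $O_k=B_{\delta_k}(\Phi(\mathcal S_i))$ one needs
$\int_{\delta_k/4}^1\bigl(\mu_k(B_r(y))/r^s\bigr)^{1/s}\frac{\mathrm dr}{r}\geq c>0$ for all $y\in\Phi(\mathcal S_i)$, with $c$ independent of $k$. This is true, and can be proved in three steps.
\begin{enumerate}
\item Since $p=\frac m2>2-\frac{\alpha}{N}$, $W^{\mu_k}$ is pointwise comparable to the nonlinear potential $V^{\mu_k}$, so $W^{\mu_k}\geq c_0$ a.e.\ on $B_{\delta_k}(y)$.
\item The boundedness principle gives $\mu_k(B_t(z))\leq Ct^s$ for all $z$ and $t<1/2$.
\item Averaging $W^{\mu_k}$ over $B_{\delta_k/2}(y)$ shows that its contribution from scales below $\delta_k/4$ is controlled by the contribution to $W^{\mu_k}(y)$ from scales above $\delta_k/4$.
\end{enumerate}
This also gives the lower bound at the points of $\Phi(\mathcal S_i)$ themselves.

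\paragraph{A shorter alternative.} You can avoid capacities and the embedding altogether. Let $\sigma=\mathcal H^s|_{\mathcal S_i}$, which is finite by hypothesis.
\begin{itemize}
\item The density theorem $\Theta^{*s}(\sigma,x)\geq 2^{-s}$ for $\mathcal H^s$-a.e.\ $x$ holds in any metric space. Hence $\mathcal W^\sigma(x)=+\infty$ for all $x\in\mathcal S_i$ outside a set $Z$ with $\mathcal H^s(Z)=0$.
\item Cover $Z$, for each $l$, by balls $B_{r_{l,j}}(x_{l,j})$ with $x_{l,j}\in Z$, $r_{l,j}\leq 2^{-l}$ and $\sum_j r_{l,j}^s\leq 2^{-l}$. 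The atomic measure $\nu_Z=\sum_{l,j}r_{l,j}^s\delta_{x_{l,j}}$ is finite.
\item As in Lemma \ref{lem:wolff-divergent-metric}, after passing to a subsequence of $l$ with disjoint intervals $[r_{l,j'},2r_{l,j'}]$, one gets $\mathcal W^{\nu_Z}=+\infty$ on $Z$.
\end{itemize}
Then $\nu=\sigma+\nu_Z$ is supported on $\mathcal S_i$ and has the required properties.
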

\begin{proof}
Since the embedding $(X,d)\hookrightarrow (M^N,h)$ is bi-Lipschitz, the distances $d$ and $d_h$ on $X$ are comparable, where $d_h$ denotes the Riemannian distance induced from $(M^N,h)$. As a consequence, by taking the parameters $(\alpha,p)$ such that
\[
N-\alpha p = \frac{m-2}{2}\mbox{ and }
\frac{1}{p-1}=\frac{2}{m-2},
\] the
Wolff potential $\mathcal W^{\nu}_{\alpha,p}$ from \cite{AH96} with a finite Radon measure $\nu$ supported on $X$ is comparable to the Wolff potential
$\mathcal W^{\nu}$. Therefore, we just need to construct a finite Radon measure in $(M^N,h)$ supported on $\mathcal S_i$ such that $\mathcal W^{\nu}_{\alpha,p}(x)=+\infty$ for all $x\in \mathcal S_i$.

Use the same notation $\mathcal S_i$ as before. By assumption we have
$$\mathcal H^{\frac{m-2}{2}}(\mathcal S_i)<+\infty.$$
Let $C_{\alpha,p}$ denote the Bessel capacity on $M^N$. According to \cite[Theorem 5.19]{AH96}  we have
\[
C_{\alpha,p}(\mathcal S_i)=0.
\]
By the dual characterization of $C_{\alpha,p}$ together with Wolff's inequality
\cite[Theorem~4.5.2]{AH96}, the identity $C_{\alpha,p}(\mathcal S_i)=0$ implies that
there exists a finite Radon measure $\nu$ supported on $\mathcal S_i$ such that $\mathcal W^{\nu}_{\alpha,p}(x)=+\infty$ for all $x\in \mathcal S_i$. This completes the proof.
\end{proof}

Proposition \ref{Prop: desingularization} follows immediately.
\begin{proof}[Proof of Proposition \ref{Prop: desingularization}]
    It follows from Proposition \ref{Prop: weak form} and Proposition \ref{Prop: strong form}.
\end{proof}

For our later use, we only need the following special version of singularity blow-up.

\begin{corollary}\label{Cor: singularity blowup}
   Let $(M^{N},g,E)$ be a $\mathbf T^k$-warped ALF manifold with $N-k\geq 4$ and $\tilde\Sigma^{N-1}$ be a connected area-minimizing hypersurface with singular set $\mathcal S$ and a $\mathbf T^k$-warped ALF end $\tilde E$. If we have
   $$\mathcal H^{\frac{N-3}{2}}(\mathcal S\cap U)<+\infty\mbox{ for any bounded }U\subset \tilde \Sigma.$$
   there is a smooth harmonic function $G_{\mathcal S}$ on $\mathcal R$ such that the conformal metric
    $$\tilde g:=(1+\delta G_{\mathcal S})^{\frac{4}{m-2}}g$$
    is complete on $\mathcal R$ for any $\delta>0$, where $\mathcal R=\tilde \Sigma-\mathcal S$. Moreover, we have $G_{\mathcal S}\to 0$ as $x\to \infty$ in the ALF end $\tilde E$.
\end{corollary}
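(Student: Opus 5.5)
The plan is to apply Proposition \ref{Prop: strong form} to the metric measure space $X=\tilde\Sigma$ with the metric $d$ induced from $(M^N,g)$ and $\mu=\mathcal H^{N-1}\llcorner\tilde\Sigma$, so that $m=N-1$. Three things then need to be checked: the hypotheses (AM) and (LPI); the bi-Lipschitz embedding together with the Hausdorff condition; and non-parabolicity. After that, the decay of $G_{\mathcal S}$ at infinity will be read off from the comparison estimate $G_{\mathcal S}<C_{E,v}(v-\inf_X v)$ for a suitable superharmonic $v$ on the ALF end.

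\textbf{Hypotheses (AM), (LPI) and the embedding.} The regular set $\mathcal R=\tilde\Sigma-\mathcal S$ is a smooth $(N-1)$-manifold, and it is connected because $\tilde\Sigma$ is connected and $\mathcal H^{N-3}(\mathcal S)=0$ (codimension $\ge2$ singular sets do not disconnect). Local Ahlfors $(N-1)$-regularity follows from the monotonicity formula together with area-minimality, which gives upper density bounds on bounded sets. The singular set satisfies $\dim\mathcal S\le N-8$, so $\mathcal H^{m-2}(\mathcal S)=0$. The local Poincaré inequality for area-minimizing hypersurfaces is the Bombieri--Giusti inequality, as indicated in the Example. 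For the embedding, we take the inclusion $\tilde\Sigma\subset M$. If $d$ denotes the ambient restricted distance, the inclusion is trivially isometric. If instead one insists on the intrinsic length metric, local quasiconvexity (Lemma \ref{Lem: connectedness}) shows the two distances are locally comparable. The Hausdorff assumption $\mathcal H^{\frac{m-2}{2}}=\mathcal H^{\frac{N-3}{2}}$ is exactly the hypothesis of the corollary.

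\textbf{Non-parabolicity and decay.} On the end $\tilde E\cong(\mathbf R^{n'}\setminus B)\times\mathbf T^k$ with $n'=N-1-k\ge3$ (this is where $N-k\ge4$ is used), the metric is asymptotic to $\bar g$. A natural candidate is a modification of $\min\{1,|x|^{2-n'}\}$: first compute $\Delta_g|x|^{2-n'}=O(|x|^{-n'-\tau})$, which is integrable, then correct it by solving a Poisson equation on the end so that the corrected function is superharmonic there. I expect this correction to be the main technical obstacle. One way to carry it out is to use $|x|^{2-n'}-|x|^{2-n'-\tau'}$ with small $\tau'<\tau$; this is superharmonic for large $|x|$ because the subtracted term contributes a positive $c|x|^{-n'-\tau'}$ to $-\Delta$, which dominates the error. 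Then glue via $v=\min\{c_0,\,\cdot\,\}$ and extend by the constant $c_0$ elsewhere. A minimum of superharmonic functions is superharmonic, and $v$ is Lipschitz and non-constant with $\inf v=0$, so $X$ is non-parabolic.

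To obtain decay, apply the comparison estimate with $E=\{|x|>R\}\cap\tilde E$. This set has compact boundary and, for large $R$, its closure misses $\mathcal S$, since $\mathcal S$ avoids the smooth ALF region. This gives $G_{\mathcal S}\le C(v-0)=O(|x|^{2-n'})\to0$ in $\tilde E$. Finally, the conformal factor exponent should be read with $m=N-1$, and completeness of $\tilde g$ for every $\delta>0$ is the conclusion of Proposition \ref{Prop: strong form}.
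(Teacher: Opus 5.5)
Your proposal is correct and follows the paper's proof. You use the same metric measure space $(\tilde\Sigma, d_g|_{\tilde\Sigma}, \mathcal H^{N-1}_g|_{\tilde\Sigma})$, Ahlfors regularity from monotonicity and area comparison, (LPI) from Bombieri--Giusti, non-parabolicity via an explicit barrier on the ALF end, and then Proposition~\ref{Prop: strong form}, with the decay of $G_{\mathcal S}$ read off from its comparison estimate.

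The only difference is the barrier. The paper uses $\min\{|x'|^{-1/2},\delta\}$, whose superharmonicity margin $\tfrac12(n'-\tfrac52)|x'|^{-5/2}$ absorbs the metric errors directly, so the correction step you anticipated is never needed. Your $|x'|^{2-n'}-|x'|^{2-n'-\tau'}$ works equally well and has one advantage: it yields the sharp rate $G_{\mathcal S}=O(|x'|^{2-n'})$ at once, rather than only $O(|x'|^{-1/2})$.
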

\begin{proof}
    Define
    $$X=\tilde\Sigma,\,d=d_g|_{\tilde \Sigma}\mbox{ and }\mu=\mathcal H^{N-1}_g|_{\tilde \Sigma}.$$
    Then $(X,d,\mu)$ is a metric measure space. Since $\tilde \Sigma$ is area-minimizing, it follows from the monotonicity formula and local area comparison that $(X,d,\mu)$ satisfies the local Ahlfors $(N-1)$-regularity. Then
    it follows from \cite{BG1972} and the assumption that $(X,d,\mu)$ satisfies (AM) and (LPI). It remains to show that $(X,d,\mu)$ is non-parabolic. By definition, $\tilde E$ is diffeomorphic to $(\mathbf R^{N-k-1}-B_{r_0})\times \mathbf T^k$, where the coordinate will be denoted by $(x',\theta)$. From a direct computation, we have $\Delta |x'|^{-1/2}<0$ around the infinity of $E$ and so $\min\{|x'|^{-1/2},\delta\}$ gives the desired superharmonic function on $(X,d,\mu)$ if we choose $\delta>0$ to be small enough. The result follows from Proposition \ref{Prop: desingularization}.
\end{proof}

\section{Torical symmetrization with negative mass}

In this section, we always use the notation
$$u=\mathcal O_{l}(|x|^{-\delta})\mbox{ as }x\to \infty$$
to mean that a smooth function $u$ on a $\mathbf T^k$-warped ALF manifold satisfies
$$\sum_{j=0}^l|x|^j|\partial^ju|=O(|x|^{-\delta})\mbox{ as }x\to \infty.$$
We also write $u=\mathcal O_\infty(|x|^{-\delta})$ if $u=\mathcal O_l(|x|^{-\delta})$ for all $l\in\mathbf N$.
For convenience, we denote
$$N=n+k\mbox{ and }\tau_*=\min\{\tau,1\}.$$

\subsection{Asymptotic modification}

\begin{proposition}\label{prop: density}
    Given any $\mathbf T^k$-warped ALF manifold $(M^{n+k},g,E)$ with negative mass and nonnegative scalar curvature, we can find a new $\mathbf T^k$-warped ALF manifold $(M,\tilde g,\tilde E)$ with negative mass and nonnegative scalar curvature such that the metric $\tilde g$ in $\tilde E$ has the form
    $$\tilde g=v^{\frac{4}{N-2}}\bar g,$$
    where $v$ is $\mathbf T^k$-invariant and has the expansion
    $$ v=1+A|x|^{2-n}+\mathcal O_\infty(|x|^{1-n})\mbox{ as }x\to \infty\mbox{ with }A<0$$
    for any constant $\delta \in (0,\tau_*)$, and $\bar g$ is the reference metric in Definition \ref{def: ALF}. Moreover, we can guarantee $R_{\tilde g}>0$ in any fix annulus $\mathcal A_{r,4r}=\{(x,\theta)\in \tilde E:r\leq|x| \leq 4r\}$ by arbitrarily small perturbation.
\end{proposition}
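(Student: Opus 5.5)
We follow the density argument of Schoen--Yau (and its $\mathbf T^k$-warped version in \cite{HSY26}) in two steps: a cut-off that makes the metric conformally flat near infinity, followed by a conformal correction that restores nonnegative scalar curvature. Everything is done $\mathbf T^k$-equivariantly: all cut-offs depend only on $|x|$, and all functions are $\mathbf T^k$-invariant. Consequently the PDEs we solve descend to weighted equations on $M_0$, and the Fredholm theory on $\mathbf R^n\setminus B_{r_0}$ with weights $|x|^{-\delta}$, $\delta\in(0,n-2)$, is available. Here the relevant Euclidean dimension is $n$, not $N$; this is why the expansion has the form $A|x|^{2-n}$.

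\textbf{Step 1 (cut-off).} Take a large $\sigma$ and a radial cut-off $\chi_\sigma$ with $\chi_\sigma\equiv1$ for $|x|\le\sigma$ and $\chi_\sigma\equiv 0$ for $|x|\ge2\sigma$. Set $g_\sigma=\chi_\sigma g+(1-\chi_\sigma)\bar g$. This is still $\mathbf T^k$-warped: in the warped form, $u_i^2$ is interpolated to $1$ and $g_0$ to the flat metric. The decay condition \eqref{Eq: decay condition} gives $R_{g_\sigma}=R_g$ for $|x|\le\sigma$, and $|R_{g_\sigma}|\le C|x|^{-2-\tau}$ uniformly in $\sigma$ on the transition annulus. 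Hence $\|R_{g_\sigma}^-\|_{L^{N/2}}\to0$ and $\|R^-_{g_\sigma}\|_{L^1}\to0$ as $\sigma\to\infty$, where $R^-$ denotes the negative part; the $L^1$ bound uses $\tau>\frac{n-2}{2}$ together with $R_g\in L^1$.

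\textbf{Step 2 (conformal correction).} Solve
$$-\tfrac{4(N-1)}{N-2}\Delta_{g_\sigma}v_\sigma+c\,R_{g_\sigma}^{-}\chi_{\sigma}v_\sigma=0$$
or, more precisely, $L_{g_\sigma}v_\sigma=\tfrac{N-2}{4(N-1)}(R_{g_\sigma}-\tilde R)v_\sigma$ for a small nonnegative $\tilde R$ chosen so that the new scalar curvature $v_\sigma^{-\frac{N+2}{N-2}}(\ldots)$ is nonnegative, with $v_\sigma\to1$ at infinity. The choice is arranged so that the new scalar curvature equals $R^+$-type terms plus a compactly supported positive bump. Existence and positivity come from writing $v_\sigma=1+w_\sigma$ and inverting the operator on weighted Sobolev spaces. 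The only possible obstruction is a nontrivial kernel, and the $L^{N/2}$-smallness of the potential rules it out via the Sobolev inequality on the ALF end.

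Outside the support of $\chi_\sigma$ the metric $g_\sigma$ is flat, so $v_\sigma$ is $\bar g$-harmonic and $\mathbf T^k$-invariant. Expanding it in spherical harmonics on $\mathbf R^n$ gives
$$v_\sigma=1+A_\sigma|x|^{2-n}+\mathcal O_\infty(|x|^{1-n}),$$
so the metric $\tilde g=v_\sigma^{4/(N-2)}\bar g$ has mass proportional to $A_\sigma$ (with a dimensional constant depending on $N$ and $n$).

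\textbf{Step 3 (mass convergence).} We must show $m(\tilde g)\to m(g)$ as $\sigma\to\infty$; since $m(g)<0$, this gives $A<0$ for large $\sigma$. We expect this to be the main obstacle. It requires the boundary-integral representation of the mass of $g_\sigma$, which equals $m(g)$ up to the integrated $R_g$ over $\{|x|\ge\sigma\}$, plus $\mathcal O(\sigma^{n-2-2\tau})$ and the $L^1$ tail. It also requires the correction term $A_\sigma$, expressed as $-c\int R^-_{g_\sigma}\chi v_\sigma\,d\mu$, which is small by Step 1 combined with a uniform $L^\infty$ bound on $v_\sigma$ from Moser iteration. The plan is to follow Schoen--Yau's computation in each step, with the $\mathbf T^k$ factor only contributing constant volume factors.

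\textbf{Step 4 (strict positivity on the annulus).} Finally, to get $R_{\tilde g}>0$ on a fixed $\mathcal A_{r,4r}$, perturb conformally once more. Take $\tilde v=1+\epsilon\phi$, where $\phi$ solves $-\Delta\phi=f\ge0$ with $f$ supported in $\mathcal A_{r,4r}$ and $f>0$ there. This makes the scalar curvature positive on the annulus and changes $A$ by $O(\epsilon)$, so $A$ stays negative for $\epsilon$ small.
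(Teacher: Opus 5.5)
\textbf{There is a genuine gap, and it is in the mass bookkeeping of Steps 2--3.}

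\emph{The mass of $g_\sigma$ is zero.} Because $g_\sigma=\bar g$ for $|x|\ge2\sigma$, its mass is exactly $0$, not ``$m(g)$ up to small errors''. You are conflating the flux of $g_\sigma$ at radius $\sigma$ with its mass at infinity. As you note in Step 2, $m(\tilde g)$ is a positive multiple of $A_\sigma$. If $\Delta_{g_\sigma}v_\sigma=fv_\sigma$ with $f$ compactly supported, then $A_\sigma=-\frac{1}{(n-2)\omega_{n-1}}\int fv_\sigma\,\mathrm d\mu$. So all of the mass of $\tilde g$ must be produced by the potential $f$, and ``$m(g)$ plus a small correction $A_\sigma$'' is self-contradictory.

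\emph{Your choice of potential destroys negative mass.} If the target scalar curvature $\tilde R$ dominates $R_{g_\sigma}$ pointwise, then $f=\frac{N-2}{4(N-1)}(R_{g_\sigma}-\tilde R)\le0$. This is exactly your choice of ``$R^+$-type terms plus a positive bump''. Then $v_\sigma$ is superharmonic and $A_\sigma\ge0$, so the negative mass is necessarily lost. With the sign you actually wrote, $-\frac{4(N-1)}{N-2}\Delta v_\sigma+cR^-_{g_\sigma}\chi_\sigma v_\sigma=0$, you do get $A_\sigma\le0$. But then the new scalar curvature is $v_\sigma^{-4/(N-2)}\bigl(R^+_{g_\sigma}-(1+c\chi_\sigma)R^-_{g_\sigma}\bigr)$, which is not nonnegative.

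\emph{The $L^1$ claim in Step 1 is false in general.} The bound $C|x|^{-2-\tau}$ on a region of volume $\sim\sigma^n$ gives only $O(\sigma^{n-2-\tau})$ for $\|R^-_{g_\sigma}\|_{L^1}$. For example, if $g=(1+A|x|^{2-n})^{4/(N-2)}\bar g$ on $E$, then $R_{g_\sigma}\approx-\frac{4(N-1)}{N-2}A\,\Delta_{\bar g}(\chi_\sigma|x|^{2-n})$. This changes sign on the annulus, and $\|R^-_{g_\sigma}\|_{L^1}$ is of order $|A|$ independently of $\sigma$.

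\textbf{The missing idea is where the mass of $g$ goes after you cut off to $\bar g$.} The flux of $g_\sigma$ vanishes at $|x|=2\sigma$ and tends to $2\omega_{n-1}m(g)$ at $|x|=\sigma$. By the divergence theorem, with quadratic errors $O(\sigma^{n-2-2\tau})$,
\[
\int_{\{\sigma\le|x|\le2\sigma\}}R_{g_\sigma}\,\mathrm d\mu\longrightarrow-2\omega_{n-1}m(g)>0 .
\]
So you must remove the full signed scalar curvature of the transition annulus, not only its negative part.

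\emph{How to repair Steps 2--3.}
\begin{enumerate}
\item Take $f=\frac{N-2}{4(N-1)}\eta R_{g_\sigma}$, where $0\le\eta\le1$ is compactly supported and $\eta\equiv1$ on $\{\sigma\le|x|\le2\sigma\}$. Off this annulus $R_{g_\sigma}\ge0$ anyway, so $R_{\tilde g}=v_\sigma^{-4/(N-2)}(1-\eta)R_{g_\sigma}\ge0$.
\item Then $A_\sigma\to\frac{N-2}{2(N-1)(n-2)}\,m(g)<0$, provided you show $\int f(v_\sigma-1)\,\mathrm d\mu\to0$.
\item By H\"older and the weighted Sobolev estimate for $v_\sigma-1$, that term is $O(\sigma^{n-2-2\tau})$. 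This is where $\tau>\frac{n-2}{2}$ is really used, not in an $L^1$ bound on $R^-$.
\end{enumerate}

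\emph{Comparison with the paper.} The paper avoids the large annulus integral by cutting off not to $\bar g$ but to the model $(1+A_0|x|^{2-n})^{4/(N-2)}\bar g$, which already carries the mass. The discarded part $Q$ then has zero flux, $\int f\to0$, and the conformal correction $A_1$ genuinely tends to $0$, with $A=A_0+A_1$. That is the ``small correction'' picture you had in mind, but it is valid only for that choice of cut-off.

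Your Step 4 is fine and is essentially the paper's final perturbation.
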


First we recall the following existence lemma from \cite{CLSZ25}. 

\begin{lemma}\label{Lem: existence f eigenfunction}
    Given any smooth open $\mathbf T^k$-invariant region $U\subset M$ with $\overline{U\Delta E}$ compact, there is a universal constant $c=c(U)$ such that if $f$ is a smooth function on $M$ with compact support in $U$ satisfying
    $$\left(\int_U|f_-|^\frac{N}{2}|x|^k\,\mathrm d \mu_g\right)^{\frac{2}{N}}\leq c,$$
    then the equation
    $$\Delta_gu=fu$$
    has a uniformly positive solution $u$ on $M$. Moreover, $u$ has the expansion
    \begin{equation}\label{Eq: expansion}
        u=1+A|x|^{2-n}+\mathcal O_2(|x|^{2-n-\delta})\mbox{ as }x\to \infty
    \end{equation}
    for any constant $\delta \in (0,\tau_*)$,
    where
    \begin{equation}\label{Eq: coefficient A}
        A=-\frac{1}{(n-2)\omega_{n-1}}\int_Ufu\,\mathrm d\mu_g,
    \end{equation}
    and we have
    $$\int_{\partial U}\nabla_g u\cdot \nu_{\partial U}\,\mathrm d\sigma_g=0\mbox{ and }\int_{\partial U}u\nabla_gu\cdot \nu_{\partial U}\,\mathrm d\sigma_g\leq 0,$$
    where $\nu_{\partial U}$ denotes the unit outward normal of $\partial U$. If $f$ is a $\mathbf T^k$-invariant function, then $u$ can be required to be $\mathbf T^k$-invariant as well.
\end{lemma}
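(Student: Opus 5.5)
The plan is to write $u=1+w$ and solve
$$\Delta_g w-fw=f$$
by exhaustion, using the smallness of $f_-$ to make the operator $-\Delta_g+f$ coercive on $U$. I would then read off the asymptotics and the flux identities from the fact that $f$ vanishes outside a compact subset of $U$.

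\textbf{Coercivity.} The key analytic input is a weighted Sobolev inequality on the $\mathbf T^k$-warped ALF region $U$:
$$\left(\int_U|\phi|^{\frac{2N}{N-2}}|x|^{-\frac{2k}{N-2}}\,\mathrm d\mu_g\right)^{\frac{N-2}{N}}\leq C_U\int_U|\nabla_g\phi|^2\,\mathrm d\mu_g\quad\mbox{for }\phi\in C^\infty_0(M).$$
On the end $\mathbf R^n\times\mathbf T^k$ with $n\geq 3$, this is a Caffarelli--Kohn--Nirenberg-type inequality. It holds on the compact part of $U$ by the usual Sobolev inequality, and the two pieces are glued by a cut-off argument. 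Here we use $\overline{U\Delta E}$ compact, so $U$ has only the ALF end at infinity; this is exactly why the weight $|x|^k$ appears in the hypothesis. Hölder's inequality then gives
$$\int f_-\phi^2\leq\left(\int_U|f_-|^{\frac N2}|x|^k\right)^{\frac2N}\left(\int_U|\phi|^{\frac{2N}{N-2}}|x|^{-\frac{2k}{N-2}}\right)^{\frac{N-2}{N}}\leq cC_U\int|\nabla\phi|^2.$$
Hence choosing $c\leq 1/(2C_U)$ makes $\int|\nabla\phi|^2+f\phi^2\geq\frac12\int|\nabla\phi|^2$ for $\phi$ supported in $U$. I expect establishing this weighted Sobolev inequality with the correct exponent on the warped end to be the main obstacle.

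\textbf{Construction of $u$.} Take an exhaustion of $M$ by bounded smooth $\mathbf T^k$-invariant domains $V_l$. On each $V_l$, solve $\Delta u_l=fu_l$ with $u_l=1$ on $\partial V_l$ by Lax--Milgram applied to $w_l=u_l-1$. The coercivity gives a uniform bound on $\int|\nabla w_l|^2$, hence on the weighted $L^{2N/(N-2)}$ norm. Off $\spt f$ the functions $u_l$ are harmonic, so De Giorgi--Nash--Moser estimates together with the maximum principle away from $\spt f$ give uniform $C^{2,\alpha}_{loc}$ bounds. Passing to a subsequence yields a limit $u$ with $u-1\in\dot W^{1,2}$. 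For positivity I would test with $u^-$: coercivity forces $u^-\equiv 0$, so $u\geq 0$. The strong maximum principle and Harnack's inequality then give $u>0$. Uniform positivity follows because $u\to 1$ on every end outside a compact set: on the ALF end from the asymptotics below, and on the other ends by the maximum principle, since $u_l=1$ on $\partial V_l$ and $u$ is harmonic there. If $f$ is $\mathbf T^k$-invariant, uniqueness of the Lax--Milgram solutions makes each $u_l$, and hence $u$, $\mathbf T^k$-invariant.

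\textbf{Asymptotics.} On the end, outside $\spt f$, $u$ is $g$-harmonic with $u-1$ of finite energy. Decompose $u$ into its $\mathbf T^k$-average and the remaining Fourier modes. The nonzero modes decay exponentially, since they satisfy $\Delta_{\mathbf R^n}-\lambda^2$ up to perturbation. The average satisfies a perturbed Laplace equation on $\mathbf R^n$, and a standard barrier/iteration argument in weighted Hölder spaces using the decay rate $\tau$ gives $u=1+A|x|^{2-n}+\mathcal O_2(|x|^{2-n-\delta})$. To identify $A$, integrate $\Delta u=fu$ over $U\cap\{|x|<\rho\}$ and let $\rho\to\infty$. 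The flux through the large spheres tends to $-(n-2)\omega_{n-1}A$, since $\mathbf T^k$ has unit volume. The inner boundary flux $\int_{\partial U}\partial_\nu u$ vanishes, which yields the stated formula for $A$.

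\textbf{Boundary identities.} For the first, $\int_{\partial U}\partial_\nu u=0$, integrate $\Delta u_l=0$ over $V_l\setminus U$. The flux of $u_l$ through $\partial V_l\setminus E$ tends to zero, because $u_l-1\to u-1$ has finite energy and is harmonic on the non-ALF ends with boundary value $1$. For the second, apply the same argument to $\int_{V_l\setminus U}|\nabla u_l|^2=-\int_{\partial U}u_l\partial_\nu u_l+\int_{\partial V_l\setminus E}u_l\partial_\nu u_l$. The last term equals $\int\partial_\nu u_l$ because $u_l=1$ there, and this tends to $0$. Passing to the limit gives $\int_{\partial U}u\,\partial_\nu u\leq 0$.
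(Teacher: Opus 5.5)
Your coercivity step (the weighted Sobolev inequality on $U$ for test functions that need not vanish on $\partial U$, then H\"older), the positivity argument via $u^-$, and the asymptotic analysis are in line with the paper. \emph{The gap is in the flux identities, and it comes from your boundary condition.} You impose $u_l=1$ on all of $\partial V_l$, including the part lying in $M\setminus U$, i.e.\ on the other, arbitrary ends. You claim the flux through $\partial V_l\setminus E$ tends to zero because $u_l-1$ has finite energy. That is false on non-parabolic ends: on an asymptotically flat end, $|y|^{2-n}$ has finite Dirichlet energy but flux $-(n-2)\omega_{n-1}$ through every large sphere.

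Here is a concrete counterexample. Take $k=0$, the Schwarzschild manifold $M=\mathbf R^n\setminus\{0\}$ with $g=(1+|x|^{2-n})^{\frac{4}{n-2}}\delta$, $E=\{|x|>2\}$, $U=\{|x|>1\}$, and a radial $f\geq 0$, $f\not\equiv 0$, compactly supported in $\{1<|x|<2\}$.
\begin{itemize}
\item By the maximum principle $0\le u_l\le 1$.
\item Since $w$ is $g$-harmonic iff $(1+|x|^{2-n})w$ is Euclidean harmonic, one checks that your limit is $u=(a+|x|^{2-n})/(1+|x|^{2-n})$ on $\{0<|x|\le 1\}$, with $a<1$.
\item Hence $u\to 1$ at the inner end and $\partial_\nu u>0$ on $\partial U$, where $\nu$ points toward $x=0$. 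So $\int_{\partial U}\partial_\nu u>0$.
\item Since $u$ is a positive constant on $\partial U$, also $\int_{\partial U}u\,\partial_\nu u>0$.
\item By your own flux computation, $A$ picks up the extra term $\frac{1}{(n-2)\omega_{n-1}}\int_{\partial U}\partial_\nu u$.
\end{itemize}
In general your argument only yields $\int_{\partial U}\partial_\nu u=\lim_l\int_{\partial V_l\setminus\overline U}\partial_\nu u_l$. This is the flux of $u$ into the other ends, which vanishes on parabolic ends but not otherwise.

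The missing idea is to impose a \emph{Neumann} condition on the part of the boundary lying in $M\setminus U$, so that no flux leaks into the other ends. The paper exhausts $M$ by $\mathbf T^k$-invariant regions $U_i\supset U$ with $U_i\setminus U$ bounded, so each $U_i$ still contains the ALF end. It solves $\Delta_g v_i-fv_i=f$ in $U_i$ with $\partial_\nu v_i=0$ on the compact boundary $\partial U_i$ and $v_i\to 0$ at the ALF infinity. In your bounded setting you could equivalently use mixed data: $u_l=1$ on $\partial V_l\cap E$ and $\partial_\nu u_l=0$ on $\partial V_l\setminus\overline U$. Coercivity is unaffected, precisely because your weighted Sobolev inequality does not need $\phi$ to vanish on $\partial U$, and the $u^-$ argument still applies.

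Since $v_i$ is harmonic in $U_i\setminus\overline U$, the divergence theorem now gives, for every $i$,
\begin{itemize}
\item $\int_{\partial U}\partial_\nu v_i=\int_{\partial U_i}\partial_\nu v_i=0$;
\item $\int_{\partial U}(1+v_i)\partial_\nu v_i=-\int_{U_i\setminus U}|\nabla_g v_i|^2\le 0$.
\end{itemize}
Both pass to the limit by local $C^1$ convergence, and your derivation of the formula for $A$ then goes through. Finally, the maximum principle with the Hopf lemma at the Neumann boundary gives $\inf_{U_i}v_i\ge\inf_U v_i$, hence $\inf_M u\ge\inf_U u>0$. This also replaces your assertion that $u\to 1$ on every end, which is not justified for arbitrary ends. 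For example, on a cylindrical end your Dirichlet limit tends to a constant determined by the inner data.
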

\begin{proof}
    This is \cite[Proposition 4.10]{CLSZ25}. We sketch its proof for completeness. It follows from \cite[Lemma 4.9]{CLSZ25} that we have the weighted Sobolev inequality
    \begin{equation}\label{Eq: Sobolev}
        \left(\int_U|\zeta|^{\frac{2N}{N-2}}|x|^{\frac{-2k}{N-2}}\,\mathrm d\mu_g\right)^{\frac{N-2}{2N}}\leq C_U\left(\int_U|\nabla_g\zeta|^2\,\mathrm d\mu_g\right)^{\frac{1}{2}}
    \end{equation}
    for some universal positive constant $C_U$ independent of $\zeta$, where the test function $\zeta$ is allowed to take non-zero values on $\partial U$. Set
    $$c=(2C_U)^{-1}.$$
    Then the operator $-\Delta_g+f$ is positive in any compact region. Fix an exhaustion $\{U_i\}_{i\geq 1}$ of $M$, where $U_i$ are $\mathbf T^k$-invariant smooth open subsets containing $U$ and $U_i\Delta U$ are bounded. In particular, we can solve the following PDE
    $$\Delta_gv_i-fv_i=f\mbox{ in }U_i$$
    with the Neumann-Dirichlet boundary condition
    $$\nabla_gv_i\cdot \nu_{\partial U_i}=0\mbox{ on }\partial U_i\mbox{ and }v_i(x,\theta)\to 0\mbox{ as }x\to \infty. $$
    From integration by parts and the weighted Sobolev ineqality \eqref{Eq: Sobolev}, it is easy to deduce the uniform estimate
    \begin{equation}\label{Eq: uniform estimate}
        \left(\int_{U}|v_i|^{\frac{2N}{N-2}}|x|^{\frac{-2k}{N-2}}\,\mathrm d\mu_g\right)^{\frac{N-2}{2N}}\leq 2C_U^2\left(\int_U|f|^{\frac{2N}{N+2}}|x|^{\frac{2k}{N+2}}\,\mathrm d\mu_g\right)^{\frac{N+2}{2N}}.
    \end{equation}
    By standard theory for elliptic PDEs we have
    $$\|v_i\|_{C^l(K)}\leq C(l,K)$$
    for any given integer $l\in \mathbf N_+$ and compact subset $K$. Up to a subsequence, the functions $v_i$ converge to a smooth function $v$ on $M$. 
    
    Define $u=v+1$. Then $u$ solves the equation $\Delta_g u=fu$. The positivity of $-\Delta_g+f$ yields $v_i+1\geq 0$ and so we have $u> 0$ from the convergence and the strong maximum principle. Note that $v_i$ are harmonic functions outside $U$. The maximum principle yields
    $$\inf_{U_i} v_i\geq \inf_U v_i\mbox{ and so }\inf_Mu\geq \inf_{U}u>0. $$
    From integration by parts and the divergence theorem we see
    $$\int_{\partial U}\nabla_g u\cdot \nu_{\partial U}\,\mathrm d\sigma_g=\lim_{i\to\infty}\int_{\partial U}\nabla_gv_i\cdot \nu_{\partial U}\,\mathrm d\sigma_g=\lim_{i\to\infty}\int_{\partial U_i}\nabla_g v_i\cdot \nu_{\partial U_i}\,\mathrm d\sigma_g=0$$
    and
    \[
    \begin{split}
\int_{\partial U}u\nabla_g u\cdot \nu_{\partial U}\,\mathrm d\sigma_g&=\lim_{i\to\infty}\int_{\partial U}(v_i+1)\nabla_g v_i\cdot \nu_{\partial  U}\,\mathrm d\sigma_g \\
&\leq \lim_{i\to \infty}\int_{\partial U_i}(v_i+1)\nabla_g v_i\cdot \nu_{\partial U_i}\,\mathrm d\sigma_g=0,
    \end{split}
    \]
    where we use the boundary condition $\nabla_g v_i\cdot \nu_{\partial U_i}=0$ on $\partial U_i$. From a comparison with the power functions, it is easy to obtain $$v=O(|x|^{2-n+\tau_*-\delta})$$ for any constant $\delta\in (0,\tau_*)$, and then the Schauder estimate yields $v=\mathcal O_2(|x|^{2-n+\tau_*-\delta})$ as $x\to \infty$. In particular, we have
    $$\Delta_{\bar g}v=O(|x|^{-n-\delta})\mbox{ as }x\to \infty,$$
    where $\bar g$ denotes the reference metric from Definition \ref{def: ALF}. Define
    $$w=\int_{\mathbf T^k}v(x,\theta)\,\mathrm d\theta.$$
    Then $w$ is a decaying function defined around the infinity of $\mathbf R^n$ and we have
    $$\Delta_{\mathbf R^n}w=O(|x|^{-n-\delta})\mbox{ as }x\to\infty.$$
    In particular, we have the expansion
    $$w=A|x|^{2-n}+O(|x|^{2-n-\delta}).$$
    Now we consider the function $\bar v:=v-A|x|^{2-n}$. Through a direct computation, we have
$$|v(x,\theta)-w(x)|\leq \sup_{\{x\}\times \mathbf T^k}|\partial v|\cdot \diam(\mathbf T^k,\mathrm d\theta\otimes \mathrm d\theta)=O(|x|^{1-n})$$
    and so
    $$\Delta_g\bar v=-A\Delta_g|x|^{2-n}=\mathcal O_1(|x|^{-n-\delta})\mbox{ and }\bar v=O(|x|^{2-n-\delta}).$$
    From the Schauder estimate we obtain the desired expansion \eqref{Eq: expansion}. The constant $A$ can be determined by integrating the equation $\Delta_gu=fu$.
    
    If $f$ is a $\mathbf T^k$-invariant function on $M$, we can solve $v_i$ in the class of $\mathbf T^k$-invariant functions since the metric $g$ is also $\mathbf T^k$-invariant. Then $u$ is $\mathbf T^k$-invariant as desired.
\end{proof}

\begin{proof}[Proof of Proposition \ref{prop: density}]
We follow the proof of \cite[Proposition 4.11]{CLSZ25}. In the ALF end $E$, we can write $g$ as
$$g=\left(1+A_0|x|^{2-n}\right)^{\frac{4}{N-2}}\bar g+Q\mbox{ with }A_0:=\frac{N-2}{2N(N-1)}m(M,g,E).$$
Through a direct computation, we have
\begin{equation}\label{Eq: zero mass difference}  \lim_{\rho\to+\infty}\int_{\mathbf S^{n-1}(\rho)\times \mathbf T^k}(\partial_iQ_{ij}-\partial_jQ_{\alpha\alpha})\nu^j\,\mathrm d\sigma_{\bar g}=0.
\end{equation}
For any constant $s>0$ we can define
$$\hat g=\left(1+A_0|x|^{2-n}\right)^{\frac{4}{N-2}}\bar g+\left(1-\zeta\left(\frac{|x|}{s}\right)\right)Q,$$
where $\zeta:\mathbf R\to [0,1]$ is a cut-off function such that $\zeta\equiv 0$ in $(-\infty,2]$ and $\zeta \equiv 1$ in $[3,+\infty)$.

Fix another cut-off function $\eta:\mathbf R\to [0,1]$ such that $\eta\equiv 0$ outside $[1,4]$ and $\eta\equiv 1$ in $[2,3]$. 
    Denote
    $$f=\frac{N-2}{4(N-1)}\eta\left(\frac{|x|}{s}\right)R_{\hat g}.$$
    Recall that we denote $\mathcal A_{r_1,r_2}=\{(x,\theta)\in E:r_1\leq |x|\leq r_2\}$ for any constants $r_1<r_2$. Note that $R_{\hat g}$ is nonnegative outside $\mathcal A_{2s,3s}$. Therefore, we have
    $$\int_E|f_-|^{\frac{N}{2}}|x|^k\,\mathrm d\mu_{\hat g}\leq O(s^k)\cdot\int_{\mathcal A_{2s,3s}}|R_{\hat g}|^{\frac{N}{2}}\,\mathrm d\mu_{\hat g}=O(s^{-\frac{N\tau}{2}})\to 0\mbox{ as }s\to+\infty.$$
    By taking $s$ large enough, it follows from Lemma \ref{Lem: existence f eigenfunction} that we can find a uniformly positive $\mathbf T^k$-invariant function $u$ on $M$ solving the equation $\Delta_{\hat g}u=fu$ with the expansion
    $$u=1+A_1|x|^{2-n}+\mathcal O_2(|x|^{2-n-\delta})$$ Define
    $$\tilde g=u^{\frac{4}{N-2}}\hat g.$$
    If we take $\tilde E=\{(x,\theta)\in E:|x|\geq 4s\}$, then we have
    $$\tilde g=v^{\frac{4}{N-2}}\bar g\mbox{ in }\tilde E, \mbox{ where }v=u\left(1+A_0|x|^{2-n}\right).$$
    Note that $v$ is harmonic in $(\tilde E,\bar g)$, so the function $v$ has the expansion
    $$ v=1+A|x|^{2-n}+\mathcal O_\infty(|x|^{1-n})\mbox{ as }x\to \infty \mbox{ with }A=A_0+A_1.$$

    We claim $A<0$ when $s$ is large enough. Recall from \eqref{Eq: coefficient A} that we have
    $$A_1=-\frac{1}{(n-2)\omega_{n-1}}\int_Ufu\,\mathrm d\mu_{\hat g}.$$
    Let us write
    $$\int_Ufu\,\mathrm d\mu_{\hat g}=\int_{\mathcal A_{s,4s}}fv\,\mathrm d\mu_{\hat g}+\int_{\mathcal A_{s,4s}}f\,\mathrm d\mu_{\hat g}.$$
    From the H\"older inequality and the uniform estimate \eqref{Eq: uniform estimate} we have
    \[
    \begin{split}
        \left|\int_{\mathcal A_{s,4s}}fv\,\mathrm d\mu_{\hat g}\right|&\leq C \left(\int_{\mathcal A_{s,4s}}|R_{\hat g}|^{\frac{2N}{N+2}}|x|^{\frac{2k}{N+2}}\,\mathrm d\mu_{\hat g}\right)^{\frac{N+2}{2N}}\left(\int_{\mathcal A_{s,4s}}|v|^{\frac{2N}{N-2}}|x|^{-\frac{2k}{N-2}}\,\mathrm d\mu_{\hat g}\right)^{\frac{N-2}{2N}}\\
        &\leq C \left(\int_{\mathcal A_{s,4s}}|R_{\hat g}|^{\frac{2N}{N+2}}|x|^{\frac{2k}{N+2}}\,\mathrm d\mu_{\hat g}\right)^{\frac{N+2}{N}}=O(s^{n-2-2\tau})\to 0\mbox{ as }s\to +\infty.
    \end{split}
    \]  
    Integrating $f$ over the annulus $\mathcal A_{s,4s}$ and using the nonnegativity of $R_{\hat g}$ outside $\mathcal A_{2s,3s}$, we see
    $$\frac{N-2}{4(N-1)}\int_{\mathcal A_{2s,3s}}R_{\hat g}\,\mathrm d\mu_{\hat g}\leq\int_{\mathcal A_{s,4s}}f\,\mathrm d\mu_{\hat g}\leq \frac{N-2}{4(N-1)}\int_{\mathcal A_{s,4s}}R_{\hat g}\,\mathrm d\mu_{\hat g}.$$
    In the coordinate chart $(x,\theta)$, the scalar curvature has the expansion
$$R_{\hat g}=|\hat g|^{-\frac{1}{2}}\partial_\alpha(\partial_\beta \hat g_{\alpha\beta}-\partial_\alpha \hat g_{\beta\beta})+O(|x|^{-2-2\tau})\mbox{ as }x\to \infty.$$
    Integrating $R_{\hat g}$ and using the limit \eqref{Eq: zero mass difference}, we can deduce
    $$\lim_{s\to+\infty}\int_{\mathcal A_{2s,3s}}R_{\hat g}\,\mathrm d\mu_{\hat g}=\lim_{s\to+\infty}\int_{\mathcal A_{s,4s}}R_{\hat g}\,\mathrm d\mu_{\hat g}=0,$$
    which implies
    $$\int_{\mathcal A_{s,4s}}f\,\mathrm d\mu_{\hat g}\to 0\mbox{ as }s\to+\infty.$$
    We have shown $A_1\to 0$ as $s\to +\infty$. By definition and our assumption we have $A_0<0$. So we can guarantee $A<0$ when $s$ is large enough.

    Let us show that $(M,\tilde g,\tilde E)$ is a $\mathbf T^k$-warped ALF manifold with negative mass and nonnegative scalar curvature. First we point out that $\tilde g$ is a complete metric since the conformal factor $u$ is uniformly positive. From assumption we can write $M=M_0\times \mathbf T^k$ and 
    $$g=g_0+\sum_{i=1}^ku_i^2\mathrm d\theta_i\otimes \mathrm d\theta_i.$$
    Through a direct computation we see
    $$\tilde g=\tilde g_0+\sum_{i=1}^k\tilde u_i^2\mathrm d\theta_i\otimes \mathrm d\theta_i,$$
    where
    $$\tilde g_0=u^{\frac{4}{N-2}}\left(\left(1-\zeta\left(\frac{|x|}{s}\right)\right)g_0+\zeta\left(\frac{|x|}{s}\right)(1+A_0|x|^{2-n})^{\frac{4}{N-2}}(\mathrm dx\otimes \mathrm dx)\right)$$
    and
    $$\tilde u_i=u^{\frac{2}{N-2}}\left(\left(1-\zeta\left(\frac{|x|}{s}\right)\right)u_i+\zeta\left(\frac{|x|}{s}\right)(1+A_0|x|^{2-n})^{\frac{2}{N-2}}\right).$$
    Recall that we have $\tilde g=v^{\frac{4}{N-2}}\bar g$ in $\tilde E$. From the expansion of $v$ we know that $(\tilde E,\tilde g)$ satisfies the ALF decay condition \eqref{Eq: decay condition}. By direct computation, we have $R_{\tilde g}=O(|x|^{-n})$ as $x\to \infty$ and so $R_{\tilde g}\in L^1(\tilde E,\tilde g)$. The mass $m(M,\tilde g,\tilde E)$ is a positive multiple of $A$, which is negative due to $A<0$. The nonnegativity of $R_{\tilde g}$ follows from
    $$R_{\tilde g}=u^{-\frac{4}{N-2}}\left(R_{\hat g}-\eta\left(\frac{|x|}{s}\right)R_{\hat g}\right)\geq 0.$$
    
     Next we show how to make arbitrarily small perturbation to guarantee $R_{\tilde g}>0$ in any fixed annulus $\mathcal A_{r,4r}$. From Lemma \ref{Lem: existence f eigenfunction} by taking a very small nonnegative $\mathbf T^k$-invariant cut-off function $\eta$ such that $\eta> 0$ in $\mathcal A_{r,4r}$ and $\eta\equiv 0$ outside a small neighborhood of $\mathcal A_{r,4r}$,  we can find a smooth positive function $w_0$ solving the equation
     \begin{equation}\label{Eq: equation w_0}
         \Delta_{\tilde g}w_0=-\frac{N-2}{4(N-1)}\eta w_0
     \end{equation}
     with the expansion
     $$w_0=1+A_3|x|^{2-n}+\mathcal O_2(|x|^{2-n-\delta})\mbox{ as }x\to \infty.$$
     Define
     $$w=\frac{1+\varepsilon w_0}{1+\varepsilon}\mbox{ with }\varepsilon\in (0,1)$$
     and take $w^{\frac{4}{N-2}}\tilde g$ to be the new metric. Then the new scalar curvature is given by
     $$R_{w^{\frac{4}{N-2}}\tilde g}=w^{-\frac{N+2}{N-2}}\left(\frac{\varepsilon\eta}{1+\varepsilon}w_0 +R_{\tilde g}w\right),$$
     which is positive in the annulus $\mathcal A_{r,4r}$. 
     The mass $m(M,w^{\frac{4}{N-2}}\tilde g,E)$ differs from $m(M,\tilde g,E)$ by a constant multiple of $\varepsilon$. Therefore, we can ensure that the new mass is still negative. The new conformal factor $wv$ still has the desired expansion since it is also harmonic with respect to $\bar g$ around the infinity of $\tilde E$.
\end{proof}

\subsection{Construction of strongly stable area-minimizing hypersurface}\label{Sec: existence minimizing}

Given any $\mathbf T^k$-warped Riemannian manifold $(M,g)$ with a $\mathbf T^k$-warped ALF end $E$, we denote the test function space
$$\mathcal T_{-q}(M,g,E)=\left\{1+\psi:\psi\in\bigcup_l W^{1,2}_{-q}(U_l,g)\right\},$$
where $q>0$ and $\{U_l\}_{l\geq 1}$ is an increasing open exhaustion of $M$ where $\overline{U_l\Delta E}$ is compact in $M$ for all $l\geq 1$.

\begin{proposition}\label{Prop: minimizing hypersurface}
    Let $(M^{n+k},g,E)$ be a $\mathbf T^k$-warped ALF manifold with
    \begin{equation}\label{Eq: Schwarzschild type metric}
        g=v^{\frac{4}{N-2}}\bar g \mbox{ in }E,
    \end{equation}
    where $v$ has the expansion
    $$ v=1+A|x|^{2-n}+\mathcal O_\infty(|x|^{1-n})\mbox{ with }A<0.$$
    Then there is a $\mathbf T^k$-invariant area-minimizing hypersurface $\tilde\Sigma$ 
    satisfying the following properties:
    \begin{itemize}
        \item[(i)] $\tilde\Sigma$ possibly has non-empty singular set $\mathcal S$ satisfying $$\mathcal H^{N-8}(\mathcal S\cap K)<+\infty$$ for any compact subset $K$;
        \item[(ii)] $\tilde\Sigma$ is a smooth minimal graph over $\mathbf R^{n-1}\times \mathbf T^k$ around infinity, where the graph function $w$ has the expansion
        $$w(x',\theta)=c_0+c_1|x'|^{3-n}+\mathcal O_\infty(|x'|^{2-n})\mbox{ as }x'\to\infty.$$
        Let $E_{\tilde\Sigma}$ be the minimal graph part of $\Sigma$ and $g_{\tilde\Sigma}$ be the induced metric of $\tilde \Sigma-\mathcal S$. In particular, $(\tilde\Sigma-\mathcal S,g_{\tilde \Sigma})$ is a $\mathbf T^k$-warped Riemannian manifold with a $\mathbf T^k$-warped ALF end $E_{\tilde \Sigma}$;
        \item[(iii)] $\tilde\Sigma$ is strongly stable in the sense that for any test function $$\phi\in \mathcal T_{-q}(\tilde\Sigma-\mathcal S,g_{\tilde \Sigma},E_{\tilde \Sigma})\mbox{ with }q>\frac{n-3}{2},$$ 
        we have
        $$\int_{\tilde \Sigma}\left(\Ric_g(\tilde \nu)+|\tilde A_{\tilde \Sigma}|^2\right)\phi^2\,\mathrm d\mathcal H^{N-1}\leq\int_{\tilde \Sigma}|\nabla_{\tilde \Sigma}\phi |^2\,\mathrm d\mathcal H^{N-1}.$$
    \end{itemize}
\end{proposition}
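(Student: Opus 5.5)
We follow Schoen's construction \cite{Schoen1989}, adapted to the $\mathbf T^k$-warped setting. In $E$ write $x=(x',x_n)$ with $x'\in\mathbf R^{n-1}$. For $h>0$ large and $L\gg h$, consider the slab boundary data $\Gamma_{L,h}=\{|x'|=L,\ x_n=h\}\times\mathbf T^k$ (a $\mathbf T^k$-invariant $(N-2)$-cycle), and minimize area among $\mathbf T^k$-invariant integral currents (equivalently, boundaries of Caccioppoli sets) with boundary $\Gamma_{L,h}$ inside the cylinder region $\{|x'|\le L\}\cup(M\setminus E)$.

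\textbf{Barriers from $A<0$.} The essential input is that $A<0$ makes the slabs $\{x_n=\pm h\}$ into barriers. Using $g=v^{4/(N-2)}\bar g$, the mean curvature of $\{x_n=t\}$ with respect to the upward normal $\partial_{x_n}$ is a positive multiple of $\partial_{x_n}\log v\approx -(n-2)A\,x_n|x|^{-n}$. Thus for $A<0$ it has the sign of $x_n$: the region $\{|x_n|\le h\}$ is mean-convex for $h$ large. Consequently the minimizers $\Sigma_{L,h}$ stay in $\{|x_n|\le h\}$ far out.

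\textbf{Height normalization and limit.} The height of the boundary is chosen by a continuity argument, with $h$ free. We pick $h=h(L)\in[-\Lambda,\Lambda]$ so that $\Sigma_{L,h}$ meets a fixed compact set; otherwise the minimizer would escape and we would obtain an area-minimizing hypersurface homologous to zero, contradicting the slab barriers and a degree argument as in \cite{Schoen1989}. This step uses the intersection number with a line transverse to the slabs. Uniform local area bounds come from comparison with slab pieces together with the monotonicity formula. We then let $L\to\infty$ and extract a $\mathbf T^k$-invariant, locally area-minimizing limit $\tilde\Sigma$ by compactness of currents. It is area-minimizing among all competitors, not only invariant ones, because we may run the minimization without imposing invariance. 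Uniqueness-free averaging is unnecessary: one can show that invariance is inherited by taking minimizers in the quotient with the weighted area $\prod u_i\, d\mathcal H^{n-1}$.

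\textbf{Regularity and asymptotics.} Item (i) follows from the standard regularity theory for area-minimizing hypersurfaces in dimension $N$, which gives $\dim\mathcal S\le N-8$ and local finiteness of $\mathcal H^{N-8}$ by Naber--Valtorta. For (ii), the slab barriers and the height bound give $|x_n|\le C$ on $\tilde\Sigma$ far out. A blow-down/tilt-excess argument shows that outside a compact set $\tilde\Sigma$ is a graph $x_n=w(x',\theta)$ with small gradient. The minimal surface equation in the conformally flat metric is then a perturbation of $\Delta_{\bar g}w=O(|x'|^{1-n})$ over $\mathbf R^{n-1}\times\mathbf T^k$. Averaging over $\mathbf T^k$ and using decay of the non-zero Fourier modes, we obtain
$$w=c_0+c_1|x'|^{3-n}+\mathcal O_\infty(|x'|^{2-n})$$
via Schauder estimates, as in the proof of Lemma \ref{Lem: existence f eigenfunction}. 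This gives the ALF decay of $g_{\tilde\Sigma}$.

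\textbf{Strong stability.} For (iii), minimality already gives the stability inequality for $\phi\in C_c^\infty(\tilde\Sigma\setminus\mathcal S)$. For $\phi=1+\psi$ we use that $\tilde\Sigma$ minimizes area under vertical translations. The Jacobi field $\langle\partial_{x_n},\tilde\nu\rangle\to1$ at infinity allows testing with functions tending to $1$, following Schoen's argument: compare $\tilde\Sigma$ with the graph deformation by $t\phi\,\tilde\nu$ cut off at radius $\rho$. The error terms are controlled by $|\nabla\phi|^2$ and the curvature decay $O(|x'|^{-n})$, and they vanish as $\rho\to\infty$ because $\psi\in W^{1,2}_{-q}$ with $q>\frac{n-3}{2}$. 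Singular points are handled with the zero-capacity cutoffs of Lemma \ref{Lem: zero capacity}, valid since $\mathcal H^{N-3}(\mathcal S)=0$.

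\textbf{Main obstacle.} I expect the main obstacle to be the height-normalization/nonescape step together with proving the graphical asymptotic expansion. The expansion is needed to justify the test functions tending to $1$ in (iii).
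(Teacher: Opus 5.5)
\textbf{Main gap: item (iii) does not follow from your construction.}

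You fix the boundary height $h(L)$ only by a non-escape normalization. So $\Sigma_{L,h(L)}$ minimizes area only among competitors with the same boundary $\Gamma_{L,h(L)}$, and the limit $\tilde\Sigma$ inherits stability only for test functions compactly supported in $\tilde\Sigma\setminus\mathcal S$. For $n\ge 4$ this does not imply (iii).

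The end of $\tilde\Sigma$ is asymptotic to $(\mathbf R^{n-1}\setminus B)\times\mathbf T^k$, which is non-parabolic. Hence any cutoff $\chi_\rho$ with $\chi_\rho\equiv 1$ on $\{|x'|\le\rho\}$ has $\int|\nabla\chi_\rho|^2\ge c\rho^{n-3}$. In your comparison with the deformation by $t\phi\tilde\nu$ cut off at radius $\rho$, the hypothesis $q>\frac{n-3}{2}$ only controls the $\psi$-part. The constant part $1$ of $\phi$ produces exactly this cutoff error, which does not tend to zero.

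Your sentence ``$\tilde\Sigma$ minimizes area under vertical translations'' is precisely the property that has to be built into the construction, and nothing in your Plateau set-up provides it. The Jacobi-field remark does not rescue it either: $\partial_{x_n}$ is not a Killing field of $g=v^{\frac{4}{N-2}}\bar g$ when $A\neq 0$, so $\langle\partial_{x_n},\tilde\nu\rangle$ is not a Jacobi field.

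\textbf{The missing idea: let the boundary height take part in the minimization.}

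One way is Schoen's device. Choose $h(L)$ to minimize the Plateau area over all heights. The mean-convex slabs show that a minimizing height exists with $|h(L)|\le\Lambda$. Then the second variation is nonnegative for variations that translate the boundary vertically, and this passes to the limit using uniform graphical asymptotics.

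The paper instead minimizes $P(F,\mathcal C_{s_i})$ over $\mathbf T^k$-invariant Caccioppoli sets with $(F\Delta\bar F)\cap E$ bounded. This is a free-boundary problem on the cylinder wall. Because $g$ is conformal to $\bar g$, the slabs $\mathcal P_t$ meet $\partial\mathcal C_s$ orthogonally, so vertical sliding is admissible. The paper then reduces to $\mathbf T^k$-invariant test functions via $\bar\phi=\bigl(\int_{\mathbf T^k}\phi^2\,\mathrm d\theta\bigr)^{1/2}$ and invokes \cite[Lemma 3.6]{HSY2025} for $\hat\Sigma$ in $(M_0,\hat g_0)$. With the barriers in place, your non-escape concern disappears anyway.

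\textbf{Smaller points.}

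(a) Running the minimization without the invariance constraint does not produce an invariant minimizer, so your justification of global minimality is invalid. Work with invariant minimizers and do regularity, blow-down and Allard for $\hat\Sigma$ in the quotient $(M_0,\hat g_0)$, with $\hat g_0=(\prod u_i)^{\frac{2}{n-1}}g_0$. This gives $\mathcal S=\hat{\mathcal S}\times\mathbf T^k$. Alternatively, a slicing inequality shows that invariant minimizers are global minimizers.

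(b) A forcing term that is merely $O(|x'|^{1-n})$ in $\mathbf R^{n-1}$ would allow $|x'|^{3-n}\log|x'|$ terms. The stated expansion needs the finer structure $O(|x'|^{1-n})|Dw|+O(|x'|^{-n})$, which holds because $\partial_n v=O(|x|^{-n})$ on the slab. You then need a bootstrap of the gradient decay to reach the expansion.
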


The construction is well-known since  \cite{Schoen1989}. Here we will set free-boundary minimizing problems as in \cite{HSY26} instead of the original leafwise Plateau problem.

In the following, let us write $x=(x',x_{n})$. For any positive constants $s$ and $t$ we define
$$\mathcal C_s=\{(x,\theta)\in E:|x'|< s\}\cup (M-E)$$
and
$$\mathcal P_t=\{(x,\theta)\in E: x_n=t\}\mbox{ and }\mathcal Q_t=\{(x,\theta)\in E: x_n\leq t\}.$$

\begin{lemma}\label{Lem: barrier}
    There is a positive constant $\Lambda$ such that $\mathcal P_t$ is mean-convex with respect to $e_n$ and $\mathcal P_{-t}$ is mean-convex with respect to $-e_n$ whenever $t\geq \Lambda$. Moreover, $\mathcal P_t$ is always perpendicular to $\mathcal C_s$.
\end{lemma}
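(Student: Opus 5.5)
The plan is to compute the mean curvature of $\mathcal P_t$ directly, using the conformal structure $g=v^{\frac{4}{N-2}}\bar g$ in $E$ from \eqref{Eq: Schwarzschild type metric}. In the reference metric $\bar g=\mathrm dx\otimes\mathrm dx+\mathrm d\theta\otimes\mathrm d\theta$ the slice $\mathcal P_t=\{x_n=t\}\cong\mathbf R^{n-1}\times\mathbf T^k$ is totally geodesic, with unit normal $e_n=\partial_{x_n}$. For a conformal metric $g=e^{2\varphi}\bar g$ with $e^{2\varphi}=v^{\frac{4}{N-2}}$, the mean curvature of a hypersurface with respect to the unit normal $e^{-\varphi}\bar\nu$ is
$$H_g=e^{-\varphi}\bigl(H_{\bar g}+(N-1)\,\bar\nu(\varphi)\bigr).$$
(A sign convention has to be fixed; I take $H$ positive when the hypersurface is mean-convex toward the normal direction, as for spheres seen from outside.) Since $H_{\bar g}=0$, this gives
$$H_g=\frac{2(N-1)}{N-2}\,v^{-\frac{N}{N-2}}\,\partial_{x_n}v$$
on $\mathcal P_t$ with respect to $e_n$, and the analogous expression with $-\partial_{x_n}v$ on $\mathcal P_{-t}$ with respect to $-e_n$. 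So the lemma reduces to sign information on $\partial_{x_n}v$ along these slices.

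Next I use the expansion $v=1+A|x|^{2-n}+\mathcal O_\infty(|x|^{1-n})$. It gives
$$\partial_{x_n}v=(2-n)A\,x_n|x|^{-n}+O(|x|^{-n}).$$
Since $A<0$ and $n\ge3$, the leading coefficient $(2-n)A$ is strictly positive. On $\mathcal P_t$ with $t>0$ we have $x_n=t$ and $|x|\ge t$, so
$$\partial_{x_n}v\ge |x|^{-n}\bigl((n-2)|A|\,t-C\bigr),$$
which is positive once $t\ge\Lambda:=2C/((n-2)|A|)$, uniformly in $x'$. The same argument with $x_n=-t$ shows $-\partial_{x_n}v>0$ on $\mathcal P_{-t}$. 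The $\mathbf T^k$-dependence of $v$ sits only in the error term and causes no trouble. If one wants strict mean convexity rather than mere nonnegativity, this bound gives it.

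The main obstacle is keeping the error bound uniform. I need the $O(|x|^{1-n})$ term, after one derivative, to be bounded by $C|x|^{-n}$ on all of $E$, including directions where $x'$ is large relative to $t$. This is exactly what $\mathcal O_\infty$ provides. Since $|x|\ge t$, the ratio of the error to the main term is $O(1/t)$ regardless of $x'$. Also, $\mathcal P_t$ lies in $E$ as long as $t$ exceeds the radius $r_0$ of $E$, and I take $\Lambda>r_0$ as well.

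Finally, for perpendicularity, $\partial\mathcal C_s=\{|x'|=s\}\times\mathbf R_{x_n}\times\mathbf T^k$ within $E$. Its $\bar g$-normal is $x'/|x'|$, which is $\bar g$-orthogonal to $e_n$, the normal of $\mathcal P_t$. Conformal changes preserve angles, so $\mathcal P_t$ and $\mathcal C_s$ meet orthogonally in $g$ as well.
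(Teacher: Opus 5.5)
Your proposal is correct and follows essentially the same route as the paper. The conformal transformation law gives $H_g=\frac{2(N-1)}{N-2}v^{-\frac{N}{N-2}}\partial_{x_n}v$ on $\mathcal P_t$; the expansion $\partial_{x_n}v=(2-n)A\,x_n|x|^{-n}+O(|x|^{-n})$ with $A<0$ fixes the sign once $|t|\geq\Lambda$; and perpendicularity to $\mathcal C_s$ follows from conformal invariance of angles. Your explicit bound showing that $\Lambda$ can be chosen uniformly in $x'$, and your requirement $\Lambda>r_0$ so that $\mathcal P_{\pm t}\subset E$, are details the paper leaves implicit.
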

\begin{proof}
    Let us compute the mean curvature of $\mathcal P_t$ with respect to $e_n$. It is clear that $\mathcal P_t$ has  vanishing $\bar g$-mean curvature. Then the $g$-mean curvature is given by
    $$H_g=\frac{2(N-1)}{N-2}v^{-\frac{N}{N-2}}\nabla_{\bar g}v\cdot e_n.$$
    Since we have
    $$\nabla_{\bar g}v\cdot e_n=(2-n)A|x|^{-n}x_n+\mathcal O_\infty(|x|^{-n}),$$
    we can take the constant $\Lambda$ large enough such that the $g$-mean curvature of $\mathcal P_t$ with $|t|\geq \Lambda$ has the same sign as $x_n$. Since $g$ is conformal to $\bar g$, $\mathcal P_t$ is always perpendicular to $\mathcal C_s$.
\end{proof}

\begin{remark}\label{Rem: stability of barrier}
    We remark that the constant $\Lambda$ above can be taken large enough such that the following property holds: for any smooth function $w_0$ with expansion
    \begin{equation}\label{Eq: expansion w0}
        w_0=1+A_3|x|^{2-n}+\mathcal O_\infty(|x|^{1-n})\mbox{ as }x\to \infty,
    \end{equation}
    there is a constant $\varepsilon_0$ such that Lemma \ref{Lem: barrier} holds with $\Lambda$ for any conformal metric in the form of
    $$g_\varepsilon=\left(\frac{1+\varepsilon w_0}{1+\varepsilon}\right)^{\frac{4}{N-2}}g\mbox{ with }|\varepsilon|<\varepsilon_0.$$
    Note that the function $w_0$ constructed by \eqref{Eq: equation w_0} has the expansion \eqref{Eq: expansion w0} since both $v$ and $w_0v$ there are harmonic functions with such expansions. In particular, the perturbation in Proposition \ref{prop: density} can be made without affecting the barrier constant $\Lambda$ in Lemma \ref{Lem: barrier}.
\end{remark}

Denote the reference set
$$\bar F=\{(x,\theta)\in E:x_{n}\leq 0\}.$$
Define
$$\mathcal F=\{\mathbf T^k\mbox{-invariant Caccioppoli sets }F\mbox{ such that }(F\Delta \bar F)\cap E\mbox{ is bounded}\}.$$
Fix a sequence of constants $s_i\to+\infty$ as $i\to\infty$ and write $\mathcal C_i=\mathcal C_{s_i}$ for short. Set
$$\mu_i=\inf_{F\in\mathcal F}P(F,\mathcal C_i),$$
where $P(F,\mathcal C_i)$ denotes the perimeter of $F$ in $\mathcal C_i$. Roughly speaking, the perimeter $P(F,\mathcal C_i)$ measures the area of $\partial F$ in $\mathcal C_i$.

\begin{lemma}\label{Lem: minimizing exhaustion}
    There is $\tilde F_i\in\mathcal F$ such that $P(\tilde F_i,\mathcal C_i)=\mu_i$. Moreover, we have 
    $$\mathcal Q_{-\Lambda}\subset \tilde F_i \cap E\subset \mathcal Q_\Lambda.$$
\end{lemma}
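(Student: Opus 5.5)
The plan is the direct method in the class $\mathcal F$, with the barriers of Lemma \ref{Lem: barrier} used to confine minimizers to the slab between $\mathcal P_{-\Lambda}$ and $\mathcal P_\Lambda$. Note that $\mathcal C_i\cap E$ is an infinite cylinder in the $x_n$-direction, so compactness needs care.

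\textbf{Step 1: upper bound on $\mu_i$.} Since $\bar F\in\mathcal F$, we have
$$\mu_i\leq P(\bar F,\mathcal C_i)=\mathcal H^{N-1}_g(\mathcal P_0\cap\mathcal C_i)<+\infty.$$

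\textbf{Step 2: truncation into the slab.} For any $F\in\mathcal F$ we replace it by
$$F'=(F\cup \mathcal Q_{-\Lambda})\cap(\mathcal Q_\Lambda\cup (M-E)).$$
We claim $P(F',\mathcal C_i)\leq P(F,\mathcal C_i)$.

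Consider the portion of $F\setminus\mathcal Q_\Lambda$ inside $E\cap\mathcal C_i$. Foliate the region $\{x_n\ge \Lambda\}$ by the planes $\mathcal P_t$, $t\geq\Lambda$. These are mean-convex with respect to $e_n$ (with the correct sign). Take the unit normal vector field $e_n/|e_n|_g$, extended by the foliation, and apply the divergence theorem on $F\cap\{x_n>\Lambda\}\cap\mathcal C_i$. Its divergence equals the mean curvature of the leaves, which has the sign that makes the flux comparison favorable.

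The lateral boundary $\partial\mathcal C_i$ contributes nothing. The reason is that $\mathcal P_t$ meets $\mathcal C_s$ perpendicularly (Lemma \ref{Lem: barrier}), so the normal vector field is tangent to $\partial\mathcal C_i$. This is exactly why the free-boundary setting is compatible with the barrier.

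The upshot is that the area of $\partial F$ above $\mathcal P_\Lambda$ is at least the area of the part of $\mathcal P_\Lambda$ it shadows. The same argument applies below $\mathcal P_{-\Lambda}$, using the mean convexity of $\mathcal P_{-t}$ with respect to $-e_n$. Since $\mathbf T^k$-invariance is preserved by intersections and unions with the invariant sets $\mathcal Q_{\pm\Lambda}$, we get $F'\in\mathcal F$.

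\textbf{Step 3: existence of a minimizer.} Take a minimizing sequence $F_j\in\mathcal F$ and replace each by its truncation from Step 2. Now $F_j\Delta\bar F$ is contained in the fixed set
$$(\mathcal Q_\Lambda\setminus\mathcal Q_{-\Lambda})\cup(M-E).$$
This set is bounded inside $\mathcal C_i$, apart from the compact core $M-E$ and the other ends. Other ends are not actually a problem: perimeter in $\mathcal C_i$ is being minimized, and $\mathcal C_i$ contains $M-E$.

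We then have the local perimeter bound $P(F_j,\mathcal C_i)\leq\mu_i+1$. BV compactness on an exhaustion of $\mathcal C_i$ by bounded sets, plus a diagonal argument, gives $\chi_{F_j}\to\chi_{\tilde F_i}$ in $L^1_{loc}$. Lower semicontinuity of perimeter on open sets then gives $P(\tilde F_i,\mathcal C_i)\leq\mu_i$.

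To see that $\tilde F_i\in\mathcal F$:
\begin{itemize}
\item $\mathbf T^k$-invariance passes to the $L^1_{loc}$ limit.
\item $(\tilde F_i\Delta\bar F)\cap E$ stays inside the slab region. A priori it need not be bounded in $E$, since the slab is unbounded in the $x'$-directions. However, the set relevant to $\mathcal C_i$ is bounded in $|x'|<s_i$. So we either interpret $\mathcal F$ relative to $\mathcal C_i$, or simply redefine $\tilde F_i=\bar F$ outside $\mathcal C_i$, which does not change $P(\cdot,\mathcal C_i)$.
\end{itemize}
Hence $P(\tilde F_i,\mathcal C_i)=\mu_i$.

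\textbf{Step 4: the inclusions.} The inclusions $\mathcal Q_{-\Lambda}\subset\tilde F_i\cap E\subset\mathcal Q_\Lambda$ hold by construction, since every set in the minimizing sequence was truncated. Truncating the limit once more would not increase perimeter anyway.

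\textbf{Main obstacle.} The hard step is the calibration-type comparison in Step 2. It must be done with care for Caccioppoli sets (via the coarea/divergence theorem for BV functions) and at the free boundary $\partial\mathcal C_i$, where the perpendicularity from Lemma \ref{Lem: barrier} is essential.
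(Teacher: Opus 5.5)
Your proposal is correct and follows essentially the paper's argument: truncate each member of a minimizing sequence by $\mathcal Q_{\pm\Lambda}$, justify $P(F',\mathcal C_i)\le P(F,\mathcal C_i)$ by the divergence theorem for the mean-convex foliation $\{\mathcal P_t\}$ from Lemma \ref{Lem: barrier} (perpendicularity kills the flux through $\partial\mathcal C_i$), and conclude by compactness and lower semicontinuity of perimeter. The differences are cosmetic. The paper resets each $F_l$ to $\bar F$ outside $\mathcal C_i$ before passing to the limit, whereas you do this to the limit to ensure membership in $\mathcal F$. Your identity in Step 1 omits the compact piece of $\partial E$ that also bounds $\bar F$, which is harmless since only $\mu_i<+\infty$ is needed.
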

\begin{proof}
    This follows from the standard minimizing procedure. Take a minimizing sequence $F_l$ such that
    $$P(F_l,\mathcal C_i)\to \mu_i\mbox{ as }l\to\infty.$$
    We can make the following modifications. First, we can replace $F_l$ by $(F_l\cap \mathcal C_i)\cup(\bar F-\mathcal C_i)$ without changing the perimeter $P(\cdot,\mathcal C_i)$. Therefore, we may assume $F_l\Delta \bar F\subset \mathcal C_i$. Second, we can replace $F_l$ by $F_l':=(F_l\cup\mathcal Q_{-\Lambda})\cap \mathcal Q_\Lambda$, where the divergence theorem associated with Lemma \ref{Lem: barrier} shows
    $$P(F'_l,\mathcal C_i)\leq P(F_l,\mathcal C_i).$$
    This means that $F'_l$ is a minimizing sequence with $P(F_l',\mathcal C_i)\to \mu_i$ as $l\to\infty$. Since all $F'_l$ have locally uniformly bounded perimeter, we can assume $F'_l\to \tilde F_i$ as $l\to\infty$ in the sense of Caccioppoli sets for some $\tilde F_i\in \mathcal F$. From the lower semi-continuity of the perimeter, we have
    $$\mu_i\leq P(\tilde F_i,\mathcal C_i)\leq \mu_i,\mbox{ i.e. }P(\tilde F_i,\mathcal C_i)=\mu_i.$$
    From construction we have 
    $\mathcal Q_{-\Lambda}\subset F'_l\cap E\subset \mathcal Q_\Lambda$ and so the same thing holds for the limit $\tilde F_i$.
\end{proof}

\begin{proof}[Proof of Proposition \ref{Prop: minimizing hypersurface}]
    By Lemma \ref{Lem: minimizing exhaustion} $\partial\tilde F_i$ is $\mathbf T^k$-invariant and area-minimizing in $\mathcal C_i$. Then for any bounded smooth open subset $U$ we have 
    $$P(\tilde F_i,U)\leq \mathcal H^{N-1}(\partial U)\mbox{ for }i\mbox{ large enough}$$ from a simple comparison argument. In particular, we can deduce $\tilde F_i\to \tilde F_\infty$ as $i\to\infty$ in the sense of Caccioppoli sets up to a subsequence. Passing to the limit,  we have
    \begin{equation}\label{Eq: lie between}
        \mathcal Q_{-\Lambda}\subset \tilde F_\infty\cap E\subset \mathcal Q_\Lambda,
    \end{equation}
    and that $\partial\tilde F_\infty$ is $\mathbf T^k$-invariant and area-minimizing in $M$. Define $$\tilde\Sigma=\partial \tilde F_\infty.$$ 
    
    Let us verify all the desired properties for $\tilde \Sigma$. From the $\mathbf T^k$-invariance we can write
    $$\tilde F_\infty=\hat F_\infty\times \mathbf T^k\mbox{ and }\tilde \Sigma=\hat\Sigma\times \mathbf T^k\mbox{ with }\hat\Sigma=\partial\hat F_\infty.$$
     Note that the $\mathbf T^k$-invariant area-minimizing property of $\tilde \Sigma$ in $(M,g)$ is equivalent to the area-minimizing property of $\hat \Sigma$ in $(M_0,\hat g_0)$, where $M_0$ is the component in the product $M=M_0\times \mathbf T^k$ and $\hat g_0$ is the conformal metric given by
    $$\hat g_0=\left(\prod_{i=1}^ku_i\right)^{\frac{2}{n-1}}g_0.$$
Then it follows from \cite{NV20} that $\tilde\Sigma$ possibly has non-empty singular set $\mathcal S$, where $\mathcal S$ satisfies $\mathcal H^{N-8}(\mathcal S\cap K)<+\infty$ for any compact subset $K$.
    To show that $\tilde\Sigma$ is a smooth minimal graph around infinity, we shall use the notation $\hat P(\cdot,\cdot)$ to denote the $\hat g_0$-perimeters below. The area-minimizing property of $\hat \Sigma$ in $(M_0,\hat g_0)$ yields
    $$\hat P(\hat F_\infty,B^n_r)=O(r^{n-1})\mbox{ as }r\to+\infty,$$
    where $B^n_r$ denotes the coordinate $r$-ball given by $B^n_r=\{x\in E_0:|x|<r\}$ with $E_0$ the component in the product $E=E_0\times \mathbf T^k$.  In the end $E$, from \eqref{Eq: Schwarzschild type metric} we have
    $$g_0=v^{\frac{4}{N-2}}\mathrm dx\otimes \mathrm dx\mbox{ and }u_i=v^{\frac{2}{N-2}},$$
    and so
    $$\hat g_0=v^{\frac{4(N-1)}{(N-2)(n-1)}}\mathrm dx\otimes \mathrm dx.$$
    From the relation \eqref{Eq: lie between} we conclude that for any sequence of constants $\lambda_i\to 0$ we have
    $$\Phi_{\lambda_i}(\hat F_\infty\cap  E_0)\to \{x_n\leq 0\}\mbox{ in }\mathbf R^n_*$$
    as $i\to\infty$ in the sense of Caccioppoli sets up to a subsequence, where $\Psi_\lambda:\mathbf R^n\to \mathbf R^n,\,x\mapsto \lambda x$ and $\mathbf R^n_*=\mathbf R^n-\{O\}$. It follows from \cite[Theorem 36.3]{Sim83} that the convergence from $\partial\Phi_{\lambda_i}(\hat F_\infty\cap  E_0)$ to $\{x_n=0\}$ is locally smooth with multiplicity one in $\mathbf R^n_*$. In particular, we see that $\hat\Sigma$ can be written as a graph $(x',\hat w(x'))$ for some function $\hat w:\mathbf R^{n-1}\to [-\Lambda,\Lambda]$ around infinity, where we have 
    \begin{equation}\label{Eq: hat w decay}
        |D\hat w|+|x'||D^2\hat w|=o(1)\mbox{ as } x'\to\infty.
    \end{equation}
    Let us view $\hat \Sigma$ as a hypersurface in the Euclidean space $(\mathbf R^n,\mathrm dx\otimes \mathrm dx)$. Through a direct computation we know that the Euclidean mean curvature $\underline H$ of $\hat\Sigma$ satisfies
    \begin{equation*}
        |\underline H|=O(|\partial v|)=O(|x'|^{1-n})\mbox{ as }x'\to\infty.
    \end{equation*}
   Here and in the sequel, quantities with underline are those computed with respect to the Euclidean metric. By comparing the area of portions of $\Sigma$ to their domain up to some error on the boundary cylinder, we have
   \begin{equation*}
       \underline{\mathcal H}^{n-1}(\hat\Sigma\cap B^n_{|x|/2}(x))\leq \sigma_{n-1}\left(\frac{|x|}{2}\right)^{n-1}+O(|x|^{n-2})\mbox{ as }x\to+\infty,
   \end{equation*}
    where $\sigma_{n-1}$ denotes the volume of the $(n-1)$-dimensional unit Euclidean ball.
  From the Allard regularity theorem \cite[Theorem 24.2]{Sim83} as well as \eqref{Eq: lie between} we can deduce
  \begin{equation}\label{Eq: initial decay}
  |D \hat w|=O\left(|x'|^{-\gamma}\right)\mbox{ as }x'\to\infty\mbox{ with }\gamma=\frac{1}{4(n-1)}.
  \end{equation}
  Note that we have the mean curvature equation
  \begin{equation}\label{Eq: mean curvature equation}
      \Delta_{\mathbf R^{n-1}} \hat w=\underline H\sqrt{1+|D\hat w|^2}+\frac{\hat w_i\hat w_j\hat w_{ij}}{1+|D\hat w|^2}.
  \end{equation}
  We show that the decay of $D \hat w$ can be improved by a bootstrapping argument. By detailed computation we have 
  $$\underline H\sqrt{1+|D\hat w|^2}=(n-1)\Lambda v^{-\Lambda-1}(-\partial_jv D_j\hat w+\partial_nv)\mbox{ with }\Lambda=\frac{2(N-1)}{(N-2)(n-1)},$$
  where $D(\cdot)$ denotes the derivative in $\mathbf R^{n-1}$ and $\partial(\cdot)$ denotes the derivative in $\mathbf R^n$ evaluated at point $(x',\hat w(x'))$.
  In particular, we have
  $$\underline H\sqrt{1+|D\hat w|^2}=O(|x'|^{1-n})|D\hat w|+ O(|x'|^{-n})\mbox{ as }x'\to\infty.$$
  Combined with the estimates \eqref{Eq: lie between} and \eqref{Eq: hat w decay} we have $$\Delta_{\mathbf R^{n-1}}\hat w= O\left(|x'|^{\max\{1-n-\gamma,-n,-1-2\gamma\}}\right)=:F_1.$$
  By standard PDE theory the function $\hat w$ has a decomposition $\hat w=\hat w_1+\hat w_2$, where $\hat w_1$ is a bounded harmonic function with the expansion $c+\mathcal O_\infty(|x'|^{3-n})$ and $\hat w_2$ solves the equation $\Delta_{\mathbf R^{n-1}}\hat w_2=F_1$ with the estimate
  $$|\hat w_2|+|x'||D \hat w_2|= O(|x'|^{\max\{3-n-\gamma,2-n, 1-2\gamma\}}).$$
  In particular, we have the improved gradient estimate $$D \hat w=O(|x'|^{\max\{2-n,-2\gamma\}}).$$ Repeating the same argument for finite many times, we end up with the expansion
  $$\hat w=c_0+c_1|x'|^{3-n}+\mathcal O_1(|x'|^{2-n})\mbox{ as }x'\to \infty.$$
  Now let us write the mean curvature equation \eqref{Eq: mean curvature equation} as
  $$a_{ij}(D\hat w)\hat w_{ij}+b_j(v)\hat w_j=F_2,$$
  where
  $$a_{ij}(D\hat w)=\delta_{ij}-\frac{\hat w_i\hat w_j}{1+|D\hat w|^2},$$
  $$b_j=(n-1)\Lambda v^{-\Lambda-1}v_j\mbox{ and }F_2=(n-1)\Lambda v^{-\Lambda-1}\partial_n v.$$
  Let us introduce the notion $u= \mathcal O_{l,\alpha}(|x'|^{-\delta})$ to mean
  $$\sum_{j=0}^l|x'|^{j}|\partial^ju|+|x'|^{l+\alpha}|\partial^{l+\alpha}u|=O(|x'|^{-\delta}),$$
  where
  $$|\partial^{l+\alpha}u|(x'):=\sup_{x'_1,x_2'\in B^{n-1}_{|x|/2}(x)}\frac{|u(x'_1)-u(x'_2)|}{|x'_1-x'_2|}.$$
  Based on the relation $D_j=\partial_j+D_j\hat w\cdot\partial_n$,
  it is direct to verify
  $$a_{ij}=\delta_{ij}+\mathcal O_{l,\alpha}(1)=\delta_{ij}+O(|x'|^{-1}),\, b_j=\mathcal O_{l,\alpha}(|x'|^{1-n})\mbox{ and }  F_2=\mathcal O_{l,\alpha}(|x'|^{-n})$$
  if
  \begin{equation}\label{Eq: induction condition}
      D\hat w=\mathcal O_{l,\alpha}(1)\mbox{ and }D\hat w=O(|x'|^{-1}).
  \end{equation}
  Basically we use the condition on $D\hat w$ to show that each time we take derivative the decay order goes down by one. From \eqref{Eq: hat w decay} we see that \eqref{Eq: induction condition} holds for $l=0$ and any $\alpha\in (0,1)$. It follows
  from the Schauder estimate and bootstrapping that
  $$\hat w=c_0+c_1|x'|^{3-n}+\mathcal O_\infty(|x'|^{2-n})\mbox{ as }x'\to\infty.$$
  The graph function $w$ of $\tilde\Sigma$ is just the $\mathbf T^k$-invariant extension of the function $\hat w$, so it has the same expansion.

  We point out it suffices to show the strong stability of $\Sigma$ with respect to $\mathbf T^k$-invariant test functions. This simply follows from the facts
  $$\int_{\tilde\Sigma}(\Ric_g(\tilde \nu)+|\tilde A_{\tilde \Sigma}|)\phi^2\,\mathrm d\sigma_g=\int_{\tilde\Sigma}(\Ric_g(\tilde \nu)+|\tilde A_{\tilde \Sigma}|)\bar\phi^2\,\mathrm d\sigma_g$$
  and
  $$\int_{\tilde \Sigma}|\nabla_{\tilde \Sigma}\phi|^2\,\mathrm d\sigma_g\geq \int_{\tilde \Sigma}|\nabla_{\tilde \Sigma}\bar\phi|^2\,\mathrm d\sigma_g,$$
  where
  $$\bar\phi=\sqrt{\int_{\mathbf T^k}\phi^2\,\mathrm d\theta}.$$
  For $\mathbf T^k$-invariant test functions, the strong stability of $\tilde \Sigma$ is equivalent to the strong stability of $\hat \Sigma$ in the AF manifold $(M_0,\hat g_0)$, which was established by \cite[Lemma 3.6]{HSY2025}, where the singular set $\mathcal S$ does not matter since the test functions under consideration vanish around $\mathcal S$.
\end{proof}

\subsection{The torical symmetrization procedure}
In this subsection, we will give a proof for Proposition \ref{Prop: torical symmetrization}, which allows us to apply torical symmetrization inductively to an AF manifold with negative mass, up to dimension 13.
\begin{proposition}\label{Prop: torical symmetrization up to 13}
    Proposition \ref{Prop: torical symmetrization} is true up to dimension 13.
\end{proposition}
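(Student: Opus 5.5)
We prove Proposition \ref{Prop: torical symmetrization} for $N:=\dim M\le 13$ by the usual Schoen--Yau descent. The area-minimizing slice may be singular, and we remove its singular set with Corollary \ref{Cor: singularity blowup}. The dimension restriction enters through the inequality
$$N-8\le \frac{(N-1)-2}{2}=\frac{N-3}{2}\iff N\le 13,$$
which matches the hypothesis of Corollary \ref{Cor: singularity blowup} (Ahlfors dimension $m=N-1$) with the estimate $\mathcal H^{N-8}(\mathcal S\cap K)<+\infty$ from Proposition \ref{Prop: minimizing hypersurface}(i).

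\emph{Step 1 (normalization).} Starting from $(M,g,E)$, apply Proposition \ref{prop: density} to replace $g$ by a $\mathbf T^k$-warped ALF metric with negative mass and $R\ge 0$. This metric equals $v^{\frac{4}{N-2}}\bar g$ in the end, with $v=1+A|x|^{2-n}+\mathcal O_\infty(|x|^{1-n})$ and $A<0$. Using the perturbation at the end of Proposition \ref{prop: density}, together with Remark \ref{Rem: stability of barrier} so that the barrier constant $\Lambda$ is unchanged, we also arrange $R_g>0$ on a fixed annulus $\mathcal A_{r,4r}$ that meets the slab $\{|x_n|\le\Lambda\}$. Hence any hypersurface trapped in that slab passes through a region of positive scalar curvature.

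\emph{Step 2 (slice).} Proposition \ref{Prop: minimizing hypersurface} gives $\tilde\Sigma$. It is $\mathbf T^k$-invariant and strongly stable, its singular set satisfies $\mathcal H^{N-8}$-local finiteness, and its end is a minimal graph with $w=c_0+c_1|x'|^{3-n}+\dots$. On this end the induced metric is $v^{\frac{4}{N-2}}$ times the graph metric. Because $|x|^{2-n}=o(|x'|^{3-n})$ and $|Dw|^2=O(|x'|^{4-2n})$, the induced metric is $\mathbf T^k$-warped ALF in dimension $n-1$ with zero mass.

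\emph{Step 3 (desingularize).} Since $N\le 13$, Corollary \ref{Cor: singularity blowup} gives a positive harmonic function $G_{\mathcal S}$ on $\mathcal R=\tilde\Sigma-\mathcal S$ with $G_{\mathcal S}\to0$ at infinity, and $\tilde g_\delta=(1+\delta G_{\mathcal S})^{\frac{4}{N-3}}g_{\tilde\Sigma}$ is complete. Because $G_{\mathcal S}$ is harmonic, we have $R_{\tilde g_\delta}=(1+\delta G)^{-\frac{4}{N-3}}R_{\tilde\Sigma}$. The asymptotics $G_{\mathcal S}=a|x'|^{3-n}+\dots$ with $a\ge 0$ follow from the comparison with $v-\inf v$ in Corollary \ref{Cor: singularity blowup} and a harmonic expansion. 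The resulting mass contribution is $O(\delta)$.

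\emph{Step 4 (spectral positivity).} By the Gauss equation, strong stability reads
$$\int_{\mathcal R}|\nabla\phi|^2+\tfrac12 R_{\tilde\Sigma}\phi^2\ \ge\ \int_{\mathcal R}\tfrac12(R_g+|\tilde A|^2)\phi^2 .$$
We transplant this to $\tilde g_\delta$ by substituting $\phi=(1+\delta G)\psi$ and integrating by parts using $\Delta G=0$. The coefficient $\tfrac12$ exceeds the conformal-Laplacian constant $\frac{N-3}{4(N-2)}$, so the leftover gradient term has a good sign. This yields
$$\int|\nabla_{\tilde g_\delta}\psi|^2+\tfrac12R_{\tilde g_\delta}\psi^2\ \ge\ \int\tfrac12\,\rho\,\psi^2\,\mathrm d\mu_{\tilde g_\delta}$$
for some $\rho\ge0$ with $\rho>0$ on the annulus. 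The cutoffs near $\mathcal S$ are harmless because $\mathcal S$ has zero capacity (Lemma \ref{Lem: zero capacity}).

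\emph{Step 5 (warp).} On the complete manifold $(\mathcal R,\tilde g_\delta)$, solve by exhaustion, as in Lemma \ref{Lem: existence f eigenfunction}, for a $\mathbf T^k$-invariant $u>0$ satisfying
$$-\Delta u+\tfrac12R_{\tilde g_\delta}u=\tfrac12\rho u,\qquad u\to1 \text{ at infinity}.$$
Then set $\widehat M=\mathcal R\times\mathbf S^1$ and $\widehat g=\tilde g_\delta+u^2\mathrm d\theta^2$, whose scalar curvature is $R_{\tilde g_\delta}-2\Delta u/u=\rho\ge0$. The expansion $u=1+B|x'|^{3-n}+\dots$ has $B$ a negative multiple of $\int\rho u$, which is strictly negative because $\rho>0$ on the annulus. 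The mass of $\widehat g$ is therefore a combination of $B<0$ and the $O(\delta)$ term from Step 3, so it is negative once $\delta$ is small. Completeness of $\widehat g$ holds because $\tilde g_\delta$ is complete, and $\dim\widehat M=(N-1)+1=N$.

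\emph{Main obstacle.} The delicate step is Step 4 combined with the mass bookkeeping. We must check that the stability form genuinely survives the conformal blow-up without losing the strict positivity on the annulus, and that the test-function class $\mathcal T_{-q}$ accommodates $u-1$. We must also verify that the equation for $u$ is solvable on the non-compact, blown-up ends near $\mathcal S$, which requires a positivity (Sobolev) bound on the blown-up manifold. I would first try to solve on $(\mathcal R,g_{\tilde\Sigma})$ with Dirichlet exhaustions avoiding $B_s(\mathcal S)$ and only afterwards conformally transform, using the $W^{1,2}$ control near $\mathcal S$ from Corollary \ref{Cor: W012 is W12}.
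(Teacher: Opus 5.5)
Steps 1--4 follow the paper's architecture: normalize with Proposition \ref{prop: density}, take the slice from Proposition \ref{Prop: minimizing hypersurface}, use $N-8\le\frac{N-3}{2}$ to invoke Corollary \ref{Cor: singularity blowup}, and transplant stability to the blown-up metric. In Step 4, since $1+\delta G_{\mathcal S}$ is harmonic, this transplant is an exact identity for compactly supported test functions, so no gradient term is left over.

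The genuine gap is in Step 5, where you put the entire spectral room into the equation, $-\Delta u+\frac12R_{\tilde g_\delta}u=\frac12\rho u$. Here $\rho$ is neither compactly supported nor bounded, since it contains $|\tilde A|^2$, which blows up towards $\mathcal S$. Two things go wrong.

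First, $-\Delta+\frac12R_{\tilde g_\delta}-\frac12\rho$ is only known to be nonnegative. Lemma \ref{Lem: existence f eigenfunction} does not apply, because it needs a compactly supported $f$ with small negative part. A merely nonnegative operator can also be critical, and then no positive solution tends to $1$.

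Second, your mass bookkeeping has the wrong sign. Integrating $\Delta u=\frac12(R_{\tilde g_\delta}-\rho)u$ as in Lemma \ref{Lem: existence f eigenfunction} gives $B$ equal to a positive constant times $\int(\rho-R_{\tilde g_\delta})u$. So the $\rho$-term alone makes $u$ superharmonic, gives $B>0$, and increases the mass. The sign of $B$ can only come from the energy identity obtained by multiplying the equation by $u$,
$$c_nB=\tfrac12\int\rho u^2-\int\big(|\nabla u|^2+\tfrac12R_{\tilde g_\delta}u^2\big),\qquad c_n>0,$$
combined with the stability inequality applied to $u$ itself. With your choice of $\rho$ the two $\rho$-terms cancel exactly. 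What remains is at best an $O(\delta)$ boundary term, and nothing $\delta$-independent and negative is left to absorb the $O(\delta)$ mass created by the blow-up.

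The paper avoids both problems by solving instead
$$-\Delta_{\hat g_*}\hat u_{k+1}+\tfrac12R_{\hat g_*}\hat u_{k+1}=\tfrac12h_*\eta\,\hat u_{k+1},$$
with $\eta\ge0$ compactly supported in $\mathcal R\cap\mathcal A_{r,4r}$. The operator then keeps a margin of at least $\frac12h_*$, and the source stays away from $\mathcal S$. Existence follows from \cite[Proposition 4.2]{HSY26}, and \cite[Lemma 4.3]{HSY26} gives \eqref{Eq: mass change}, a strict gain of $-\frac{1}{2\omega_{n-1}}\int h_*\hat u_{k+1}^2$.

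You also need this gain to stay bounded away from zero as $\delta\to0$, which your proposal does not address. The paper gets it by noting that $\psi=\hat u_{k+1}(1+\varepsilon G_{\mathcal S})$ solves an $\varepsilon$-independent equation on $(\mathcal R,\check g)$. Hence $\hat u_{k+1}$ is uniformly bounded below on $\spt\eta$, and the negative term beats $m(\mathcal R,\hat g_*,\check E)=O(\varepsilon)$. Your idea of solving first on $(\mathcal R,g_{\tilde\Sigma})$ and then conformally transforming is exactly this device, but it only works once you use the cut-off potential with a margin.
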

\begin{proof}
    Let $(M,g,E)$ be a $\mathbf T^k$-warped ALF manifold with nonnegative scalar curvature and negative mass. By Proposition \ref{prop: density} we may assume that the metric $g$ in $E$ has the form
    $$g=v^{\frac{4}{N-2}}\bar g\mbox{ where }v=1+A|x|^{2-n}+\mathcal O_\infty(|x|^{1-n})\mbox{ with }A<0.$$
    Moreover, the scalar curvature $R_g$ can be guaranteed to be positive in any fixed annulus $\mathcal A_{r,4r}$. It follows from Proposition \ref{Prop: minimizing hypersurface} that there is a strongly stable $\mathbf T^k$-invariant area-minimizing hypersurface $\Sigma=\Sigma_0\times \mathbf T^k$ possibly with singular set $\mathcal S$ satisfying $\mathcal H^{N-8}(\mathcal S\cap K)<+\infty$ for any compact subset $K$, which can be written as a graph over $\mathbf R^{n-1}\times \mathbf T^k$ around infinity, where the graph function $w$ satisfies
    \begin{equation}\label{Eq: graph expansion}
        w=c_0+c_1|x'|^{3-n}+\mathcal O_\infty(|x'|^{2-n}).
    \end{equation}
    From Remark \ref{Rem: stability of barrier} we can first determine the barrier constant $\Lambda$ and then fix an annulus $\mathcal A_{r,4r}$ with $r>\Lambda$, and so the area-minimizing hypersurface $\Sigma$ always has non-empty intersection with $\mathcal A_{r,4r}$, where the scalar curvature $R_g$ is positive.
    
    For convenience, we write the regular-singular decomposition of $\Sigma$ as 
    $$\Sigma=\mathcal R\sqcup\mathcal S.$$  Denote the induced metric of $\mathcal R$ from $(M,g)$ by $\check g$. Clearly, the metric $\check g$ is $\mathbf T^k$-warped and it can be written as
    $$\check g=\check g_{0}+\sum_{i=1}^k\check u_i^2\mathrm d\theta_i\otimes \mathrm d\theta_i,$$
    where $\check g_0$ is the induced metric of $\Sigma_0$ from $(M_0,g_0)$ on its regular part and $\check u_i=u_i|_{\mathcal R}$.
    From the expansion \eqref{Eq: graph expansion} we know that $(\Sigma,\check g)$ contains a $\mathbf T^k$-warped ALF end $\check E$ with zero mass around the infinity of $E$. Note that we can simply take $\check E$ to be the minimal graph part of $\Sigma$ around infinity with the coordinate $(x',\theta)$.
    
    Now we follow the work \cite{HSY26} to blow up the singular set $\mathcal S$ to make the regular part $\mathcal R$ complete. It follows from  Corollary \ref{Cor: singularity blowup} that we can find a $\mathbf T^k$-invariant positive harmonic function $G_{\mathcal S}$ on $\mathcal R$ with $G_{\mathcal S}=o(1)$ as $x'\to\infty$ such that the conformal metric
    $$\hat g_*=(1+\varepsilon G_{\mathcal S})^{\frac{4}{N-3}}\check g\mbox{ on }\mathcal R$$
    is complete for any positive constant $\varepsilon>0$. From the same argument of Lemma \ref{Lem: existence f eigenfunction} we can derive $G_{\mathcal S}=\mathcal O_2(|x'|^{3-n})$ and so $(\mathcal R,\hat g_*,\check E)$ is a $\mathbf T^k$-warped ALF manifold with

    \begin{equation}\label{Eq: small mass}
        m(\mathcal R,\hat g_*,\check E)=O(\varepsilon)\mbox{ as }\varepsilon\to 0.
    \end{equation}
    Set
    $$h=\frac{1}{2}(R_g+|A_\Sigma|^2)\mbox{ on }\mathcal R.$$
    Note that we have $h\geq 0$ in $\mathcal R$ and $h>0$ in $\mathcal R\cap \mathcal A_{r,4r}$. 
    It follows from the strong stability of $\Sigma$ and also \cite[Proposition 5.7]{HSY26} that we have
    
     \begin{equation}\label{eq:non negativity of scalar curvature	1}
    	 \int_{\mathcal R}|\nabla_{\hat g_*}\phi|^2+\frac{R_{\hat g_*}}{2}\phi^2\,\mathrm d\sigma_{\hat g_*}\geq \int_{\mathcal R}h_*\phi^2\,\mathrm d\sigma_{\hat g_*}\mbox{ with }h_*=h(1+\varepsilon G_{\mathcal S})^{-\frac{4}{N-2}}
     \end{equation}
    for any test function $\phi\in \mathcal T_{-q}(\mathcal R,\hat g_*,\check E)$ with $q>\frac{n-3}{2}$.
    Take a $\mathbf T^k$-invariant nonnegative cut-off function $\eta$ on $\Sigma$ with compact support in $\mathcal R\cap \mathcal A_{r,4r}$ such that $\eta> 0$ somewhere. 
    By \cite[Proposition 4.2]{HSY26} there is a smooth positive $\mathbf T^k$-invariant function on $\mathcal R$, denoted by $\hat u_{k+1}$, such that
    \begin{equation}\label{Eq: hat u k+1}
        -\Delta_{\hat g_*}\hat u_{k+1}+\frac{R_{\hat g_*}}{2}\hat u_{k+1}=\frac{h_*\eta}{2}\hat u_{k+1}\mbox{ and } \hat u_{k+1}=1+\mathcal O_2(|x'|^{3-n}).
    \end{equation}
    Define
    $$\widehat M=\mathcal R\times \mathbf S^1,\,\hat g=\hat g_*+\hat u_{k+1}^2\,\mathrm d\theta_{k+1}\otimes \mathrm d\theta_{k+1}\mbox{ and }\widehat E=\check E\times \mathbf S^1,$$
    where $\mathbf S^1$ denotes a unit circle and $\theta_{k+1}$ is the arc length parametrization of $\mathbf S^1$. It is direct to verify that $(\widehat M,\hat g,\widehat E)$ is a $\mathbf T^{k+1}$-warped ALF manifold with its scalar curvature given by
    $$R_{\hat g}=h_*\eta,$$
    which is nonnegative in $\mathcal R$ and positive in $\mathcal R\cap\mathcal A_{2r,3r}$. From \cite[Lemma 4.3]{HSY26} we have
    \begin{equation}\label{Eq: mass change}
        m(\widehat M,\hat g,\widehat E)\le m(\mathcal R,\hat g_*,\check E)-\frac{1}{2\omega_{n-1}}\int_{\check E}h_*\hat u_{k+1}^2\,\mathrm d\sigma_{\hat g_*}.
    \end{equation}
    As shown in \cite[(5.13)-(5.15)]{HSY26}, $\psi:=\hat u_{k+1}(1+\varepsilon G_{\mathcal S})$ is a smooth positive function independent of $\varepsilon$, which solves the equation
    $$-\Delta_{\check g}\psi+\frac{R_{\check g}}{2}\psi=\frac{h\eta}{2}\psi.$$
    Therefore, $\hat u_{k+1}$ is uniformly bounded from below by a positive constant in the support of $\eta$, and then we can guarantee $m(\widehat M,\hat g,\widehat E)<0$ by taking $\varepsilon$ small enough, concerning \eqref{Eq: small mass} and \eqref{Eq: mass change}.
\end{proof}

\section{Generic regularity and Riemannian positive mass theorem up to dimension 19}
\subsection{Generic regularity}
In this section, we are going to establish the following generic regularity result so that it can be used in our torical symmetrization procedure to relax the ambient dimension up to $19$.

\begin{proposition}\label{Prop: generic regularity}
    Let $(M^{n+k},g,E)$ be a $\mathbf T^k$-warped ALF manifold with
    \begin{equation}\label{Eq: 4.1}
        g=v^{\frac{4}{N-2}}\bar g \mbox{ in }E,
    \end{equation}
    where $v$ has the expansion
    \begin{equation}\label{Eq: 4.2}
        v=1+A|x|^{2-n}+\mathcal O_\infty(|x|^{1-n})\mbox{ with }A<0.
    \end{equation}
    Let $\mathfrak d=11$. Then, up to adjusting to a new metric with the same form above, there is a $\mathbf T^k$-invariant area-minimizing hypersurface $\tilde\Sigma$ satisfying that
    $\tilde\Sigma$ possibly has non-empty singular set $\mathcal S$ with $$\dim_{\mathcal H}\mathcal S<N-\mathfrak d,$$
    and also (ii)-(iii) in Proposition \ref{Prop: minimizing hypersurface}.
    Moreover, if $(M,g)$ has nonnegative scalar curvature, then we can guarantee that $\tilde \Sigma$ passing through a large annulus $\mathcal A_{r,4r}$ where the ambient scalar curvature is positive.
\end{proposition}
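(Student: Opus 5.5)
The plan is to reduce to the $\mathbf T^k$-quotient and apply the local generic regularity theorem of \cite{CMSW2025} to a one-parameter family of minimizers obtained by perturbing the metric in a compact region. As in the proof of Proposition \ref{Prop: minimizing hypersurface}, a $\mathbf T^k$-invariant area-minimizing boundary $\tilde\Sigma=\hat\Sigma\times\mathbf T^k$ in $(M,g)$ is the same as an area-minimizing boundary $\hat\Sigma$ in $(M_0,\hat g_0)$ with $\hat g_0=(\prod_i u_i)^{\frac{2}{n-1}}g_0$. Moreover, $\dim_{\mathcal H}\mathcal S(\tilde\Sigma)=\dim_{\mathcal H}\mathcal S(\hat\Sigma)+k$. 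It therefore suffices to produce $\hat\Sigma$ with $\dim_{\mathcal H}\mathcal S(\hat\Sigma)<n-\mathfrak d$. Here $\mathfrak d=11$ and $\hat\Sigma$ has dimension $n-1\le 10$ in the range $n\le 11$; for $n\le 7$ this is automatic. The generic regularity of \cite{CMS23, CMSW2025} states the following. For a monotone one-parameter family of area-minimizing boundaries $\partial F_t$ with mutually disjoint or nested supports (a foliation-type family), the singular set is empty, and in particular of dimension $<n-11$, for almost every $t$, with the dimension estimate holding in the purely local form. The family is built as follows.

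First, I construct the family by varying the reference set. Following the exhaustion in Lemma \ref{Lem: minimizing exhaustion}, for each $t\in[-1,1]$ I minimize perimeter in $\mathcal C_i$ among $F\in\mathcal F$ with $F\Delta\bar F_t$ bounded, where $\bar F_t=\{x_n\le t\}$, or alternatively I fix the class and instead perturb the metric. The cleanest option uses the statement's freedom of ``adjusting to a new metric with the same form''. I take $g_t=\left(\frac{1+t w_0}{1+t}\right)^{\frac{4}{N-2}}g$, with $w_0$ solving \eqref{Eq: equation w_0} for a cut-off supported in a compact set. This preserves the form \eqref{Eq: 4.1}--\eqref{Eq: 4.2} with $A<0$ for $|t|$ small, and by Remark \ref{Rem: stability of barrier} the barrier constant $\Lambda$ stays uniform. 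Passing $i\to\infty$ yields minimizers $\tilde\Sigma_t$ satisfying (ii)--(iii) of Proposition \ref{Prop: minimizing hypersurface} uniformly in $t$. The uniformity holds because the graph expansion and the strong stability arguments only used the asymptotic form of the metric and $\Lambda$.

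Second, I arrange the family to be ordered, since this is the input needed by \cite{CMSW2025}. For this I would instead vary boundary data, using the shifted reference sets $\bar F_t$ in a fixed metric. I then select, for each $t$, the outermost or innermost minimizer in the exhaustion. A cut-and-paste comparison with $F_t\cap F_{t'}$ and $F_t\cup F_{t'}$ shows $F_t\subset F_{t'}$ for $t<t'$, and the strong maximum principle (valid for minimizing boundaries) shows that distinct leaves either coincide or are disjoint. Since these hypersurfaces all lie between $\mathcal P_{\pm\Lambda}$, the family sweeps out a region. The local generic regularity theorem of \cite{CMSW2025}, applied on an exhaustion by compact sets and in the quotient manifold $M_0$, then gives a full-measure set of $t$ with $\dim_{\mathcal H}\mathcal S(\hat\Sigma_t)<n-\mathfrak d$. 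One such $t$ is chosen. The asymptotic constant $c_0$ in (ii) may depend on $t$, which is harmless. I expect this step to be the main obstacle: the noncompactness, the requirement that distinct $t$ produce genuinely distinct leaves (otherwise the almost-every-$t$ statement is vacuous), and the need for $\mathbf T^k$-invariance (handled by working in $M_0$) all require care. If the boundary-shift family degenerates, for example if all $t$ give the same minimizer, I would fall back on the metric perturbation $g_t$. In that case I would verify the transversality hypothesis of \cite{CMSW2025} directly: the perturbation must change the mean curvature sign near $\tilde\Sigma$, and I would arrange this through the choice of $w_0$.

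Finally, for the last assertion, I use Proposition \ref{prop: density} with its small perturbation to make $R_g>0$ on $\mathcal A_{r,4r}$ with $r>\Lambda$, again keeping $\Lambda$ fixed by Remark \ref{Rem: stability of barrier}. Every leaf lies in the slab $|x_n|\le\Lambda$ and is a graph over $\mathbf R^{n-1}\times\mathbf T^k$ near infinity, so it necessarily meets $\mathcal A_{r,4r}$, exactly as in the proof of Proposition \ref{Prop: torical symmetrization up to 13}.
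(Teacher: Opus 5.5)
\textbf{There is a genuine gap.} Your reduction to the quotient $(M_0,\hat g_0)$ and the closing barrier argument are fine. The central step, however, is producing a family of minimizers to which \cite[Theorem 1.4]{CMSW2025} applies, and that step does not work as proposed.

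\emph{The boundary-shift family is constant in $t$.} The exhaustion of Lemma \ref{Lem: minimizing exhaustion} minimizes $P(F,\mathcal C_i)$ with free boundary on $\partial\mathcal C_i$. Since $(\bar F_t\Delta\bar F_0)\cap\mathcal C_i$ is bounded, the replacement $F\mapsto(F\cap\mathcal C_i)\cup(\bar F_0-\mathcal C_i)$ identifies the admissible classes for all $t$ without changing $P(\cdot,\mathcal C_i)$. So $\mu_i$, the minimizers and their limits are independent of $t$, and ``almost every $t$'' is vacuous. A Plateau version with boundary at height $t$ would depend on $t$. But you would then have to show that the $i\to\infty$ limits stay distinct and satisfy the separation hypothesis below. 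Generic regularity at each finite $i$ does not pass to the limit, because upper bounds on $\dim_{\mathcal H}$ of singular sets are not stable under convergence.

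\emph{The fallback family is not ordered either.} For $g_t$ built from the $w_0$ of \eqref{Eq: equation w_0}, the $g_t$-mean curvature of the regular part of a $g$-minimizer is a positive multiple of $t\,\partial_\nu w_0$. Nothing forces a global positive superharmonic $w_0$ to have $\partial_\nu w_0$ of one sign along $\tilde\Sigma$. Your phrase ``arrange this through the choice of $w_0$'' is exactly the missing construction.

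\emph{Disjoint, nested leaves are not enough.} \cite{CMSW2025} also needs the time function $\mathfrak T$ to be locally Lipschitz, i.e.\ $\dist(p_1,p_2)\ge\min\{c(p_1),c(p_2)\}|t_1-t_2|$ for $p_j\in\Sigma_{t_j}$. Your description of the hypothesis omits this.

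\textbf{How the paper supplies these steps.} Lemma \ref{Lem: uniqueness} first replaces $g$ by $(1+\varepsilon w)^{\frac{1}{N-1}}g$, where $w$ vanishes on one chosen minimizer $\partial\tilde F^g$ and is positive elsewhere in $\mathring{\mathcal A}_{r,4r}$. This forces every new minimizer to lie on $\partial\tilde F^g$ in $\mathcal A_{2r,3r}$, so $\mathfrak A^{g'}_o=\{\Sigma_o\}$. As a result, for small $t$ every leaf crosses the perturbation region as an almost-constant graph, and no minimizer can stay put while $t$ varies. Lemma \ref{Lem: foliation} then takes the innermost minimizer and sets $g'_t=e^{2tw}g'$. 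Here $w$ is supported in a Fermi neighborhood of a regular piece of $\Sigma_o$ inside $\mathcal A_{2r,3r}$, with $w=0$ on that piece and $\partial_zw<0$. Cut-and-paste at the level of $\mathcal C_i$, together with strict mean-concavity of earlier leaves in later metrics, gives nestedness and disjointness. \cite[Claim 9.3, (9.21)]{CMSW2025} gives the Lipschitz bound. Compact components are discarded by working in $M-\Sigma$.

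\textbf{Two smaller corrections.}
\begin{enumerate}
\item For $n>11$, \cite{CMSW2025} does not give $\mathcal S=\emptyset$, only $\dim_{\mathcal H}\mathcal S<n-11$. That weaker bound is exactly what is needed when $12\le n\le 19$, so drop your restriction to $n\le 11$.
\item The perturbation of Proposition \ref{prop: density} has to be made \emph{before} the generic one. The generic perturbation must also be $C^\infty$-small and supported where $R>0$. Otherwise you lose either $R\ge 0$ or the chosen generic leaf.
\end{enumerate}
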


Let us fix some notations below. As before, we denote the reference set
$$\bar F=\{(x,\theta)\in E:x_{n-1}\leq 0\}$$
and define
$$\mathcal F=\{\mathbf T^k\mbox{-invariant Caccioppoli sets }F\mbox{ such that }(F\Delta \bar F)\cap E\mbox{ is bounded}\}.$$
Fix a sequence of constants $s_i\to+\infty$ as $i\to\infty$ and write $\mathcal C_i=\mathcal C_{s_i}$ for short.

Given any $\mathbf T^k$-warped ALF metric $g$ in the form of \eqref{Eq: 4.1}-\eqref{Eq: 4.2}, we introduce
$$\mu_i^g=\inf_{F\in\mathcal F}P_g(F,\mathcal C_i)$$
and
$$\mathfrak A_i^g=\{\partial \tilde F_i^g\cap \mathcal C_i:\tilde F^g_i\in \mathcal F\mbox{ with }P_g(\tilde F_i^g,\mathcal C_i)=\mu_i^g\},$$
where $P_g(F,\mathcal C_i)$ denotes the $g$-perimeter of $F$ in $\mathcal C_i$. Passing to a limit, we define
$$\mathfrak A^g=\{\partial \tilde F^g:\exists\,\partial\tilde F_i^g\in \mathfrak A^g_i\mbox{ such that }\partial\tilde F_i^g\to\partial \tilde F^g\}.$$
From the previous discussion in subsection \ref{Sec: existence minimizing}. In particular, we have $\mathfrak A^g\neq\emptyset$ and for each $\partial \tilde F^g\in \mathfrak A^g$ we have
$$\mathcal Q_{-\Lambda}\subset\tilde F^g\cap E\subset \mathcal Q_\Lambda\mbox{ for some }\Lambda>0.$$
Since $\partial \tilde F^g$ is area-minimizing, we have $\mathcal H^{N}(\partial \tilde F^g)=0$ and so we can always take $\tilde F^g$ to be an open subset of $M$. The same thing holds for $\tilde F^g_i$. We point out the following properties of $\tilde F^g$:
\begin{itemize}
    \item $\tilde F^g$ is connected. Otherwise, $\tilde F^g$ has a component $C$ such that $C\cap E$ is bounded. Then we can remove this component to decrease the area, contradicting to the area-minimizing property of $\partial \tilde F^g$.
    \item $\partial\tilde F^g$ has a unique component, denoted by $\partial_o\tilde F^g$, which is a smooth minimal graph around the infinity of $E$ by Proposition \ref{Prop: minimizing hypersurface}.
\end{itemize}
For convenience, we introduce
$$\mathfrak A_o^g=\{\partial_o\tilde F^g:\partial\tilde F^g\in \mathfrak A^g\}.$$
Roughly speaking, we will run the following two steps towards the generic regularity:
\begin{itemize}
    \item make a perturbation $g'$ to $g$ such that $\mathfrak A^{g'}_o$ consists of only one area-minimizing hypersurface;
    \item make a continuous family of perturbations $\{g'_t\}_{t\in [0,1]}$ to $g'$ such that area-minimizing hypersurfaces in $\mathfrak A^{g'_t}$ with $t\in [0,1]$ form a foliation (to be specified in Lemma \ref{Lem: foliation}).
\end{itemize}
All perturbations will be made in the $\mathbf T^k$-invariant setting.

\begin{lemma}\label{Lem: uniqueness}
    We can make an arbitrarily small $C^\infty$-perturbation $g'$ of $g$ such that $g'$ differs from $g$ only in $\mathcal A_{r,4r}$ and $\mathfrak A^{g'}_o$ consists only one area-minimizing hypersurface.
\end{lemma}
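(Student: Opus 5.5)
The plan is the standard ``one-sided conformal push'' uniqueness trick, carried out in the $\mathbf T^k$-invariant class. Any two hypersurfaces in $\mathfrak A^g_o$ agree to leading order near infinity: both lie between $\mathcal P_{-\Lambda}$ and $\mathcal P_\Lambda$ and have the expansion $c_0+c_1|x'|^{3-n}+\dots$. They can still differ, but they are nested. I will therefore work with the \emph{lowest} element $\partial_o\tilde F^g_{\min}$, and perturb the metric so that it becomes strictly preferred over every other candidate.

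\textbf{Step 1: an ordering on $\mathfrak A^g_o$.} For minimizers $\tilde F_1,\tilde F_2\in\mathcal F$ of $P_g(\cdot,\mathcal C_i)$, the usual cut-and-paste inequality
$$P(F_1\cap F_2)+P(F_1\cup F_2)\le P(F_1)+P(F_2)$$
shows that $\tilde F_1\cap\tilde F_2$ and $\tilde F_1\cup\tilde F_2$ are again minimizers; both remain $\mathbf T^k$-invariant and in $\mathcal F$. Passing to limits, I expect to obtain a minimal element $\tilde F^g_{\min}$, the intersection of all limits, with $\tilde F^g_{\min}\subset\tilde F^g$ for every $\partial\tilde F^g\in\mathfrak A^g$. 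The compactness of minimizers with barriers from Lemma \ref{Lem: minimizing exhaustion} should make the intersection of a countable dense family still a limit of minimizers. In particular, every other outer component $\partial_o\tilde F^g$ lies weakly on the positive side of $\Sigma_{\min}:=\partial_o\tilde F^g_{\min}$.

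\textbf{Step 2: the perturbation.} Choose $r>\Lambda$ so that every element of $\mathfrak A_o^g$ is a graph over $\mathcal A_{r,4r}$ trapped in $\{|x_n|\le\Lambda\}$. Take a $\mathbf T^k$-invariant bump $\varphi\ge0$ supported in $\mathcal A_{r,4r}\cap\{x_n>w_{\min}(x')\}$, where $w_{\min}$ is the graph function of $\Sigma_{\min}$, with $\varphi>0$ on a region touching $\Sigma_{\min}$ from above. Set
$$g'=(1+\varepsilon\varphi)^{\frac{4}{N-2}}g.$$
This is a $C^\infty$-small change supported in $\mathcal A_{r,4r}$, it keeps the form \eqref{Eq: 4.1}--\eqref{Eq: 4.2}, and it leaves the barrier $\Lambda$ unchanged by Remark \ref{Rem: stability of barrier}. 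Area is unchanged on $\Sigma_{\min}$ and strictly increased on any hypersurface that enters the support of $\varphi$.

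\textbf{Step 3: uniqueness for $g'$.} The key estimate is a comparison of perimeters in $\mathcal C_i$. For any $F\in\mathcal F$ we have
$$P_{g'}(F,\mathcal C_i)\ge P_g(F,\mathcal C_i)\ge\mu_i^g=P_g(\tilde F_{i,\min},\mathcal C_i)=P_{g'}(\tilde F_{i,\min},\mathcal C_i),$$
using that $\tilde F_{i,\min}$ converges to $\tilde F^g_{\min}$ and avoids $\operatorname{spt}\varphi$ for large $i$. This last point is where care is needed; if necessary, shrink the support of $\varphi$ into $\{x_n>w_{\min}+\delta\}$ and use local smooth convergence of the minimizers near the annulus. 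Hence $\mu^{g'}_i=\mu^g_i$, and every $g'$-minimizer is also a $g$-minimizer whose boundary avoids $\operatorname{spt}\varphi$. Passing to limits, any $\Sigma'\in\mathfrak A^{g'}_o$ is a $g$-minimizer, so Step 1 gives $\Sigma'\ge\Sigma_{\min}$, and $\Sigma'$ avoids the region where $\varphi>0$. I then want to show $\Sigma'=\Sigma_{\min}$. If not, the strong maximum principle for the two nested minimal hypersurfaces, which are regular in the annulus, forces $\Sigma'$ to lie strictly above $\Sigma_{\min}$ on the connected outer component. If $\varphi$ is chosen with support in the thin strip just above $\Sigma_{\min}$, any such $\Sigma'$ must cross that support, which is a contradiction.

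\textbf{Main obstacle.} The difficulty is that $\varphi$ must be fixed \emph{before} knowing how close competitors come to $\Sigma_{\min}$. A competitor can stay above $\Sigma_{\min}$ while lying arbitrarily near it, and so miss any fixed strip. To handle this I would first try replacing the strip argument by a strict-area argument: rather than a bump above $\Sigma_{\min}$, decrease the conformal factor slightly on a tubular neighbourhood of $\Sigma_{\min}\cap\mathcal A_{r,4r}$, with a cutoff that is maximal exactly on $\Sigma_{\min}$. Then any other $g$-minimizer, being a different graph over part of the annulus by the maximum principle, has strictly larger $g'$-area than $\Sigma_{\min}$ in compact pieces. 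This would make $\Sigma_{\min}$ the unique $g'$-minimizer in its class, and give uniqueness in $\mathfrak A_o^{g'}$ via convergence of the approximating $\tilde F_i^{g'}$ together with the barrier of Lemma \ref{Lem: minimizing exhaustion}.
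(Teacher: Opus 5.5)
There is a genuine gap: neither your strip construction nor your tube fallback forces uniqueness. With a bump supported in a one-sided strip, Step~3 only shows that surviving competitors are $g$-minimizers that miss the strip, and such competitors tie with $\Sigma_{\min}$. This happens once you push the support off by $\delta$ (which you need so that the approximants $\tilde F_{i,\min}$ avoid it and $\mu^{g'}_i=\mu^g_i$ holds exactly). A $g$-minimizer lying within $\delta$ of $\Sigma_{\min}$, or lying entirely above the strip, then has $P_{g'}=P_g=\mu$.

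The tube construction in your last paragraph gives up the one-sided inequality $P_{g'}\ge P_g$ on which any such argument rests. Lowering the conformal factor around $\Sigma_{\min}$ also discounts every competitor passing through the tube. Their area there is not bounded by that of $\Sigma_{\min}$, so you cannot conclude that $\Sigma_{\min}$ has strictly smaller $g'$-perimeter. For a nearby nested minimizer, the difference of weighted areas is a quadratic form in its height function, and its sign depends on that competitor. Your comparison is also made only against other $g$-minimizers and only ``in compact pieces''. A $g'$-minimizer need not be a $g$-minimizer, and minimality is a global statement in $\mathcal C_i$.

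The principle you state in Step~2 is the right one: keep the area of the chosen surface unchanged and increase it elsewhere. The missing idea is that the penalized set must be the whole open annulus minus the chosen surface, on both sides and arbitrarily close to it. Exact equalities are then replaced by a quantitative gain-versus-error estimate. Fix any $\partial\tilde F^g\in\mathfrak A^g$; no ordering or innermost element is needed. Take a smooth $\mathbf T^k$-invariant $w\ge0$ with $\{w>0\}=\mathring{\mathcal A}_{r,4r}\setminus\partial\tilde F^g$, and set $g'=(1+\varepsilon w)^{\frac{1}{N-1}}g$. Then $P_{g'}\ge P_g$ for every competitor. For the approximants, since $w$ is Lipschitz and vanishes on the limit, $P_{g'}(\tilde F^g_i,\mathcal C_i)\le\mu^g_i+C\varepsilon\|\nabla_g w\|_{L^\infty}\,d_{\mathcal H}(\partial\tilde F^g_i,\partial\tilde F^g)=\mu^g_i+o(1)$.

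Now suppose some $\partial\tilde F^{g'}\in\mathfrak A^{g'}$ had a point in $\mathcal A_{2r,3r}$ at distance $d>0$ from $\partial\tilde F^g$. Its approximants would contain points $p_i$ at distance at least $d/2$ from $\partial\tilde F^g_i$. On $B_{d/4}(p_i)$ we have $w\ge c_1$, and the monotonicity formula, valid at singular points too, gives $\mathcal H^{N-1}_g(\partial\tilde F^{g'}_i\cap B_{d/4}(p_i))\ge c_2$. Hence $P_g(\tilde F^{g'}_i,\mathcal C_i)+\varepsilon c_1c_2\le P_{g'}(\tilde F^{g'}_i,\mathcal C_i)\le P_{g'}(\tilde F^{g}_i,\mathcal C_i)\le\mu^g_i+o(1)$, which contradicts $P_g(\tilde F^{g'}_i,\mathcal C_i)\ge\mu^g_i$. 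So $\partial\tilde F^{g'}\cap\mathcal A_{2r,3r}\subset\partial\tilde F^g$. This intersection is nonempty by the barrier, and unique continuation then gives $\partial_o\tilde F^{g'}=\partial_o\tilde F^g$. This route also avoids two unproved assumptions in your Steps~1--2: that intersections of limits are again limits of minimizers, and that all elements of $\mathfrak A^g_o$ are regular graphs over the annulus.
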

\begin{proof}
   Fix some $\partial\tilde F^g\in \mathfrak A^g$. Take a smooth $\mathbf T^k$-invariant function $w$ such that $w>0$ in $\mathring{\mathcal A}_{r,4r}-\partial \tilde F^g$ and $w=0$ elsewhere. Define
   $$g'=(1+\varepsilon w)^{\frac{1}{N-1}}g\mbox{ for some small }\varepsilon>0.$$
   We want to show $\mathfrak A^{g'}_o=\{\partial_o\tilde F^g\}$ when $\varepsilon$ is small enough. From Remark \ref{Rem: stability of barrier} we can still guarantee $\mathcal Q_{-\Lambda}\subset\partial \tilde F^{g'}\cap E\subset \mathcal Q_\Lambda$ for any $\partial\tilde F^{g'}\in\mathfrak A^{g'}$ when $\varepsilon$ is small enough. Then we see $\partial\tilde F^{g'}\cap\mathcal A_{2r,3r}\neq\emptyset$. It suffices to show 
   \begin{equation}\label{Eq: inclusion minimizing}
\partial\tilde F^{g'}\cap \mathcal A_{2r,3r}\subset \partial\tilde F^g\cap \mathcal A_{2r,3r}.
\end{equation}
   Then by unique continuation we have $\partial \tilde F^{g'}\subset \partial \tilde F^g$ and so $\partial_o\tilde F^{g'}=\partial_o\tilde F^g$.

   Now we illustrate how to prove \eqref{Eq: inclusion minimizing} for any $\partial \tilde F^{g'}\in \mathfrak A^{g'}$. Assume by contradiction that \eqref{Eq: inclusion minimizing} is false. Then we can fix a point $p\in \partial\tilde F^{g'}\cap\mathcal A_{2r,3r}$ with
   $$\dist_g(p,\partial\tilde F^g)\geq d>0\mbox{ where }d<r.$$
   By definition we can find $\partial\tilde F^g_i\in\mathfrak A^g_i$ and $\partial\tilde F^{g'}_i\in \mathfrak A^{g'}_i$ such that $\partial\tilde F^g_i\to\partial\tilde F^g$ and $\partial\tilde F^{g'}_i\to\partial \tilde F^{g'}$ as $i\to\infty$, where the convergence is also in the sense of Hausdorff distance in any fixed compact subsets. In particular, there are points $p_i\in\partial\tilde F^{g'}_i\cap\mathcal A_{2r,3r}$ such that
   $$\dist_g(p_i,\partial\tilde F^g_i)\geq \frac{d}{2}>0\mbox{ for }i\mbox{ large enough}.$$
   Let $B_i$ denote the $g$-geodesic ball centered at $p_i$ with radius $d/4$. Note that all $B_i$ have a definite amount of distance to $\{w=0\}$, so we have $w|_{B_i}\geq c_1>0$ for some constant $c_1$ independent of $i$. From the monotonicity formula we also have $\mathcal H_g^{N-1}(\partial\tilde F^{g'}_i\cap B_i)\geq c_2>0$ for some constant $c_2$ independent of $i$. Then we have
   \[
   \begin{split}
      \mathcal H^{N-1}_{g}(\partial\tilde F^{g'}_i\cap \mathcal C_i)+\varepsilon c_1c_2 &\leq\mathcal H^{N-1}_{g'}(\partial\tilde F^{g'}_i\cap\mathcal C_i)\\
      &\leq \mathcal H^{N-1}_{g'}(\partial\tilde F^g_i\cap\mathcal C_i)\\
      &\leq \mathcal H^{N-1}_{g}(\partial\tilde F^g_i\cap\mathcal C_i)+C\varepsilon|\nabla_g w|_{L^\infty}\cdot d_{\mathcal H,g}(\partial\tilde F^g_i,\partial\tilde F^g),
   \end{split}
   \]
   where $C$ denotes the area of $\partial\mathcal A_{r,4r}$ bounding $\mathcal H^{N-1}(\partial\tilde F^g_i\cap \mathcal A_{r,4r})$ and $d_{\mathcal H,g}(\cdot,\cdot)$ denotes the Hausdorff distance in $\mathcal A_{r,4r}$. As $i\to \infty$ we have $d_{\mathcal H,g}(\partial\tilde F^g_i,\partial\tilde F^g)\to 0$ and so
   $$\mathcal H^{N-1}_g(\partial\tilde F^{g'}_i\cap\mathcal C_i)<\mathcal H^{N-1}_g(\partial\tilde F^g_i\cap\mathcal C_i),$$
   which contradicts to the area-minimizing property of $\partial\tilde F^g_i$ in $\mathcal C_i$.
\end{proof}

In the following, we take $\partial \tilde F^{g'}_*\in \mathfrak A^{g'}$ where $\tilde F^{g'}_*$ is {\it innermost} among $\mathfrak A^{g'}$ and denote
$$\Sigma_o=\partial_o\tilde F^{g'}_*.$$
Recall that we have $\mathfrak A^{g'}_o=\{\Sigma_o\}$ from Lemma \ref{Lem: uniqueness} above.
\begin{lemma}\label{Lem: foliation}
   Let $\Sigma_o$ be as above.
   Then we can construct a continuous family of arbitrarily small $C^\infty$-perturbations $\{ g'_t\}_{t\in [0,1]}$ of $g'$ with $ g'_0=g'$ satisfying the following properties:
    
    \begin{itemize}
        \item[(1)] $g'_t$ differs from $g'$ only in $\mathcal A_{2r,3r}$ for all $t\in [0,1]$; 
        \item[(2)] all distinct area-minimizing hypersurfaces $\Sigma_t\in \mathfrak A^{g'_{t}}_o$ are pairwise disjoint;
        \item[(3)] we have $\tilde F^{g'}_*\subset \tilde F^{g'_t}$ for any $\partial\tilde F^{g'_t}\in \mathfrak A^{g'_t}$;
        \item[(4)] there is a positive continuous function $c$ on $M$ such that for any pair of area-minimizing hypersurface $\Sigma_{t_i}\in\mathfrak A^{g'_{t_i}}_o$ we have
        $$\dist_{g'}(p_1,p_2)\geq \min\{ c(p_1),c(p_2)\}\cdot|t_1-t_2|\mbox{ for any }p_i\in\Sigma_{t_i}.$$
    \end{itemize}
     
\end{lemma}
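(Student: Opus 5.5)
The plan is to build the family by a conformal deformation that makes the region on the "outer" side of $\Sigma_o$ progressively cheaper, in the style of \cite{CMS23, CMSW2025}. I will take a smooth $\mathbf T^k$-invariant function $\phi\ge 0$ supported in $\mathring{\mathcal A}_{2r,3r}$, positive on $\mathcal A_{2r,3r}\cap(M\setminus\tilde F^{g'}_*)$ near $\Sigma_o$ and vanishing on $\tilde F^{g'}_*$. Then I will set
$$g'_t=(1-\varepsilon t\phi)^{\frac{2}{N-1}}g',\qquad t\in[0,1],$$
with $\varepsilon$ small. Property (1) is immediate. Remark \ref{Rem: stability of barrier} keeps the barrier constant $\Lambda$ uniform, so every $\partial\tilde F^{g'_t}\in\mathfrak A^{g'_t}$ still lies between $\mathcal Q_{-\Lambda}$ and $\mathcal Q_\Lambda$ and meets $\mathcal A_{2r,3r}$.

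For (3) I will use a cut-and-paste comparison at the level of the approximating minimizers in $\mathcal C_i$. Let $F_i$ be a $g'$-minimizer and $G_i$ a $g'_t$-minimizer. Since $g'_t\le g'$ with equality on $\tilde F^{g'}_*$, I expect
$$P_{g'_t}(F_i\cup G_i)+P_{g'}(F_i\cap G_i)\le P_{g'_t}(G_i)+P_{g'}(F_i).$$
This forces $F_i\cap G_i$ to be $g'$-minimizing. Passing to the limit and using that $\tilde F^{g'}_*$ is innermost, I get $\tilde F^{g'}_*\subset\tilde F^{g'_t}$. The same argument applied between $g'_{t_1}$ and $g'_{t_2}$ with $t_1<t_2$ gives monotone nesting. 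It shows that for any minimizers $F^{t_1}$, $F^{t_2}$, the union $F^{t_1}\cup F^{t_2}$ is $g'_{t_2}$-minimizing and the intersection $F^{t_1}\cap F^{t_2}$ is $g'_{t_1}$-minimizing.

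For (2), the nesting puts distinct $\Sigma_{t_1},\Sigma_{t_2}$ on one side of each other. If they touched, the strong maximum principle for minimizing boundaries (the Simon/Ilmanen version, valid through singularities) would contradict the strict inequality $\partial_t g'_t<0$ on the support of $\phi$ near the contact. Away from $\operatorname{spt}\phi$ both are minimal for the same metric, so touching would force them to coincide on a component. Unique continuation, together with the fact that each $\Sigma_t$ is the unique outer graph component, excludes this. The choice $\phi>0$ on the outer side of $\Sigma_o$, with the nesting, guarantees that every $\Sigma_t$ meets $\{\phi>0\}$.

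The main obstacle is the quantitative separation (4). I plan to argue by contradiction and blow up. Suppose $p_i\in\Sigma_{t_1^i}$, $q_i\in\Sigma_{t_2^i}$ with $\dist(p_i,q_i)\le c\,|t_1^i-t_2^i|$ and $c\to 0$. Rescale the separation: the normalized difference of the two nested minimizing hypersurfaces, divided by $|t_1-t_2|$, should converge to a nonnegative supersolution $u$ of the Jacobi equation
$$L u=-\tfrac{\varepsilon}{N-1}\,\partial_\nu\!\left(\text{conformal factor}\right)\ \text{on } \operatorname{spt}\phi,$$
that is, $Lu\le 0$ with strict inequality somewhere. The Harnack inequality for positive Jacobi fields on the regular part (Lemma \ref{Lem: ball harnack}), combined with the monotone-nesting structure, should yield a positive lower bound for $u$ on compact subsets. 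Near singular points I will instead use a Hardy-type bound via monotonicity, in the spirit of \cite{CMS23}. The resulting lower bound is locally uniform but degenerates near $\mathcal S$ and at infinity, which is why $c$ is only a positive continuous function. Ensuring that this limit exists and is nontrivial near singular points is the step I expect to require the most care.
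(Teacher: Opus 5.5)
\textbf{There is a genuine gap: your perturbation is degenerate exactly where it needs to act.}

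The problem is the choice of $\phi$. A smooth $\phi\ge 0$ that vanishes on $\tilde F^{g'}_*$ has all of its derivatives equal to zero at every point of $\Sigma_o$, since each such point is a limit of points of the open set $\tilde F^{g'}_*$. Write $g'_t=e^{2\psi_t}g'$ with $\psi_t=\frac{1}{N-1}\log(1-\varepsilon t\phi)$. The formula $H_{g'_t}=e^{-\psi_t}\bigl(H_{g'}+(N-1)\partial_\nu\psi_t\bigr)$ then shows two things: $\Sigma_o$ stays stationary for every $g'_t$, and the forcing $\partial_t H$ vanishes identically along $\Sigma_o$.

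Consequences:
\begin{itemize}
\item Nothing pushes the leaves off $\Sigma_o$ at a rate proportional to $t$. Moving a distance $s$ outward gains only $\varepsilon t\,o(s^m)$ in area, for every $m$. So when $\Sigma_o$ is, say, a strictly stable isolated minimizer, one expects $\Sigma_t=\Sigma_o$ for all small $t>0$, and then (4) fails outright (take $p_1=p_2$).
\item Your claim that every $\Sigma_t$ meets $\{\phi>0\}$ is already false for $\Sigma_0=\Sigma_o$.
\item In your blow-up argument for (4) with $t^i_1,t^i_2\to 0$, the right-hand side $-\frac{\varepsilon}{N-1}\partial_\nu\phi$ tends to $0$ along $\Sigma_o$. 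The limit therefore satisfies $Lu=0$ with no strict inequality, and no contradiction follows.
\end{itemize}

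The same misconception affects your nesting and (2) arguments. Comparing leaves is governed by the sign of the \emph{normal derivative} of the conformal factor along the leaves, not by the sign of $\partial_t g'_t$.
\begin{itemize}
\item For nesting, take $t_1<t_2$ and the bad region $B=F^{t_1}\setminus F^{t_2}$, with outer boundary $\alpha\subset\partial F^{t_1}$ and inner boundary $\beta\subset\partial F^{t_2}$. Cut-and-paste only gives $[A_{t_1}-A_{t_2}](\alpha)\le[A_{t_1}-A_{t_2}](\beta)$. This is no contradiction when the metric merely decreases on both $\alpha$ and $\beta$. Equivalently, your displayed inequality leaves the nonnegative term $[A_{g'_{t_1}}-A_{g'_{t_2}}](\partial F^{t_2}\cap F^{t_1})$ with the wrong sign.
\item In (2), the inner leaf is a barrier in the metric $g'_{t_2}$ only where $\partial_\nu\phi>0$ along it, not merely where $\phi>0$.
\end{itemize}
You would need the perturbation to be strictly monotone in the outward normal direction across all leaves. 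You have not arranged this, and you cannot arrange it near singular points of $\Sigma_o$, where your $\phi$ is also allowed to be positive.

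\textbf{How the paper does it.} Following \cite[Claim 9.3]{CMSW2025}, the paper localizes in a Fermi neighborhood $V\subset\mathcal A_{2r,3r}$ of a smooth piece $\mathcal R$ of $\Sigma_o$. This piece is chosen so that every minimizer for a nearby metric meets $V$ only as an almost-constant graph over $\mathcal R$. The paper then sets $g'_t=e^{2tw}g'$, where $w=0$ on $\mathcal R$, $w>0$ inside, $w<0$ outside, and $\partial_z w<0$ on a smaller $V'$. So $w$ has a \emph{nonzero} normal derivative on $\Sigma_o$.

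For $t_1>t_2$, each $g'_{t_2}$-minimizer is then weakly mean-concave everywhere and strictly mean-concave in $V$ with respect to $g'_{t_1}$. Its mean curvature in $V'$ is comparable to $(N-1)(t_1-t_2)|\partial_z w|$. This barrier yields:
\begin{itemize}
\item that the union of the two minimizers is $g'_{t_1}$-minimizing, giving nesting;
\item strict separation, which is (2);
\item (3), via the innermost property of $\tilde F^{g'}_*$;
\item a lower bound on the separation of the graphs in $V'$ that is linear in $|t_1-t_2|$. A Harnack argument then propagates this to (4), as in \cite[(9.21)]{CMSW2025}.
\end{itemize}
Your Harnack-propagation step is in the right spirit, but it needs this nondegenerate, one-signed forcing on a regular piece of $\Sigma_o$ to have something to propagate.
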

\begin{proof}
    The proof is almost the same as that of \cite[Claim 9.3]{CMSW2025}. Take a smooth piece $\mathcal R$ of $\Sigma_o\cap \mathcal A_{2r,3r}$ with a fixed Fermi neighborhood $V$ in $\mathcal A_{2r,3r}$ such that
    $$\partial \tilde F^{g'}\cap V=\Sigma_o\cap V\mbox{ for all }\partial \tilde F^{g'}\in \mathfrak A^{g'}.$$
     Fix a smooth $\mathbf T^k$-invariant function $w$ supported in the Fermi neighborhood $V$ such that $w\equiv 0$ along $\mathcal R$, $w<0$ in the interior of $V'-\tilde F^{g'}_*$ and also $w>0$ in $V'\cap \tilde F^{g'}_*$, where $V'$ is a smaller Fermi neighborhood of a smaller piece $\mathcal R'$ of $\mathcal R$. Moreover, we can require $\partial_zw<0$ in $V'$, where $z$ is the vertical Fermi coordinate with $\partial_z$ pointing outside $\tilde F^{g'}_*$. 
    
    Define the metrics $g'_t$ by
    $$g'_t=e^{2tw}g'\mbox{ with }t\in [0,1].$$
    Clearly, we have the property (1). Note that we can always replace $w$ by a smaller multiple of $w$, and then the fact $\mathfrak A^{g'}_o=\{\Sigma_o\}$ implies
    \begin{itemize}
        \item $\partial \tilde F^{g'_t}\cap V=\partial_o\tilde F^{g'_t}\cap V$ for all $\partial \tilde F^{g'_t}\in \mathfrak A^{g'_t}$;
        \item $ \Sigma_t\cap V$ can be written as a graph over $\mathcal R$ for all $\Sigma_t\in\mathfrak A^{g'_t}_o$, where the graph functions are almost constant.
    \end{itemize}
    Take two distinct area-minimizing hypersurfaces
    $$\Sigma_{t_1}\in \mathfrak A^{g'_{t_1}}_o\mbox{ and }\Sigma_{t_2}\in \mathfrak A^{g'_{t_2}}_o.$$
   If 
   $t_1=t_2$,
   we assert that $\Sigma_{t_1}$ and $\Sigma_{t_2}$ are disjoint. Otherwise, by the unique continuation $\Sigma_{t_1}$ and $\Sigma_{t_2}$ must intersect transversally somewhere. By definition, for $j=1,2$ we have
    $$\Sigma_{t_j}=\partial_o\tilde F^{g'_{t_j}}\mbox{ and }\partial \tilde F^{g'_{t_j}}_i\to \partial\tilde F^{g'_{t_j}}\mbox{ as }i\to \infty\mbox{ with }\partial \tilde F^{g'_{t_j}}_i\in \mathfrak A^{g'_{t_j}}_i.$$
    Then we know that $\partial \tilde F^{g'_{t_1}}_i$ actually intersects $\partial \tilde F^{g'_{t_2}}_i$ transversally for $i$ large enough, which is impossible from a cut-and-paste argument.
    Now let us assume $t_1>t_2$. With a careful choice of suitable function $w$, the same argument as that in the proof of \cite[Claim 9.3]{CMSW2025} actually implies 
    \begin{itemize}
        \item $\partial(\tilde F^{g'_{t_1}}_i\cup \tilde F^{g'_{t_2}}_i)\in \mathfrak A^{g'_{t_1}}_i$ for $i$ large enough;
        \item $\partial\tilde F^{g'_{t_2}}_i$ is weakly mean-concave everywhere and strictly mean-concave in $V$ with respect to the outward unit normal and the metric $g'_{t_1}$ for $i$ large enough, and the same thing holds for $\partial\tilde F^{g'_{t_2}}$.
    \end{itemize}
    From these two facts we conclude that $\Sigma_{t_1}$ lies strictly above $\Sigma_{t_2}$ for any $t_1>t_2$, and so we obtain the property (2). We point out that the same argument above also implies 
    $$\partial(\tilde F^{^{g'_t}}\cap \tilde F^{g'}_*)\in \mathfrak A^{g'}.$$
    Since $\tilde F^{g'}_*$ is innermost, we obtain the property (3). Finally,
    the property (4) follows from an exhaustion argument and \cite[(9.21)]{CMSW2025}.
\end{proof}
        
\begin{corollary}\label{Cor:generic regularity}   
For almost every $t\in [0,1]$ we have
    $$\dim_{\mathcal H}(\mathcal S)<N-11\mbox{ for any }\Sigma_t\in \mathfrak A^{g'_t}_o,$$
    where $\mathcal S$ denotes the singular set of $\Sigma_t$.
\end{corollary}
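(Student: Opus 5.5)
The plan is to reduce to the local generic regularity theorem of \cite{CMSW2025}, which the acknowledgement notes is purely local, and feed it the foliation of Lemma \ref{Lem: foliation}. Because of $\mathbf T^k$-invariance, everything descends to the quotient. Each $\Sigma_t=\hat\Sigma_t\times\mathbf T^k$, where $\hat\Sigma_t$ is area-minimizing in $(M_0,\hat g_{0,t})$ with $\hat g_{0,t}=(\prod u_i)^{2/(n-1)}g_{0,t}$, as in the proof of Proposition \ref{Prop: minimizing hypersurface}. The singular set of $\Sigma_t$ equals $\hat{\mathcal S}_t\times\mathbf T^k$. So it suffices to prove $\dim_{\mathcal H}\hat{\mathcal S}_t<n-11$ for almost every $t$, for all $\hat\Sigma_t$ arising from $\mathfrak A^{g'_t}_o$. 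The hypersurfaces $\hat\Sigma_t$ are minimizing boundaries of dimension $n-1$ in the $n$-manifold $M_0$.

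The key steps, in order, are as follows.

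First, I would transfer properties (2) and (4) of Lemma \ref{Lem: foliation} to the quotient family $\{\hat\Sigma_t\}$. The hypersurfaces are pairwise disjoint, and their separation satisfies $\dist(p_1,p_2)\ge \min\{c(p_1),c(p_2)\}|t_1-t_2|$, which is a locally uniform Lipschitz-type separation. Property (3) and the innermost choice give the monotone nesting needed to regard the family as a (possibly discontinuous, multivalued) foliation.

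Second, I would exhaust $M_0$ by countably many compact sets $K_j$. On each $K_j$ the function $c$ is bounded below by some $c_j>0$. Every tangent cone at a singular point of some $\hat\Sigma_t$ is area-minimizing, so the stratification and the cone classification in \cite{CMS23,CMSW2025} apply locally. I would then invoke the local covering and measure argument of \cite[Section 9]{CMSW2025}: given a family of minimizing hypersurfaces with the separation bound (4), the set of parameters $t$ for which some member has singular set of dimension $\ge n-11$ intersecting $K_j$ has Lebesgue measure zero. Heuristically, singular points with tangent cones of large singular dimension "repel" nearby leaves at a superlinear rate, so the separation bound forces the bad parameters to be sparse. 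Taking the countable union over $j$ and over the dimension thresholds $n-11+1/\ell$ gives a null set of bad $t$. Any point outside $\bigcup_j K_j$ lies in the smooth graphical end, which contains no singular points.

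The main obstacle is verifying that the hypotheses of the local theorem in \cite{CMSW2025} are exactly met by our family. That result is stated for minimizers in a fixed homology class under a one-parameter metric perturbation on a closed manifold. Here we instead have free-boundary limits from the cylinders $\mathcal C_i$, the metrics vary only in $\mathcal A_{2r,3r}$, and there may be several minimizers for a single $t$. I would first check that their proof uses only three ingredients: local minimality, the disjointness/ordering, and the quantitative separation (4) on compact sets, since (4) was derived from \cite[(9.21)]{CMSW2025} precisely for this purpose. Having several $\Sigma_t$ for the same $t$ is harmless because they are disjoint and ordered by (2), so the union over $t$ still forms a laminated family to which the measure estimate applies. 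The dimension gain matches because the quotient ambient dimension is $n$ and the target dimension $n-11$ corresponds to $N-11$ after taking the product with $\mathbf T^k$.
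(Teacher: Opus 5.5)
Your proposal is correct and follows the paper's route: the family from Lemma \ref{Lem: foliation} has pairwise disjoint leaves by (2) and a locally Lipschitz time function $\mathfrak T$ by (4), and it is fed into \cite[Theorem 1.4 and Corollary 1.6]{CMSW2025}, with an exhaustion by compact sets handling the fact that $c$ is only locally bounded below. The differences are cosmetic. First, you pass to the $\mathbf T^k$-quotient $M_0$; this is valid but unnecessary, since the paper applies the theorem directly in $\bar M=M-\Sigma$, after discarding the compact components $\Sigma$ of $\partial\tilde F^{g'}_*$ so that every $\Sigma_t$ is a minimizing boundary there. Second, the strict inequality must come from the bound in \cite{CMSW2025} itself, as in your measure-zero claim for dimension $\geq n-11$; your union over thresholds $n-11+1/\ell$ would on its own give only $\leq n-11$.
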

\begin{proof}
Decompose $\partial\tilde F^{g'}_*=\Sigma_o\sqcup \Sigma$ and denote $\bar M=M-\Sigma$. Then all $\Sigma_t\in \mathfrak A^{g'_t}_o$ are area-minimizing boundaries in $\bar M$. Combined with Lemma \ref{Lem: foliation} the desired consequence follows from \cite[Theorem 1.4 and Corollary 1.6]{CMSW2025} applied to the triple
    $$\left(\bar M,\bigcup_{t\in [0,1]}\mathfrak A^{g'_t}_o,\mathfrak T\right),$$
    where $\mathfrak T$ is the function given by
    $$\mathfrak T:\bigcup_{\Sigma_t\in \mathfrak A^{g'_t}_o}\Sigma_t\to [0,1]\mbox{ with }\mathfrak T(p)=t\mbox{ if }p\in \Sigma_t.$$
    We point out that \cite[Theorem 1.4]{CMSW2025} is valid with the locally Lipschitz property of $\mathfrak T$-function by a simple exhaustion argument.
\end{proof}

We are ready to prove Proposition \ref{Prop: generic regularity}.
\begin{proof}[Proof of Proposition \ref{Prop: generic regularity}]
    Let $g$ be a $\mathbf T^k$-warped ALF matric satisfying \eqref{Eq: 4.1}-\eqref{Eq: 4.2}. From Proposition \ref{prop: density} we can make a perturbation $g_*$ of $g$ such that $g_*$ still satisfies \eqref{Eq: 4.1}-\eqref{Eq: 4.2} and that the scalar curvature $R_{g_*}$  becomes positive in $\mathcal A_{r,4r}$ for some fixed large annulus $\mathcal A_{r,4r}$ to be determined later. From Remark \ref{Rem: stability of barrier} we can assume the barrier constant $\Lambda$ from \ref{Lem: barrier} to be fixed under small perturbations no matter which annulus is chosen. 
    Now we can apply 
    Corollary \ref{Cor:generic regularity}     to perturb $g_*$ into a new metric $\tilde g$ such that any area-minimizing hypersurface $\tilde \Sigma\in \mathfrak A^{\tilde g}_o$ satisfies $\dim_{\mathcal H}(\mathcal S)<N-11$ Since the perturbation is made in compact subset, we can assume the barrier constant $\Lambda$ is still kept and so $\tilde \Sigma$ passes through the region $\mathcal A_{r,4r}$ where the scalar curvature $R_{\tilde g}$ is still positive. The properties (ii)-(iii) follow from the exactly same argument as before.
\end{proof}

\begin{lemma}\label{Lem: modified torical symmetrization}
    If Proposition \ref{Prop: generic regularity} holds for some $\mathfrak d\in \mathbf N_+$, then Proposition \ref{Prop: torical symmetrization} still holds when $k+3<\dim M\leq 2\mathfrak d-3$. 
\end{lemma}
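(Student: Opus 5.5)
The plan is to rerun the proof of Proposition \ref{Prop: torical symmetrization up to 13} almost line by line. The only change is that Proposition \ref{Prop: minimizing hypersurface} is replaced by the assumed Proposition \ref{Prop: generic regularity} with the given $\mathfrak d$. That proposition supplies a strongly stable $\mathbf T^k$-invariant area-minimizing hypersurface $\tilde\Sigma$ whose singular set satisfies $\dim_{\mathcal H}\mathcal S<N-\mathfrak d$. The only thing to check is that this dimension bound is good enough for the singularity blow-up of Proposition \ref{Prop: desingularization}.

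\textbf{Step 1: setup.} Start from a $\mathbf T^k$-warped ALF manifold $(M,g,E)$ with $R_g\geq 0$ and negative mass, where $N=\dim M$ satisfies $k+3<N\leq 2\mathfrak d-3$.
\begin{itemize}
\item Apply Proposition \ref{prop: density} to put $g$ in the Schwarzschild-type form $g=v^{\frac{4}{N-2}}\bar g$ on the end, with $A<0$.
\item Fix the barrier constant $\Lambda$ from Lemma \ref{Lem: barrier} and Remark \ref{Rem: stability of barrier}, and choose an annulus $\mathcal A_{r,4r}$ with $r>\Lambda$ on which $R_g>0$.
\item Apply Proposition \ref{Prop: generic regularity}. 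It may perturb the metric, but only within the same class and only on compact sets, so $A<0$ and the barrier are preserved. The output is $\tilde\Sigma$ with properties (ii)--(iii), passing through $\mathcal A_{r,4r}$, with $\dim_{\mathcal H}\mathcal S<N-\mathfrak d$.
\end{itemize}

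\textbf{Step 2: dimension check for the blow-up.} The hypersurface $\tilde\Sigma$ has dimension $m=N-1$, so the threshold in Proposition \ref{Prop: weak form} is $\frac{m-2}{2}=\frac{N-3}{2}$. The hypothesis $N\leq 2\mathfrak d-3$ is equivalent to $N-\mathfrak d\leq\frac{N-3}{2}$. Hence
$$\dim_{\mathcal H}\mathcal S<N-\mathfrak d\leq \frac{N-3}{2}.$$
This is the strict inequality required by the first alternative in Proposition \ref{Prop: desingularization}, so no finite-measure condition is needed.

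The remaining hypotheses of Proposition \ref{Prop: weak form} are verified as in the proof of Corollary \ref{Cor: singularity blowup}:
\begin{itemize}
\item (AM) follows from monotonicity, and $\mathcal H^{m-2}(\mathcal S)=0$ holds because $N-\mathfrak d\leq m-2$ (we may assume $\mathfrak d\geq 8$).
\item (LPI) follows from \cite{BG1972}.
\item Non-parabolicity comes from the barrier $\min\{|x'|^{-1/2},\delta\}$. This needs $n-1\geq 3$, which is exactly $N-k\geq 4$.
\item The regular part $\mathcal R$ of $\tilde\Sigma$ must be connected. To arrange this I restrict to the component $\partial_o\tilde F$ containing the ALF graph end; any other components are ignored.
\end{itemize}
The result is a $\mathbf T^k$-invariant harmonic $G_{\mathcal S}=\mathcal O_2(|x'|^{3-n})$ such that $(1+\varepsilon G_{\mathcal S})^{\frac{4}{N-3}}\check g$ is complete. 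The $\mathbf T^k$-invariance is obtained by averaging over the torus, which preserves harmonicity, and completeness survives averaging since the blow-up lower bound persists.

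\textbf{Step 3: warping construction.} Follow the proof of Proposition \ref{Prop: torical symmetrization up to 13} verbatim:
\begin{itemize}
\item Use the stability inequality \eqref{eq:non negativity of scalar curvature	1}.
\item Solve \eqref{Eq: hat u k+1} using \cite[Proposition 4.2]{HSY26}.
\item Form $\widehat M=\mathcal R\times\mathbf S^1$ with the warped metric.
\item Apply the mass estimate \eqref{Eq: mass change} together with \eqref{Eq: small mass}.
\item Use the $\varepsilon$-independence of $\psi=\hat u_{k+1}(1+\varepsilon G_{\mathcal S})$ to conclude that the new mass is negative for small $\varepsilon$.
\end{itemize}
This gives a $\mathbf T^{k+1}$-warped ALF manifold of the same dimension with nonnegative scalar curvature and negative mass.

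\textbf{Main obstacle.} The delicate point is not the dimension count but verifying that the hypotheses of the blow-up and of the cited \cite{HSY26} results still hold when $\mathcal S$ is controlled only by Hausdorff dimension rather than by $\mathcal H^{N-8}$-finiteness. Concretely, two things must be checked:
\begin{itemize}
\item that the stability inequality extends to test functions on $\mathcal R$ via cutoffs near $\mathcal S$. This uses the zero-capacity statement Lemma \ref{Lem: zero capacity}, which only needs $\mathcal H^{m-2}(\mathcal S)=0$;
\item that \cite[Proposition 5.7]{HSY26} uses nothing beyond completeness of $\hat g_*$ and this capacity fact.
\end{itemize}
I would check both carefully, but expect them to go through unchanged.
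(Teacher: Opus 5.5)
Your proposal is correct and follows the paper's proof. The paper reruns the argument of Proposition \ref{Prop: torical symmetrization up to 13} with the hypersurface from Proposition \ref{Prop: generic regularity} in place of the one from Proposition \ref{Prop: minimizing hypersurface}, and its only new input is your check $\dim_{\mathcal H}\mathcal S<N-\mathfrak d\leq\frac{N-3}{2}$, which lets Proposition \ref{Prop: desingularization} apply. The worry in your last paragraph is unnecessary: $\tilde\Sigma$ is still area-minimizing, so the $\mathcal H^{N-8}$-local finiteness from \cite{NV20} still holds, and the \cite{HSY26} inputs are used under exactly the same hypotheses as in the dimension-$13$ argument.
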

\begin{proof}
    We follow the proof of Proposition \ref{Prop: torical symmetrization up to 13} with Proposition \ref{Prop: generic regularity} replaced by Proposition \ref{Prop: minimizing hypersurface}. 
\end{proof}

Now we have Proposition \ref{Prop: torical symmetrization} as an immediate consequence.
\begin{proof}[Proof of Proposition \ref{Prop: torical symmetrization}]
    This follows directly from Proposition \ref{Prop: generic regularity} and Lemma \ref{Lem: modified torical symmetrization}.
\end{proof}
\subsection{Proof of Riemannian positive mass theorem up to dimension 19}
In this subsection, we complete the proof of the Riemannian positive mass theorem up to dimension $19$ based on the torical symmetrization procedure. 

As a fundamental tool, we mention that the positive mass theorem for $\mathbf T^k$-warped ALF manifolds holds in all dimensions with enough $\mathbf T^k$-symmetry, which was first established in \cite[Proposition 2.9]{HSY26}. Although the proof there is only for $\mathbf T^1$-warped case, it can be extended to the $\mathbf T^k$-warped case without change. Actually, their proof says that the Lohkamp compactification and the soap bubble method used in \cite{CLSZ25} to prove the positive mass theorem for ALF manifolds run smoothly within the framework of $\mathbf T^k$-invariant objects, where the singularity issue never happens because of the fact that singularity has to be $\mathbf T^k$-invariant with codimension no less than $7$.

\begin{proposition}\label{Prop: PMT ALF}
    Let $(M,g,E)$ be a $\mathbf T^k$-warped ALF manifold with nonnegative scalar curvature and $3\leq \dim M-k\leq 7$. Then we have $m(M,g,E)\geq 0$ and the equality holds if and only if $(M,g)$ is isometric to $(\mathbf R^n\times \mathbf T^k,\bar g)$, where $\bar g$ is the reference metric from Definition \ref{def: ALF}.
\end{proposition}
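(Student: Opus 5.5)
The plan is to reduce everything to the $\mathbf T^k$-invariant quotient and run the classical arguments there, with the torus only entering through weights. The key observation is that a $\mathbf T^k$-invariant object in $M=M_0\times\mathbf T^k$ with $g=g_0+\sum u_i^2\,\mathrm d\theta_i^2$ is encoded by an object in $M_0$ equipped with the weighted data $(g_0,\prod_i u_i)$. Explicitly:
\begin{itemize}
\item $\mathbf T^k$-invariant hypersurfaces $\Sigma_0\times\mathbf T^k$ have area equal to the weighted area $\int_{\Sigma_0}\prod u_i\,\mathrm d\sigma_{g_0}$;
\item equivalently, this is the area of $\Sigma_0$ in the conformal metric $\hat g_0=(\prod u_i)^{2/(n-1)}g_0$, as already used in the proof of Proposition \ref{Prop: minimizing hypersurface}.
\end{itemize}
Since $\dim M_0=n=\dim M-k\le 7$, every $\mathbf T^k$-invariant area-minimizer has singular set of the form $\mathcal S_0\times\mathbf T^k$ with $\mathcal S_0$ empty. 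Hence all geometric measure theory in the argument is smooth. I would follow the Lohkamp compactification together with the soap-bubble/$\mu$-bubble approach of \cite{CLSZ25}, exactly as in \cite[Proposition 2.9]{HSY26}, checking at each step that the constructions can be performed $\mathbf T^k$-equivariantly.

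For the inequality $m\ge 0$, argue by contradiction and suppose $m(M,g,E)<0$.
\begin{enumerate}
\item Apply Proposition \ref{prop: density} to get $g=v^{4/(N-2)}\bar g$ near infinity with $A<0$, and $R_g>0$ on an annulus.
\item Perform a Lohkamp-type deformation: since $v$ is $\bar g$-harmonic with negative leading coefficient, one can glue $v$ to a constant beyond a large radius, keeping $R\ge0$ and making it positive somewhere. All of this is $\mathbf T^k$-invariant because $v$ depends only on $x$.
\item Quotient the region $|x_i|\le L$ by a lattice in the $x$-directions. This produces a closed manifold of the form $(\mathbf T^n\# X_0)\times\mathbf T^k$ with a $\mathbf T^k$-warped metric, nonnegative scalar curvature, and $R>0$ somewhere.
\item Run the Schoen--Yau descent (or the torical $\mu$-bubble argument of \cite{CLSZ25}) within $\mathbf T^k$-invariant hypersurfaces, slicing only in the $\mathbf T^n$-directions. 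At each stage take a $\mathbf T^k$-invariant area-minimizer in the class dual to a coordinate circle. This is smooth by the quotient remark, since the base dimension is at most $7$ and stays in that range under descent.
\item Warp with the first eigenfunction as in \cite{FcS1980}. This adds one more circle to the symmetry but keeps the base dimension dropping.
\item Finish when the base has dimension $2$. There a torus with a $\mathbf T^{k'}$-warped metric of positive weighted scalar curvature contradicts Gauss--Bonnet after integrating the warping terms.
\end{enumerate}

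For rigidity, suppose $m=0$. I would use the standard argument: if $\Ric\neq0$ somewhere, a small $\mathbf T^k$-invariant deformation $g-t\eta\Ric$, followed by a conformal correction via Lemma \ref{Lem: existence f eigenfunction}, produces negative mass with $R\ge0$. This contradicts the inequality just proved, so $g$ is Ricci flat. The mass formula and the ALF decay then force flatness. Finally, completeness, flatness, and an end asymptotic to $\mathbf R^n\times\mathbf T^k$ with the warping functions $u_i\to1$ imply $(M,g)\cong(\mathbf R^n\times\mathbf T^k,\bar g)$, using a volume-growth or Bishop–Gromov argument applied to the universal cover.

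The main obstacle is verifying that the descent in \cite{CLSZ25} genuinely stays inside the invariant class. The issue is that stability yields a positive first eigenfunction, and this eigenfunction must be $\mathbf T^k$-invariant. I would address this with the averaging trick $\bar\phi=(\int_{\mathbf T^k}\phi^2)^{1/2}$ used in the proof of Proposition \ref{Prop: minimizing hypersurface}, combined with uniqueness of the positive principal eigenfunction. The remaining step is to check that the compactification region, the bubble barriers, and the homology classes can all be chosen so that the codimension-$\ge 7$ regularity applies in the base at each stage.
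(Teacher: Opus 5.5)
Your top-level plan matches the paper's. The paper simply invokes \cite[Proposition 2.9]{HSY26}: run the Lohkamp compactification and the soap-bubble method of \cite{CLSZ25} inside the class of $\mathbf T^k$-invariant objects. Regularity is automatic there, because a nonempty invariant singular set has dimension at least $k$, while area-minimizers in $M^{n+k}$ have singular sets of dimension at most $n+k-8<k$.

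Your concrete Steps 3--4, however, have a genuine gap. In Definition~\ref{def: ALF}, $(M,g)$ is only assumed complete and $E$ is just one of its ends, so $M_0\setminus E_0$ may be noncompact with arbitrary further ends. This generality is essential here. The proposition is ultimately applied to manifolds built by iterating $\widehat M=\mathcal R\times\mathbf S^1$, where $\mathcal R$ is the regular part of a singular minimal hypersurface made complete by blowing up $\mathcal S$. The original AF manifold in Theorem~\ref{Thm: PMT} already has arbitrary ends as well. Consequently, Lohkamp's construction gives $(\mathbf T^n\# X_0)\times\mathbf T^k$ with $X_0$ complete and noncompact, not a closed manifold. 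The Schoen--Yau descent of Step 4 then breaks down: an area-minimizer in the class dual to a coordinate circle need not exist, since minimizing sequences can slide off along the extra ends. The eigenfunction and warping steps lose the compactness they rely on for the same reason.

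The missing ingredient is the soap-bubble ($\mu$-bubble) step you name in your first paragraph but never use. Before any descent, you must cut off the extra ends with a weighted $\mu$-bubble in the base (weight $\prod_i u_i$) that separates the compactified torus region from the rest. The slicing can then be carried out in a compact region with control on the boundary mean curvature. This requires some strict positivity of scalar curvature, which can be obtained, e.g., via a bounded $\mathbf T^k$-invariant conformal factor that preserves completeness. This step is where the actual work of \cite{CLSZ25} lies. By contrast, the $\mathbf T^k$-invariance of principal eigenfunctions, which you single out as the main obstacle, is routine by uniqueness and averaging. As written, your argument proves $m\geq 0$ only when $M_0\setminus E_0$ is relatively compact.

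Your Steps 1--2 and the rigidity outline are sound. The Ricci tensor of a $\mathbf T^k$-warped metric has no mixed or off-diagonal torus components, so $g-t\eta\Ric$ stays $\mathbf T^k$-warped. The rigidity outline does, however, depend on having the inequality in the arbitrary-ends setting.
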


We are ready to prove Theorem \ref{Thm: PMT}.
\begin{proof}[Proof of Theorem \ref{Thm: PMT}]
It suffices to prove $m(M,g,E)\geq 0$, since the rigidity with mass vanishing follows from a standard deformation argument combined with the Bishop-Gromov volume comparison theorem. Assume $m(M,g,E)<0$ by contradiction. It follows from Proposition \ref{Prop: torical symmetrization} that we can construct a $\mathbf T^k$-warped ALF manifold $(\tilde M,\tilde g,\tilde E)$ with nonnegative scalar curvature and negative mass, where $\dim \tilde M=k+3$. This contradicts to Proposition \ref{Prop: PMT ALF}.
\end{proof}

\section{Geroch conjecture in dimension 12}\label{Sec: Geroch}

In this section, we present a proof of Theorem \ref{thm:12 dim Geroch}, where $X$ can be assumed to be orientable up to lifting. We will adopt the dimension reduction argument by Schoen-Yau \cite{SY79b}. We use $M^n$ to denote $\mathbf{T}^n\# X^n$. Using the generic regularity theorem in \cite{CMSW2025}, we can find a suitable homological minimizing hypersurface $\Sigma$ in $M^n = \mathbf{T}^n\# X^n$ with singular set $\mathcal{S}$, such that the Hausdorff dimension is less than $1$. Then, we prove $\Sigma\backslash\mathcal{S}$ is an open SYS manifold as introduced by \cite{SWWZ24}, thereby admitting no positive scalar curvature (PSC) metric, see Lemma \ref{lem: degree calculation} and Lemma \ref{lem: open SYS verification}. Finally, we blow up the singular set $\mathcal{S}$ using the method developed in Section 2.1 and obtain a contradiction.

The following lemma reveals certain topological non-triviality for singular representatives in homology classes.

\begin{lemma}\label{lem: degree calculation}
   Let $f:M^{n}\longrightarrow \mathbf{T}^n = \mathbf{T}^{n-1}\times\mathbf{S}^1$ be a $C^1$ map of non-zero degree that is transversal to $\mathbf{T}^{n-1}\times\{1\}$, so $\Sigma_0 = f^{-1}(\mathbf{T}^{n-1}\times\{1\})$ is a regular submanifold.  Let $\Sigma$ be a homological minimizing hypersurface with singular set $\mathcal{S}$ in the homology class $[\Sigma_0]\in H_{n-1}(M^n)$. Then there exists a map $F:\Sigma\longrightarrow\mathbf{T}^{n-1}$, such that the restriction $F|_{\Sigma\backslash\mathcal{S}}:\Sigma\backslash\mathcal{S}\longrightarrow \mathbf{T}^{n-1}\backslash F(\mathcal{S}) $ is a proper map of non-zero degree.
\end{lemma}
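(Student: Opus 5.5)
The plan is to take $F=\pi\circ f|_\Sigma$, where $\pi:\mathbf T^{n-1}\times\mathbf S^1\to\mathbf T^{n-1}$ is the projection. Since $\Sigma$ is a closed subset of the compact manifold $M$, it is compact and $F$ is continuous. The degree will then be obtained by comparing $\Sigma$ with $\Sigma_0$ through the homology relation and pushing forward under $\pi\circ f$.

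The first step is the homological bookkeeping. As an integral current, $\Sigma$ is a cycle and $[\Sigma]=[\Sigma_0]$ in $H_{n-1}(M;\mathbf Z)$. Hence $(\pi\circ f)_*[\Sigma]=(\pi\circ f)_*[\Sigma_0]$ in $H_{n-1}(\mathbf T^{n-1};\mathbf Z)\cong\mathbf Z$. Because $\Sigma_0=f^{-1}(\mathbf T^{n-1}\times\{1\})$ is a transverse preimage, $[\Sigma_0]$ is Poincar\'e dual to $f^*[d\theta_n]$. Therefore $f_*[\Sigma_0]=\deg f\cdot[\mathbf T^{n-1}\times\{1\}]$, and so $(\pi\circ f)_*[\Sigma]=\deg f\cdot[\mathbf T^{n-1}]\neq0$. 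To make this a current statement, pick a smooth $\mathbf T^{n-1}$ volume form $\omega$ of total mass one. Then $\int_{\Sigma\setminus\mathcal S}F^*\omega=\deg f$, using $\mathcal H^{n-1}(\mathcal S)=0$ and the fact that $\Sigma-\Sigma_0=\partial T$ for an integral $n$-current $T$.

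The second step is properness of the restriction. Let $K\subset\mathbf T^{n-1}\setminus F(\mathcal S)$ be compact. Then $F^{-1}(K)$ is a closed subset of the compact set $\Sigma$, so it is compact. Since $F(\mathcal S)\cap K=\emptyset$, we have $F^{-1}(K)\subset\Sigma\setminus\mathcal S$. Thus $F|_{\Sigma\setminus\mathcal S}:\Sigma\setminus\mathcal S\to\mathbf T^{n-1}\setminus F(\mathcal S)$ is proper. The set $F(\mathcal S)$ is compact, and since $F$ is Lipschitz it has $\mathcal H^{n-1}(F(\mathcal S))=0$; in particular the target is a nonempty connected open set, because $\mathcal H^{n-2}(F(\mathcal S))=0$ whenever $\dim_{\mathcal H}\mathcal S<n-2$, which is the situation here.

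The third step, which I expect to be the main obstacle, is identifying the degree of this proper map of noncompact manifolds. I would define it in the usual way, as the signed count of preimages of a regular value $y\notin F(\mathcal S)$, or equivalently via $\int F^*\omega_c$ for a compactly supported form $\omega_c$ of unit mass on $\mathbf T^{n-1}\setminus F(\mathcal S)$. This is well defined because properness together with Stokes on $\Sigma\setminus\mathcal S$ (which has no boundary) gives invariance under $\omega_c\mapsto\omega_c+d\eta$ with $\eta$ compactly supported. Connectedness of the target makes the value independent of the choice of $\omega_c$.

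The issue is to show that this count equals $\deg f$ rather than losing mass near $F(\mathcal S)$. I would handle this by taking $\omega_c$ to be a form supported near a regular value. This form extends by zero to a closed form on all of $\mathbf T^{n-1}$ that is cohomologous to $\omega$. Then $\int_\Sigma F^*\omega_c=\langle(\pi\circ f)_*[\Sigma],[\omega]\rangle=\deg f$, where the pairing of the integral cycle $\Sigma$ with closed forms depends only on cohomology. Hence the degree of $F|_{\Sigma\setminus\mathcal S}$ is $\deg f\neq0$.
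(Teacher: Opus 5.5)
Your proposal is correct and follows essentially the same route as the paper. Both take $F=\pi\circ f|_\Sigma$ and compute the degree by integrating $F^*\omega$ for a compactly supported unit-mass form $\omega$ on $\mathbf T^{n-1}\setminus F(\mathcal S)$, using the homology $\Sigma-\Sigma_0=\partial T$ to transfer the computation to $\Sigma_0$, where it gives $\deg f$. Your explicit checks that the restriction is proper and that the target is connected, both of which the paper leaves implicit, are a useful addition.
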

\begin{proof}
     By classical results in geometric measure theory we can find a homological minimizing integer multiplicity $(n-1)$-current $\tau$ with $\partial R = \tau-\|\Sigma_0\|$, where $R$ is an integer multiplicity $n$-current. 
     
    Consider the composition of the following sequence of maps
    \begin{align*}
        \Phi: M^n\stackrel{f}{\longrightarrow} \mathbf{T}^n = \mathbf{T}^{n-1}\times \mathbf{S}^{1}\longrightarrow \mathbf{T}^{n-1}\times \{1\} = \mathbf{T}^{n-1},
    \end{align*}
    where the last map is the projection map. Let $F = \Phi\circ i: \Sigma\longrightarrow \mathbf{T}^{n-1}$, where $i$ is the inculsion map of $\Sigma$ in $M$. We first note that $ \deg (\Phi|_{\Sigma_0}) = \deg f \ne 0$. To see this, pick a regular value $x\in \mathbf{T}^{n-1}$ of the map $\Phi|_{\Sigma_0}$. Since we have assumed $f$ is transverse to  $\mathbf{T}^{n-1}\times\{1\}$ in our statement of the lemma,  $x$ is also a regular value of $f$. Hence
    \begin{align*}
        \deg \Phi|_{\Sigma_0} = \#\{(\Phi|_{\Sigma_0})^{-1}(x)\} = \#\{f^{-1}(x)\} = \deg f \ne 0.
    \end{align*}
    
    Let $\omega\in\Omega_c^{n-1}(\mathbf{T}^{n-1}\backslash F(\mathcal{S}))$ be a differential form with compact support and integration $1$ on $\mathbf{T}^{n-1}\backslash F(\mathcal{S})$. We have
    \begin{align*}
        \deg(F|_{\Sigma\backslash \mathcal{S}}) =& \int_{\Sigma\backslash\mathcal{S}}F^*\omega =  \int_{\Sigma\backslash\mathcal{S}}i^* \Phi^*\omega = \tau(\Phi^*\omega) = \|\Sigma_0\|(\Phi^*\omega)\\
        = &\int_{\Sigma_0}\Phi^*\omega = \deg (\Phi|_{\Sigma_0}) = \deg f \ne 0.
    \end{align*}
    Thus $\deg(F|_{\Sigma\backslash \mathcal{S}})\ne 0$ concluding the proof.
\end{proof}

Now, let us recall the definition of the open SYS manifold in \cite{SWWZ24}. 

\begin{definition}\label{def: aspherical homology class}(\cite[Definition 1.1]{SWWZ24})
    Given an orientable open manifold $M$, a homology class $\tau\in H_2(M,\partial_\infty M)$ is said to be \textit{aspherical}, if there exists an open set $\Omega$ with compact closure, such that the restricted class $\tau|_\Omega\in H_2(M,M\backslash\Omega)$ is \textit{aspherical}, $i.e.$ $\tau|_\Omega$ does not lie in the Hurewicz image of $\pi_2(M,M\backslash\Omega)$.
\end{definition}
In the above definition, $H_k(M,\partial_\infty M)$ is the $k$-th locally finite homology group given by

\[
H_k(M,\partial_\infty M) = H_k^{\mathrm{lf}}(M)
:=
H_k(\varprojlim_{K \Subset M}C_*(M, M \setminus K)),
\]
where the inverse limit is taken over compact subsets 
$K \subset M$ ordered by inclusion. 

\begin{remark}\label{rmk: restriction map}

$\quad$

\begin{enumerate}
    \item $H_k(M,\partial_\infty M)$ and $H_k^{\mathrm{lf}}(M)$ represent the same object.We use the former notation when convenient, in order to align with the notation in \cite{SWWZ24}.
    \item Let $i_\Omega: \Omega\longrightarrow M$ be the inclusion map. There are two equivalent ways of defining the restricted map  $\tau \mapsto \tau|_\Omega$ in Definition \ref{def: aspherical homology class}. One is described in \cite[p.5]{SWWZ24}, given by
    \begin{align*}
        H_k^{\mathrm{lf}}(M)\longrightarrow H_k(M,M\backslash\Omega)
    \end{align*}
    via the universal property of the inverse limit.  A definition of more geometric flavor can be found in \cite[Section 6.1.3]{Morgan}, which gives rise a map
    \begin{align*}
        (i_\Omega)_{\#}: H_k^{\mathrm{lf}}(M)\longrightarrow  H_k^{\mathrm{lf}}(\Omega).
    \end{align*}
    When $\Omega\subset\subset M$ has smooth boundary, it follows that $ H_k^{\mathrm{lf}}(\Omega)\cong H_k(\Omega,\partial\Omega)\cong H_k(M,M\backslash\Omega)$, so $\tau|_\Omega$ and $(i_\Omega)_{\#}(\tau)$ can be identified as the same object.
\end{enumerate}
\end{remark}

\begin{remark}\label{rmk: Omega and Omega'}
Let $\Omega'\supset\Omega$ be any neighborhood of $\Omega$, then the commutative diagram
    \[
\xymatrix{
\pi_2(M, M\setminus \Omega') 
  \ar[r] 
  \ar[d]_{h_{\mathrm{rel}}} 
&
\pi_2(M, M\setminus \Omega) 
  \ar[d]^{h_{\mathrm{rel}}} 
\\
H_2(M, M\setminus \Omega') 
  \ar[r]
&
H_2(M, M\setminus \Omega)
}
\]
implies $\tau|_{\Omega'}$ is aspherical as long as $\tau|_\Omega$ is aspherical.
\end{remark}

\begin{definition}\label{def: open SYS}
    An orientable manifold $M^n$ is said to be open SYS, if there exists $\beta_1,\beta_2,\dots,\beta_{n-2}\in H^1(M)$, such that
    \begin{align*}
        [M,\partial_\infty M]\smallfrown\beta_1\smallfrown\beta_2\smallfrown\dots\smallfrown\beta_{n-2}\in H_2(M,\partial_\infty M) 
    \end{align*}
    is aspherical. Here $[M,\partial_\infty M]\in H_n(M,\partial_\infty M)$ represents the fundamental class of the locally finite homology.
\end{definition}

Definition \ref{def: open SYS} is slightly more restrictive than \cite[Definition 1.3]{SWWZ24}; the definition here corresponds to the special case $\partial_b M = \emptyset$ in the terminology of \cite{SWWZ24}. In this important sub-case, \cite{SWWZ24} treated the PSC obstruction problem by employing a dimension reduction argument via compact free-boundary weighted slicing on a sufficiently large compact set $\Omega_T\subset M$ when $n\le 7$. By combining their approach with \cite[Theorem 1.2]{CMSW2025}, we extend the result to dimensions up to $n \le 11$.

\begin{lemma}\label{open SYS dim 11}
    Let $M^n$ be an open SYS manifold as defined in Definition \ref{def: open SYS} $(n\le 11)$. Then $M^n$ admits no $\mathbf{T}^k$-warped complete PSC metric, $i.e.$ there exists no complete metric $g$ and smooth functions $u_1,u_2,\dots, u_k$ on $M$, such that the $\mathbf{T^k}$-warped manifold
    \begin{align*}
        (M\times\mathbf{T}^k,g+\sum_{i=1}^k u_i^2d\theta_i^2)
    \end{align*}
    has positive scalar curvature.
\end{lemma}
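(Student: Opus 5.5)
Argue by contradiction. Suppose $(M\times\mathbf T^k,g+\sum u_i^2d\theta_i^2)$ is complete with positive scalar curvature, $M^n$ is open SYS with classes $\beta_1,\dots,\beta_{n-2}\in H^1(M)$, and $\tau=[M,\partial_\infty M]\smallfrown\beta_1\smallfrown\cdots\smallfrown\beta_{n-2}$ has aspherical restriction $\tau|_\Omega$ for some precompact open $\Omega$. We follow the dimension reduction of \cite{SWWZ24} for the case $\partial_bM=\emptyset$.

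\textbf{Step 1 (compact setup).} By completeness we choose a large compact domain $\Omega_T\supset\Omega$ with smooth boundary. The distance to $\Omega$ is large compared to the scale $1/\sqrt{\inf R}$ on a slightly larger compact set. On $\Omega_T$ we build the compact free-boundary weighted slicing of \cite{SWWZ24}. The ambient space is $\Omega_T\times\mathbf T^k$ with the torus-invariant structure. We represent each $\beta_j$ by a smooth map to $\mathbf S^1$ and take a regular value to obtain the homology constraint. We then minimize a weighted ($\mathbf T^{k+j}$-warped) area, or a $\mu$-bubble-type functional, in the class dual to $\beta_j$ relative to $\partial\Omega_T$. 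This produces a chain
\[
\Omega_T=\Sigma_n\supset\Sigma_{n-1}\supset\cdots\supset\Sigma_2 .
\]
At each stage the first eigenfunction of the stability operator supplies a new warping function $u_{k+j}$. By the Fischer-Colbrie--Schoen/Gromov symmetrization, each $\Sigma_{j}\times\mathbf T^{k+n-j}$ with the warped metric then carries positive scalar curvature in the spectral sense, bounded below on the part far from $\partial\Omega_T$. Property (3) of the torical symmetrization keeps all minimizers $\mathbf T$-invariant, so each step reduces to a weighted minimization on the base.

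\textbf{Step 2 (regularity in dimension $\le 11$).} For $n\le 7$ every slice is smooth. For $8\le n\le 11$ only the first slice $\Sigma_{n-1}$ (dimension $\le 10$) can be singular. Later slices have dimension $\le 9$ but live inside a possibly singular $\Sigma_{n-1}$, and the singularities propagate. To avoid this, we apply \cite[Theorem 1.2]{CMSW2025} at each minimization step. By the locality remark, and arguing as in Lemma~\ref{Lem: uniqueness}, Lemma~\ref{Lem: foliation} and Corollary~\ref{Cor:generic regularity}, we perturb the metric (or weight) in a compact interior region. For ambient dimension $\le 11$ this makes each weighted minimizing hypersurface smooth for a generic parameter. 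The perturbation is $C^\infty$-small, so it preserves the strict positivity of scalar curvature on the compact set $\Omega_T$. It also preserves the free-boundary barrier at $\partial\Omega_T$. We repeat this inductively down to $\Sigma_2$, and since the dimensions only decrease, smoothness of all slices is maintained.

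\textbf{Step 3 (contradiction).} The final slice $\Sigma_2$ is a compact surface with free boundary in $\partial\Omega_T$. In $H_2(M,M\setminus\Omega)$ it represents $\tau|_\Omega$, using Remark~\ref{rmk: Omega and Omega'} and Remark~\ref{rmk: restriction map}. Since $\tau|_\Omega$ is aspherical, the component meeting $\Omega$ cannot be a disk or sphere. Hence it has a component of genus $\ge1$, or a non-disk component relative to $M\setminus\Omega$. On the other hand, the warped positive scalar curvature on $\Sigma_2\times\mathbf T^{k+n-2}$ yields $-\Delta+K$ positive in the spectral sense. Combined with Gauss--Bonnet and the large distance from $\Omega$ to $\partial\Omega_T$ (the diameter bound of \cite{SWWZ24}), this forces the relevant component to be a disk or sphere. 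That is a contradiction.

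The main obstacle is Step 2. Generic regularity must be applied to weighted, free-boundary, torus-warped minimizers rather than unweighted area-minimizing boundaries. We would first rewrite each weighted $\mathbf T$-invariant problem as an unweighted area-minimizing problem in a conformally changed base metric, as in the proof of Proposition~\ref{Prop: minimizing hypersurface}. We would then use the interior locality of \cite{CMSW2025}, perturbing away from $\partial\Omega_T$.
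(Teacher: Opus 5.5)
Your architecture is the paper's. You run the compact free-boundary weighted slicing of \cite{SWWZ24} on $\Omega_T$, with the $\mathbf T^k$ case absorbed by starting from the weight $u_1\cdots u_k$. You insert a generic-regularity perturbation from \cite{CMSW2025} at each step whose ambient slice has dimension $8\le j\le 11$. You finish with the two-dimensional argument of \cite[Proposition 3.2]{SWWZ24}.

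\textbf{The gap is in Step 2, at the boundary.} You perturb only on a compact interior region, ``away from $\partial\Omega_T$'', and then assert that all slices are smooth.
\begin{itemize}
\item The result of \cite{CMSW2025} is an interior result and says nothing about free-boundary singularities. In ambient dimension $8$--$11$ a free-boundary minimizer can be singular at boundary points. For example, half of the Simons cone meets a hyperplane orthogonally and is free-boundary minimizing by reflection.
\item Singular points can also survive in the thin collar where you did not perturb.
\item Then $\Sigma_{j-1}$ is not a smooth compact manifold with boundary. You cannot solve the eigenvalue problem for $q_j$, use the warped scalar curvature identity, or run the next minimization and perturbation.
\end{itemize}
The paper closes this as follows. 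At each step it discards an arbitrarily small neighborhood of $\partial\Sigma_{j-1}$ to get rid of boundary singularities, and takes $q_j$ to be a Dirichlet first eigenfunction on what remains. The collar is far from $\Omega$, so the class in $H_*(M,M\setminus\Omega)$ is unchanged. The distance from $\Omega$ to the boundary changes only up to $\epsilon$.

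What must be tracked through all perturbations and cuts is exactly the input of \cite[Proposition 3.2]{SWWZ24}:
\begin{itemize}
\item $\|g_2-g|_{\Sigma_2}\|_{C^2}<\epsilon$;
\item $R(g_2+\sum_j q_j^2d\theta_j^2)\ge R_g|_{\Sigma_2}-\epsilon$;
\item $d_{(\Sigma_2,g_2)}\ge(1-\epsilon)\,d_{(\Omega_T,g)}$.
\end{itemize}

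\textbf{Several other statements are inaccurate, though they do not change the structure.}
\begin{itemize}
\item It is not true that only $\Sigma_{n-1}$ can be singular. For $n=11$, the slices $\Sigma_9,\Sigma_8,\Sigma_7$ sit in ambient slices of dimension $10,9,8$ and can be singular even when the ambient slice is smooth. Perturbing at every step handles this, but ``dimensions only decrease'' is not the reason.
\item In Step 3 the contradiction does not come from forcing a component of $\Sigma_2$ to be a disk or sphere. Positive scalar curvature with an uncontrolled free boundary does not do that; a spherical band is a positively curved annulus. Instead, \cite[Proposition 3.2]{SWWZ24} yields a subregion of $\Sigma_2$ containing $\Sigma_2\cap\Omega$ whose components are disks or spheres. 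This subregion is homologous to $\Sigma_2$ in $H_2(M,M\setminus\Omega)$, and that is what would put $\tau|_\Omega$ in the Hurewicz image.
\item $T$ is chosen as $T>T_0(L,s_0)$, using the location-dependent bound $L(s)=\min_{\Omega_s}R_g$. A single scale $1/\sqrt{\inf R}$ taken over $\Omega_T$ would be circular, since $\inf R$ can decrease as $T$ grows.
\item The appeal to ``Property (3)'' for torus invariance is misplaced. The $\mathbf T^k$-warped area is just the $u_1\cdots u_k$-weighted area on $M$, so no invariance argument is needed.
\end{itemize}
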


\begin{proof}

    We first consider the case $k=0$. Our argument follows the proof of \cite[Theorem 1.5]{SWWZ24}. Assume that $M$ carries a metric $g$ satisfying $R_g>0$. Let $\tau = [M,\partial_\infty M]\smallfrown\beta_1\smallfrown\beta_2\smallfrown\dots\smallfrown\beta_{n-2}\in H_2(M,\partial_\infty M)$. By Definition \ref{def: open SYS}, $\tau$ is aspherical. Thus, there exists an open set $\Omega$ with compact closure such that $\tau|_{\Omega}\in H_2(M,M\backslash \Omega)$ does not lie in the Hurewicz image $h_{\mathrm{rel}}: \pi_2(M,M\backslash\Omega)\longrightarrow H_2(M,M\backslash\Omega)$.
    
Consider the mollification of the distance function, we construct the following terms
    \begin{itemize}
        \item $\rho:M\longrightarrow [0,+\infty)$ a smooth function with $\rho|_\Omega = 0$ and $\Lip\rho<1$;
        \item $\Omega_s = \rho^{-1}([0,s])$ for any $s\ge 0$;
        \item $L(s):= \min_{\Omega_s}R_g$.
    \end{itemize}

    We fix $T>T_0 = T_0(L,s_0)$, where $T_0(L,s_0)$ is the constant given by \cite[Proposition 3.2]{SWWZ24}. Then from Remark \ref{rmk: Omega and Omega'} we know $\tau|_{\Omega_T}$ is not contained in the image of the Hurewicz map $h_{\mathrm{rel}}: \pi_2(M,M\backslash\Omega_T)\longrightarrow H_2(M,M\backslash\Omega_T)\cong H_2(\Omega_T,\partial\Omega_T) $.

    In the same spirit of \cite[p.23, Step 1]{SWWZ24}, we aim to find a sequence of weighted slicing
    \begin{align*}
        (\Sigma_2,\partial\Sigma_2,w_2,g_2)\hookrightarrow (\Sigma_3,\partial\Sigma_3,w_3,g_3)\hookrightarrow\dots\hookrightarrow (\Sigma_n,\partial\Sigma_n,w_n,g_n) = (\Omega_T,\partial\Omega_T,1,g).
    \end{align*}
    Here, $(\Sigma_j,\partial\Sigma_j)$ represents the homology class $[\Omega_T,\partial\Omega_T]\smallfrown e^*\beta_1\smallfrown\dots\smallfrown e^*\beta_{n-j}\in H_j(\Omega_T,\partial\Omega_T)$, where $e: \Omega_T\longrightarrow M$ is the inclusion map. Moreover, $w_j$ is a smooth function on $\Sigma_j$, and $\Sigma_{j-1}$ minimizes the weighted area
    \begin{align*}
        \mathcal{A}_j(\Sigma) = \int_\Sigma w_j d\mathcal{H}^{j-1}_{g_j}
    \end{align*}
    in $\Sigma_j$, and $q_j = \frac{w_{j-1}}{w_j|_{\Sigma_{j-1}}}$ is a first eigenfunction of the stability operator on $\Sigma_{j-1}$ associated to the $w_j$-weighted area.

    We now discuss the construction in detail. At the $(n-j+1)$-st step, we aim to find a $w_j$-weighted minimizer $\Sigma_{j-1}$ inside $\Sigma_j$. Initially, $\Sigma_j$ is equipped with the induced metric $g_{j+1}\big|_{\Sigma_j}$ inherited from $(\Sigma_{j+1}, g_{j+1})$.

Compared with \cite{SWWZ24}, an additional issue arises in our setting: for $8 \le j \le 11$, we must perturb $(\Sigma_j, g_{j+1}\big|_{\Sigma_j})$ to a new metric $(\Sigma_j, g_j)$ by applying the generic regularity result of \cite{CMSW2025}, in order to ensure that $\Sigma_{j-1}$ is smooth. This procedure gives rise to an additional data $g_j$ in our weighted slicing. 

At each step, we may also remove an arbitrarily small neighborhood of $\Sigma_{j-1}$ in order to eliminate possible boundary singularities. $i.e.$, we may throw arbitrarily small neighborhood of $\Sigma_{j-1}$ away at each step to get rid of boundary singularities. Then we solve $q_j\in C^\infty(\Sigma_{j-1})$ as the Dirichlet first eigenfunction of the stability operator on $\Sigma_{j-1}$ associated to the $w_j$-weighted area, and define
\begin{align*}
    w_{j-1} = q_j \bigl(w_j\big|_{\Sigma_{j-1}}\bigr).
\end{align*}
This completes the construction of the weighted slicing.

Since both the metric perturbation and the removal of the boundary neighborhood can be made arbitrarily small at each step, the following condition can be arranged for every $\epsilon > 0$:
    \begin{align*}
        & \|g_2-(g|_{\Sigma_2})\|_{C^2}<\epsilon,\\
        &R(g_2 +\sum_{j=2}^{n-1}q_j^2d\theta_j^2)\ge  R_g|_{\Sigma_2}-\epsilon>0,\\
        &d_{(\Sigma_2,g_2)}(x,y) \ge (1-\epsilon)d_{(\Omega_T,g)}(x,y),\mbox{ for all }x,y\in \Sigma_2.
    \end{align*}
    Then, we may follow \cite[p.24, Step 2]{SWWZ24} which applies \cite[Proposition 3.2]{SWWZ24} to get a contradiction.

    For general $k$, the only difference from the $k=0$ case is that we select
    \begin{align*}
        (\Omega_T,\partial\Omega_T,u_1u_2\dots u_k,g)
    \end{align*}
    as the first slice in the weighted slicing procedure. The remaining part goes in the same way as the $k=0$ case.
\end{proof}

In \cite{SWWZ24}, it was proved that $\mathbf{T}^n \setminus \Gamma$ is open SYS when $\Gamma$ is a disjoint union of closed curves. Our goal is to extend this result to the case where $\dim_{\mathcal{H}}\Gamma < 1$ (see Lemma~\ref{lem: open SYS verification} below). To this end, we first record the following simple observation.

\begin{lemma}\label{lem: no intersection}
    Let $\Gamma\subset \mathbf{T}^n$ be a compact set with $\dim_{\mathcal{H}}\Gamma<1$. Let $\mathbf{T}^n = \mathbf{T}^{n-1}\times \mathbf{S}^1$ be a product decomposition of $\mathbf{T}^n$. Then  $\Gamma$ has a neighborhood $N$ in $\mathbf{T}^n$, such that $\bar{N}\cap( \mathbf{T}^{n-1}\times\{\theta_0\}) = \emptyset$ for some $\theta_0\in \mathbf{S}^1$.
\end{lemma}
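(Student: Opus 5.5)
The plan is to project $\Gamma$ onto the circle factor and use that Lipschitz maps do not increase Hausdorff dimension. Let $\pi:\mathbf T^n=\mathbf T^{n-1}\times\mathbf S^1\to\mathbf S^1$ denote the projection onto the second factor. With respect to the flat product metric, $\pi$ is $1$-Lipschitz. Hence $\mathcal H^s(\pi(\Gamma))\le\mathcal H^s(\Gamma)$ for every $s\ge 0$, and so
$$\dim_{\mathcal H}\pi(\Gamma)\le\dim_{\mathcal H}\Gamma<1.$$
In particular $\mathcal H^1(\pi(\Gamma))=0$, so $\pi(\Gamma)$ has zero length in $\mathbf S^1$. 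Therefore it is a proper subset of $\mathbf S^1$, and we may pick $\theta_0\in\mathbf S^1\setminus\pi(\Gamma)$.

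Next we produce the neighborhood. The set $\pi(\Gamma)$ is compact, being the continuous image of a compact set, and $\theta_0\notin\pi(\Gamma)$. Hence $\delta:=\dist_{\mathbf S^1}(\theta_0,\pi(\Gamma))>0$. Define
$$N=\pi^{-1}\bigl(\{\theta:\dist(\theta,\pi(\Gamma))<\delta/2\}\bigr).$$
This is an open set containing $\Gamma$. Its closure satisfies
$$\bar N\subset\pi^{-1}\bigl(\{\theta:\dist(\theta,\pi(\Gamma))\le\delta/2\}\bigr),$$
and that set does not meet $\pi^{-1}(\theta_0)=\mathbf T^{n-1}\times\{\theta_0\}$. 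This gives $\bar N\cap(\mathbf T^{n-1}\times\{\theta_0\})=\emptyset$.

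The only point needing care is the Lipschitz bound on Hausdorff measure. If $\Gamma$ is measured with some other Riemannian metric on $\mathbf T^n$ (for instance one pulled back from $M$), it is bi-Lipschitz equivalent to the flat metric, because $\mathbf T^n$ is compact. Hausdorff dimension is invariant under bi-Lipschitz changes, so the argument is unaffected. Nothing here is a genuine obstacle; the lemma is essentially a one-line consequence of the fact that a set of dimension less than $1$ cannot project onto the whole circle.
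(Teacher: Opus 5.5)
Your proof is correct, and it rests on the same observation as the paper's: $\Gamma$ has length zero, so it cannot meet every slice $\mathbf T^{n-1}\times\{\theta\}$.

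The two arguments differ only in the tool used to show that some slice misses $\Gamma$. The paper uses the coarea formula, writing $0=\mathcal H^1(\Gamma)=\int_{\mathbf S^1}\mathcal H^0(\Gamma\cap(\mathbf T^{n-1}\times\{\theta\}))\,d\theta$. It concludes that the slice is empty for almost every $\theta$, picks such a $\theta_0$, and then just asserts that $N$ exists because $\Gamma$ is closed.

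You instead note that the set of $\theta$ with nonempty slice is exactly $\pi(\Gamma)$. You then bound its length by $\mathcal H^1(\pi(\Gamma))\le\mathcal H^1(\Gamma)=0$, using only that $\pi$ is $1$-Lipschitz. This is slightly more elementary: for an arbitrary compact $\Gamma$, the paper's identity really rests on Eilenberg's inequality, whereas you need only the Lipschitz image bound. It loses nothing, since $\pi(\Gamma)$ has zero length and so almost every $\theta_0$ works in your argument too.

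Your explicit choice $N=\pi^{-1}\bigl(\{\theta:\dist(\theta,\pi(\Gamma))<\delta/2\}\bigr)$ also fills in the step the paper leaves implicit.
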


\begin{proof}
    By coarea formula, we have
    \begin{align*}
        0 = \mathcal{H}^1(\mathcal{S}) = \int_{\mathbf{S}^1}\mathcal{H}^0(\mathcal{S}\cap(\mathbf{T}^{n-1}\times\{\theta\}))d\theta
    \end{align*}
    Thus, there exists $\theta_0\in\mathbf{S}^1$, such that $\mathcal{H}^0(\mathcal{S}\cap (\mathbf{T}^{n-1}\times\{\theta_0\})) = 0$, which implies $\mathcal{S}\cap(\mathbf{T}^{n-1}\times\{\theta_0\})$ is an empty set. Since $\mathcal{S}$ is closed, we can find an open neighborhood $N$ of $\mathcal{S}$ such that $\bar{N}\cap (\mathbf{T}^{n-1}\times \{\theta_0\}) = \emptyset$.
\end{proof}

\begin{lemma}\label{lem: open SYS verification}
    Let $\Gamma\subset \mathbf{T}^n$ be a compact set with $\dim_{\mathcal{H}}\Gamma<1$ and let $Y = \mathbf{T}^n\setminus\Gamma$. Then any oriented open manifold $Z$ which admits a proper map $f_Z:Z\longrightarrow Y$ with non-zero degree is an open SYS manifold.
\end{lemma}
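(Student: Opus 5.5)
The plan is to pull back the standard cohomology classes of the torus. Let $\theta_1,\dots,\theta_n$ be the coordinate circle factors of $\mathbf T^n$ and $\alpha_j\in H^1(\mathbf T^n)$ the dual classes. Set $\beta_j=(f_Z)^*(\alpha_j|_Y)\in H^1(Z)$ for $j=1,\dots,n-2$. The goal is to show that
$$\tau_Z=[Z,\partial_\infty Z]\smallfrown\beta_1\smallfrown\dots\smallfrown\beta_{n-2}\in H_2(Z,\partial_\infty Z)$$
is aspherical.

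The first step is to locate a compact region of $Y$ where the cap product is geometrically visible and away from $\Gamma$. Apply Lemma \ref{lem: no intersection} $n-2$ times, once for each product decomposition $\mathbf T^n=\mathbf T^{n-1}\times\mathbf S^1_{(j)}$. This gives angles $\theta_j^0$ and a neighborhood $N$ of $\Gamma$ whose closure misses every hyperplane $H_j=\{\theta_j=\theta_j^0\}$. Taking $N$ to be the intersection of the neighborhoods is fine: a subset of each neighborhood still misses the corresponding hyperplane.

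The 2-torus $T^2=H_1\cap\dots\cap H_{n-2}$ is then disjoint from $\bar N$. The key point is that $\Gamma$ can intersect only a bounded region of each slice, so for a compact $\Omega_Y\subset Y$ containing $T^2$ (for instance $\mathbf T^n\setminus N$), the restricted class $(\mathrm{PD}\text{ of }\alpha_1\cdots\alpha_{n-2})|_{\Omega_Y}$ is represented by $T^2$. I would make $f_Z$ transversal to the $H_j$ by a small proper homotopy, which does not change the degree or the classes. Set $\Omega=f_Z^{-1}(\mathrm{int}\,\Omega_Y)$, which has compact closure because $f_Z$ is proper. By naturality of cap products under proper maps,
$$(f_Z)_*(\tau_Z|_\Omega)=\deg(f_Z)\,[T^2]|_{\Omega_Y}\in H_2(Y,Y\setminus\Omega_Y)\cong H_2(\Omega_Y,\partial\Omega_Y),$$
with the identification made via Remark \ref{rmk: restriction map}.

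Next, suppose for contradiction that $\tau_Z|_\Omega=h_{\mathrm{rel}}(\sigma)$ for some $\sigma\in\pi_2(Z,Z\setminus\Omega)$. Pushing forward gives $\deg(f_Z)[T^2]|_{\Omega_Y}=h_{\mathrm{rel}}((f_Z)_*\sigma)$, a spherical class in $H_2(Y,Y\setminus\Omega_Y)$. To rule this out, pair with a cohomology class that vanishes on spherical classes. Consider $\gamma=\alpha_{n-1}\smile\alpha_n$, realized on $Y$ through the projection $\mathbf T^n\to\mathbf T^2$ onto the last two coordinates, which restricts to a map $p:Y\to\mathbf T^2$.

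Composing $p$ with the class gives a relative spherical class in $(\mathbf T^2,p(Y\setminus\Omega_Y))$, and this is where I expect the main obstacle. I would first try to enlarge $\Omega_Y$ (allowed by Remark \ref{rmk: Omega and Omega'}) so that $Y\setminus\Omega_Y$ is a small neighborhood $N\setminus\Gamma$ of $\Gamma$. Since $\dim_{\mathcal H}\Gamma<1$, $p(\bar N)$ can be chosen to lie in a small set that misses a generic fiber point. One can then compose with the collapse $\mathbf T^2\to\mathbf T^2/(\text{small disc around }p(\bar N))$, or more simply use Lemma \ref{lem: no intersection} again for the last two coordinates, so that $p(\bar N)$ misses a circle $\{\theta_{n-1}=c\}$ and a circle $\{\theta_n=c'\}$. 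The map then factors through $(\mathbf T^2,D)$, where $D$ is contained in the complement of the two circles, which is contractible. This gives $H_2(\mathbf T^2,D)\cong H_2(\mathbf T^2)$ and $\pi_2(\mathbf T^2,D)\cong\pi_2(\mathbf T^2)=0$ by the long exact sequence.

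Under this map the image of $[T^2]$ is $\pm[\mathbf T^2]\neq0$, because the restriction of $p$ to the slice torus $T^2$ is a diffeomorphism. Hence $\deg(f_Z)[\mathbf T^2]\ne0$ would have to be spherical, contradicting $\pi_2(\mathbf T^2,D)=0$. Checking that $p(\bar N)$ avoids the two circles is exactly Lemma \ref{lem: no intersection} applied to the decompositions $\mathbf T^{n-1}\times\mathbf S^1_{(n-1)}$ and $\mathbf T^{n-1}\times\mathbf S^1_{(n)}$. The only remaining routine verification is the naturality of the cap product and the Hurewicz map under proper maps of pairs.
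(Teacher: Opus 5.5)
Your proposal is correct, but it rules out spherical classes by a different mechanism than the paper.

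\textbf{The paper's route.} The paper works with the same class $\mathcal T=[Z]\smallfrown\hat\beta_1\smallfrown\dots\smallfrown\hat\beta_{n-2}$. It caps $\mathcal T$ with one further pulled-back class $\hat\beta_{n-1}$ and shows the result is nonzero in $H_1(Z,Z\setminus f_Z^{-1}(N))$. For this it pushes forward to $H_1(\mathbf T^n,N)$ and proves that $i_*\tau_0$ is non-torsion there. That step is an intersection-number argument: the last circle factor meets the slice $\mathbf T^{n-1}\times\{\theta_0\}$ once, and this slice misses $\bar N$. Spherical classes are then excluded because $H^1(\mathbf S^2)=H^1(\mathbf D^2)=0$, so their cap product with any degree-one class vanishes.

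\textbf{Your route.} You push the restricted $2$-class along $p\circ f_Z$ into the pair $(\mathbf T^2,Q)$, where $Q$ is the open square complementary to two coordinate circles. Since $\pi_2(\mathbf T^2)=0$ and $\pi_1(Q)=0$, the exact sequence gives $\pi_2(\mathbf T^2,Q)=0$. On the other hand $H_2(\mathbf T^2,Q)\cong\mathbf Z$, and the class lands on $\pm\deg(f_Z)\neq0$.

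\textbf{Comparison.} The paper needs Lemma~\ref{lem: no intersection} in only one direction and stays within the cap-product detection typical of the SYS framework. Yours needs $\bar N$ to miss hyperplanes in the two directions $\theta_{n-1},\theta_n$. In exchange it replaces the extra cap product and the torsion-freeness claim with the asphericity of the target torus, which is arguably more direct.

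\textbf{One correction.} The isomorphisms $H_2(\mathbf T^2,\cdot)\cong H_2(\mathbf T^2)$ and $\pi_2(\mathbf T^2,\cdot)=0$ hold for the contractible square $Q$. They need not hold for $D=p(Y\setminus\Omega_Y)$ itself, which need not be simply connected. You should therefore compose with the inclusion $(\mathbf T^2,D)\to(\mathbf T^2,Q)$.

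\textbf{Unneeded steps.} The following can be dropped:
\begin{itemize}
\item the transversality homotopy;
\item the $n-2$ extra applications of Lemma~\ref{lem: no intersection} that produce the slice torus $T^2$, since the restriction of the torus class to $H_2(\mathbf T^n,\bar N)$ is defined wherever the representative sits;
\item the appeal to Remark~\ref{rmk: Omega and Omega'}, since you may simply choose $\Omega_Y=\mathbf T^n\setminus N$.
\end{itemize}
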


\begin{proof}

         Assume $\mathbf{T}^n$ has the form $\mathbf{T}^{n-1}\times \mathbf{S}^1$ and $N$ is constructed as in Lemma \ref{lem: no intersection}. Let $\alpha_1,\alpha_2,\dots,\alpha_{n-1}\in H^1(\mathbf{T}^n)$ be the cohomology classes represented by the first $n-1$  $\mathbf{S}^1$ factors in $\mathbf{T}^n = \mathbf{T}^{n-1}\times \mathbf{S}^1$, so 
    \begin{align*}
        \tau_0 = [\mathbf{T}^n]\smallfrown \alpha_1\smallfrown \alpha_2\smallfrown\dots\smallfrown\alpha_{n-1}\in H_1(\mathbf{T}^n)
    \end{align*}
    is the homology class represented by the last $\mathbf{S}^1$ factor in  $\mathbf{T}^n = \mathbf{T}^{n-1}\times \mathbf{S}^1$. 

    Let $j: Y\longrightarrow \mathbf{T}^n$ be the inclusion map. Since $Y$ is an open set of $\mathbf{T}^n$, there is a restriction map $j_{\#}: H_k(\mathbf{T}^n)\longrightarrow H_k^{\mathrm{lf}}(Y)$\footnote{We slightly change the notation in the proof of Lemma \ref{lem: open SYS verification} for better exposition: For an open oriented $n$-dimensional manifold $M$, we use $H_k^{\mathrm{lf}}(M)$ to denote its $k$-th locally finite homology group, which is equivalent to $H_k(M,\partial_\infty M)$ used before. We use $[M]\in H_n^{\mathrm{lf}}(M)$ to denote the fundamental class.} in the sense of Remark \ref{rmk: restriction map} (2). By naturality of the cap product and compatibility of fundamental classes under restriction, the Poincare duality isomorphisms for $\mathbf{T}^n$ and $Y$ fit into the following commutative diagram.
    \[
\xymatrix{
H^k(\mathbf{T}^n) \ar[r]^{\;\cap [\mathbf{T}^n]\;} \ar[d]_{j^*}
& H_{n-k}(\mathbf{T}^n) \ar[d]^{j_{\#}} \\
H^k(Y) \ar[r]^{\;\cap [Y]\;}
& H_{n-k}^{\mathrm{lf}}(Y)
}
\]
Denote $\beta_k = j^*\alpha_k\in H^1(Y)$ $ (k=1,2\dots,n-1)$, we have
    \begin{align*}
        \tau_Y& = [Y]\smallfrown \beta_1\smallfrown \beta_2\smallfrown\dots\smallfrown \beta_{n-1}\\
        & = [Y]\smallfrown j^*\alpha_1\smallfrown j^*\alpha_2\smallfrown\dots\smallfrown j^*\alpha_{n-1}\\
        & = j_{\#}([\mathbf{T}^n]\smallfrown \alpha_1\smallfrown \alpha_2\smallfrown\dots\smallfrown \alpha_{n-1}) = j_{\#}\tau_0\in H_1^{\mathrm{lf}}(Y),
    \end{align*}

Let $i_N: H_*^{\mathrm{lf}}(Y)\longrightarrow H_*(\mathbf{T}^n, N)$ be the restriction map in the sense of Remark \ref{rmk: restriction map} (2) and $i: \mathbf{T}^n\longrightarrow (\mathbf{T}^n,N)$ be the inclusion map. We have the following commutative diagram\footnote{Note $ H_*(\mathbf{T}^n, N)\cong H_*(\mathbf{T}^n\backslash N,\partial N)\cong H_*^{\mathrm{lf}}(\mathbf{T}^n\backslash N)$, so the commutative diagram is equivalent to the chain of restriction maps
\begin{align*}
    H_*^{\mathrm{lf}}(\mathbf{T}^n)\longrightarrow H_*^{\mathrm{lf}}(Y)\longrightarrow H_*^{\mathrm{lf}}(\mathbf{T}^n\backslash N)
\end{align*}
induced by the chain of inclusions $\mathbf{T}^n\backslash N\longrightarrow Y\longrightarrow\mathbf{T}^n$.}
\[
\xymatrix{
H_*(\mathbf{T}^n) \ar[rr]^{i_*} \ar[dr]_{j_{\#}} & & H_*(\mathbf{T}^n, N) \\
&  H_*^{\mathrm{lf}}(Y) \ar[ur]_{i_N} &
}
\]

\textbf{Claim: } $i_*\tau_0 = i_N\circ j_{\#}\tau_0 = i_N(\tau_Y)\in H_1(\mathbf{T}^n,N)$ is a free element.

To see this, let's consider the following exact sequence for relative homology
    \begin{align*}
        H_1(N)\stackrel{\eta_*}{\longrightarrow} H_1(\mathbf{T}^n)\stackrel{i_*}{\longrightarrow} H_1(\mathbf{T}^n,N).
    \end{align*}
    If $m\cdot i_*\tau_0 = i_*(m\tau_0)= 0$ for some $m\in \mathbf{Z}\backslash\{0\}$, then there exists $\bar{\tau_0}\in H_1(N)$, such that $m\tau_0 = \eta_*\bar{\tau}_0$. Since any representative of $\bar{\tau}_0$ is supported in $N$, by Lemma \ref{lem: no intersection}, its algebraic intersection with $\mathbf{T}^{n-1}\times\{\theta_0\}$ must be zero. On the other hand, the algebraic intersection of $m\tau_0$ and $\mathbf{T}^{n-1}\times\{\theta_0\}$ equals $m\ne 0$ since $\tau_0$ is represented by the last $\mathbf{S}^1$ factor in $\mathbf{T}^n$, which is a contradiction. This verifies the Claim.

Now we focus ourselves to the space $Z$. Our argument is based on the following commutative diagram for maps
\[
\xymatrix@C=4cm{
Y
& Y\setminus N \ar[l] \\
Z \ar[u]^{f_Z}
& Z\setminus f_Z^{-1}(N) \ar[l]^{\iota} \ar[u]_{f_Z}
}
\]
Here $\iota: Z\backslash f_Z^{-1}(N)\longrightarrow Z$ is the inclusion. Since $Y\backslash N$ is compact, the properness of $f_Z$ implies $Z\backslash f_Z^{-1}(N)$ is also compact.

Let $\hat{\beta}_k = f_Z^*(\beta_k)\in H^1(Z)$ $(k=1,2,\dots,n-1)$ and $$\tau_Z = [Z]\smallfrown \hat{\beta}_1\smallfrown \hat{\beta}_2\smallfrown\dots\smallfrown \hat{\beta}_{n-1}.$$ The naturality of cap product yields $f_{Z*}(\tau_Z) = (\degg f_Z)\cdot \tau_Y$. From the Claim above and the naturality of the restriction map, we know
\begin{align*}
    f_{Z*}(\tau_Z\big|_{Z\backslash f_Z^{-1}(N)}) =  f_{Z*}(\tau_Z)\big|_{Y\backslash N} = (\degg f_Z)\cdot \tau_Y\big|_{Y\backslash N} = (\degg f_Z)\cdot i_N(\tau_Y)\ne 0
\end{align*}
Denote $\mathcal{T} = [Z]\smallfrown\hat{\beta}_1\smallfrown \hat{\beta}_2\smallfrown\dots\smallfrown \hat{\beta}_{n-2}\in H_2^{\mathrm{lf}}(Z)$, it follows that
\begin{equation}\label{eq: A complicated formula}
    \begin{split}
            \iota_{\#}(\mathcal{T})\smallfrown \iota^*\hat{\beta}_{n-1}=&\iota_{\#}(\mathcal{T}\smallfrown \hat{\beta}_{n-1}) = \iota_{\#}(\tau_Z) = \tau_Z\big|_{Z\backslash f_Z^{-1}(N)}\ne 0.
    \end{split}
\end{equation}
in $ H_1(Z,Z\backslash f_Z^{-1}(N))\cong H_1^{\mathrm{lf}}(Z\backslash f_Z^{-1}(N))$.

\ref{eq: A complicated formula} then implies $\mathcal{T}$ is aspherical in the sense of Definition \ref{def: aspherical homology class}. To see this, it suffices to show $\iota_{\#}(\mathcal{T})$ does not lie in the Hurewicz image of $\pi_2(Z,Z\backslash f_Z^{-1}(N))$. This is because for $\Sigma = \mathbf{S}^2$ or $\mathbf{D}^2$, the geometric representative for $\pi_2(Z,Z\backslash f_Z^{-1}(N))$, the cap product given by
    
    \begin{align*}
H_2(\Sigma,\partial\Sigma)\times H^1(\Sigma)\stackrel{\smallfrown}{\longrightarrow} H_1(\Sigma,\partial\Sigma)
    \end{align*}
    is trivial. Thus, a contradiction would follow from \eqref{eq: A complicated formula} if $\iota_{\#}(\mathcal{T})$ is in the Hurewicz image of $\pi_2(Z,Z\backslash f_Z^{-1}(N))$. Therefore, we conclude the proof that $Z$ is an open SYS manifold.
\end{proof}

We are now in a position to prove Theorem \ref{thm:12 dim Geroch}.
\begin{proof}[Proof of Theorem \ref{thm:12 dim Geroch}] 
     We prove a slightly stronger result that every oriented manifold $M^n$ admitting a non-zero degree map $f: M \longrightarrow \mathbf{T}^n (n\le 12)$ admits no PSC metric. If $n\le 11$, this follows from \cite[Theorem 1.2]{CMSW2025} and the dimension reduction argument in \cite{SY79b}. In the following we focus ourselves on the case $n=12$.
    
    Assume by contradiction that $(M,g)$ satisfies $R_g>0$. From \cite[Theorem 1.2]{CMSW2025}, by possibly perturbing $g$, we are able to find an area-minimizing hypersurface in the homology class $[\Sigma_0]\in H_{n-1}(M)$ with singular set $\mathcal{S}$, such that $\dim_{\mathcal{H}}(\mathcal{S})<1$. Here, $\Sigma_0$ is as in the statement of Lemma \ref{lem: degree calculation}. If $\mathcal{S} = \emptyset$, then we can perform a conformal deformation as in \cite{SY79b} to reduce the problem to the case $n\le 11$. Thus, it suffices to focus on the case $\mathcal{S}\ne \emptyset$.  Using Lemma \ref{lem: degree calculation}, we obtain a proper map 
    \begin{align*}
        F|_{\Sigma\backslash\mathcal{S}}:\Sigma\backslash\mathcal{S}\longrightarrow \mathbf{T}^{n-1}\backslash F(\mathcal{S})
    \end{align*}
    of non-zero degree.  
    
    Since $F$ is Lipschitz, we have $\dim_{\mathcal{H}}(F(\mathcal{S}))\le \dim_{\mathcal{H}}(\mathcal{S})<1$. By Lemma \ref{lem: open SYS verification}, $\Sigma\backslash\mathcal{S}$ is an open SYS manifold. Combined with Lemma \ref{open SYS dim 11}, $\Sigma\backslash\mathcal{S}$ admits no PSC metric.

    Because $\mathcal{S}\ne\emptyset$, from \cite[Lemma 5.1]{HSY26} we can find $p\in\Sigma\backslash S$ such that $|A|^2(p)>0$. Then, we choose a smooth function $\varphi\ge 0$ with compact support on $\Sigma\backslash\mathcal{S}$, such that
    \begin{equation}\label{eq: positive spectral scalar curvature}
    \begin{split}
                &\bullet\varphi(p)>0, \\
        &\bullet\frac{1}{2}(R_g+|A|^2)-\frac{n-1}{n-3}\varphi\ge 0\mbox{ on }\Sigma\backslash \mathcal{S},\\
        &\bullet\frac{1}{2}(R_g+|A|^2)(p)-\frac{n-1}{n-3}\varphi(p) > 0.
    \end{split}
    \end{equation}

    Since $\Sigma$ is compact, $-\Delta$ alone does not admit a Green's function, but a strictly positive operator does (this positivity ensures the existence of a Green's function in general, e.g., even for the noncompact parabolic case); inparticular, the operator $-\Delta+\varphi$ does.  As is shown in \cite[Lemma 5.10]{HSY26}, there exists a Green's type function $G$ on $\Sigma\backslash\mathcal{S}$ satisfying
    \begin{align*}
        -\Delta G+\varphi G = 0, \mbox{ on }\Sigma\backslash\mathcal{S},
    \end{align*}
    which is exactly a Green's function outside the support of $\varphi$. Therefore, the argument in Section 2.1 can be adopted here to construct a blow up factor $G_{\mathcal{S}}$ such that $(\Sigma\backslash\mathcal{S}, \hat{g} = G_{\mathcal{S}}^{\frac{4}{n-3}}g|_{\Sigma})$ is complete.

    Using \cite[Lemma 5.11]{HSY26}, we have
    \begin{align}\label{eq: positive spectral scalar curvature 2}
        \int_{\Sigma\backslash\mathcal{S}} (|\hat{\nabla}\phi|^2+ \frac{1}{2}R_{\hat{g}}\phi^2)d\mu_{\hat{g}}\ge \int_{\Sigma\backslash\mathcal{S}} (\frac{1}{2}(R_g+|A|^2)-\frac{n-1}{n-3}\varphi)G_{\mathcal{S}}^{-\frac{4}{n-3}}\phi^2 d\mu_{\hat{g}}
    \end{align}
    for any $\phi\in C_c^\infty(\Sigma\backslash\mathcal{S})$. From \eqref{eq: positive spectral scalar curvature}, the right hand side of \eqref{eq: positive spectral scalar curvature 2} is nonnegative. By \cite{FcS1980}, there exists a function $v\in C^\infty(\Sigma\backslash\mathcal{S})$ satisfying
    \begin{align*}
        -\Delta_{\hat{g}}v+\frac{1}{2}R_{\hat{g}}v = (\frac{1}{2}(R_g+|A|^2)-\frac{n-1}{n-3}\varphi)G_{\mathcal{S}}^{-\frac{4}{n-3}}v
    \end{align*}
    Hence, the warped manifold $((\Sigma\backslash\mathcal{S})\times\mathbf{S}^1,\bar{g} = \hat{g}+v^2ds^2)$ has $R_{\bar{g}}\ge 0$ everywhere and $R_{\bar{g}}(p)>0$. By an $\mathbf{S}^1$-invariant analogue of \cite{Kazdan82}, which we refer to \cite[Lemma 2.8]{HSY26}, there exists an $\mathbf{S}^1$-invariant conformal factor $w$ on $(\Sigma\backslash\mathcal{S})\times\mathbf{S}^1$ satisfying
    \begin{align*}
        -\Delta_{\bar{g}}w+\frac{n-3}{4(n-2)}R_{\bar{g}}w>0
    \end{align*}
    and $0<a\le w\le b<\infty$ for some positive constants $a$ and $b$. Consequently, $w$ can be regarded as a function on $\Sigma\backslash\mathcal{S}$. Hence, $((\Sigma\backslash\mathcal{S})\times\mathbf{S}^1,w^{\frac{4}{n-3}}\bar{g} = w^{\frac{4}{n-3}}\hat{g}+w^{\frac{4}{n-3}}v^2ds^2)$ is PSC, so $(\Sigma\backslash\mathcal{S}, w^{\frac{4}{n-3}}\hat{g})$ is a complete Riemannian manifold with $\mathbf{S}^1$-warped PSC metric. This contradicts Lemma \ref{open SYS dim 11} since $\Sigma\backslash\mathcal{S}$ is an open SYS manifold.
\end{proof}

\bibliographystyle{alpha}

\bibliography{Positive}

\end{document}